\let\oldtocsection=\tocsection \let\oldtocsubsection=\tocsubsection
\renewcommand{\tocsection}[2]{\hspace{0em}\oldtocsection{#1}{#2}}
\renewcommand{\tocsubsection}[2]{\hspace{2em}\oldtocsubsection{#1}{#2}}
\theoremstyle{plain}
\newtheorem{theorem}{Theorem}[section] 
\newtheorem{lemma}[theorem]{Lemma}
\newtheorem{corollary}[theorem]{Corollary} 
\newtheorem{proposition}[theorem]{Proposition}
\theoremstyle{definition}
\newtheorem{definition}[theorem]{Definition}
\theoremstyle{remark} 
\newtheorem{remark}[theorem]{Remark}
\newtheorem{example}[theorem]{Example}
\numberwithin{equation}{section}
\newcommand{\A}{A}
\newcommand{\rs}{S}
\newcommand{\R}{\mathbb{ R}} 
\newcommand{\Z}{\mathbb{ Z}} 
\newcommand{\PP}{\mathbb{ P}} 
\newcommand{\Q}{\mathbb{ Q}} 
\newcommand{\Char}{\operatorname{Char}}
 \newcommand{\ft}{{\mathfrak t}}
\newcommand{\ga}{\alpha} \newcommand{\gb}{\beta}
\newcommand{\gd}{\delta} \newcommand{\gD}{\Delta}
 \newcommand{\gre}{\epsilon}
\renewcommand{\gg}{\gamma} 
 \newcommand{\gl}{\lambda}
\newcommand{\gP}{\Phi}  \newcommand{\gs}{\sigma}
 \newcommand{\gz}{\zeta}
 \newcommand{\ch}{\mathcal{H}}
\newcommand{\ct}{\mathcal{T}}
 \newcommand{\vp}{\mathbf{p}}
\newcommand{\vq}{\mathbf{q}}
\newcommand{\vr}{\mathbf{r}} \newcommand{\vs}{\mathbf{s}}
\newcommand{\vt}{\mathbf{t}}
\newcommand{\IP}[2]{\langle#1 , #2\rangle} 
\renewcommand{\bar}[1]{\overline{#1}}
\newcommand{\wh}{\widehat}
\newcommand{\zind}[1]{{{#1}^{\Z}}}
\newcommand{\isnd}[1]{{#1}^{\ddag}}
\newcommand{\conea}{\operatorname{Cone}_{\A}}
\newcommand{\conez}{\operatorname{Cone}_{\Z}}
\newcommand{\coner}{\operatorname{Cone}_{\R}}
\newcommand{\pam}{I(x^{-1})}
\newcommand{\pcuk}{\Phi_{\operatorname{cur}}^{\operatorname{KL}}}
\newcommand{\ptak}{\Phi_{\tan}^{\operatorname{KL}}}
\newcommand{\cur}{_{\operatorname{cur}}}
 \newcommand{\oth}{\operatorname{oth}}
\newcommand{\gew}{_{\geq w}}
\newcommand{\zgew}{_{z \geq w}}
\newcommand{\upm}{U_P^-(x)} 
\newcommand{\uup}{U_P^-(x)\cap U}
\newcommand{\uub}{U^-(x)\cap U}
\newcommand{\pupm}{x\gP_P^-} 
\newcommand{\puup}{x\gP_P^-\cap \gP^+}
\newcommand{\puub}{x\gP^-\cap \gP^+}
\newcommand{\st}{\subseteq}
\newcommand{\xw}{X^w}
\newcommand{\kxw}{Y_x^w}
\newcommand{\vrr}{V_{\R}}
\newcommand{\vqq}{V_{\Q}}
\newcommand{\xgw}{_{x \geq w}}
\newcommand{\zgw}{_{z \geq w}}
\newcommand{\ua}{^{\A}}
\newcommand{\uq}{^{\Q}}
\newcommand{\uz}{^{\Z}}
\newcommand{\ur}{^{\R}}
\newcommand{\uia}{^{\scriptscriptstyle\uparrow \A}}
\newcommand{\uiz}{^{\scriptscriptstyle\uparrow \Z}}
\newcommand{\uiq}{^{\scriptscriptstyle\uparrow \Q}}
\newcommand{\uis}{^{\ddag}}
\newcommand{\pcu}{\Phi_{\operatorname{cur}}}
\newcommand{\pta}{\Phi_{\tan}}
\newcounter{myenumi}
\renewcommand{\themyenumi}{$(\arabic{myenumi})$}
\newenvironment{myenumerate}{%
\setlength{\parindent}{10pt}
\setcounter{myenumi}{0}
\renewcommand{\item}{
\par
\refstepcounter{myenumi}
\makebox[2.5em][c]{\themyenumi}
}
}{
\par
\noindent
\ignorespacesafterend
}
\begin{document}
\parskip=4pt \baselineskip=14pt

\title[Tangent space and $T$-invariant curves]{Tangent spaces and
  $T$-invariant curves\\ of Schubert varieties}


\author{William Graham} \address{ Department of Mathematics,
  University of Georgia, Boyd Graduate Studies Research Center,
  Athens, GA 30602 } \email{wag@uga.edu}

\author{Victor Kreiman} \address{ Department of Mathematics and
  Physics, University of Wisconsin - Parkside, Kenosha, WI 53140 }
\email{kreiman@uwp.edu}

\subjclass{Primary 14M15; Secondary 05E15.  Keywords: K-theory, Schubert
  variety, flag variety, Grassmannian, cominuscule}

\date{\today}
\begin{abstract}
  The set of $T$-invariant curves in a Schubert variety through a
  $T$-fixed point is relatively easy to characterize in terms of its
  weights, but the tangent space is more difficult. We prove that the
  weights of the tangent space are contained in the rational cone
  generated by the weights of the $T$-invariant
  curves.  In simply laced types, this remains true if ``rational" is replaced
  by ``integral".  We also obtain conditions under which every weight
  of the tangent space is the weight of a $T$-invariant curve, as well as
  a smoothness criterion.  The results rely on equivariant $K$-theory,
  as well as the study of different notions of decomposability of roots.
\end{abstract}

\maketitle

\tableofcontents

\section{Introduction}\label{s.intro}

Let $X$ be a Schubert variety in the generalized flag variety $G/P$,
and let $x$ be a $T$-fixed point of $X$. In this paper we study two
spaces: the tangent space to $X$ at $x$, and the span of tangent lines
to $T$-invariant curves through $x$. These spaces are characterized by
their weights, which we denote by $\pta$ and $\pcu$ respectively.  

In \cite{LaSe:84}, Lakshmibai and Seshadri obtained a formula for
$\pta$ in type $A$ and gave criteria based on this formula for $X$ to
be smooth at $x$.  Lakshmibai then gave detailed formulas for $\pta$
in all classical types \cite{Lak:95}, \cite{Lak:00}, and
\cite{Lak2:00} (see also \cite[Chapter 5]{BiLa:00}).  In
\cite{Car:94}, Carrell and Peterson gave a formula for $\pcu$ and
discovered a test for determining whether $X$ is rationally smooth at
$x$.

The Carrell-Peterson formula for $\pcu$ has certain advantages: it
holds in all types, is type-independent and relatively simple, and has
clear connections to combinatorics.  The purpose of this paper is to
study the relationship between $\pta$ and $\pcu$, with an eye toward
replacing $\pta$ by the computationally simpler $\pcu$ in certain
applications. It is known that $\pcu\st \pta$, with equality in type
$A$.  Our main result is the following:

\begin{theorem}\label{t.main}
  $\pta\st \conea \pcu$.
\end{theorem}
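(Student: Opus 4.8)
The plan is to pass from the tangent space to the equivariant Hilbert series of the tangent cone, to compute that Hilbert series using equivariant $K$-theory together with a Bott--Samelson resolution, and then to extract the containment from a combinatorial analysis of the roots that occur.

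\textbf{Reduction to a Hilbert series statement.} Write $X=X^w$, $x=uP$, and let $\mathfrak{m}\subset\mathcal{O}_{X,x}$ be the maximal ideal. The associated graded ring $\operatorname{gr}\mathcal{O}_{X,x}=\bigoplus_{k\ge 0}\mathfrak{m}^k/\mathfrak{m}^{k+1}$ is the coordinate ring of the tangent cone $C_xX\subseteq T_x(G/P)$; it is generated in degree one by $\mathfrak{m}/\mathfrak{m}^2=(T_xX)^{\vee}$, whose weights are $-\pta$, so its equivariant Hilbert series $\mathcal{H}=\sum_{\mu}\dim(\operatorname{gr}\mathcal{O}_{X,x})_{\mu}\,e^{\mu}$ satisfies $-\pta\subseteq\operatorname{supp}\mathcal{H}$. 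It is standard (the equivariant $K$-theory analogue of the equivariant multiplicity) that, after expanding in the $\lambda$-adic direction for a cocharacter $\lambda$ that is dominant on $\pta$ and generic with respect to the finitely many roots appearing below, one has $\mathcal{H}=[\mathcal{O}_X]|_x\big/\lambda_{-1}\!\left(T_x^{\vee}(G/P)\right)$. Since $\operatorname{supp}\mathcal{H}\supseteq -\pta$, it suffices to prove $\operatorname{supp}\mathcal{H}\subseteq\conea(-\pcu)=-\conea\pcu$, as this forces $-\pta\subseteq-\conea\pcu$, i.e.\ $\pta\subseteq\conea\pcu$.

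\textbf{Computing $\mathcal{H}$ by $K$-theory.} Assume first $P=B$ (the general case is analogous). Fix a reduced word for $w$ and let $\mu\colon\widetilde{Z}\to X$ be the associated Bott--Samelson resolution. Since Schubert varieties have rational singularities, $\mu_*\mathcal{O}_{\widetilde{Z}}=\mathcal{O}_X$ with no higher direct images, so $\mu_*[\mathcal{O}_{\widetilde{Z}}]=[\mathcal{O}_X]$ in $K_T$. Applying equivariant localization to the proper morphism $\mu$ (with $\widetilde{Z}$ and $G/B$ smooth, fixed points isolated) and dividing by $\lambda_{-1}(T_x^{\vee}(G/B))$ gives
\[
\mathcal{H}\;=\;\sum_{\vec{\epsilon}}\ \prod_{k=1}^{\ell}\frac{1}{1-e^{-\nu_k(\vec{\epsilon})}},
\]
the sum over the $T$-fixed points $z_{\vec{\epsilon}}$ of $\widetilde{Z}$ lying over $x$ (equivalently, over subwords $\vec{\epsilon}$ of the chosen reduced word whose product is $u$), where $\nu_1(\vec{\epsilon}),\dots,\nu_{\ell}(\vec{\epsilon})$ are the weights of $T_{z_{\vec{\epsilon}}}\widetilde{Z}$ — each an explicit $\pm(\text{partial product of reflections})(\alpha_{i_k})$ — and every factor is expanded as a formal power series in the $\lambda$-adic direction. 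The weights $\nu_k(\vec{\epsilon})$ fall into two kinds: the \emph{geometric} ones, for which the corresponding coordinate $\mathbb{P}^1$ in $\widetilde{Z}$ maps isomorphically under $\mu$ onto a $T$-invariant curve of $X$ through $x$ — for these $\nu_k(\vec{\epsilon})\in\pcu$, and $1/(1-e^{-\nu_k})$ expands to a series supported on $-\mathbb{Z}_{\ge 0}\pcu$ — and the \emph{excess} ones, for which that $\mathbb{P}^1$ is contracted by $\mu$.

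\textbf{The main obstacle: controlling the excess roots.} What remains, and this is the heart of the matter, is to show that every excess root, after replacing it by its negative if necessary, is a nonnegative rational combination of the curve weights $\pcu$; then each factor $1/(1-e^{-\nu_k})$ is supported on $-\conea\pcu$, hence so is every summand above, hence so is $\mathcal{H}$, and the theorem follows. This is where the study of different notions of decomposability of roots (writing a root as a nonnegative combination of a prescribed subset of roots) enters: by choosing the reduced word for $w$ appropriately and analyzing how the partial products $s_{i_1}^{\epsilon_1}\cdots s_{i_k}^{\epsilon_k}$ move the simple roots $\alpha_{i_k}$ relative to the (combinatorially described) curve weights of $X$ at $x$, one expresses each excess root in the required form — possibly after regrouping summands in order to absorb cancellations. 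I expect this combinatorial step to be the most delicate part of the argument.

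\textbf{The simply-laced refinement.} In simply-laced types one replaces ``rational'' by ``integral'' throughout by means of a separate root-theoretic lemma: in a simply-laced root system, a root that is a nonnegative rational combination of a prescribed set of roots is already a nonnegative integral combination of them. Applying this to the excess-root decompositions of the previous step upgrades the rational cone $\coneq\pcu$ to the integral cone $\conez\pcu$, giving $\pta\subseteq\conez\pcu$.
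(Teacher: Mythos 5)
Your framework (pass to the equivariant Hilbert series of the tangent cone, compute it by pushing a resolution's structure sheaf forward and localizing, then control the support factor by factor) is reasonable in outline and is close in spirit to how the paper obtains its character formula (Proposition \ref{p.pcu}(iii), imported from \cite{GrKr:20}); but the proposal stops exactly where the content of the theorem begins. The step you yourself label ``the heart of the matter'' --- that every excess weight at every fixed point over $x$, with the sign dictated by the expansion direction, lies in $\coneq \pcu$ --- is not proved: you only say that a good choice of reduced word and ``different notions of decomposability,'' possibly ``after regrouping summands to absorb cancellations,'' should yield it. That is precisely the difficulty, and the hedge about cancellation concedes that the factor-by-factor containment may fail term by term (the fixed points over $x$ come from non-reduced subwords, and nothing constrains the weights of the contracted directions a priori). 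In the paper, the corresponding work is substantial: the K-theoretic formula only gives $\ptak \st \conez(\pam\zgw)$, i.e.\ membership is governed by Demazure products $z_i \geq w$, while Carrell--Peterson gives $\pcuk = \pam\xgw$, governed by ordinary products $x_i \geq w$; bridging the two is the main technical result $\conea(\pam\zgw) = \conea(\pam\xgw)$ (Corollary \ref{c.ind-opp}), proved in Sections \ref{s.decind}--\ref{s.inc-iso} via decomposition into increasing $\A$-indecomposables (itself delicate --- the naive induction fails, see Example \ref{ex.nonterminate}), the implication increasing indecomposable $\Rightarrow$ iso-indecomposable, and Theorem \ref{t.reduced}, which shows iso-indecomposability of $\gg_i$ forces the punctured word to be reduced, so $z_i = x_i$. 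Nothing in your proposal substitutes for this chain, so the argument has a genuine gap at its central step. (A smaller slip: with this paper's conventions $X^w = \overline{B^-wP}$ has codimension $\ell(w)$, so the resolution must be built from a word of length $\dim X^w$ --- e.g.\ for $w_0w$ when $P=B$ --- not from a reduced word for $w$.)

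The simply-laced refinement also does not work as stated: the claimed lemma (a root that is a nonnegative rational combination of a prescribed set of roots is a nonnegative integral combination of them) is false for general sets of roots even in simply-laced type. For instance, in $D_4$, $\gre_1+\gre_3 = \tfrac12\big[(\gre_1+\gre_2)+(\gre_1-\gre_2)+(\gre_3+\gre_4)+(\gre_3-\gre_4)\big]$, yet $\gre_1+\gre_3$ is not a nonnegative integral combination of those four roots. The paper gets the $\Z$ statement structurally, not from such a lemma: the character formula is already integral (note the $\conez$ in Proposition \ref{p.pcu}(iii)), and the combinatorial chain has integral versions in simply-laced types (Lemma \ref{l.simpiso}, Proposition \ref{p.int-iso}); the closest general ``rational $\Rightarrow$ integral'' statement the paper proves (Proposition \ref{p.iso-bi-gen}) requires the set to be closed (an inversion set), is partly restricted to classical types, and is a hard theorem rather than an off-the-shelf fact.
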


In this theorem and throughout this section $A=\Q$ (except when
referring to root systems of ``type $A$''); in simply laced types all
results hold for $A=\Z$ as well.  By $\conea \pcu$, we mean the set of
all nonnegative $\A$-linear combinations of elements of $\pcu$.

This result is new even in classical types (besides type $A$).  Indeed,
Lakshmibai's formulas for the tangent spaces are very complicated,
and it is not obvious how to use them to deduce Theorem \ref{t.main}.

Theorem \ref{t.main} is equivalent to the assertion that $\pta$ and
$\pcu$ generate the same cone $C$ over $\A$. This in turn is
equivalent to the assertion that $\pta$ and $\pcu$ have the same 
$\A$-indecomposable elements, where an element of a set is 
$\A$-indecomposable if it cannot be written as a positive $\A$-linear
combination of other elements of the set. In the case $\A=\Q$, such
indecomposable elements correspond to the edges of $C$.  More
precisely, each indecomposable element lies on one edge, and each edge
contains one indecomposable element.  

Under certain conditions, Theorem \ref{t.main} can be
strengthened. The $T$-fixed point $x$ can be represented by a Weyl
group element which, by abuse of notation, we denote by $x$ as well.

\begin{theorem}\label{t.ptapcu}
  Suppose that $G$ is simply laced and that (i) $G/P$ is cominuscule;
  or (ii) $x$ is a cominuscule Weyl group element in the sense of
  Peterson; or (iii) $G$ is of type $D$ and $x$ is fully commutative.
  Then $\pta=\pcu$.
\end{theorem}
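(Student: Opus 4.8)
The plan is to prove the reverse inclusion $\pta \st \pcu$ in each of the three cases, since $\pcu \st \pta$ always holds and by Theorem \ref{t.main} it suffices to match indecomposable elements. Equivalently, I would show that in each case \emph{every} weight of the tangent space is already the weight of some $T$-invariant curve, so no genuine decomposition phenomenon occurs. The tangent weights at $x$ are a subset of $\{x\alpha : \alpha > 0\}$ (the inversions, in the $G/P$ setting those $\alpha$ with $x\alpha$ a root of the appropriate parabolic type), and the curve weights are the $x\alpha$ for which the corresponding $T$-stable $\PP^1$ lies in $X$, governed by the combinatorial/Bruhat condition $s_\beta x \leq w$ in the Bruhat order (with $\beta$ the positive root corresponding to $x\alpha$). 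So the crux is: if $x\alpha$ is a tangent weight at $x$ to $X = X^w$, then the reflection condition $s_{x\alpha} x \le w$ holds.

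The main tool should be the equivariant $K$-theory description of the tangent space alluded to in the abstract and presumably developed in the body of the paper: the multiplicity of a weight $x\alpha$ in the tangent space is read off from the equivariant $K$-class (or the coefficient appearing in the localization of the structure sheaf / the "curve neighborhood" expansion) at $x$, and the vanishing/non-vanishing of these coefficients is controlled by the $\pcu$ combinatorics together with the decomposability notions. In case (i), $G/P$ cominuscule, the poset of roots in $\fu_P$ is a distributive lattice and each $x\alpha$ appears with multiplicity one and is \emph{indecomposable among the inversion roots} in the sense that it cannot be built from others — here one shows directly that the cominuscule structure forces $x\alpha$ to already be a curve weight, using that minuscule weight posets have no additive relations among their elements of the relevant kind. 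In case (ii), $x$ cominuscule in Peterson's sense, the inversion set $\Phi_x = \{\alpha > 0 : x^{-1}\alpha < 0\}$ consists of pairwise "orthogonal enough" roots (Peterson's abelian/cominuscule condition forces the inversions to sum to a single weight with no internal additive coincidences), so again each tangent weight is forced to be a curve weight. In case (iii), type $D$ with $x$ fully commutative, I would invoke Stembridge's classification of fully commutative elements: the reduced word has no $s_i s_j s_i$ braid, which translates into the inversion set being "sparse" in a way that, combined with the simply-laced root arithmetic, again precludes decompositions that are not already present among curve weights.

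Concretely, the steps in order: (1) reduce to showing $\pta \st \pcu$ using $\pcu \st \pta$ and Theorem \ref{t.main}; (2) recall the $K$-theoretic formula for the multiplicity of a weight in $\pta$ and the Carrell--Peterson/Deodhar description of $\pcu$ in terms of $\{\beta > 0 : s_\beta x \le w\}$; (3) fix a tangent weight $x\alpha \in \pta$ and show its multiplicity being nonzero forces, in each of the three special cases, that $\alpha$ is an inversion of $x$ with $s_{x\alpha} x \le w$ — i.e. $x\alpha \in \pcu$; (4) for this last step, feed in the structural input special to each case: the distributive-lattice structure of the cominuscule root poset (i), Peterson's characterization of cominuscule elements via abelian inversion sets (ii), and Stembridge's characterization of fully commutative elements in type $D$ (iii), in each case checking the relevant "no unexpected additive relations / every inversion root is a curve root" statement. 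I expect step (4), case (iii), to be the main obstacle: type $D$ fully commutative elements are more intricate than the cominuscule cases (the $D_n$ Dynkin diagram branches), so one likely needs a careful combinatorial analysis of reduced words — or a reduction, via a parabolic or a known embedding of fully commutative type-$D$ combinatorics into type-$A$-like patterns, to the already-understood type-$A$ equality $\pta = \pcu$. The cominuscule cases (i) and (ii) should be comparatively clean once the $K$-theoretic multiplicity formula from earlier in the paper is in hand.
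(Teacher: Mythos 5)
Your overall reduction is the same one the paper uses: since $\pcu\st\pta$ always holds, and the main result (in its integral, simply laced form, at the Kazhdan--Lusztig level, Theorem \ref{t.containments}) places every tangent weight in $\conez\pcuk$, it suffices to show that under each hypothesis every element of $\ptak$ (equivalently, of $\pam$, by Lemma \ref{l.indrel}(v)) is integrally indecomposable; then the cone equality collapses to $\ptak=\pcuk$, and $\pta=\pcu$ follows from Corollary \ref{c.tanwts}(ii),(iv). This is exactly Theorem \ref{t.comin-char}. For cases (i) and (ii) your structural input is essentially right, but note that the mechanism the paper uses is \emph{coplanarity} of $\pam$ (there is $v$ with $\ga(v)=-1$ for all $\ga\in\pam$; for cominuscule $P$ this is the coefficient-one property of the roots of the nilradical), which kills \emph{all} nonnegative integral combinations in one stroke (Theorem \ref{t.allin}(i)$\Rightarrow$(ii)). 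The weaker statements you invoke --- ``no internal additive coincidences,'' i.e.\ abelianness of the inversion set --- only exclude two-term sums, which is not yet enough to collapse the cone.

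The genuine gap is case (iii), which you yourself flag as an unresolved obstacle. Full commutativity gives you, via Fan--Stembridge, only that $\pam$ contains no triple $\ga,\gb,\ga+\gb$, i.e.\ (in the simply laced case) that every element of $\pam$ is iso-indecomposable. But Theorem \ref{t.main} only puts a tangent weight into the cone $\conez\pcuk$, so to conclude it actually lies in $\pcuk$ you must also rule out decompositions $\ga=\sum c_i\ga_i$ with three or more terms (and, rationally, relations such as $\ga=\tfrac12(\gb+\gg)$). The ingredient missing from your sketch is precisely the paper's Theorem \ref{t.iso-gen}/Proposition \ref{p.iso-bi-gen}: for inversion sets in classical root systems, rational decomposability implies bi-decomposability implies iso-decomposability --- a nontrivial case analysis using closedness of $\pam$ and the explicit $\gre_p\pm\gre_q$ description of the roots. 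With it, full commutativity in type $D$ upgrades to ``every element of $\pam$ is integrally indecomposable'' (Corollary \ref{c.adind}) and the argument closes; without it, ``sparseness'' of the inversion set plus simply laced arithmetic does not preclude the longer decompositions you need to exclude, and the hoped-for reduction to type $A$ is not supplied. (Two minor points: with the paper's conventions the curve condition is $s_\gg x\geq w$, not $\leq w$; and to pass from cone equality to set equality you also need that indecomposability descends to the subsets $\pcuk\st\ptak\st\pam$, as in Lemma \ref{l.indrel}(v).)
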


Part (iii) is a generalization in type $D$ of (ii), since cominuscule
Weyl group elements are fully commutative, but Stembridge (see \cite{Ste:01}) provides
an example in type $D$ of a fully commutative
element which is not cominuscule.  See Remark \ref{r.stem} below.

As an application of Theorem \ref{t.ptapcu}, a smoothness criterion
can be deduced. The Schubert variety $X$ is defined by a Weyl group
element $w\leq x$.  Thus, any reduced expression $\vs$ for $x$
contains a reduced subexpression for $w$.

\begin{theorem}\label{t.smoothcrit}
  Suppose that $G$ is simply laced and any of the three conditions of
  Theorem \ref{t.ptapcu} are satisfied. Let $\vs$ be any reduced
  decomposition for $x$. Then $X$ is smooth at $x$ if and only if
  $\vs$ contains a unique reduced subexpression for $w$.
\end{theorem}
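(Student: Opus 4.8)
The plan is to reduce the smoothness criterion to a statement about the dimensions of $\pta$ and $\pcu$, and then to exploit Theorem \ref{t.ptapcu} together with the standard interpretation of reduced subexpressions via Bott--Samelson (Demazure) desingularizations. First I would recall that $X$ is smooth at $x$ if and only if $\dim T_x X = \dim X = \ell(w)$, i.e. if and only if $|\pta| = \ell(w)$; since weights of the tangent space are counted with multiplicity one in the cases at hand, $|\pta|$ is the dimension of the Zariski tangent space. By Theorem \ref{t.ptapcu}, under any of the three hypotheses we have $\pta = \pcu$, so $X$ is smooth at $x$ if and only if $|\pcu| = \ell(w)$. This replaces the tangent-space condition by the (computable) condition that the number of $T$-invariant curves in $X$ through $x$ equals $\dim X$.

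Next I would translate the right-hand side of the asserted equivalence. Fix a reduced decomposition $\vs = (s_{i_1},\dots,s_{i_n})$ for $x$, with $n = \ell(x)$, and let $Z_{\vs} \to G/P$ be the associated Bott--Samelson resolution, whose image is $X_x := \overline{BxP/P}$ (the opposite Schubert cell closure used implicitly above). The fiber over the point $x$ is in natural bijection with the set of subwords of $\vs$ that are reduced expressions for $w$; more precisely, each such subword gives a $T$-fixed point of $Z_{\vs}$ lying over $x$, and these are all of them. The resolution restricts to an isomorphism over the smooth locus of $X$; in particular, if $X$ is smooth at $x$, the fiber over $x$ is a single (reduced) point, so the reduced subexpression for $w$ is unique. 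This gives the ``only if'' direction without any hypothesis on $G$.

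For the ``if'' direction I would argue contrapositively: suppose $X$ is not smooth at $x$, so by the first paragraph $|\pcu| > \ell(w)$. The weights in $\pcu$ are exactly the weights of $T$-invariant curves in $X$ through $x$, i.e. the weights $\gb \in \Phi^+$ (of the appropriate parabolic type) such that $s_\gb x < x$ with $s_\gb x \geq w$ in the relevant order, equivalently such that there is a chain in Bruhat order realized along a $T$-stable $\PP^1$. Each such curve contributes a reflection $s_\gb$ with $w \leq s_\gb \cdot x \lessdot x$ (a covering relation), and the Bott--Samelson combinatorics say that the number of reduced subwords of $\vs$ representing $w$ is at least the number of such ``first steps'' down from $x$ toward $w$ — this is where I would invoke that $Z_{\vs}$ is smooth, so its fiber over $x$, cut out by the $|\pcu|$ coordinate curves, has length at least $|\pcu| - \ell(w) + 1 \geq 2$ if $|\pcu| > \ell(w)$. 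Hence the reduced subexpression is not unique. I would make the counting precise using the recursive structure $Z_{(s_{i_1},\dots,s_{i_n})} = Z_{(s_{i_1},\dots,s_{i_{n-1}})} \times^{B} P_{i_n}/B$ and induction on $n$, comparing the tangent weights at a $T$-fixed point of $Z_{\vs}$ with those at its image in $X$.

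The main obstacle will be the ``if'' direction, specifically making rigorous the inequality relating $|\pcu|$ to the number of reduced subexpressions. The subtlety is that a priori one only knows $|\pta| = \ell(w)$ forces uniqueness (via the iso over the smooth locus), but the converse needs the Bott--Samelson fiber over $x$ to be genuinely fat whenever $\dim T_x X > \dim X$; equivalently, one must rule out that the resolution $Z_{\vs} \to X_x$ is an isomorphism over $x$ while $X$ is singular there. Here the simply-laced, cominuscule-type hypotheses enter exactly through Theorem \ref{t.ptapcu}: they guarantee $\pta = \pcu$, and hence that the ``defect'' $|\pta| - \ell(w)$ is visible in the curve weights $\pcu$, which are precisely the coordinate directions of the Bott--Samelson chart over $x$. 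So the logical skeleton is: smoothness $\iff |\pta| = \ell(w) \iff |\pcu| = \ell(w) \iff$ the $\PP^1$-tower over $x$ has a single $T$-fixed point $\iff$ the reduced subexpression for $w$ in $\vs$ is unique, with Theorem \ref{t.ptapcu} supplying the crucial middle equivalence and the Bott--Samelson formalism supplying the outer ones.
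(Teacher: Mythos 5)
Your opening reduction (smooth at $x$ $\Leftrightarrow$ $|\pta|=\dim X$ $\Leftrightarrow$ $|\pcu|=\dim X$, using Theorem \ref{t.ptapcu}) is the same first step as the paper's (modulo a convention slip: here $X=\overline{B^-wP}$, so $\dim X=\dim G/P-\ell(w)$, not $\ell(w)$). But the Bott--Samelson mechanism you propose for the remaining equivalence does not work. The fiber of $Z_{\vs}\to\overline{BxP/P}$ over the point $xP$ is a single reduced point (the only subword of a reduced word for $x$ whose product is $x$ is the full word); it is not indexed by reduced subexpressions for $w$, and a Bott--Samelson resolution is \emph{not} an isomorphism over the smooth locus (e.g. $Z_{(s_1,s_2,s_1)}\to SL_3/B$ has a $\PP^1$ fiber over $eB$ although the target is smooth). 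In particular your claim that the ``only if'' direction holds without any hypothesis is false: in type $A_2$ take $w=s_1$, $x=w_0$, $\vs=(s_1,s_2,s_1)$; every Schubert variety in $SL_3/B$ is smooth, so $X^w$ is smooth at $x$, yet $\vs$ contains two reduced subexpressions for $w$. The hypotheses of Theorem \ref{t.ptapcu} are needed for \emph{both} directions, and the geometric argument you sketch cannot deliver either one.

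What is missing is the combinatorial bridge between curve weights and subwords, which in the paper is supplied by an entirely different chain (Theorem \ref{t.smooth}): by Carrell--Peterson (Proposition \ref{p.pcu}, Corollary \ref{c.exw}) the curve weights in the Kazhdan--Lusztig chart are exactly the $\gg_i$ with $x_i\geq w$; under the stated hypotheses every root of $\pam$ is iso-indecomposable (Theorem \ref{t.allin}), so by Theorem \ref{t.reduced} each deleted word $(s_1,\ldots,\wh{s}_i,\ldots,s_l)$ is reduced and hence $x_i=z_i$ (Corollary \ref{c.zixi}); by the Demazure-product fact (Lemma \ref{l.heckefact}), $z_i\geq w$ if and only if the deleted word contains a reduced subexpression for $w$; and Lemma \ref{l.lxw} converts the count $|\{i:z_i\geq w\}|=\ell(x)-\ell(w)$ into uniqueness of the reduced subexpression. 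Note that the hypotheses are used \emph{twice}: once to get $\pta=\pcu$ and once more to get $x_i=z_i$; you use them only for the first equality and then appeal to a general Bott--Samelson counting principle that is not true in general (the $A_2$ example above already shows ``number of curves $=\dim$'' and ``unique reduced subexpression'' are inequivalent without the second ingredient). Without something playing the role of Lemma \ref{l.heckefact} together with the $x_i=z_i$ step, neither direction of the theorem can be completed along the lines you propose.
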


It can be deduced from \cite[Corollary 2.11]{GrKr:15} that this
criterion for smoothness in fact holds whenever $G/P$ is cominuscule,
even without the simply laced requirement. Further discussion of the
criterion appears in \cite{GrKr:23b}.

In addition to the results by Lakshmibai-Seshadri and Carroll-Peterson
discussed above, a number of other papers have studied smoothness and
rational smoothness of Schubert varieties. These include
Lakshmibai-Sandhya \cite{LaSa:90}, Kumar \cite{Kum:96}, Billey
\cite{Bil:98}, Lakshmibai-Littelmann-Magyar \cite{LLM:98}, Brion
\cite{Bri:99}, Billey-Warrington \cite{BiWa:03}, Boe-Graham
\cite{BoGr:03}, Carrell-Kuttler \cite{CaKu:03}, Gaussent
\cite{Gau:03}, and Kassel-Lascoux-Reutenauer \cite{KLR:03}.  We refer
the reader to \cite[Chapters 6 and 8]{BiLa:00} for a detailed
discussion of this topic.

Another application of Theorem \ref{t.main} appears in
\cite{GrKr:23b}, which studies multiplicities of singular points of
Schubert varieties.


\subsection{Outline of the proof of Theorem
  \ref{t.main}}\label{ss.proofoutline}
This theorem builds on the work of \cite{GrKr:20} connecting Demazure
products with weights of tangent spaces.  Here, that connection
appears as Theorem \ref{t.known}, which is proved using the methods of
\cite{GrKr:20}.  The theorem of Carrell and Peterson describing $\pcu$
(\cite{Car:94}) also plays a key role.  The proof of Theorem
\ref{t.main} also requires a detailed study of decomposability of
roots.  An important new ingredient is the notion of
iso-decomposability, which, as shown by Theorem \ref{t.reduced},
appears naturally in the study of inversion sets of Weyl group
elements.

In order to prove Theorem \ref{t.main}, which concerns the Schubert
variety $X$, we first prove an analogous but stronger result for the
Kazhdan-Lusztig variety $Y\st X$ with $T$-fixed point $x$.  The reason
we work with $Y$ rather than $X$ is that the tangent space to $Y$ at
$x$ lives in an ambient space with weights $\pam$, the inversion set
of $x^{-1}$, and algebraic properties related to $\pam$ are essential
to our proof.

Fix a reduced expression $\vs=(s_1, \ldots, s_l)$ for $x$. It is known
that the roots of $\pam$ can be enumerated explicitly by the formula
$\gg_i=s_1\cdots s_{i-1}(\ga_i)$, $i=1,\ldots, l$, where $\ga_i$ is
the simple root corresponding to $s_i$.  Based on this same expression
$\vs$, define $x_i$ and $z_i$ to be the ordinary and Demazure products
respectively of $(s_1,\ldots,\wh{s}_i,\ldots, s_l)$.  Denote the
weights of the tangent space to $Y$ at $x$ by $\ptak$, and the weights
of the tangent lines to $T$-invariant curves of $Y$ through $x$ by
$\pcuk$.  It follows easily from a result of Carrell and Peterson
(\cite{Car:94}) that $\conea \pcuk = \conea \{\gg_i\mid x_i\geq w\}$
(indeed, this result holds before taking cones).  For $\ptak$, we have
the following result, which follows from the methods of
\cite{GrKr:20}.

\begin{theorem}\label{t.known}
  $\ptak \st \conea \{ \gg_i\mid z_i\geq w\}$.
\end{theorem}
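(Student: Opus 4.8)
The plan is to prove Theorem~\ref{t.known} by reducing it to a statement about Demazure products and then invoking the $K$-theoretic machinery of \cite{GrKr:20}. The starting point is the standard description of the Kazhdan--Lusztig variety $Y = Y_x^w$: it sits inside the affine cell $\vam$, whose coordinate ring is a polynomial ring in variables indexed by the roots $\gg_i \in \pam$, and $Y$ is cut out by the equations defining $X$ pulled back to this cell. The tangent space $T_x Y$ is therefore a subspace of the ambient space $\bigoplus_i \C_{-\gg_i}$ (or $\C_{\gg_i}$, depending on conventions), so its weights $\ptak$ form a subset of $\pam = \{\gg_1, \dots, \gg_l\}$. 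Thus the content of the theorem is: \emph{if $\gg_i \in \ptak$, then $z_i \geq w$}, equivalently, \emph{if $z_i \not\geq w$, then $\gg_i \notin \ptak$}, i.e.\ the $i$-th coordinate function does not vanish to order $\geq 2$ on $Y$ at $x$ --- rather, its restriction to $T_x Y$ is zero.

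The key tool, following \cite{GrKr:20}, is equivariant $K$-theory. The idea is that the class $[\co_Y]_x$ in the equivariant $K$-theory of the ambient affine space (a Laurent polynomial ring in the characters $\e^{\gg_i}$) encodes, through its lowest-degree terms, information about the tangent cone and hence the tangent space. Concretely, one expands the localized $K$-class and reads off which weights $\gg_i$ can appear in $\ptak$ from the Hilbert series / character of the tangent cone. The hypothesis $z_i \geq w$ versus $z_i \not\geq w$ enters through the recursive structure of Demazure products: deleting the generator $s_i$ from the reduced word $\vs$ and forming the Demazure product $z_i$ of $(s_1, \dots, \wh{s_i}, \dots, s_l)$ corresponds geometrically to a projection or a Bott--Samelson-type degeneration, and the condition $z_i \geq w$ is exactly the condition under which the corresponding limit still meets the Schubert variety $X$. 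So the plan is: first, set up the $K$-theory class of $Y$ at $x$ using the Bott--Samelson resolution attached to $\vs$; second, use the combinatorics of subwords (each subexpression of $\vs$ for an element $\le x$ contributes a term) together with the Carrell--Peterson description to identify which $\gg_i$ survive in the tangent space; third, match the surviving indices with the condition $z_i \geq w$.

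I would carry this out in the following order. First, recall precisely from \cite{GrKr:20} the formula expressing $\ptak$ (or rather the weights of the tangent cone, which contain those of the tangent space) in terms of the $K$-class, making sure to track the normalization so that the ambient weights are exactly $\pam$. Second, establish the Demazure-product bookkeeping: show that $z_i = \zd(s_1, \dots, \wh{s_i}, \dots, s_l)$ is the Demazure product obtained by omitting position $i$, and relate the set $\{i : z_i \geq w\}$ to the combinatorics of reduced subwords of $\vs$ representing $w$ (an index $i$ with $z_i \geq w$ means $w$ admits a subexpression in $\vs$ avoiding position $i$, after passing to Demazure products). Third, combine: a weight $\gg_i$ appears in $\ptak$ only if the corresponding degeneration of $Y$ is nonempty, which forces $z_i \geq w$.

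The main obstacle I anticipate is Step two --- the precise translation between ``the $i$-th tangent coordinate is nonzero on $T_x Y$'' and ``$z_i \geq w$''. The subtlety is that one must work with Demazure products rather than ordinary products, because the tangent space (unlike the set of $T$-invariant curves, which is governed by the ordinary products $x_i$) sees the ``thickened'' or scheme-theoretic structure, and this is exactly where Demazure products arise naturally in \cite{GrKr:20}. Making this rigorous requires a careful analysis of the Bott--Samelson resolution $Z_{\vs} \to X$: the fiber structure over $x$, the behavior of the exceptional locus, and how deleting a generator interacts with the map. I expect that once the framework of \cite{GrKr:20} is in place, the proof is relatively short, but importing that framework correctly --- in particular getting the inequality direction ($\subseteq$ rather than $\supseteq$) and the role of Demazure (not ordinary) products exactly right --- is the delicate part.
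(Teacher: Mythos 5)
Your opening reduction is where the proposal goes wrong. You assert that since $\ptak\st\pam$, "the content of the theorem is: if $\gg_i\in\ptak$ then $z_i\geq w$." That is a strictly stronger statement than the theorem, not a reformulation of it: the theorem only claims that each tangent weight lies in the cone $\conea\{\gg_j\mid z_j\geq w\}$, so a \emph{decomposable} tangent weight $\gg_i$ is perfectly allowed to have $z_i\not\geq w$, provided it is a nonnegative combination of roots $\gg_j$ with $z_j\geq w$. The set-membership statement is exactly what \cite{GrKr:20} proves only for integrally indecomposable $\gg_i$ (the introduction of the present paper describes Theorem \ref{t.known} as the cone analogue of that result for all elements), and your proposal contains no mechanism for handling decomposable weights; the later step "a weight $\gg_i$ appears in $\ptak$ only if the corresponding degeneration is nonempty, which forces $z_i\geq w$" simply re-asserts the strong claim.

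Moreover, the $K$-theoretic machinery you plan to import cannot deliver that strong claim, because its output is the $T$-character of the tangent cone, $\Char C=\sum_{\vt\in\ct_{w,\vs}}\sum_{\gz\in\conez\{\gg_j:\,j\notin\vt\}}(-1)^{e(\vt)}n_{\gz}e^{-\gz}$, and a character does not see the grading: if $\gg_i=\gg_j+\gg_k$, the weight $-\gg_i$ of a degree-one coordinate is indistinguishable from the weight of a degree-two monomial in the coordinates indexed by $j$ and $k$, so one cannot conclude $i\notin\vt$ (hence $z_i\geq w$) for such $\gg_i$. What the character does give is that every weight of the tangent cone, hence of $B_1\cong C_1$ and hence every weight of $T_x\kxw$, lies in $\conez(\pam\zgw)$, using \cite[Theorem 5.8]{GrKr:20} to translate "$i\notin\vt$ for some $\vt\in\ct_{w,\vs}$" into $z_i\geq w$. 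That is precisely the paper's proof (Proposition \ref{p.pcu}(iii)), and it is the correct target: if you replace your membership claim by this cone conclusion, your outline collapses onto the paper's argument; as written, it aims at a statement the method cannot prove and which the theorem does not assert.
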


See Proposition \ref{p.pcu}(iii).  The main result of \cite{GrKr:20}
is that if $\gg_j$ is an integrally indecomposable element of
$I(x^{-1})$, then $\gg_j$ is in $\ptak$ if and only if $z_i \geq w$.
Theorem \ref{t.known} is an analogue of this result for all elements
of $\ptak$.

The  following equality of cones, which appears in the body of the paper
as Corollary \ref{c.ind-opp}, is our main technical result.

\begin{theorem}\label{t.techmain}
  $\conea \{ \gg_i\mid z_i\geq w\} = \conea \{\gg_i\mid x_i\geq w\}$.
\end{theorem}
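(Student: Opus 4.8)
The plan is to prove the equality of cones by showing the two generating sets have the same nonnegative rational span, exploiting the relation between $x_i$ and $z_i$. Since the ordinary product is always $\leq$ the Demazure product in Bruhat order, we have $x_i \leq z_i$, so $z_i \geq w$ whenever $x_i \geq w$; hence the inclusion $\conea\{\gg_i \mid x_i \geq w\} \subseteq \conea\{\gg_i \mid z_i \geq w\}$ is immediate, and the real content is the reverse inclusion. Concretely, I must show: if $z_i \geq w$ but $x_i \not\geq w$, then $\gg_i$ already lies in $\conea\{\gg_j \mid x_j \geq w\}$. First I would recall (from the cited Carrell--Peterson material and the combinatorial setup around $\pam$) the explicit description of which subexpressions of $\vs$ are reduced: $x_i$ is obtained from the reduced word $(s_1,\dots,s_l)$ by deleting $s_i$, and whether the result stays reduced, or ``collapses'' under the Demazure product to something strictly larger, is governed by whether $s_i$ can be commuted/absorbed. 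The key structural input here is the notion of \emph{iso-decomposability} of roots introduced in the paper, which Theorem \ref{t.reduced} ties to inversion sets of Weyl group elements; the expectation is that precisely when $x_i \neq z_i$, the root $\gg_i$ is iso-decomposable with respect to $\pam$, i.e.\ expressible as a combination of other $\gg_j$'s in a controlled way.

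The main steps, in order, would be: (1) Establish the inclusion $\conea\{\gg_i \mid x_i \geq w\} \subseteq \conea\{\gg_i \mid z_i \geq w\}$ from $x_i \leq z_i$ in Bruhat order. (2) Analyze the ``bad'' indices $i$ with $z_i \geq w$, $x_i \not\geq w$: show that for such $i$, the subword $(s_1,\dots,\wh{s_i},\dots,s_l)$ is non-reduced, which forces $\ell(z_i) < l-1$ and in particular means $\gg_i$ is decomposable in $\pam$ in a specific sense. (3) Use iso-decomposability (Theorem \ref{t.reduced}) to write $\gg_i$ as a nonnegative combination of roots $\gg_j$ with $j$ in some index set $J(i)$, where — and this is the crux — one must show every $j \in J(i)$ satisfies $x_j \geq w$. (4) Conclude that $\gg_i \in \conea\{\gg_j \mid x_j \geq w\}$, so the reverse inclusion holds and the cones coincide. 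Throughout, the identity $\gg_i = s_1\cdots s_{i-1}(\ga_i)$ and the behavior of $w$'s reduced subexpressions inside $\vs$ are the combinatorial engine; Bruhat-order monotonicity under deletion of letters controls which $x_j$, $z_j$ dominate $w$.

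The hard part will be step (3): controlling the index set $J(i)$ in the iso-decomposition of a ``bad'' root $\gg_i$ and verifying the Demazure/Bruhat condition propagates to each $\gg_j$ appearing in it. The natural mechanism is that when $x_i \neq z_i$, there is a unique ``braid collision'' caused by deleting $s_i$, and the Demazure product $z_i$ records the element obtained after the collision resolves; the roots $\gg_j$ entering the iso-decomposition of $\gg_i$ should be exactly those indexed by the letters participating in (or straddling) that collision, and for those the deletion $(s_1,\dots,\wh{s_j},\dots,s_l)$ still has enough letters to express $w$, giving $x_j \geq w$. Making this precise — i.e.\ matching the combinatorics of iso-decomposability with the combinatorics of Demazure-product collisions, and ruling out the possibility that some $\gg_j$ in the decomposition has $x_j \not\geq w$ — is where the genuine work lies, and is presumably the reason the paper isolates iso-decomposability as a new tool (Theorem \ref{t.reduced}) rather than arguing directly with inversion sets. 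Once that matching is established, the equality of cones follows formally, and combined with Theorem \ref{t.known} and the Carrell--Peterson description of $\pcuk$ it yields Theorem \ref{t.main}.
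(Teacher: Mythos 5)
Your overall skeleton matches the paper's: the easy inclusion $\conea\{\gg_i \mid x_i\geq w\}\subseteq \conea\{\gg_i\mid z_i\geq w\}$ from $x_i\leq z_i$, and the reduction of the reverse inclusion to iso-decomposability via Theorem \ref{t.reduced}, are both correct starting points. But your step (3) has a genuine gap, in two respects. First, what an iso-decomposition of a ``bad'' root actually propagates is the \emph{Demazure} condition, not the Coxeter one: the paper's Lemma \ref{l.iso-inc} shows, by a specific choice of $j<i<k$ and a 0-Hecke computation, that one can arrange the iso-decomposition $c\gg_i=\gg_j+\gg_k$ so that $z_j,z_k\geq z_i\geq w$. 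There is no reason the components satisfy $x_j\geq w$; they may themselves be ``bad'' (i.e.\ $z_j\geq w$ but $x_j\not\geq w$), so your one-step mechanism --- that the letters straddling the collision ``still have enough letters to express $w$, giving $x_j\geq w$'' --- is unjustified and does not hold in general. The passage from the $z$-condition to the $x$-condition occurs only at the terminal stage: iso-indecomposable roots satisfy $z_i=x_i$ (Theorem \ref{t.reduced} together with Corollary \ref{c.zixi}), and only for those does $z_i\geq w$ give $x_i\geq w$.

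Second, once you accept that the decomposition must be iterated until indecomposable roots are reached, termination is not automatic for rational coefficients. In simply laced types the coefficient is $1$ and $\IP{\gd}{\cdot}$ strictly drops, so induction works (Proposition \ref{p.dind-iz}); but for $\A=\Q$ in general the coefficient can be $1/2$ and the naive recursion can cycle indefinitely --- this is exactly Example \ref{ex.nonterminate}. Handling this is the bulk of the paper's technical work: Lemmas \ref{l.replace}, \ref{l.removeroot}, \ref{l.indnonempty} and \ref{l.IJ} culminate in Theorem \ref{t.dind-iq}, which says every element of a weighted root set has an \emph{increasing} $\Q$-decomposition by increasing $\Q$-indecomposable elements; increasingness with respect to the Demazure weight keeps all components in $\{\gg_i\mid z_i\geq w\}$ (Corollary \ref{c.inc-comb}), Proposition \ref{p.invc} shows increasing indecomposables are iso-indecomposable, and Corollary \ref{c.inds} then places them in $\{\gg_i\mid x_i\geq w\}$. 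Your proposal neither states nor addresses this termination problem, so as written the core of the reverse inclusion is missing.
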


The proof of Theorem \ref{t.techmain} is taken up in Sections
\ref{s.decind} - \ref{s.inc-iso}. Before discussing this proof, we
point out that Theorems \ref{t.known} and \ref{t.techmain} together
clearly imply $\ptak\st \conea \pcuk$.  This statement is stronger
than Theorem \ref{t.main}.  Indeed, locally, $X$ differs from $Y$ by a
representation of $T$. Denoting the weights of this representation by
$\gP'$, one can show that $\pta = \gP'\sqcup \ptak$ and $\pcu = \gP'
\sqcup \pcuk$. Thus $\ptak\st \conea \pcuk$ implies $\pta\st
\conea\pcu$.

The proof of Theorem \ref{t.techmain} entails establishing various
relationships among cones, indecomposability, 0-Hecke algebras, and
Weyl groups.  The inclusion $\conea \{ \gg_i\mid x_i\geq w\} \st
\conea \{\gg_i\mid z_i\geq w\}$ follows from the fact that $x_i\leq
z_i$ for all $i$. In order to prove the other inclusion, we introduce
two types of indecomposable elements: {\it increasing
 $\A$-indecomposable} and {\it iso-indecomposable}. Their definitions are
deferred to Section \ref{s.indec}.  In Sections \ref{s.decind},
\ref{s.inc-iso}, and \ref{s.inversion-set} respectively, we prove:
\begin{enumerate}
\item Every element of $\{\gg_i\mid z_i\geq w\}$ is a positive
  $\A$-linear combination of increasing $\A$-indecomposable elements
  which lie in $\{\gg_i\mid z_i\geq w\}$ (Corollary \ref{c.inc-comb}).
\item Increasing $\A$-indecomposable elements are iso-indecomposable
  (Proposition \ref{p.invc}).
\item Iso-indecomposable elements $\gg_i$ satisfy $z_i=x_i$ (Corollary
  \ref{c.ind-zixi}).
\end{enumerate}
Together these statements imply that every element of
$\{\gg_i\mid z_i\geq w\}$ is a positive $\A$-linear combination of
elements which lie in $\{\gg_i\mid x_i\geq w\}$. Thus
$\{ \gg_i\mid z_i\geq w\} \st \conea \{\gg_i\mid x_i\geq w\}$,
completing the proof of Theorem \ref{t.techmain}.

The proof of Theorem \ref{t.main} does not rely on Sections
\ref{s.dec-iso-dec} and \ref{s.allin}.  The main result of these
sections, Theorem \ref{t.iso-gen}, is that for inversion sets in
classical root systems, the notions of rational indecomposability and
iso-indecomposability are equivalent, and in simply laced types are
equivalent to integral indecomposability.  This result, which we view
as of independent interest, has a number of consequences, and is used
in the proofs of Theorems \ref{t.ptapcu} and \ref{t.smoothcrit}.

\subsection{Organization of the paper}\label{ss.org}

In Section \ref{s.indec} we define various notions of decomposition
and indecomposability in {\it root sets}, where a root set is a
generalization of the set of positive roots of a root system. Sections
\ref{s.decind} - \ref{s.allin} mainly address general root sets and
the root set $\pam$.  The purpose of Sections \ref{s.decind},
\ref{s.inversion-set}, and \ref{s.inc-iso}, as discussed above, is to
prove Theorem \ref{t.techmain}, the main technical result needed for
the proof of Theorem \ref{t.main}. The purpose of Sections
\ref{s.dec-iso-dec} and \ref{s.allin} is to establish further
properties of indecomposability in $\pam$ needed in later sections to
prove Theorems \ref{t.ptapcu} and \ref{t.smoothcrit}
respectively. Specifically, in Section \ref{s.dec-iso-dec}, we show
that various types of indecomposability in $\pam$ are equivalent in
classical types; in Section \ref{s.allin}, we examine conditions under
which all elements of a root set are indecomposable.

In Sections \ref{s.schubert} - \ref{s.smoothness}, we narrow our focus
to the root sets $\pcu, \pta\st \pam$. In Section \ref{s.schubert}, we
review known properties of these two roots sets together with some
known facts about Schubert varieties, Kazhdan-Lusztig varieties, and
$T$-invariant curves. In Section \ref{s.ind-tinv}, Theorem
\ref{t.main} is proved, and in Section \ref{s.smoothness}, Theorems
\ref{t.ptapcu} and \ref{t.smoothcrit} are proved.  Finally, Section \ref{s.examples}
contains examples:
we apply Theorem \ref{t.main} to study tangent spaces of singular three-dimensional Schubert varieties,
and verify Theorem \ref{t.main} by direct calculation for a family of examples in type $D_n$.

\section{Indecomposability in root sets: definitions and basic
  properties }\label{s.indec}

As discussed briefly in Section \ref{s.intro}, there is a connection
between cones and indecomposability.  This connection is explored in
Section \ref{s.decind}. In this section we focus on
indecomposability. We introduce various types of indecomposability and
prove some of their basic properties.  In order to study
indecomposability in a general framework, we introduce the notion of a
{\it root set}.

\subsection{Indecomposability definitions}\label{ss.indecdefs}

Let $M$ be a lattice which is isomorphic to $\Z^n$.  Let $\vrr$ be the
associated real vector space $M\otimes_{\Z} \R$ and $\vqq$ the
rational subspace $M\otimes_{\Z} \Q$.  Recall that a subset $S$ of
$\vrr$ is contained in an open half-space of $\vrr$ if there is a
positive definite inner product $\IP{\,}{\,}$ on $\vrr$ and an element
$\delta$ in $\vrr$ such that $\IP{\delta}{\alpha}>0$ for all $\alpha\in
S$. We define a \textbf{root set} to be a finite subset $S$ of $M$
such that $S$ is contained in an open half-space of $\vrr$ and does
not contain both $\alpha$ and $c\alpha$ for a scalar $c\neq 1$. An
element of a root set is often referred to as a \textbf{root}.  A
\textbf{weight} on a root set is a map $z\colon S\to W$, where $W$ is
a partially ordered set.  If $W$ consists of a single element and $z$
is the constant map, then the weight is said to be \textbf{trivial}.
A root set with a weight is called a \textbf{weighted root set}.  Any
subset of a root set is a root set, and any subset of a weighted root
set is a weighted root set.  In our applications, $M$ will be the root
lattice of a semisimple Lie algebra, $S$ a set of positive roots, and
$W$ the Weyl group.

\begin{definition}\label{d.legion}
  Let $S$ be a root set. Let $\A \st \R$ and $I\st S$.  A linear
  combination $\ga=\sum c_i\ga_i$, with $c_i\in \R_{\geq 0}$ and
  $\ga,\ga_i\in S$, is said to be
  \begin{itemize}
  \item an $\mathbf{\A}$\textbf{-linear combination} if $c_i\in \A$
    for all $i$,
  \item \textbf{in} $\mathbf{I}$ or \textbf{by elements of}
    $\mathbf{I}$ if $\ga_i\in I$ for all $i$,
  \item a \textbf{decomposition} if $\ga_i\neq \ga$ for all $i$,
  \item an $\mathbf{\A}$\textbf{-decomposition} if $c_i\in \A$ and
    $\ga_i\neq \ga$ for all $i$,
  \item an \textbf{iso-decomposition} if it is a $\Q$-decomposition of
    the form $\ga=c\ga_1+c\ga_2$ with $\|\ga_1\|=\|\ga_2\|$,
  \item \textbf{increasing} if $S$ is weighted and $z(\ga_i)\geq
    z(\ga)$ for all $i$.
  \end{itemize}
\end{definition}

Elements of $\rs$ for which there exists no decomposition are said to
be \textbf{indecomposable}.  Elements $\ga\in S$ for which there
exists no $A$-decomposition (resp. iso-decomposition, increasing $\A$
decomposition) are said to be \textbf{$\A$-indecomposable} (resp.
\textbf{iso-indecomposable}, \textbf{increasing $\A$-indecomposable});
the set of all such $\ga$ is denoted by $S\ua$ (resp.  $S\uis$,
$S\uia$).

When referring to $\A$-linear combinations, $\A$-decompositions, or
$\A$-indecomposability, the term \textbf{rational} or
\textbf{integral} is often substituted for $\A$ when $\A=\Q$ or
$\A=\Z$ respectively.

\begin{figure}[tbh]
  \begin{equation*}
    \begin{tikzpicture}[scale=0.6]
      \matrix (m) [matrix of math nodes, row sep = 3em, column sep =
      2.5 em] %
      {& S\uiq &\\
        S\uq & S\uis & S\uiz\\
        & S\uz & \\};%
      \path[line width=1.2pt,>=stealth,->] %
      (m-1-2) edge (m-2-3) %
      (m-2-1) edge (m-2-2) %
      (m-2-1) edge (m-3-2) %
      (m-2-1) edge (m-1-2) %
      (m-3-2) edge (m-2-3); %
    \end{tikzpicture}
  \end{equation*}
  \caption{Relationships among five types of indecomposability. Arrows
    represent inclusions.}
\label{f.indec}
\end{figure}
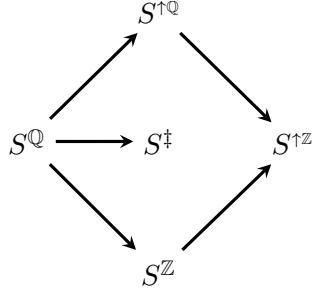

Let $S$ be a root set and $\A\st \R$. Define
\begin{equation*}
  \conea S = \bigg\{ \sum c_i\ga_i : c_i \in \A_{\geq 0},  \alpha_i\in S\bigg\}
  \subseteq V_{\R}.
\end{equation*}
The set $S$ is said to generate $\conea S$.

\subsection{Basic properties of $\A$-indecomposability}\label{ss.bprops}

\begin{lemma}\label{l.indrel}
  Let $E$ and $F$ be  subsets of a root set $S$, and let $A\st
  \R$. Then
  \begin{enumerate}
  \item $F\cap S\ua \subseteq F\ua$.
  \item $F\cap S\ua = F\ua \cap S\ua $.
  \item If $F\ua = E\ua$, then $F\cap S\ua = E\cap S\ua$.
  \item $F\cap S\ua=(F\cap S\ua)\ua$.
  \item If $F^A=F$ and $E\st F$, then $E^A=E$.
  \end{enumerate}
\end{lemma}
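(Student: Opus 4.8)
The plan is to isolate the single substantive observation and then derive all five parts from it by pure bookkeeping. The observation, which I will call \emph{monotonicity of $\A$-decompositions}, is this: if $T\subseteq T'$ are root sets and $\ga\in T$ admits an $\A$-decomposition by elements of $T$, then the very same expression $\ga=\sum c_i\ga_i$ is an $\A$-decomposition by elements of $T'$, because each $\ga_i$ lies in $T\subseteq T'$ while the conditions $c_i\in\A_{\geq 0}$ and $\ga_i\neq\ga$ are untouched. Contrapositively, $T\cap (T')\ua\subseteq T\ua$. I would also record at the outset the tautology $T\ua\subseteq T$ for every root set $T$, and note that every subset of a root set is again a root set, so that the decorated sets $T\ua$ appearing below are all defined.

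Part (i) is monotonicity applied to $F\subseteq S$. Part (ii): the inclusion $F\ua\cap S\ua\subseteq F\cap S\ua$ is immediate from $F\ua\subseteq F$, and the reverse inclusion $F\cap S\ua\subseteq F\ua\cap S\ua$ follows by combining (i) with the tautology $F\cap S\ua\subseteq S\ua$. Part (iii) is then formal: by (ii), $F\cap S\ua=F\ua\cap S\ua=E\ua\cap S\ua=E\cap S\ua$, where the middle equality uses the hypothesis $F\ua=E\ua$. Part (iv): apply (i) with $F$ replaced by $T:=F\cap S\ua$, which is still a subset of $S$; this gives $T\cap S\ua\subseteq T\ua$, and since $T\cap S\ua=T$ while $T\ua\subseteq T$ always, we get $T=T\ua$, that is, $F\cap S\ua=(F\cap S\ua)\ua$. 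Part (v): apply monotonicity to the pair $E\subseteq F$, obtaining $E\cap F\ua\subseteq E\ua$; since $F\ua=F$ and $E\subseteq F$ we have $E\cap F\ua=E$, hence $E\subseteq E\ua\subseteq E$, so $E\ua=E$.

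I do not expect any genuine obstacle here; the content is entirely in the monotonicity observation, and even that is a one-line consequence of the definitions in Section~\ref{s.indec}. The only points needing a little care are the two definitional conventions that make monotonicity work: that a decomposition of $\ga$ must use only summands $\ga_i\neq\ga$ drawn from whichever ambient set is under consideration (so enlarging that set can destroy, but never create, indecomposability), and that restricting to a subset keeps one inside the class of root sets. In the write-up I would state monotonicity as a short preliminary claim and then handle (i)--(v) in order, each in a line or two.
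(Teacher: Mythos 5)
Your proposal is correct and follows essentially the same route as the paper: your ``monotonicity'' claim is exactly the paper's part (i) (indecomposability in the ambient set passes to subsets), and parts (ii)--(v) are derived from it by the same formal bookkeeping, with only trivial differences (e.g.\ in (iv) you take the ambient set to be $S$ where the paper applies (i) a second time with ambient set $F$). No gaps.
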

\begin{proof}
  (i) If $\ga\in F$ is $\A$-indecomposable in $S$, then $\ga$ is $\A$-indecomposable in the smaller set $F$.
  
  \noindent (ii) Since $F\ua \st F$, $F\ua \cap S\ua \st F\cap S\ua$.  By
  (i), $F\cap S\ua = (F\cap S\ua) \cap S\ua \st F\ua\cap S\ua$.

  \noindent (iii) By (ii), $F\cap S\ua = F\ua \cap S\ua = E\ua \cap
  S\ua = E \cap S\ua$.

  \noindent (iv) Applying (i) twice, $F\cap S\ua=(F\cap S\ua)\cap
  F\ua\st (F\cap S\ua)\ua$. The other inclusion is clear.

  \noindent (v) By (i), $E=E\cap F=E\cap F^A\st E^A$, and the other
  inclusion is clear.
\end{proof}

\begin{definition}\label{d.sgew}
  If $S$ is a root set with weight $z\colon S\to W$, then we define
  $S\zgew=\{s\in S\mid z(s)\geq w\}$.
\end{definition}

\begin{lemma}\label{l.sam}
  Let $S$ be a root set with weights $z,x\colon S\to W$, and let $\A\st
  \R$. Then
  \begin{enumerate}
  \item $(S\ua)\zgew \st (S\zgew)\ua$.
\item  $(S\ua)\zgew = (S\zgew)\ua \cap S\ua$.
\item If $(S\zgew)\ua = (S_{x\geq w})\ua$, then $(S\ua)\zgew =
  (S\ua)_{x\geq w}$.
\item $(S\ua)\zgew = ((S\ua)\zgew)\ua$.
  \end{enumerate}
\end{lemma}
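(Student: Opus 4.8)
Lemma \ref{l.sam} is the ``$S\zgew$-analogue'' of Lemma \ref{l.indrel}, obtained by taking $F=S\zgew$. The cleanest strategy is to deduce each part of Lemma \ref{l.sam} from the corresponding part of Lemma \ref{l.indrel} applied to the subset $F=S\zgew$ of $S$, after observing the basic compatibility $(S\ua)\zgew = S\ua\cap S\zgew$, which is immediate from Definition \ref{d.sgew} (an element is in $(S\ua)\zgew$ iff it is $\A$-indecomposable in $S$ and has $z$-value $\geq w$). I would state this identity first as a one-line observation, since every part uses it.

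\textbf{Parts (i), (ii), (iv).} For (i): by the observation, $(S\ua)\zgew = S\ua\cap S\zgew = S\zgew\cap S\ua$, which by Lemma \ref{l.indrel}(i) (with $F=S\zgew$) is contained in $(S\zgew)\ua$. For (ii): combine the observation with Lemma \ref{l.indrel}(ii), which gives $S\zgew\cap S\ua = (S\zgew)\ua\cap S\ua$; hence $(S\ua)\zgew = (S\zgew)\ua\cap S\ua$. For (iv): apply Lemma \ref{l.indrel}(iv) with $F=S\zgew$ to get $S\zgew\cap S\ua = (S\zgew\cap S\ua)\ua$, then rewrite both sides via the observation as $(S\ua)\zgew = ((S\ua)\zgew)\ua$.

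\textbf{Part (iii).} This is the analogue of Lemma \ref{l.indrel}(iii) with the two subsets $E=S_{x\geq w}$ and $F=S\zgew$. The hypothesis $(S\zgew)\ua = (S_{x\geq w})\ua$ is exactly ``$F\ua = E\ua$'' for these choices, so Lemma \ref{l.indrel}(iii) yields $S\zgew\cap S\ua = S_{x\geq w}\cap S\ua$. Rewriting via the observation (for $z$) and its evident analogue for $x$ gives $(S\ua)\zgew = (S\ua)_{x\geq w}$, as desired.

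\textbf{Main obstacle.} There is no real obstacle here — the only thing to be careful about is making the identification $(S\ua)\zgew = S\ua\cap S\zgew$ explicit and then consistently translating between the ``$\zgew$'' notation of Definition \ref{d.sgew} and the ``$\cap S\zgew$'' form so that each invocation of Lemma \ref{l.indrel} matches its hypotheses verbatim. I would write the observation once, note that the $x$-weight version holds by the same argument, and then present parts (i)--(iv) as three-line deductions.
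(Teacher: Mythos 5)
Your proposal is correct and is essentially identical to the paper's proof: the paper likewise notes the identity $S\zgew \cap S\ua = (S\ua)\zgew$ and then obtains parts (i)--(iv) from the corresponding parts of Lemma \ref{l.indrel} with $F=S\zgew$ and $E=S_{x\geq w}$. Your write-up just spells out the translations that the paper leaves implicit.
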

\begin{proof}
  Noting that $S\zgew \cap S\ua = (S\ua)\zgew$, one obtains (i) - (iv)
  of this lemma from (i) - (iv) respectively of Lemma \ref{l.indrel}
  by setting $E=S_{x\geq w}$ and $F=S\zgew$.
\end{proof}

\begin{remark} \label{r.interest} In this paper, $(S\zgew)\ua$ is of
  more importance than $(S\ua)\zgew$.  The reason is that the elements
  of $(S\zgew)\ua$ are used in decomposing elements of $S\gew$.
  Precisely, in Section \ref{s.decind}, we will show that an element
  of $S\gew$ can be written in terms of indecomposable elements of
  $S\gew$---that is, in terms of elements of $(S\zgew)\ua$.  In
  Corollary \ref{c.summary-class}, we show that if $S$ is an inversion
  set in a root system of classical type, $(S\ua)\zgew = (S\zgew)\ua$.
\end{remark}

\section{Decomposing into indecomposables}\label{s.decind}

The base $\gD\st \gP^+$ consists of the roots $\gg\in\gP^+$ which
cannot be expressed as a sum $\gg=\ga+\gb$, where $\ga,\gb\in
\gP^+$. In the literature, such roots $\gg$ are said to be {\it
  indecomposable}.  (This traditional usage differs from our
definition of indecomposable in Section \ref{s.indec}.)  A fundamental
property of root systems is that every element of $\gP^+$ is a
positive integer linear combination of roots in $\gD$.  Similarly, it
is well-known that a similar property holds for $\rs\ua \st \rs$,
where $\A=\Z$ or $\Q$: every element of $\rs$ is a positive
$\A$-linear combination of roots in $\rs\ua$ (for $\A=\Q$, see Remark
\ref{r.cpc}).

In this section we show that an analogous property
holds for $\rs\uia\st \rs$, where $\rs$ is a weighted root set and
$\A=\Z$ or $\Q$: every element $\ga\in \rs$ is an increasing $\A$-linear 
combination of roots $\ga_i\in \rs\uia$. If $z(\ga)\geq w$,
then, since the linear combination is increasing, $z(\ga_i)\geq w$ for
all $i$. This proves $\rs\zgew \st \conea(\rs\uia \cap \rs\zgew)$, the
main result we will need from this section.

We first study increasing $\Z$-linear combinations and then the more
difficult case of increasing $\Q$-linear combinations. Recall that
$\gd$ is chosen so that $\IP{\gd}{\ga}>0$ for all $\ga\in \rs$.

\begin{proposition}\label{p.dind-iz}
  Let $S$ be a weighted root set. Then every element of $S \setminus
  S\uiz$ has an increasing $\Z$-decomposition by elements of $S\uiz$.
\end{proposition}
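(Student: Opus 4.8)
The plan is to argue by induction on the value $\IP{\gd}{\ga}$, where $\gd$ is the fixed element with $\IP{\gd}{\gb}>0$ for every $\gb\in S$. Since $S$ is finite, the set $\{\IP{\gd}{\gb}:\gb\in S\}$ is a finite subset of $\R_{>0}$, so it suffices to prove the claim for a given $\ga\in S\setminus S\uiz$ under the assumption that it already holds for every $\gb\in S\setminus S\uiz$ with $\IP{\gd}{\gb}<\IP{\gd}{\ga}$. (Elements of $S$ realizing the minimal value of $\IP{\gd}{\cdot}$ are automatically in $S\uiz$, since a decomposition of such an element would require roots of strictly smaller value, so the base of the induction is vacuous.)

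Fix such an $\ga$. As $\ga\notin S\uiz$, it admits an increasing $\Z$-decomposition $\ga=\sum_i c_i\ga_i$; discarding zero coefficients, we may take $c_i\in\Z_{>0}$, $\ga_i\in S$, $\ga_i\neq\ga$, and $z(\ga_i)\geq z(\ga)$ for all $i$, with at least one term present because $\ga\neq 0$. The first key point is that this decomposition involves at least two distinct roots: a relation $\ga=c_1\ga_1$ with a single root forces $c_1=1$ (whence $\ga_1=\ga$, excluded) or $c_1\neq 1$, in which case $S$ contains both $\ga_1$ and $c_1\ga_1=\ga$, contradicting the defining property of a root set. Consequently, for each $i$ the remaining terms contribute a strictly positive amount, so $c_i\IP{\gd}{\ga_i}<\sum_j c_j\IP{\gd}{\ga_j}=\IP{\gd}{\ga}$, and since $c_i\geq 1$ we get $\IP{\gd}{\ga_i}<\IP{\gd}{\ga}$ for every $i$.

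Now split the index set. For each $i$ with $\ga_i\in S\uiz$, keep the term $c_i\ga_i$. For each $i$ with $\ga_i\in S\setminus S\uiz$, the inequality $\IP{\gd}{\ga_i}<\IP{\gd}{\ga}$ lets us invoke the inductive hypothesis, writing $\ga_i=\sum_k d_{ik}\gb_{ik}$ with $d_{ik}\in\Z_{>0}$, $\gb_{ik}\in S\uiz$, and $z(\gb_{ik})\geq z(\ga_i)$; transitivity of the order on $W$ together with $z(\ga_i)\geq z(\ga)$ then gives $z(\gb_{ik})\geq z(\ga)$. Substituting these expansions into $\ga=\sum_i c_i\ga_i$ produces a $\Z_{\geq 0}$-linear combination of elements of $S\uiz$, each of $z$-value $\geq z(\ga)$. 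Every root appearing lies in $S\uiz$ while $\ga\notin S\uiz$, so no appearing root equals $\ga$; hence the resulting expression is an increasing $\Z$-decomposition of $\ga$ by elements of $S\uiz$, as required.

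I expect the only genuinely delicate step to be the ``at least two terms'' observation, which is exactly where the root-set axiom forbidding $\ga$ and $c\ga$ simultaneously is used to guarantee that $\IP{\gd}{\cdot}$ strictly decreases; the rest is bookkeeping, including verifying that transitivity of $\geq$ on $W$ propagates the increasing condition through the substitution.
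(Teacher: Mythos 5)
Your proof is correct and follows essentially the same route as the paper's: note that any increasing $\Z$-decomposition must involve at least two roots, deduce that $\IP{\gd}{\ga_i}<\IP{\gd}{\ga}$ for every summand, and induct on $\IP{\gd}{\cdot}$, substituting the inductive expansions. You simply spell out details the paper leaves implicit (the use of the root-set axiom to rule out a one-term decomposition, and transitivity of $\geq$ on $W$ to keep the decomposition increasing).
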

\begin{proof}
  Let $\ga\in S\setminus S\uiz$.  We can write $\ga=\sum_i c_i\ga_i$,
  where $\ga_i\in S$ satisfy $z(\ga_i)\geq z(\ga)$, and the $c_i$ are
  positive integers at least two of which are nonzero. For all $i$,
  $\IP{\gd}{\ga_i}<\IP{\gd}{\ga}$. By induction on $\IP{\gd}{\cdot}$, if
  $\ga_i\notin S\uiz$, then $\ga_i$ has an increasing $\Z$
  decomposition by elements of $S\uiz$. We conclude that $\ga$ has an
  increasing $\Z$-decomposition by elements of $S\uiz$.
\end{proof}

The inductive proof above does not extend to the case of increasing
$\Q$-decompositions. This is because $\IP{\gd}{\ga_i}$ may not be strictly
less than $\IP{\gd}{\ga}$. Hence the inductive iteration may not
terminate, as seen in the following example.

\begin{example} \label{ex.nonterminate} Suppose $\Phi$ is of type
  $B_2$, $S = \Phi^+ = \{ \gre_1, \gre_2, \gre_1 - \gre_2, \gre_1 +
  \gre_2 \}$, and $z$ is the trivial weight.  The element $\gre_1$ has a
  $\Q$-decomposition
  \begin{equation} \label{e.difficulty} \gre_1 =
    \frac{1}{2}(\gre_1+\gre_2) + \frac{1}{2}(\gre_1 - \gre_2).
\end{equation}
The long root $\gre_1 + \gre_2$ has a $\Q$-decomposition as a sum
of the short roots $\gre_1$ and $\gre_2$: write this decomposition as
$\gre_1 + \gre_2 = (\gre_1) + (\gre_2)$.  But now we can decompose the
summand $\gre_1$ as in \eqref{e.difficulty}.  Substituting into
\eqref{e.difficulty}, we obtain
$$
\gre_1 = \frac{1}{2} \Big( \frac{1}{2}(\gre_1+\gre_2) +
\frac{1}{2}(\gre_1 - \gre_2)+\gre_2 \Big) + \frac{1}{2}(\gre_1 -
\gre_2).
$$
This process can be repeated indefinitely without terminating. Note
that in this example, $S\uiq=\{\gre_1-\gre_2,\gre_2\}$, and $\gre_1 =
(\gre_1 - \gre_2) + \gre_2$ is the desired $\Q$-decomposition of
$\gre_1$ by elements of $S\uiq$.
\end{example}

Thus, a more complicated approach is required in order to extend
Proposition \ref{p.dind-iz} to increasing $\Q$-decompositions.

\begin{lemma} \label{l.replace} Let $I=\{\alpha_1,\ldots,\alpha_n\}$
  be a subset of a weighted root set $S$. Suppose that
  $\alpha_k=\sum_{i=1}^n c_i \alpha_i$ with $c_i$ nonnegative rational
  numbers, $z(\alpha_i)\geq z(\alpha_k)$ whenever $c_i\neq 0$, and
  $c_j>0$ for some $j\neq k$. Then there exists an increasing
  $\Q$-decomposition $\alpha_k=\sum_{i=1}^n d_i\ga_i$ in $I$ (with
  $d_k=0$).
\end{lemma}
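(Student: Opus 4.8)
The plan is to eliminate the coefficient $c_k$ of $\alpha_k$ on the right-hand side by solving the relation for $\alpha_k$; the only thing that can obstruct this is $c_k\geq 1$, and that possibility is ruled out by the defining property that a root set $S$ (hence $I\st S$) lies in an open half-space.

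Concretely, I would first fix a positive definite inner product $\IP{\,}{\,}$ on $\vrr$ and an element $\delta\in\vrr$ with $\IP{\delta}{\alpha}>0$ for every $\alpha\in S$; such a $\delta$ exists since $S$ is a root set. Pairing both sides of $\alpha_k=\sum_{i=1}^n c_i\alpha_i$ with $\delta$ and moving the $i=k$ term to the left gives
\[
(1-c_k)\,\IP{\delta}{\alpha_k}\;=\;\sum_{i\neq k} c_i\,\IP{\delta}{\alpha_i}\;\geq\; c_j\,\IP{\delta}{\alpha_j}\;>\;0,
\]
the last inequality because $c_j>0$ and $\IP{\delta}{\alpha_j}>0$. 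Since also $\IP{\delta}{\alpha_k}>0$, this forces $1-c_k>0$, i.e. $c_k<1$.

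With $c_k<1$ in hand, I would set $d_i=c_i/(1-c_k)$ for $i\neq k$ and $d_k=0$. Each $d_i$ is a nonnegative rational number (the $c_i$ are nonnegative rationals and $1-c_k$ is a positive rational), and dividing the vector identity $(1-c_k)\alpha_k=\sum_{i\neq k}c_i\alpha_i$ by $1-c_k$ yields $\alpha_k=\sum_{i\neq k}d_i\alpha_i=\sum_{i=1}^n d_i\alpha_i$. This is a $\Q$-linear combination in $I$ with $d_k=0$; its nonzero terms all have $\alpha_i\neq\alpha_k$ (and $d_j>0$, so the sum is nonempty), so it is a genuine decomposition. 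Finally, whenever $d_i\neq 0$ we have $c_i\neq 0$, hence $z(\alpha_i)\geq z(\alpha_k)$ by hypothesis, so the decomposition is increasing, as required.

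The only substantive point is the elementary half-space observation that kills $c_k\geq 1$; everything after that is a rescaling of the given relation, so I do not anticipate any real obstacle. (One should note that the argument works verbatim over $\Q$ but not over $\Z$, since $1-c_k$ need not be $\pm 1$ even when the $c_i$ are integers — which is exactly why Proposition~\ref{p.dind-iz} does not extend directly.)
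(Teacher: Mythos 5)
Your proof is correct and follows essentially the same route as the paper: move the $\ga_k$ term to the left, use the inner product with $\gd$ (which is strictly positive on the right side because $c_j>0$) to conclude $c_k<1$, then rescale by setting $d_i=c_i/(1-c_k)$ for $i\neq k$ and $d_k=0$, noting that $d_i\neq 0$ forces $c_i\neq 0$ and hence $z(\ga_i)\geq z(\ga_k)$. Your parenthetical about why the argument is $\Q$-specific is a nice observation but not needed for the lemma itself.
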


\begin{proof}
We have
\begin{equation} \label{e.replace}
(1 - c_k) \ga_k = \sum_{i \neq k} c_i \ga_i.
\end{equation}
The right side of \eqref{e.replace} has a positive inner product with
$\gd$; hence so does the left side.  This implies that $c_k < 1$.  Set
$d_i = c_i/(1-c_k)$ for $i \neq k$, and $d_k = 0$.  If $d_i \neq 0$,
then $c_i \neq 0$, so $z(\ga_i) \geq z(\ga_k)$.  Hence $\ga_k =
\sum_{i=1}^n d_i \ga_i$ is our desired increasing $\Q$-decomposition.
\end{proof}

\begin{lemma}\label{l.removeroot} Let $S$ be a weighted root set.
  Suppose $I_1=I\cup \{\ga\} \subseteq S$, where $\alpha\notin I$, and
  suppose that $\alpha$ has an increasing $\Q$-decomposition in $I$. Then
  any element of $S$ which has an increasing $\Q$-decomposition in $I_1$
  has an increasing $\Q$-decomposition in $I$.
\end{lemma}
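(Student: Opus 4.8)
The plan is to start from an element $\beta\in S$ equipped with an increasing $\Q$-decomposition in $I_1$, substitute in the given increasing $\Q$-decomposition of $\alpha$ in terms of $I$, and check that the resulting expression is --- or can be converted, via Lemma \ref{l.replace}, into --- an increasing $\Q$-decomposition of $\beta$ in $I$.

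Concretely, first I would write $\beta=\sum_{\gamma\in I}c_\gamma\gamma+c_\alpha\alpha$ with all coefficients in $\Q_{\geq 0}$ (allowing $c_\alpha=0$), with $z(\gamma)\geq z(\beta)$ whenever $c_\gamma\neq 0$ and $z(\alpha)\geq z(\beta)$ whenever $c_\alpha\neq 0$; since this is a decomposition, $\beta$ itself does not occur among the summands, so in particular $c_\beta=0$ when $\beta\in I$. Next I would write the hypothesized increasing $\Q$-decomposition $\alpha=\sum_{\gamma\in I}d_\gamma\gamma$, with $d_\gamma\in\Q_{\geq 0}$ and $z(\gamma)\geq z(\alpha)$ whenever $d_\gamma\neq 0$. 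Substituting gives $\beta=\sum_{\gamma\in I}e_\gamma\gamma$ with $e_\gamma:=c_\gamma+c_\alpha d_\gamma\in\Q_{\geq 0}$. I would then verify the increasing condition: if $e_\gamma\neq 0$, then either $c_\gamma\neq 0$, so $z(\gamma)\geq z(\beta)$; or $c_\alpha d_\gamma\neq 0$, in which case $c_\alpha\neq 0$ gives $z(\alpha)\geq z(\beta)$ and $d_\gamma\neq 0$ gives $z(\gamma)\geq z(\alpha)$, whence $z(\gamma)\geq z(\beta)$ again.

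To finish I would split into two cases. If $\beta\notin I$, then no summand of $\sum_{\gamma\in I}e_\gamma\gamma$ equals $\beta$, so this is already the desired increasing $\Q$-decomposition of $\beta$ in $I$. If $\beta\in I$, then $e_\beta=c_\alpha d_\beta$ (using $c_\beta=0$), and I would apply Lemma \ref{l.replace} with $\alpha_k=\beta$; its hypotheses are met once I exhibit some $\gamma\neq\beta$ in $I$ with $e_\gamma>0$. If there were no such $\gamma$, the identity $\beta=e_\beta\beta$ would force $e_\beta=1$ (as $\IP{\gd}{\beta}>0$), hence $c_\alpha d_\beta=1$ and $d_\gamma=0$ for every $\gamma\neq\beta$, so $\alpha=d_\beta\beta$ with $d_\beta\neq 0$; but $\alpha$ and $\beta$ both lie in the root set $S$, so this forces $d_\beta=1$ and $\alpha=\beta$, contradicting $\alpha\notin I$. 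Hence such a $\gamma$ exists, and Lemma \ref{l.replace} produces an increasing $\Q$-decomposition of $\beta$ in $I$.

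The bookkeeping of coefficients and weights here is routine; the one delicate point --- the main obstacle --- is the case $\beta\in I$, where the substituted expression may feature $\beta$ on its own right-hand side. This is precisely the situation Lemma \ref{l.replace} is designed to clean up, and the only extra input needed before applying it is the nondegeneracy that some coefficient other than that of $\beta$ is positive, which follows from the root-set axiom forbidding two distinct parallel roots.
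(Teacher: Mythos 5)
Your proof is correct and follows essentially the same route as the paper: substitute the given decomposition of $\alpha$ into the one for $\beta$, check the increasing condition on the combined coefficients, and invoke Lemma \ref{l.replace} when $\beta$ itself reappears on the right-hand side. The only (minor) difference is that you justify the nondegeneracy hypothesis of Lemma \ref{l.replace} directly from the root-set axiom forbidding distinct parallel roots, whereas the paper arranges it by choosing the decomposition of $\alpha$ to have a nonzero coefficient on some root different from $\gamma$; both rest on the same fact.
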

\begin{proof}
  Since $\alpha$ has an increasing $\Q$-decomposition in $I$, we see that
  $|I|\geq 2$. Write $I=\{\alpha_1,\ldots,\alpha_{n-1}\}$, with $n\geq
  3$, and let $\alpha_n=\alpha$. Let $\gg$ be an element of $S$ with
  an increasing $\Q$-decomposition in $I_1$, and let
  \begin{equation}\label{e.removeroot}
    \gg=\sum_{i=1}^n c_i\alpha_i
  \end{equation}
  be such an increasing $\Q$-decomposition.  If $c_n=0$ we are done; thus
  assume $c_n\neq 0$. By hypothesis, there is an increasing
  $\Q$-decomposition of $\alpha_n$ in $I$:
 \begin{equation}\label{e.removeroot2}
   \alpha_n=\sum_{i=1}^{n-1} d_i \alpha_i
 \end{equation}
 where we may assume that $d_j\neq 0$ for some $j$ such that
 $\ga_j\neq \gg$.  Let $e_i=c_i+c_nd_i$ for $i<n$; then $e_i\geq 0$
 and
 \begin{equation}\label{e.st}
   \gamma=\sum_{i=1}^{n-1} e_i\alpha_i.
 \end{equation}

 Since $c_n\neq 0$ and $d_j\neq 0$, $e_j\neq 0$. We claim that if
 $e_i>0$ then $z(\alpha_i)\geq z(\gamma)$. Indeed, if $e_i>0$, then
 either $c_i>0$, in which case the claim follows since
 \eqref{e.removeroot} is a increasing $\Q$-decomposition; or $c_nd_i>0$, in
 which case, since both \eqref{e.removeroot2} and \eqref{e.removeroot}
 are increasing $\Q$-decompositions, we have $z(\ga_i)\geq z(\ga_n)\geq
 z(\gg)$. This proves the claim. If $\gamma\notin I$, then
 \eqref{e.st} is an increasing $\Q$-decomposition of $\gamma$ in
 $I$. Otherwise, apply Lemma \ref{l.replace} to obtain an increasing
 $\Q$-decomposition of $\gamma$ in $I$.
\end{proof}

\begin{lemma} \label{l.indnonempty} If $S$ is a nonempty weighted root
  set, then $S\uiq$ is nonempty.
\end{lemma}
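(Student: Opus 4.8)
The statement to prove is that a nonempty weighted root set $S$ has a nonempty set $S\uiq$ of increasing $\Q$-indecomposable elements. The natural strategy is to choose a minimal element and show it must be increasing $\Q$-indecomposable, but ``minimal'' must be taken with respect to a carefully chosen order that combines the weight $z$ and the linear functional $\IP{\gd}{\cdot}$. The plan is first to pick a weight $w\in W$ minimal among the values $z(s)$, $s\in S$, that is attained; then among the finitely many $\ga\in S$ with $z(\ga)=w$ (this subset is nonempty by construction), choose one, call it $\ga$, minimizing $\IP{\gd}{\ga}$.

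I claim such an $\ga$ lies in $S\uiq$. Suppose not: then $\ga$ has an increasing $\Q$-decomposition $\ga=\sum_i c_i\ga_i$ with $c_i>0$, $\ga_i\neq\ga$, and $z(\ga_i)\geq z(\ga)=w$ for all $i$ appearing. By minimality of $w$ among attained weight values, in fact $z(\ga_i)=w$ for each such $i$. Pairing with $\gd$ gives $\IP{\gd}{\ga}=\sum_i c_i\IP{\gd}{\ga_i}$ with all $\IP{\gd}{\ga_i}>0$. If all the coefficients satisfied $\sum_i c_i\geq 1$, then since each $\IP{\gd}{\ga_i}\geq \IP{\gd}{\ga}$ by minimality of $\IP{\gd}{\ga}$ among those with weight $w$, we would get $\IP{\gd}{\ga}=\sum_i c_i\IP{\gd}{\ga_i}\geq \big(\sum_i c_i\big)\IP{\gd}{\ga}\geq \IP{\gd}{\ga}$, forcing equality throughout, hence $\sum_i c_i=1$ and $\IP{\gd}{\ga_i}=\IP{\gd}{\ga}$ for every $i$ in the decomposition; but there are at least two distinct $\ga_i$'s with $\sum c_i = 1$ — actually more care is needed here, so let me instead argue directly.

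The cleaner argument: since $\ga=\sum_i c_i\ga_i$ is a decomposition, at least one index $j$ has $\ga_j\neq\ga$, so the decomposition is nontrivial and each term satisfies $z(\ga_i)=w$ and thus $\IP{\gd}{\ga_i}\geq \IP{\gd}{\ga}$ by the choice of $\ga$. Rewriting, $(1-\sum_i c_i)\IP{\gd}{\ga}\leq 0$ is not immediately what I want; instead note $\IP{\gd}{\ga}=\sum_i c_i\IP{\gd}{\ga_i}\geq \IP{\gd}{\ga}\sum_i c_i$, so $\sum_i c_i\leq 1$. On the other hand, I can also apply the reverse: if some $\ga_i$ equals $\ga$ we would use Lemma \ref{l.replace} to pass to a genuine $\Q$-decomposition with $d_i=0$ and $\sum d_i$ possibly larger; but since in a decomposition no $\ga_i$ equals $\ga$, this does not arise. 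The remaining possibility $\sum_i c_i<1$ would make $\ga$ lie strictly inside the cone and, pairing with $\gd$, gives $\IP{\gd}{\ga}<\IP{\gd}{\ga}$ once we also observe $\IP{\gd}{\ga_i} \le$ something — this is where the argument needs the opposite inequality, which is not available. So the genuinely correct route is: the minimality argument as above forces $\sum_i c_i\leq 1$ together with $\IP{\gd}{\ga_i}=\IP{\gd}{\ga}$ for all $i$ with $c_i>0$ (in the equality case) or a strict contradiction (in the case $\sum c_i < 1$, combined with $\IP{\gd}{\ga_i} \ge \IP{\gd}{\ga}$ giving $\IP{\gd}{\ga} \ge \IP{\gd}{\ga} \sum c_i$, consistent, so one then needs to rule out $\sum c_i < 1$ separately by using that $S$ lies in an open half-space and noting $\ga - \sum_i c_i\ga_i = 0$ forces, via a second functional or via boundedness, that this cannot happen unless... ).

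The honest assessment is that the main obstacle is handling the coefficient sum $\sum_i c_i$: one must rule out $\sum_i c_i \neq 1$. I expect the intended argument replaces the $\gd$-minimality by minimality with respect to a total order, or invokes the fact (surely available from the convex geometry of $\conea S$, since $S$ lies in an open half-space so $\conea S$ is a pointed cone) that an extreme ray generator of the cone over $\{s\in S : z(s)=w\}$ cannot be a nontrivial nonnegative combination of other elements of that set. Concretely, the finishing step is: let $\ga$ generate an extreme ray of the pointed cone $\coner\{s\in S: z(s)=w\}$ (such $\ga$ exists in $S$ because the cone is pointed and finitely generated, so every extreme ray is generated by one of the generators); then $\ga$ admits no decomposition in $\{s\in S: z(s)=w\}$ at all, a fortiori no increasing $\Q$-decomposition, so $\ga\in S\uiq$. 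The geometric extreme-ray fact is where I would put the real content, citing pointedness from the open-half-space hypothesis; everything else is bookkeeping with the weight order.
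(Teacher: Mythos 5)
There is a genuine gap, and it sits exactly where you place the weight $w$. You choose $w$ minimal among the attained values of $z$ and then claim that any increasing $\Q$-decomposition of an element $\ga$ with $z(\ga)=w$ must have all its terms in the slice $\{s\in S\mid z(s)=w\}$. That is false: an increasing decomposition only requires $z(\ga_i)\geq z(\ga)=w$, and minimality of $w$ excludes strictly smaller attained weights, not larger ones---the inequality in the definition of ``increasing'' points in precisely the direction that minimality cannot control. Your final extreme-ray argument inherits this error: a generator of an extreme ray of the cone over the slice can still be increasing $\Q$-decomposable using elements of strictly larger weight. Concretely, take $M=\Z^2$, $S=\{(2,0),(0,2),(1,1)\}$, $\gd=(1,1)$, and let $z$ send $(1,1)$ to $w$ and the other two roots to some $w'>w$. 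The slice at the minimal attained weight is $\{(1,1)\}$, so $(1,1)$ trivially generates an extreme ray of its cone, yet $(1,1)=\frac{1}{2}(2,0)+\frac{1}{2}(0,2)$ is an increasing $\Q$-decomposition; here $S\uiq=\{(2,0),(0,2)\}$, so your chosen witness is not in $S\uiq$ at all (the lemma survives only because other elements happen to be). The earlier part of your text, the unresolved bookkeeping with $\sum_i c_i$, does not repair this.

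The fix is to discard the weight slice entirely: the weight is irrelevant to producing a witness, because $S\uq\st S\uiq$ (an increasing $\Q$-decomposition is in particular a $\Q$-decomposition). Since $S$ lies in an open half-space, $\coner S$ is a pointed, nonzero, finitely generated cone, so it has an extreme ray, and any element of $S$ lying on such a ray is rationally indecomposable in $S$ (no other element of $S$ is proportional to it, by the definition of a root set), hence lies in $S\uiq$. That corrected argument would be a legitimate alternative to the paper's proof, which instead argues by induction on $|S|$, using Lemma \ref{l.removeroot} to show that removing an increasing $\Q$-decomposable element does not change $S\uiq$; your route is essentially the convex-geometry argument the paper itself alludes to in Remark \ref{r.cpc}. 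But as written, the minimality-plus-slice argument does not prove the lemma.
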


\begin{proof}
  We prove the result by induction on $|S|$.  If $S$ has one element,
  then $S = S\uiq$ and the result holds.  For the inductive step,
  suppose that $S = I \cup \{ \ga \}$, where $|I| \geq 1$ and $\ga
  \not\in I$.  Our inductive hypothesis is that $I\uiq$ is nonempty.
  If $\ga$ is increasing $\Q$ indecomposable in $S$, then $S\uiq$
  contains $\ga$ and we are done, so assume that $\ga$ is increasing
  $\Q$-decomposable in $S$.  We will show that $S\uiq = I\uiq$; this
  suffices.

  Observe that $S\uiq \subseteq I\uiq$.  This holds because any
  element of $I$ which is increasing $\Q$ indecomposable in $S$
  remains increasing $\Q$ indecomposable in the smaller set $I$;
  moreover, $\ga$ is increasing $\Q$-decomposable in $S$. For the
  reverse inclusion $S\uiq\supseteq I\uiq$, we require that if
  $\gamma\in I$ does not have an increasing $\Q$-decomposition in $I$,
  then it does not have an increasing $\Q$-decomposition in $S$. This
  follows from Lemma \ref{l.removeroot}, with $I_1=S$.
\end{proof}

\begin{lemma} \label{l.IJ} Let $S$ be a weighted root set, let $I$ and
  $J$ be disjoint subsets of $S$, and let $I_1 = I \cup J$.  Suppose
  that any $\gb \in J$ has an increasing $\Q$-decomposition in $I_1$.  Then
  any $\gg \in S$ which has an increasing $\Q$-decomposition in $I_1$ has
  an increasing $\Q$-decomposition in $I$.
\end{lemma}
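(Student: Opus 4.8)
The plan is to induct on $|J|$. When $|J|=0$ we have $I_1=I$ and there is nothing to prove, so assume $|J|\geq 1$ and that the statement is already known whenever the disjoint subset playing the role of $J$ has fewer than $|J|$ elements. The inductive step will remove one element of $J$ from $I_1$ by means of Lemma \ref{l.removeroot}, and then apply the inductive hypothesis to the smaller set.

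Concretely, fix any $\gb\in J$. By hypothesis $\gb$ has an increasing $\Q$-decomposition $\gb=\sum_i c_i\ga_i$ with $\ga_i\in I_1$. The first key point is that, this being a \emph{decomposition}, no summand $\ga_i$ equals $\gb$; hence every $\ga_i$ lies in $I_1\setminus\{\gb\}=I\cup(J\setminus\{\gb\})$, so $\gb$ already has an increasing $\Q$-decomposition in $I_1\setminus\{\gb\}$. I would then apply Lemma \ref{l.removeroot} with $I_1\setminus\{\gb\}$, $\gb$, and $I_1$ in the roles of ``$I$'', ``$\ga$'', and ``$I_1$'' respectively, to conclude that every element of $S$ possessing an increasing $\Q$-decomposition in $I_1$ also possesses one in $I_1\setminus\{\gb\}$.

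Next, set $J'=J\setminus\{\gb\}$ and $I_1'=I\cup J'=I_1\setminus\{\gb\}$, so that $I$ and $J'$ are disjoint. The second key point is that each $\gb'\in J'\st S$ has an increasing $\Q$-decomposition in $I_1$ by hypothesis, hence, by the conclusion just obtained, an increasing $\Q$-decomposition in $I_1'$; thus the pair $I,J'$ again satisfies the hypotheses of the statement, now with $|J'|=|J|-1$. The inductive hypothesis therefore gives that any $\gg\in S$ with an increasing $\Q$-decomposition in $I_1'$ has one in $I$. Composing this with the previous step --- an increasing $\Q$-decomposition in $I_1$ yields one in $I_1'$ --- shows that any $\gg\in S$ with an increasing $\Q$-decomposition in $I_1$ has an increasing $\Q$-decomposition in $I$, which finishes the induction.

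I do not expect a genuine obstacle; the proof is a clean reduction. The only things that need care are the two ``key points'' above: that a decomposition of $\gb$ in $I_1$ is automatically a decomposition in $I_1\setminus\{\gb\}$ (which is what licenses the use of Lemma \ref{l.removeroot}), and that removing $\gb$ does not destroy the increasing $\Q$-decomposability of the other elements of $J$ (which is what makes the inductive hypothesis applicable). In particular, the non-termination phenomenon illustrated by Example \ref{ex.nonterminate} is irrelevant here, since the induction runs on the finite integer $|J|$ rather than on an iterative decomposition procedure.
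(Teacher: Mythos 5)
Your proof is correct. It rests on the same engine as the paper's proof, namely Lemma \ref{l.removeroot}, but the organization is genuinely different and noticeably cleaner. The paper fixes an enumeration $I=\{\ga_1,\dots,\ga_r\}$, $J=\{\ga_{r+1},\dots,\ga_n\}$, first invokes Lemma \ref{l.indnonempty} to guarantee $I\neq\emptyset$, then proves by a downward induction (with a minimality-of-index argument) the claims $C(j)$ that each $\ga_j\in J$ decomposes increasingly in $\{\ga_1,\dots,\ga_{j-1}\}$, and finally runs a second minimality argument to push a given $\gg$ down into $I$. Your induction on $|J|$ replaces all of this bookkeeping by a single observation: since a decomposition of $\gb\in J$ by definition has no summand equal to $\gb$, it is already a decomposition in $I_1\setminus\{\gb\}$, so Lemma \ref{l.removeroot} (applied with $I_1\setminus\{\gb\}$, $\gb$, $I_1$ in the roles of $I$, $\ga$, $I_1$) lets you delete $\gb$ from the ambient set while preserving increasing $\Q$-decomposability of everything in $S$, including the remaining elements of $J$; the inductive hypothesis then finishes. (The paper uses the same ``no summand equals $\gb$'' observation implicitly when asserting that $C(n)$ holds by hypothesis.) What your version buys is the elimination of the enumeration, the two minimality arguments, and the appeal to Lemma \ref{l.indnonempty}; the paper's version, by tracking indices explicitly, yields the slightly finer intermediate statements $C(j)$, which are not needed for the lemma itself.
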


\begin{proof}
  We may assume that $J$ is nonempty, since otherwise the lemma is
  trivial.  Since the set $J$ does not intersect $S\uiq$, we see
  that $I \supset S\uiq$, so by Lemma \ref{l.indnonempty}, $I$ is
  nonempty.  Let $I = \{ \ga_1, \ldots, \ga_r \}$ and $J = \{
  \ga_{r+1}, \ldots, \ga_n \}$, where $r \geq 1$ and $n \geq r+1$.
  Let $C(j)$ be the assertion that $\ga_j$ has an increasing
  $\Q$-decomposition in $\{ \ga_1, \dots, \ga_{j-1} \}$.  We will show that
  $C(j)$ holds for $j \in \{ r+1, \ldots, n \}$.

  The assertion $C(n)$ holds by hypothesis.  Suppose that $r+1\leq
  j<n$ and that $C(j+1), \ldots, C(n)$ hold.  We show by contradiction
  that $C(j)$ holds as well. Assume that it does not. Let
  \begin{equation}\label{e.IJoriginal} 
    \ga_j = \sum_{i=1}^n c_i \ga_i
  \end{equation}
  be an increasing $\Q$-decomposition for $\alpha_j$ in $I_1$.  Among all
  possible $\Q$-decompositions for $\alpha_j$ in $I_1$, assume
  \eqref{e.IJoriginal} is one for which the largest integer $m$ for
  which $c_{m}\neq 0$ is smallest. Since $C(j)$ does not hold, $m\geq
  j+1$. Since $\alpha_j$ has an increasing $\Q$-decomposition in
  $\{\alpha_1,\ldots,\alpha_m\}$ and, by $C(m)$, $\alpha_m$ has an
  increasing $\Q$-decomposition in $\{\ga_{1},\ldots,\ga_{m-1}\}$, Lemma
  \ref{l.removeroot} implies that $\alpha_j$ has an increasing
  $\Q$-decomposition in $\{\alpha_1,\ldots,\alpha_{m-1}\}$. This
  contradicts the minimality of $m$ and proves $C(j)$. By induction,
  $C(j)$ holds for $j\in \{r+1,\ldots,n\}$.

  We now complete the proof of the lemma.  Suppose that $\gg \in S$
  has an increasing $\Q$-decomposition in $I_1 = \{ \ga_1, \ldots, \ga_n
  \}$.  Let $m$ be the smallest integer such that $\gg$ has an
  increasing $\Q$-decomposition in $\{ \ga_1, \ldots, \ga_m \}$.  We must
  show $m \leq r$.  If not, then $m \geq r+1$, so by $C(m)$, $\ga_m$
  has an increasing $\Q$-decomposition in $\{ \ga_1, \ldots, \ga_{m-1} \}$.
  Lemma \ref{l.removeroot} then implies that $\gg$ has an increasing
  $\Q$-decomposition in $\{ \ga_1, \ldots, \ga_{m-1} \}$.  This contradicts
  the minimality of $m$.  We conclude that $m \leq r$, as desired.
\end{proof}

\begin{theorem}\label{t.dind-iq}
  Let $S$ be a weighted root set. Then every element of $S \setminus
  S\uiq$ has an increasing $\Q$-decomposition by elements of $S\uiq$.
\end{theorem}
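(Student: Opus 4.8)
The plan is to obtain this as an immediate consequence of Lemma \ref{l.IJ}, applied to the partition $S = I \sqcup J$ with $I = S\uiq$ and $J = S\setminus S\uiq$ (so that $I_1 = I\cup J = S$). If $J = \es$ there is nothing to prove, so I would assume $J\neq\es$; note also that $I\neq\es$ by Lemma \ref{l.indnonempty}. The sets $I$ and $J$ are disjoint by construction, and the hypothesis of Lemma \ref{l.IJ} holds for this partition: each $\gb\in J$ lies outside $S\uiq$, hence by the definition of increasing $\Q$-indecomposability admits an increasing $\Q$-decomposition $\gb=\sum_i c_i\ga_i$ with all $\ga_i\in S$, which is exactly an increasing $\Q$-decomposition in $I_1 = S$.

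Lemma \ref{l.IJ} then tells us that every $\gg\in S$ which has an increasing $\Q$-decomposition in $S$ has one in $I = S\uiq$. Now let $\gg$ be an arbitrary element of $S\setminus S\uiq$. By definition $\gg$ is increasing $\Q$-decomposable, i.e.\ it has an increasing $\Q$-decomposition in $S$; so by the conclusion of Lemma \ref{l.IJ} it has an increasing $\Q$-decomposition $\gg = \sum_i d_i\ga_i$ with every $\ga_i\in S\uiq$. Since $\gg\notin S\uiq$ while each $\ga_i\in S\uiq$, we automatically have $\ga_i\neq\gg$, so this really is a decomposition by elements of $S\uiq$ in the sense of Definition \ref{d.legion}. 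This is exactly the assertion of the theorem.

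All of the genuine difficulty here---illustrated by the non-terminating iteration in Example \ref{ex.nonterminate}---has already been absorbed into Lemmas \ref{l.removeroot}, \ref{l.indnonempty}, and \ref{l.IJ}, whose proofs replace the naive induction on $\IP{\gd}{\cdot}$ by a minimality argument on the largest index carrying a nonzero coefficient. Consequently I expect no real obstacle in the proof above: the only points requiring any care are the (vacuous) degenerate cases $S=\es$ and $S=S\uiq$, and the bookkeeping check that an ``increasing $\Q$-decomposition in $S\uiq$'' meets the definition of a decomposition by elements of $S\uiq$---both of which are routine.
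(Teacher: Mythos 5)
Your proposal is correct and follows essentially the same route as the paper: the paper's proof is exactly the application of Lemma \ref{l.IJ} with $I = S\uiq$, $J = S\setminus S\uiq$, noting that every element of $S\setminus S\uiq$ has an increasing $\Q$-decomposition in $S$ by definition. The extra bookkeeping you include (nonemptiness via Lemma \ref{l.indnonempty}, the $\ga_i\neq\gg$ check, degenerate cases) is harmless but not needed, since the definition of a decomposition already excludes $\ga_i=\gg$ and the empty cases are vacuous.
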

\begin{proof} 
  Every $\gg\in S\setminus S\uiq$ has an increasing $\Q$-decomposition
  in $S$. Thus, by Lemma \ref{l.IJ} with $I = S\uiq$ and $J = S
  \setminus S\uiq$, every such $\gg$ has an increasing $\Q$
  decomposition in $S\uiq$.
\end{proof}

\begin{corollary}\label{c.inc-comb}
  Let $S$ be a weighted root set, and let $A=\Q$ or $\Z$. Then
  $S\zgew\subseteq \conea ((S\uia)\zgew)$.
\end{corollary}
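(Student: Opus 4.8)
The plan is to obtain the corollary as an immediate consequence of Proposition \ref{p.dind-iz} (for $\A=\Z$) and Theorem \ref{t.dind-iq} (for $\A=\Q$), using nothing beyond transitivity of the partial order on $W$. The point is that an \emph{increasing} decomposition by construction preserves the condition ``$z(\cdot)\geq w$'', so once every element of $S$ is known to decompose increasingly into elements of $S\uia$, the same decomposition works inside $S\zgew$.

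Concretely, I would fix $\ga\in S\zgew$, so $\ga\in S$ and $z(\ga)\geq w$, and split into two cases. If $\ga\in S\uia$, then since also $z(\ga)\geq w$ we have $\ga\in(S\uia)\zgew$, and writing $\ga=1\cdot\ga$ exhibits $\ga\in\conea((S\uia)\zgew)$. If instead $\ga\in S\setminus S\uia$, then Proposition \ref{p.dind-iz} (when $\A=\Z$) or Theorem \ref{t.dind-iq} (when $\A=\Q$) provides an increasing $\A$-decomposition $\ga=\sum_i c_i\ga_i$ with $c_i\in\A_{\geq 0}$ and each $\ga_i\in S\uia$.

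The next step is to check that each $\ga_i$ with $c_i\neq 0$ in fact lies in $(S\uia)\zgew$. Since the decomposition is increasing, $z(\ga_i)\geq z(\ga)$; combined with $z(\ga)\geq w$ and transitivity in $W$, this gives $z(\ga_i)\geq w$, hence $\ga_i\in S\zgew$. As $\ga_i\in S\uia$ as well, $\ga_i\in(S\uia)\zgew$, and therefore $\ga=\sum_i c_i\ga_i\in\conea((S\uia)\zgew)$. This finishes the proof in both cases, so $S\zgew\subseteq\conea((S\uia)\zgew)$.

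I do not anticipate any obstacle in this step itself: the whole difficulty of the section—illustrated by the non-terminating induction in Example \ref{ex.nonterminate} and resolved through Lemmas \ref{l.replace}--\ref{l.IJ}—has already been absorbed into Theorem \ref{t.dind-iq}. What remains here is purely the observation that passing from a decomposition in $S$ to one inside $S\zgew$ is automatic for increasing decompositions, so the corollary is a short formal deduction rather than a new argument.
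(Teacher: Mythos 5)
Your proof is correct and follows essentially the same route as the paper: apply Proposition \ref{p.dind-iz} (for $\A=\Z$) or Theorem \ref{t.dind-iq} (for $\A=\Q$) to get an increasing $\A$-decomposition into elements of $S\uia$, then use the increasing condition together with $z(\ga)\geq w$ to conclude each summand lies in $(S\uia)\zgew$. The only cosmetic difference is your explicit case split for $\ga\in S\uia$, which the paper absorbs into the phrase ``positive $\A$-linear combination.''
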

\begin{proof}
  Let $\ga\in S\zgew$. By Theorem \ref{t.dind-iq} and Proposition
  \ref{p.dind-iz}, $\ga$ is a positive $\A$-linear combination of
  elements $\ga_j\in S\uia$ such that $z(\ga_j)\geq z(\ga)$. Since
  $z(\ga)\geq w$, $z(\ga_j)\geq w$ for all $j$, and thus each $\ga_j$
  lies in $S\zgew$.
\end{proof}

\begin{remark}\label{r.inc-pres-zgw}
  The reason that we introduce inceasing linear combinations in this
  paper is that they preserve $\rs\zgw$, in the sense that if
  $\ga=\sum c_j\ga_j$ is increasing and $\ga\in \rs\zgw$, then
  $\ga_j\in\rs\zgw$ for all $j$. This property is used to prove both
  the above corollary and Lemma \ref{l.iso-preserve}.
\end{remark}

\begin{corollary}\label{c.sinco}
  Let $S$ be a root set, and let $A=\Q$ or $\Z$. Then $S\st \conea
  (S\ua)$, and thus $\conea (S) = \conea (S\ua)$.
\end{corollary}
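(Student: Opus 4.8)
The plan is to deduce this immediately from Corollary \ref{c.inc-comb} by specializing to the trivial weight. Given a root set $S$, I would regard it as a weighted root set by equipping it with the trivial weight $z\colon S\to W$, where $W$ is a one-element poset. With this choice the inequality $z(\ga_i)\ge z(\ga)$ holds vacuously for all $\ga,\ga_i\in S$, so every $\A$-linear combination is automatically increasing; consequently an element of $S$ is increasing $\A$-indecomposable exactly when it is $\A$-indecomposable, i.e.\ $S\uia=S\ua$. Likewise, for the unique element $w\in W$ we have $S\zgew=S$.

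Next I would simply invoke Corollary \ref{c.inc-comb}, which for $\A=\Q$ or $\Z$ gives $S\zgew\subseteq\conea\big((S\uia)\zgew\big)$. Substituting the identifications from the previous paragraph turns this into $S\subseteq\conea(S\ua)$, which is the first assertion. Alternatively, one could bypass the weighted-set language and cite Theorem \ref{t.dind-iq} together with Proposition \ref{p.dind-iz} directly, since with a trivial weight those statements say precisely that every element of $S\setminus S\ua$ is a nonnegative $\A$-combination of elements of $S\ua$.

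For the equality of cones, the inclusion $\conea(S\ua)\subseteq\conea(S)$ is immediate from $S\ua\subseteq S$ and the monotonicity of $\conea$ in its argument. For the reverse inclusion I would note that, because $\A$ is closed under addition and multiplication (being either $\Z$ or $\Q$), the set $\conea(S\ua)$ is itself closed under nonnegative $\A$-linear combinations; hence from $S\subseteq\conea(S\ua)$ one gets $\conea(S)\subseteq\conea\big(\conea(S\ua)\big)=\conea(S\ua)$.

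There is no real obstacle here: the content is entirely contained in Corollary \ref{c.inc-comb} (equivalently Theorem \ref{t.dind-iq} and Proposition \ref{p.dind-iz}), and the only things to verify are the trivial-weight bookkeeping and the idempotence of $\conea$ for $\A\in\{\Z,\Q\}$. The one point worth stating explicitly is that the definition of a weighted root set in Section \ref{s.indec} permits the trivial weight, so the specialization is legitimate.
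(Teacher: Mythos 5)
Your proposal is correct and matches the paper's proof, which is exactly "take $z$ to be the trivial weight in Corollary \ref{c.inc-comb}"; your extra remarks on the trivial-weight bookkeeping and the idempotence of $\conea$ just make explicit what the paper leaves implicit.
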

\begin{proof}
  Take $z$ to be the trivial weight in Corollary \ref{c.inc-comb}.
\end{proof}

\begin{corollary}\label{c.conez-zind}
  Let $E$, $F$ be subsets of a root set $S$. Let $\A=\Q$ or
  $\Z$. Then $\conea(E)=\conea(F)$ if and only if $E\ua=F\ua$.
\end{corollary}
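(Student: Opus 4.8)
The plan is to prove the two implications separately, with ``$\Leftarrow$'' essentially free and ``$\Rightarrow$'' carrying all the content. For ``$\Leftarrow$'', if $E\ua=F\ua$ then Corollary \ref{c.sinco}, applied to the root sets $E$ and $F$ (subsets of a root set are root sets), gives $\conea(E)=\conea(E\ua)=\conea(F\ua)=\conea(F)$. So assume $\conea(E)=\conea(F)$; by symmetry it is enough to show $E\ua\st F\ua$, and I fix $\ga\in E\ua$.

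The key idea is a double substitution. Using Corollary \ref{c.sinco} twice, $\conea(E\ua)=\conea(E)=\conea(F)=\conea(F\ua)$. Since $\ga\in E\ua\st\conea(E\ua)=\conea(F\ua)$, I write $\ga=\sum_i c_i\gb_i$ with the $\gb_i\in F\ua$ pairwise distinct and $c_i\in\A_{>0}$; the sum is nonempty because $\IP{\gd}{\ga}>0$ forces $\ga\neq 0$. Each $\gb_i$ lies in $F\ua\st\conea(E\ua)$, so I write $\gb_i=\sum_j d_{ij}\gre_j$ with the $\gre_j\in E\ua$ (indexed by one common finite set) and $d_{ij}\in\A_{\geq 0}$. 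Substituting, $\ga=\sum_j f_j\gre_j$ with $f_j=\sum_i c_i d_{ij}\in\A_{\geq 0}$ (here one uses that $\Q$ and $\Z$ are closed under addition and multiplication); discard the indices $j$ with $f_j=0$ so that all surviving $f_j$ are positive.

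Now I show this expression is trivial: exactly one $\gre_{j_0}$ survives, it equals $\ga$, and $f_{j_0}=1$. If no $\gre_j$ equals $\ga$, then $\ga=\sum_j f_j\gre_j$ is already an $\A$-decomposition of $\ga$ inside $E$, contradicting $\ga\in E\ua$. If some $\gre_{j_0}$ equals $\ga$, then $(1-f_{j_0})\ga=\sum_{j\neq j_0}f_j\gre_j$, and pairing with $\gd$ gives $(1-f_{j_0})\IP{\gd}{\ga}\geq 0$, so $f_{j_0}\leq 1$; for $\A=\Z$ this already gives $f_{j_0}=1$, and for $\A=\Q$ the case $f_{j_0}<1$ would, after dividing by $1-f_{j_0}$, exhibit a $\Q$-decomposition of $\ga$ in $E$, again a contradiction, so $f_{j_0}=1$. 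In either case $\sum_{j\neq j_0}f_j\IP{\gd}{\gre_j}=0$ with every $\IP{\gd}{\gre_j}>0$, so $f_j=0$ for $j\neq j_0$, as claimed. Tracing back: $f_j=0$ for $j\neq j_0$ and $c_i>0$ force $d_{ij}=0$ for all $i$ and all $j\neq j_0$, so $\gb_i=d_{ij_0}\ga$; since $\gb_i\neq 0$ and $S$ contains no two distinct proportional elements, $d_{ij_0}=1$, i.e.\ $\gb_i=\ga$. As the $\gb_i$ are distinct there is only one, and $\ga=c_i\gb_i=c_i\ga$ gives $c_i=1$, so $\ga=\gb_i\in F\ua$. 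This proves $E\ua\st F\ua$, hence equality.

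I expect the main obstacle to be the ``triviality'' step of the last paragraph together with handling $\A=\Z$ and $\A=\Q$ uniformly. For $\A=\Q$ one could instead invoke the polyhedral picture behind Theorem \ref{t.main}---that $\coneq(E)$ is a cone whose edges biject with the rational-indecomposable elements---but for $\A=\Z$ this is not available, and one needs the explicit pairing with $\gd$ plus the non-proportionality axiom of a root set. The remaining bookkeeping in the double substitution is routine, but one must remember to discard vanishing coefficients before pairing with $\gd$.
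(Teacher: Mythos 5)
Your proof is correct and follows essentially the same route as the paper's: the reverse implication via Corollary \ref{c.sinco}, and the forward implication by expanding $\ga\in\conea(E)=\conea(F)=\conea(F\ua)$ in terms of indecomposables and exploiting the half-space element $\gd$ together with the no-proportional-pairs axiom of a root set. The only difference is one of detail: your explicit ``triviality'' analysis (handling a possible reappearance of $\ga$ after substituting back into $\conea(E\ua)$) spells out the step the paper compresses into the single assertion that $\ga_i\in F\ua\subseteq\conea(E)$ forces $\ga\notin E\ua$.
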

\begin{proof}
  Assume $\conea(E)=\conea(F)$. Suppose that there exists $\ga\in
  E\ua\setminus F\ua$. Then $\ga\in\conea(E)=\conea(F)=\conea(
  F\ua)$. Thus $\ga=\sum_ic_i\ga_i$, where $\ga_i\in F\ua$, and
  $c_i\in A_{>0}$, at least two of which are nonzero (otherwise
  $\ga\in F\ua$). But since $\ga_i\in F\ua \st \conea(E)$, $\ga\notin
  E\ua$, a contradiction. Thus $E\ua\st F\ua$, and similarly $F\ua\st
  E\ua$.

  Conversely, if $E\ua=F\ua$, then
  $\conea(E)=\conea(E\ua)=\conea(F\ua)=\conea(F)$.
\end{proof}

\begin{remark}\label{r.cpc}
  Proofs of Corollaries \ref{c.sinco} and \ref{c.conez-zind} for the
  case $\A=\R$ can be obtained by using the fact that if $S$ is a root
  set, then $\coner S$ is a convex polyhedral cone, and $S\ur$ is
  equal to the set of elements of $S$ which lie on the one-dimensional
  faces of $\coner S$ (see \cite[Section 1.2]{Ful:93}). One can then
  use the density of $\Q$ in $\R$ to obtain alternative proofs of
  these two corollaries for the case $\A=\Q$.
\end{remark}

\section{Iso-indecomposability in inversion
  sets}\label{s.inversion-set}
In this section we introduce the notion of iso-decomposability.  The main results of
this section, Theorem \ref{t.reduced}, and its corollaries, play a major part in this paper.

\subsection{Preliminaries and notation} \label{ss.prelim}
In this section we introduce some notation that will be used throughout the
rest of the paper.
Let $G$ be a semisimple algebraic group defined over an
algebraically closed field of characteristic 0, and let $B\supseteq T$
be a Borel subgroup and maximal torus respectively.  Let $\gP$ be the
set of roots of $G$ relative to $T$.  Let $\gP^+$ and $\gP^-$ be the
sets of positive and negative roots chosen so that $\mbox{Lie}(B)$ is spanned
by positive root spaces.  For the remainder
of this paper we limit attention to root sets $\rs\st \gP$; our
convention will be that any statement involving $\A$ holds for
$\A=\Q$, and if $\gP$ is simply laced, it holds for $\A=\Z$ as well.

Let $W=N_G(T)/T$, the Weyl group of $T$, and let $S'$ be the set of
simple reflections of $W$ relative to $B$.  The longest element of
$W$ is $w_0$.  The 0-Hecke algebra $\ch$
associated to $(W,S')$ over a commutative ring $R$ is the associative
$R$-algebra generated by $H_u$, $u\in W$, and subject to the following
relations: $H_1$ is the identity element, and if $u\in W$ and $s\in
S'$, then $H_uH_s=H_{us}$ if $\ell(us)>\ell(u)$ and $H_{u}H_s=H_u$ if
$\ell(us)<\ell(u)$.

Throughout the paper, we will assume that we have
chosen $w \leq x\in W$ and a reduced expression $\vs=(s_1,\ldots,s_l)$, $s_i\in S'$, for $x$.
Then $x_i$, $z_i$, and $\gg_i$ will have the following meaning.
For $i\in\{1,\ldots, l\}$, define
\begin{itemize}
\item $x_i=s_1\cdots \wh{s}_i\cdots s_l = s_{\gg_i}x \in W$.
\item $z_i\in W$ by the equation $H_{z_i}=H_{s_1}\cdots
  \wh{H}_{s_i}\cdots H_{s_l}$.
\item $\gg_i=s_1\cdots s_{i-1}(\ga_i)\in \gP$, where $\ga_i$ is the
  simple root corresponding to $s_i$.
\end{itemize}
It is known that the elements $\gg_1,\ldots,\gg_l$ enumerate
$\pam=\{\ga\in\gP^+\mid x^{-1}(\ga)\in \gP^-\}$, the inversion set of
$x^{-1}$ (see \cite[Exercise 5.6.1]{Hum:90}).   The equality $x_i=s_{\gg_i}x$
 is well-known; it follows from the equation
 $s_{\gg_i}=(s_1\cdots
   s_{i-1})s_i(s_1\cdots s_{i-1})^{-1}$. 

Let $w\in W$, $w\leq x$. For any root set $\rs\st \pam$, define the
\textbf{Coxeter weight} $x\colon \rs\to W$ by $x(\gg_i)=x_i$, and the
\textbf{Demazure weight} $z\colon \rs\to W$ by $z(\gg_i)=z_i$.  Then
$\rs\xgw =\{\gg_i\in S \mid x_i\geq w\}$ and $\rs\zgw=\{\gg_i\in S\mid
z_i\geq w\}$ (see Definition \ref{d.sgew}).

The main result of Section \ref{s.inversion-set} is that for $\rs=\pam$, $\gg_i$ is
iso-indecomposable in $\rs$ if and only if
$(s_1,\ldots,\wh{s}_i,\ldots,s_l)$ is reduced; in this case,
$z_i=x_i$.
Consequently, $(\rs\uis)\zgw = (\rs\uis)\xgw$.

 \begin{remark}\label{r.redmap}
   Since $x_i=s_{\gg_i}x$, the Coxeter weight $x:\pam\to W$,
   $\gg_i\mapsto x_i$, is independent of the reduced expression $\vs$
   for $x$.  On the other hand, the Demazure weight $z:\pam\to W$,
   $\gg_i\mapsto z_i$, is not.  (For example, let $x=\gs_1\gs_2\gs_1$
   in type $A_2$. For $\vs=(\gs_1,\gs_2,\gs_1)$, one checks that
   $\gg_2=\ga_1+\ga_2$ and $z_2=\gs_1$; for $\vs=(\gs_2,\gs_1,\gs_2)$,
   we again have $\gg_2=\ga_1+\ga_2$, but now $z_2=\gs_2$.)  The
   dependence of the Demazure weight on $\vs$ can be removed by
   restricting the domain to the set of iso-indecomposable elements
   (since $z_i = x_i$ if $\gg_i$ is iso-indecomposable).
 \end{remark}

\subsection{Demazure products}\label{ss.demazure}

If $\vq=(r_1,\ldots,r_k)$ is any (not necessarily reduced) sequence of
simple reflections in $S$, define the Demazure product\footnote{In
  \cite{KnMi:04} and \cite{GrKr:20}, the Demazure product $z_{\vq}$ is
  instead denoted by $\gd(\vq)$.} $z_\vq\in W$ by the equation
$H_{z_\vq}=H_{r_1}\cdots H_{r_k}$, and define $x_{\vq}=r_1\cdots r_k$.
It is well known that if $\vq$ is reduced, then $\vq$ contains a
subexpression which multiplies to $u$ if and only if $x_{\vq}\geq u$
(see Theorem 5.10 of \cite{Hum:90}).  A generalization of this result
in which $\vq$ is not required to be reduced is given by \cite[Lemma
3.4(1)]{KnMi:04}:

 \begin{lemma}\label{l.heckefact}
   $\vq$ contains a subexpression which multiplies to $u$
   $\Leftrightarrow$ $z_\vq \geq u$.
\end{lemma}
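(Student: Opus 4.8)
The plan is to reduce the statement to the case where $\vq$ is reduced, which is Theorem 5.10 of \cite{Hum:90}. For $u\in W$ and $s\in S'$ write $u\ast s$ for the Demazure product, so $u\ast s=us$ if $\ell(us)>\ell(u)$ and $u\ast s=u$ otherwise; the relations of $\ch$ give $H_uH_s=H_{u\ast s}$, so appending $s$ to a sequence sends its Demazure product $v$ to $v\ast s$. Everything will rest on two auxiliary facts. The first (\emph{monotonicity}): if $\vp$ is a subexpression of $\vq$, then $z_\vp\leq z_\vq$. The second (\emph{existence of a reduced lift}): $\vq$ contains a reduced subexpression $\vp$ with $x_\vp=z_\vq$. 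I also use the elementary fact that when $\vp$ is reduced, $z_\vp=x_\vp$, since then the partial products along $\vp$ have strictly increasing length and the Hecke relations collapse to $H_{z_\vp}=H_{x_\vp}$.

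First I would prove the existence of a reduced lift, by induction on the length of $\vq=(r_1,\dots,r_k)$. Put $\vq_0=(r_1,\dots,r_{k-1})$ and $s=r_k$, and let $\vp_0\subseteq\vq_0$ be a reduced subexpression with $x_{\vp_0}=z_{\vq_0}$. If $\ell(z_{\vq_0}s)>\ell(z_{\vq_0})$ then $z_\vq=z_{\vq_0}s$ and $(\vp_0,s)$ is reduced (its length is $\ell(z_{\vq_0})+1$) with product $z_\vq$; otherwise $\ell(z_{\vq_0}s)<\ell(z_{\vq_0})$, $z_\vq=z_{\vq_0}$, and $\vp_0$ itself works. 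Next I would prove monotonicity, also by induction on the length of $\vq=(\vq_0,s)$: a subexpression $\vp$ of $\vq$ either omits the last letter, so $\vp$ is a subexpression of $\vq_0$ and $z_\vp\leq z_{\vq_0}\leq z_{\vq_0}\ast s=z_\vq$, or $\vp=(\vp_0,s)$ with $\vp_0$ a subexpression of $\vq_0$, so $z_\vp=z_{\vp_0}\ast s\leq z_{\vq_0}\ast s=z_\vq$, using the inductive hypothesis $z_{\vp_0}\leq z_{\vq_0}$ together with the fact that $u\mapsto u\ast s$ is order-preserving on $(W,\leq)$.

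With these in place the two implications are short. For ``$\Leftarrow$'': if $z_\vq\geq u$, take the reduced lift $\vp\subseteq\vq$ with $x_\vp=z_\vq\geq u$; since $\vp$ is reduced, Theorem 5.10 of \cite{Hum:90} gives a subexpression of $\vp$ multiplying to $u$, and this is a subexpression of $\vq$. For ``$\Rightarrow$'': if $\vq$ has a subexpression multiplying to $u$, then by the deletion condition $\vq$ has a \emph{reduced} subexpression $\vp'$ with $x_{\vp'}=u$, whence $u=x_{\vp'}=z_{\vp'}\leq z_\vq$ by the recorded fact and monotonicity.

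The hard part is the one input I have not reduced to bookkeeping: that $u\mapsto u\ast s$ is order-preserving on the Bruhat order. I would obtain this from the lifting property (if $u\leq v$ and $vs<v$ then $us\leq v$; a standard property of the Bruhat order, see \cite[Chapter 5]{Hum:90}) by a four-way case split on the signs of $\ell(us)-\ell(u)$ and $\ell(vs)-\ell(v)$. Three of the four cases are immediate from $u\leq v$ and transitivity; the remaining case, $us>u$ and $vs>v$, follows by applying the lifting property to the pair $u\leq vs$ together with $(vs)s=v<vs$, which yields $us\leq vs=v\ast s$. I expect this case analysis to be the only place requiring real care.
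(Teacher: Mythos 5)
Your proposal is correct and complete. Note, however, that the paper does not prove this lemma at all: it is quoted verbatim from Knutson--Miller \cite[Lemma 3.4(1)]{KnMi:04}, so there is no internal proof to compare against. Your argument --- (a) for reduced $\vp$ one has $z_\vp=x_\vp$, (b) every $\vq$ admits a reduced subexpression $\vp$ with $x_\vp=z_\vq$ (your ``reduced lift'', which the paper itself later records as Corollary \ref{c.dem-subex}, deduced there \emph{from} this lemma rather than proved independently), (c) $z_\vp\leq z_\vq$ for subexpressions $\vp\subseteq\vq$, and then the two implications via the subword property for reduced words and the deletion condition --- is essentially the standard proof underlying the Knutson--Miller citation, and each inductive step checks out. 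The one genuinely nontrivial input, monotonicity of $u\mapsto u\ast s$ in Bruhat order, is correctly reduced to the lifting property; your handling of the case $us>u$, $vs>v$ via $u\leq vs$ and $(vs)s<vs$ is right. One small bookkeeping slip: in the four-way split, the case $us>u$, $vs<v$ is not ``immediate from $u\leq v$ and transitivity'' --- it is literally the lifting property you quoted (only the two cases with $us<u$ follow from $u\ast s=u\leq v\leq v\ast s$). Since the needed statement is exactly the one you cite, this costs nothing, but the sentence as written miscounts which tool covers which case.
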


\begin{corollary}\label{c.dem-subex}
  There exists a subexpression of $\vq$ which is a reduced expression
  for $z_{\vq}$.
\end{corollary}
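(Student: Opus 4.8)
The plan is to apply Lemma \ref{l.heckefact} with the specific choice $u=z_\vq$. Since $z_\vq\geq z_\vq$ trivially, the lemma produces a subexpression $\vq'$ of $\vq$ whose product equals $z_\vq$, i.e. $x_{\vq'}=z_\vq$. This subexpression need not be reduced, so the remaining task is to pass from $\vq'$ to a reduced expression for $z_\vq$ that is still a subexpression of $\vq$.

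For that second step I would invoke the standard deletion property of Coxeter groups (Tits, or \cite[\S5.8]{Hum:90}): if a word in the simple reflections representing a group element is not reduced, then it can be shortened to another word for the same element by deleting two of its letters. Iterating, $\vq'$ contains a sub-subexpression which is a reduced expression for $x_{\vq'}=z_\vq$. A sub-subexpression of $\vq'$ is again a subexpression of $\vq$, so we are done. (Equivalently, one can phrase this via the subword property: any word for $w$ admits a reduced subword representing $w$.)

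I expect there to be essentially no obstacle here: the content is entirely carried by Lemma \ref{l.heckefact}, and the reduction to a reduced subword is a classical fact about Coxeter groups requiring no new idea. The only mild care needed is to observe that "subexpression of a subexpression of $\vq$" is a subexpression of $\vq$, and to be explicit that the length of any word representing $z_\vq$ is at least $\ell(z_\vq)$, with equality exactly when it is reduced, so the deletion process indeed terminates at a reduced expression for $z_\vq$ rather than for some other element.
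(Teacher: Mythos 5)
Your proposal is correct and matches the paper's proof: both apply Lemma \ref{l.heckefact} with $u=z_\vq$ to obtain a subexpression of $\vq$ multiplying to $z_\vq$, and then pass to a reduced subword of that subexpression (a step the paper compresses into ``hence'' but which is exactly the deletion-property argument you spell out). No gaps; you have simply made explicit a classical fact the paper leaves implicit.
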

\begin{proof}
  By Lemma \ref{l.heckefact} with $u=z_{\vq}$, $\vq$ contains a
  subexpression which multiplies to $z_{\vq}$, and hence it contains a
  reduced subexpression which multiplies to $z_{\vq}$.
\end{proof}

\begin{corollary}\label{c.heckecons}
  We have
  \begin{itemize}
  \item[(i)] $z_\vq\geq x_\vq$, with equality if $\vq$ is reduced.
  \item[(ii)] $z_\vq\geq z_\vp$ if $\vp$ is a subexpression of $\vq$.
  \end{itemize}
\end{corollary}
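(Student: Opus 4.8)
The plan is to deduce both statements directly from Lemma \ref{l.heckefact} and Corollary \ref{c.dem-subex}, so the argument will be short and essentially formal.

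For part (i), I would observe that $\vq$ is a subexpression of itself, and this subexpression multiplies to $x_\vq = r_1\cdots r_k$. Hence Lemma \ref{l.heckefact}, applied with $u = x_\vq$, yields $z_\vq \geq x_\vq$. For the asserted equality when $\vq$ is reduced, I would show by induction on $k$ that $H_{r_1}\cdots H_{r_k} = H_{r_1\cdots r_k}$: at the $i$-th step, reducedness gives $\ell(r_1\cdots r_i) = \ell(r_1\cdots r_{i-1}) + 1$, so the defining relation $H_u H_s = H_{us}$ valid when $\ell(us) > \ell(u)$ applies. This gives $H_{z_\vq} = H_{x_\vq}$, whence $z_\vq = x_\vq$ since the generators $H_u$, $u\in W$, are distinct (they are linearly independent over the coefficient ring).

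For part (ii), I would apply Corollary \ref{c.dem-subex} to the sequence $\vp$ to obtain a subexpression $\vp'$ of $\vp$ which is a reduced expression for $z_\vp$. Since $\vp$ is a subexpression of $\vq$, so is $\vp'$ (a subexpression of a subexpression is a subexpression), and $\vp'$ multiplies to $z_\vp$. Lemma \ref{l.heckefact}, applied this time with $\vq$ and $u = z_\vp$, then gives $z_\vq \geq z_\vp$.

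I do not anticipate a substantial obstacle; the only point that needs a word of care is the equality case of (i), where one invokes the standard fact that $u \mapsto H_u$ is injective on $W$ in order to pass from $H_{z_\vq} = H_{x_\vq}$ back to $z_\vq = x_\vq$.
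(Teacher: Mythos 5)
Your proposal is correct and follows essentially the same route as the paper: the inequality in (i) comes from Lemma \ref{l.heckefact} with $u=x_\vq$, the equality in the reduced case from the defining relations of the 0-Hecke algebra (which the paper dispatches as ``by definition,'' your induction just spells this out), and (ii) from Corollary \ref{c.dem-subex} applied to $\vp$ followed by Lemma \ref{l.heckefact} with $u=z_\vp$. No gaps.
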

\begin{proof}
  \noindent (i) If $\vq$ is reduced, then $z_{\vq}=x_{\vq}$ by
  definition. The inequality is due to Lemma \ref{l.heckefact} with
  $u=x_\vq$.

  \noindent (ii) By Corollary \ref{c.dem-subex}, there exists a
  subexpression $\vp'$ of $\vp$ which is a reduced expression for
  $z_\vp$. By (i), $x_{\vp'}=z_\vp$. Since $\vp'$ is a subexpression
  of $\vp$, it is a subexpression of $\vq$, so by Lemma
  \ref{l.heckefact}, $z_\vq\geq x_{\vp'}$. Hence $z_\vq\geq z_\vp$, as
  required.
\end{proof}

\begin{remark}
  In Corollary \ref{c.heckecons}(i), equality can occur even if $\vq$
  is not reduced. For example, if $\gs_1$ and $\gs_2$ denote the
  transpositions $(1,2)$ and $(2,3)$ respectively in type $A_2$, then
  for $\vq=(\gs_1,\gs_1,\gs_2,\gs_1,\gs_2)$,
  $x_\vq=z_\vq=\gs_1\gs_2\gs_1$, although $\vq$ is not reduced.
\end{remark}

\begin{corollary}\label{c.zixi}
  $z_i\geq x_i$ for $i\in \{1,\ldots,l\}$, with equality if
  $(s_1,\ldots,\wh{s}_i,\ldots,s_l)$ is reduced.
\end{corollary}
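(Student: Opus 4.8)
The plan is to obtain this as an immediate specialization of Corollary \ref{c.heckecons}(i). The point is that the pair $(x_i,z_i)$ attached to the deletion index $i$ is nothing but the pair $(x_\vq,z_\vq)$ for the specific (possibly non-reduced) sequence $\vq$ obtained by striking the $i$-th letter from the fixed reduced expression $\vs$ for $x$.

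Concretely, first I would set $\vq=(s_1,\ldots,\wh{s}_i,\ldots,s_l)$. Unwinding the definitions from Sections \ref{ss.prelim} and \ref{ss.demazure}, we have $x_\vq=s_1\cdots\wh{s}_i\cdots s_l=x_i$, while $H_{z_\vq}=H_{s_1}\cdots\wh{H}_{s_i}\cdots H_{s_l}=H_{z_i}$, so $z_\vq=z_i$. Then Corollary \ref{c.heckecons}(i) applied to $\vq$ yields $z_i=z_\vq\geq x_\vq=x_i$, and the same corollary gives equality whenever $\vq$ is reduced, i.e. whenever $(s_1,\ldots,\wh{s}_i,\ldots,s_l)$ is reduced. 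That finishes the proof.

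There is no real obstacle here: the statement is a pure translation of the already-proved Corollary \ref{c.heckecons}(i) into the notation set up for inversion sets, and the only thing to verify is the bookkeeping identity that deleting $s_i$ from $\vs$ produces exactly the sequence whose ordinary and Demazure products are $x_i$ and $z_i$ by definition. (One could equally cite Lemma \ref{l.heckefact} directly, with $u=x_i$ and $\vq$ as above, since $\vq$ contains a subexpression multiplying to $x_i$, namely $\vq$ itself; but routing through Corollary \ref{c.heckecons}(i) also delivers the equality clause with no extra work.)
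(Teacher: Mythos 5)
Your proposal is correct and matches the paper's proof exactly: the paper also obtains Corollary \ref{c.zixi} as the special case of Corollary \ref{c.heckecons}(i) applied to $\vq=(s_1,\ldots,\wh{s}_i,\ldots,s_l)$, for which $x_\vq=x_i$ and $z_\vq=z_i$ by definition. The bookkeeping you spell out is exactly what the paper leaves implicit.
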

\begin{proof}
  This is a special case of Corollary \ref{c.heckecons}(i).
\end{proof}

\subsection{Iso-indecomposability in $I(x^{-1})$.}\label{ss.iso-indec}

\begin{lemma}\label{l.d-decomp}
  Let $\ga,\gb,\gg\in \Phi^+$. If $c\ga = \gb+\gg$ and $\| \gb\|=
  \|\gg\|$, then $c=\IP{\gb}{\ga^{\vee}}=\IP{\gg}{\ga^{\vee}}>0$.
\end{lemma}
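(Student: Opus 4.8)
The hypothesis is $c\ga = \gb + \gg$ with $\ga,\gb,\gg \in \gP^+$ and $\|\gb\| = \|\gg\|$, and the goal is the chain of equalities $c = \IP{\gb}{\ga^\vee} = \IP{\gg}{\ga^\vee} > 0$. The natural approach is to pair the relation $c\ga = \gb + \gg$ with the coroot $\ga^\vee$. Since $\IP{\ga}{\ga^\vee} = 2$, this immediately gives $2c = \IP{\gb}{\ga^\vee} + \IP{\gg}{\ga^\vee}$, so it suffices to show that $\IP{\gb}{\ga^\vee} = \IP{\gg}{\ga^\vee}$ and that this common value is positive; the equality $c = \IP{\gb}{\ga^\vee}$ then follows. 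The positivity is the easy part: pairing $c\ga = \gb+\gg$ with the fixed linear functional $\gd$ (chosen so that $\IP{\gd}{\cdot} > 0$ on $\gP^+$, equivalently just using that $\ga,\gb,\gg$ are all positive) shows $c > 0$; then once we know $\IP{\gb}{\ga^\vee} = \IP{\gg}{\ga^\vee} = c$ we are done, provided we have already established equality of the two pairings.

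So the crux is to prove $\IP{\gb}{\ga^\vee} = \IP{\gg}{\ga^\vee}$. The plan is to exploit the norm equality $\|\gb\| = \|\gg\|$ together with the relation $\gb + \gg = c\ga$. First I would compute $\|c\ga\|^2 = \|\gb\|^2 + 2(\gb,\gg) + \|\gg\|^2 = 2\|\gb\|^2 + 2(\gb,\gg)$, using the chosen $W$-invariant inner product $(\,,\,)$. On the other hand, $\IP{\gb}{\ga^\vee} = 2(\gb,\ga)/(\ga,\ga)$ and similarly for $\gg$, so $\IP{\gb}{\ga^\vee} - \IP{\gg}{\ga^\vee} = 2(\gb - \gg, \ga)/(\ga,\ga)$. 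Since $\gb - \gg = \gb - (c\ga - \gb) = 2\gb - c\ga$, we get $(\gb - \gg, \ga) = 2(\gb,\ga) - c(\ga,\ga)$. The task is to show this vanishes, i.e. $(\gb,\ga) = \tfrac{c}{2}(\ga,\ga)$, which combined with $(\gg,\ga) = c(\ga,\ga) - (\gb,\ga)$ would give $(\gg,\ga) = \tfrac{c}{2}(\ga,\ga)$ as well. Equivalently, I want to show $(\gb - \gg, \ga) = 0$, i.e. the difference $\gb - \gg$ is orthogonal to $\ga$ — which makes sense geometrically since $\gb,\gg$ have equal length and sum to a multiple of $\ga$, so they are mirror images across the line $\R\ga$.

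To nail down orthogonality of $\gb - \gg$ and $\ga$: from $\gb + \gg = c\ga$ and $\|\gb\| = \|\gg\|$, decompose $\gb = \tfrac{c}{2}\ga + v$ and $\gg = \tfrac{c}{2}\ga - v$ for some vector $v$ (this is forced by $\gb + \gg = c\ga$). Then $\|\gb\|^2 - \|\gg\|^2 = 2 \cdot (c\ga, v) = 2c(\ga, v)$, which equals $0$ by the norm hypothesis; since $c > 0$, we conclude $(\ga, v) = 0$. But $\gb - \gg = 2v$, so $(\ga, \gb - \gg) = 0$, which is exactly what we wanted. Therefore $(\gb,\ga) = (\gg,\ga) = \tfrac{c}{2}(\ga,\ga)$, hence $\IP{\gb}{\ga^\vee} = \IP{\gg}{\ga^\vee} = c$, and since $c > 0$ the full statement follows.

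I do not expect any real obstacle here — this is elementary Euclidean geometry in the root space, and the only mild care needed is to work with a fixed $W$-invariant positive definite inner product so that $\|\cdot\|$, the pairing with coroots, and the relation $\IP{\gb}{\ga^\vee} = 2(\gb,\ga)/(\ga,\ga)$ are all compatible. The one place to be slightly careful is ensuring $c \neq 0$ before dividing: this is immediate because $\gb + \gg$ is a nonzero positive combination of positive roots, hence nonzero, while $c\ga = 0$ would force $c = 0$ and $\gb + \gg = 0$, impossible for $\gb,\gg \in \gP^+$.
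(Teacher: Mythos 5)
Your proposal is correct and follows essentially the same route as the paper: pair $c\ga=\gb+\gg$ with $\ga^{\vee}$ to get $2c=\IP{\gb}{\ga^{\vee}}+\IP{\gg}{\ga^{\vee}}$, get $c>0$ from positivity, and use $\|\gb\|=\|\gg\|$ to force $(\gb,\ga)=(\gg,\ga)$. Your reflection-symmetry formulation via $\gb=\tfrac{c}{2}\ga+v$, $\gg=\tfrac{c}{2}\ga-v$ is just a repackaging of the paper's direct computation $(\gb,c\ga)=(\gb,\gb+\gg)=(\gg,\gb+\gg)=(\gg,c\ga)$, so there is no substantive difference.
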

\begin{proof}
  Since $\ga,\gb,\gg>0$, we must have $c>0$.  By applying
  $\IP{\cdot}{\ga^{\vee}}$ to both sides of the equation
  $c\ga=\gb+\gg$, we find that
  $c=(1/2)\IP{\gb}{\ga^{\vee}}+(1/2)\IP{\gg}{\ga^{\vee}}$.  Since
  $\|\gb\|=\|\gg\|$,
  \begin{equation*}
    (\gb,c\ga)=(\gb,\gb+\gg)=\| \gb\|^2+(\gb,\gg)=\|
    \gg\|^2+(\gb,\gg)=(\gg, \gb+\gg)=(\gg,c\ga),
  \end{equation*}
  implying $\IP{\gb}{\ga^{\vee}}=\IP{\gg}{\ga^{\vee}}$, as desired.
\end{proof}

\begin{lemma}\label{l.simpiso}
  Let $\ga,\gb,\gg\in \gP^+$, where $\gP$ is simply laced.  If
  $c\ga=\gb+\gg$ and $\gb\neq \ga$, then $c=1$.
\end{lemma}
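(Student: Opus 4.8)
The plan is to deduce the statement immediately from Lemma \ref{l.d-decomp} together with the standard bound on Cartan integers in a simply laced root system. First, since $\gP$ is simply laced, all roots have the same length, so in particular $\|\gb\|=\|\gg\|$. Hence Lemma \ref{l.d-decomp} applies and gives $c=\IP{\gb}{\ga^{\vee}}=\IP{\gg}{\ga^{\vee}}>0$; in particular $c$ is a positive integer.

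Next I would argue that $\IP{\gb}{\ga^{\vee}}\leq 1$. Since $\gb$ and $\ga$ both lie in $\gP^+$ and $\gb\neq\ga$, we have $\gb\neq\pm\ga$, so $\gb$ and $\ga$ are not proportional. In a simply laced root system $\IP{\gb}{\ga^{\vee}}=\IP{\ga}{\gb^{\vee}}$, both being equal to $2(\gb,\ga)/(\ga,\ga)$ because $(\ga,\ga)=(\gb,\gb)$. Hence the product $\IP{\gb}{\ga^{\vee}}\IP{\ga}{\gb^{\vee}}$ equals $\IP{\gb}{\ga^{\vee}}^{2}$; as this product is at most $3$ for non-proportional roots and $\IP{\gb}{\ga^{\vee}}$ is an integer, we get $\IP{\gb}{\ga^{\vee}}\in\{-1,0,1\}$. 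Combining this with $c=\IP{\gb}{\ga^{\vee}}>0$ forces $c=1$.

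There is no serious obstacle here; the only points requiring care are checking that the hypothesis $\gb\neq\ga$, together with positivity (which rules out $\gb=-\ga$), genuinely yields non-proportionality so that the Cartan-integer bound applies, and recalling that it is exactly the simply laced bound $\leq 1$ that drives the argument. The bound would fail for, say, a long root $\ga$ and a short root $\gb$ in a non-simply-laced system, which is consistent with the phenomenon illustrated in Example \ref{ex.nonterminate}.
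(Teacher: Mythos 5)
Your proof is correct and follows essentially the same route as the paper, which simply cites Lemma \ref{l.d-decomp}: equal root lengths in the simply laced case give $c=\IP{\gb}{\ga^{\vee}}>0$, and the simply laced Cartan-integer bound for the non-proportional roots $\ga,\gb$ forces $c=1$. You have merely spelled out the details the paper leaves implicit.
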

\begin{proof}
  This follows from Lemma \ref{l.d-decomp}.
\end{proof}

\begin{proposition}\label{p.int-iso}
  If $S\subseteq \gP^+$ is a root set and $\gP$ is simply laced, then
  $\zind{S}\subseteq \isnd{S}$.
\end{proposition}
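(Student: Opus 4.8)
The plan is to show directly that every integrally indecomposable element of $S$ is iso-indecomposable, i.e.\ that if $\ga \in S$ fails to be iso-indecomposable then it fails to be integrally indecomposable. So suppose $\ga \in S$ admits an iso-decomposition. By Definition~\ref{d.legion}, this means $\ga = c\ga_1 + c\ga_2$ with $\ga_1, \ga_2 \in S$, $\|\ga_1\| = \|\ga_2\|$, $c > 0$, and (being a decomposition) $\ga_1 \neq \ga$ and $\ga_2 \neq \ga$.

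The key step is to invoke Lemma~\ref{l.simpiso}: since $\gP$ is simply laced, $\ga_1, \ga_2, \ga \in \gP^+$, $c\ga = \ga_1 + \ga_2$, and $\ga_1 \neq \ga$, the lemma forces $c = 1$. Hence $\ga = \ga_1 + \ga_2$ is actually an integer decomposition of $\ga$ with coefficients in $\Z_{\geq 0}$ and summands in $S$ distinct from $\ga$. This is precisely a $\Z$-decomposition of $\ga$ in the sense of Definition~\ref{d.legion}, so $\ga \notin \zind{S}$. Taking the contrapositive, $\zind{S} \subseteq \isnd{S}$.

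I expect no real obstacle here; the proposition is essentially a bookkeeping consequence of Lemma~\ref{l.simpiso}, whose content (that an iso-type relation in a simply laced system must have scalar $c=1$) is the substantive input. The only point requiring a moment's care is checking that the data of an iso-decomposition $\ga = c\ga_1 + c\ga_2$ matches the hypotheses of Lemma~\ref{l.simpiso}: one needs $c\ga = \gb + \gg$ with $\gb = c\ga_1$, $\gg = c\ga_2$, but then $\|c\ga_1\| = \|c\ga_2\|$ holds since $\|\ga_1\| = \|\ga_2\|$, and $\gb = c\ga_1 \neq \ga$ because $c > 0$ and $\ga_1 \neq \ga$ would only fail if $c\ga_1 = \ga$, which we must rule out. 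In fact it is cleanest to apply Lemma~\ref{l.simpiso} in the form already stated — with $\ga, \ga_1, \ga_2$ directly, noting $c\ga = \ga_1 + \ga_2$ and $\ga_1 \neq \ga$ — so that the lemma yields $c = 1$ immediately without rescaling. I would write the proof in two or three sentences along these lines.
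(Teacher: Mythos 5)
Your proposal is correct and is essentially the paper's own proof: take the contrapositive, rewrite the iso-decomposition $\ga = c\ga_1 + c\ga_2$ as a relation of the form (scalar)$\cdot\ga = \ga_1 + \ga_2$, and apply Lemma~\ref{l.simpiso} to force that scalar to be $1$, yielding the integral decomposition $\ga = \ga_1 + \ga_2$. The only cosmetic point is that the scalar fed to Lemma~\ref{l.simpiso} is $1/c$ rather than $c$ itself, but the lemma's conclusion that it equals $1$ gives $c=1$ either way, so nothing is affected.
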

\begin{proof}
  Suppose $\ga$ is iso-decomposable in $S$. Then $c\ga = \gb+\gg$, for
  some $\gb,\gg\in S$. By Lemma \ref{l.simpiso}, $c=1$. Thus $\ga$ is
  integrally decomposable in $S$.
\end{proof}

\begin{lemma}\label{l.red_in} Let $i,j,k\in [l]$.
  \begin{enumerate}
  \item $\gg_i\neq \gg_j$ if $i\neq j$.
  \item If $j<i$ then $s_{j}\cdots s_1 (\gg_i)>0$; otherwise
    $s_{j}\cdots s_1 (\gg_i)<0$.
  \item If $c\gg_i=\gg_j+\gg_k$, $j<k$, then $j<i<k$.
  \end{enumerate}
\end{lemma}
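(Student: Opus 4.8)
*Let $i,j,k\in [l]$.*
*(i) $\gg_i\neq \gg_j$ if $i\neq j$.*
*(ii) If $j<i$ then $s_j\cdots s_1(\gg_i)>0$; otherwise $s_j\cdots s_1(\gg_i)<0$.*
*(iii) If $c\gg_i=\gg_j+\gg_k$, $j<k$, then $j<i<k$.*

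The plan is to deduce all three parts from the action of the inverse partial products $v_m := s_m s_{m-1}\cdots s_1 = (s_1\cdots s_m)^{-1}$ (with $v_0 = \id$) on the roots $\gg_i$, establishing (ii) first and then reading (i) and (iii) off from it. For (ii) the key point is a telescoping identity: since $\gg_i = s_1\cdots s_{i-1}(\ga_i)$, the word $v_m\cdot(s_1\cdots s_{i-1})$ collapses by repeatedly cancelling $s_1 s_1,\, s_2 s_2,\ldots$, yielding $s_{m+1}s_{m+2}\cdots s_{i-1}$ when $m<i$ and $s_m s_{m-1}\cdots s_i$ when $m\geq i$. Hence $v_m(\gg_i) = s_{m+1}\cdots s_{i-1}(\ga_i)$ in the first case, and $v_m(\gg_i) = s_m\cdots s_{i+1}\bigl(s_i(\ga_i)\bigr) = -\,s_m\cdots s_{i+1}(\ga_i)$ in the second (the empty-product conventions giving $v_{i-1}(\gg_i)=\ga_i$ and $v_i(\gg_i)=-\ga_i$). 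The signs then follow from two standard facts about the reduced expression $\vs$: every contiguous subword of $\vs$ is reduced, and for a reduced word $(t_1,\ldots,t_p)$ one has $t_1\cdots t_{p-1}(\ga_p)>0$ (equivalently $\ell(us)>\ell(u)\Leftrightarrow u(\ga_s)>0$). Applied to the contiguous subword $(s_{m+1},\ldots,s_i)$, this gives $v_m(\gg_i)>0$ for $m<i$; applied to the contiguous subword $(s_i,\ldots,s_m)$ — whose reverse $s_m\cdots s_i$ is reduced — it gives $s_m\cdots s_{i+1}(\ga_i)>0$, hence $v_m(\gg_i)<0$ for $m>i$, while $m=i$ gives $v_i(\gg_i)=-\ga_i<0$ directly.

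Part (i) is then immediate: if $i<j$, then $v_i$ sends $\gg_i$ to a negative root and $\gg_j$ to a positive root, so $\gg_i\neq\gg_j$. (Alternatively one may simply invoke the known fact that $\gg_1,\ldots,\gg_l$ enumerate the $\ell(x)$-element set $\pam$.)

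For (iii), suppose $c\gg_i=\gg_j+\gg_k$ with $j<k$; positivity of all three roots forces $c>0$. First rule out $i=j$ and $i=k$: either equality produces a relation of the form $(c-1)\gg_a=\gg_b$ between two elements of the root set $\pam$, where $\gg_a,\gg_b>0$ forces $c>1$; this contradicts the root-set axiom when $c-1\neq 1$, and contradicts (i) when $c-1=1$ (then $\gg_a=\gg_b$ although $a\neq b$). Now suppose toward a contradiction that $i<j$; then $i\leq j-1$, so by (ii) $v_{j-1}(\gg_i)<0$, while $j-1<j<k$ gives $v_{j-1}(\gg_j)>0$ and $v_{j-1}(\gg_k)>0$, so applying $v_{j-1}$ to $c\gg_i=\gg_j+\gg_k$ equates a negative number with a positive one. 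Hence $j<i$. Symmetrically, suppose $i>k$; then by (ii) $v_k(\gg_i)>0$ (as $k<i$), $v_k(\gg_j)<0$ (as $j<k$, so $k\geq j$), and $v_k(\gg_k)<0$, so applying $v_k$ equates a positive number with a negative one. Hence $i<k$, and combining gives $j<i<k$.

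I expect the only real obstacle to be the bookkeeping in (ii): carrying out the telescoping correctly, handling the boundary cases $m\in\{i-1,i\}$ where empty products appear, and making sure the reducedness arguments are applied only to genuinely contiguous subwords of $\vs$ — the delicate one being $(s_i,s_{i+1},\ldots,s_m)$, which is contiguous, so its reverse $s_m s_{m-1}\cdots s_i$ is a reduced word even though $s_i$ and $s_{i+1}$ need not be equal or commute. Once (ii) is in place, (i) and (iii) are short sign-chases.
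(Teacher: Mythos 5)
Your proof is correct and follows essentially the same route as the paper: for (iii) the paper likewise applies a partial product $s_m\cdots s_1$ to $c\gg_i=\gg_j+\gg_k$ and compares signs using (ii) (taking $y=s_i\cdots s_1$ where you use $s_{j-1}\cdots s_1$ and $s_k\cdots s_1$), while for (i) and (ii) it simply cites Humphreys, Section 1.7, which is exactly the telescoping argument you wrote out. The one extra point in your version is the explicit exclusion of the degenerate cases $i=j$ and $i=k$ (via reducedness of the root system together with (i)), which the paper's two stated cases ($i<j,k$ and $i>j,k$) pass over silently.
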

\begin{proof}
  (i), (ii)  See \cite[Section 1.7]{Hum:90}.

  \noindent (iii) By Lemma \ref{l.d-decomp}, $c>0$. Assume that $i<j$
  and $i<k$, and let $y=s_i\cdots s_1$. Then
  $cy\gg_i=y\gg_j+y\gg_k$. By (ii), $cy\gg_i<0$ and $y\gg_j+y\gg_k>0$,
  contradiction. A similar argument eliminates the possibility that
  $i>j$ and $i>k$. (This proof also appears in the proof of
  \cite[Theorem 5.3]{Ste:01}.)
\end{proof}

The following theorem plays a central role in this paper: it is
required for all subsequent results of this section and the next.
The theorem also illustrates how iso-indecomposability arises
naturally in the study of Coxeter groups and inversion sets.  Indeed,
the problem of finding some property of $\gg_i$ which is equivalent to
reducedness of $s_1\cdots \wh{s}_i\cdots s_l$ initially led us to this
theorem and the definition of iso-indecomposability.
\begin{theorem}\label{t.reduced}
  For $i\in [l]$, the following are equivalent:
  \begin{itemize}
  \item[(i)] $s_1\cdots \wh{s}_i\cdots s_l$ is not reduced.
  \item[(ii)] There exist $j<i<k$ such that $\ga_j=s_{j+1}\cdots
    \wh{s}_i\cdots s_{k-1}(\ga_k)$.
  \item[(iii)] There exist $j<k$ and $c\in \Q$ such that
    $c\gg_i=\gg_j+\gg_k$ and $\|\gg_j\| = \|\gg_k\|$.
  \item[(iv)] $\gg_i$ is iso-decomposable in $I(x^{-1})$.
  \end{itemize}
  Moreover, in case one (and thus all) of these statements hold, it
  must be true that $j<i<k$; and that $j,k$ satisfy (ii) if and only
  if they satisfy $(iii)$; and that $c=\IP{\gg_k}{\gg_i^{\vee}}>0$.
  \end{theorem}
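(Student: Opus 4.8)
The plan is to run everything through one elementary observation: $s_1\cdots\wh{s_i}\cdots s_l$ fails to be reduced \emph{if and only if} $s_{\gg_i}(\gg_k)<0$ for some $k>i$. To see this, write $\vs'=(s_1,\dots,\wh{s_i},\dots,s_l)$. Using $s_{\gg_i}=(s_1\cdots s_{i-1})s_i(s_1\cdots s_{i-1})^{-1}$ and the telescoping identity $(s_1\cdots s_{i-1})^{-1}(s_1\cdots s_a)=s_i s_{i+1}\cdots s_a$ (for $a\ge i$), the product of the letters of $\vs'$ preceding its copy of $s_a$ equals $s_1\cdots s_{a-1}$ when $a<i$ and $s_{\gg_i}(s_1\cdots s_{a-1})$ when $a>i$; hence the root attached to that letter is $\gg_a$ when $a<i$ and $s_{\gg_i}(\gg_a)$ when $a>i$. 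For any word $(t_1,\dots,t_m)$ one has $\ell(t_1\cdots t_m)=\#\{a:\beta_a>0\}-\#\{a:\beta_a<0\}$ with $\beta_a=t_1\cdots t_{a-1}(\ga_{t_a})$, by induction on $m$ from $\ell(us)=\ell(u)\pm1$ according to the sign of $u(\ga_s)$. Since $\vs$ is reduced the first $i-1$ attached roots are positive, so $\vs'$ is reduced exactly when none of $s_{\gg_i}\gg_{i+1},\dots,s_{\gg_i}\gg_l$ is negative. I will use this as the concrete form of (i).

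Next I would prove (i)$\Rightarrow$(iii). Choose $k>i$ minimal with $s_{\gg_i}\gg_k<0$. By minimality and the criterion above, $\vs''=(s_1,\dots,\wh{s_i},\dots,s_{k-1})$ is reduced; it represents $w''=s_{\gg_i}(s_1\cdots s_{k-1})$, and $w''(\ga_k)=s_{\gg_i}\gg_k<0$, so $\gg':=-s_{\gg_i}\gg_k$ is a positive root lying in the inversion set of $w''$, hence equals one of the roots attached to the letters of $\vs''$ (see \cite[Section 1.7]{Hum:90}). It cannot equal $s_{\gg_i}\gg_j$ with $i<j<k$, since applying $s_{\gg_i}$ would give $\gg_j=-\gg_k$; hence $\gg'=\gg_j$ for some $j<i$. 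Expanding $s_{\gg_i}\gg_k=\gg_k-\IP{\gg_k}{\gg_i^{\vee}}\gg_i$ then yields $\IP{\gg_k}{\gg_i^{\vee}}\gg_i=\gg_j+\gg_k$, while $\|\gg_j\|=\|s_{\gg_i}\gg_k\|=\|\gg_k\|$ because reflections are isometries. This is (iii), and records $j<i<k$ and $c=\IP{\gg_k}{\gg_i^{\vee}}>0$.

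To close the loop I would prove (iii)$\Leftrightarrow$(ii) with the same $(j,k)$, and (ii)$\Rightarrow$(i). Assume (iii). Lemma~\ref{l.red_in}(iii) forces $j<i<k$, and Lemma~\ref{l.d-decomp} gives $c=\IP{\gg_k}{\gg_i^{\vee}}>0$, so $s_{\gg_i}\gg_k=\gg_k-c\gg_i=-\gg_j$. Evaluating $s_{\gg_i}\gg_k$ via the conjugation formula and the telescoping identities turns this into $(s_1\cdots\wh{s_i}\cdots s_{k-1})(\ga_k)=-\gg_j$; applying $(s_1\cdots s_{j-1})^{-1}$ (using $(s_1\cdots s_{j-1})^{-1}(s_1\cdots s_{i-1})=s_j\cdots s_{i-1}$, valid as $j<i$) and then $s_j$ produces $\ga_j=s_{j+1}\cdots\wh{s_i}\cdots s_{k-1}(\ga_k)$, which is (ii). Every step is reversible, so starting from (ii) and reading backwards recovers $s_{\gg_i}\gg_k=-\gg_j<0$ with $k>i$, giving (i) (and also (iii), with $c=\IP{\gg_k}{\gg_i^{\vee}}$ and $\|\gg_j\|=\|\gg_k\|$). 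Finally (iii)$\Leftrightarrow$(iv) is the definition unravelled: from (iii), $\gg_i=\frac1c\gg_j+\frac1c\gg_k$ is an iso-decomposition in $I(x^{-1})$ (both summands lie in $I(x^{-1})$, differ from $\gg_i$ because $j,k\neq i$, and have equal norm), and conversely any iso-decomposition $\gg_i=c'(\gg_a+\gg_b)$ rewrites, after noting that in a root set $\gg_a\neq\gg_b$ and ordering them, as an instance of (iii). Chaining (i)$\Rightarrow$(iii)$\Rightarrow$(ii)$\Rightarrow$(i) and (iii)$\Leftrightarrow$(iv), and collecting the by-products above, also yields the three ``moreover'' assertions.

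The step I expect to be the genuine obstacle is (i)$\Rightarrow$(iii), and within it the point that the positive root $\gg'$ extracted from the inversion set of $w''$ is one of the \emph{earlier} roots $\gg_j$ ($j<i$) and not a transformed root $s_{\gg_i}\gg_j$ ($j>i$): it is precisely the minimality of $k$ together with the impossibility $\gg_j=-\gg_k$ that eliminates the bad alternative. Everything else is a chain of reversible Weyl-group identities, where the only care needed is the correct use of the length formula for non-reduced words and of the telescoping identities among the $s_a$.
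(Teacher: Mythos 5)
Your proof is correct, and it reaches the theorem by a somewhat different route than the paper. The paper obtains (i) $\Leftrightarrow$ (ii) by citing \cite[Theorem 1.7]{Hum:90}, and then links (ii) and (iii) through the single identity \eqref{e.gkgi}, namely $\IP{\gg_k}{\gg_i^{\vee}}\gg_i = -\,s_1\cdots \wh{s}_i\cdots s_{k-1}(\ga_k)+\gg_k$ for $j<i<k$, with (iii) $\Leftrightarrow$ (iv) definitional in both treatments. You instead prove the Coxeter-theoretic input from scratch: the sign/length criterion for reducedness of the deleted word, the identification of its attached roots as $\gg_a$ for $a<i$ and $s_{\gg_i}\gg_a$ for $a>i$, and then a direct proof of (i) $\Rightarrow$ (iii) by taking $k>i$ minimal with $s_{\gg_i}\gg_k<0$ and locating $-s_{\gg_i}\gg_k$ among the attached roots of the reduced truncation --- the same minimal-$k$ device the paper deploys only later, in the proof of Lemma \ref{l.iso-inc}, where it again leans on the proof of \cite[Theorem 1.7]{Hum:90}. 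Your chain (iii) $\Rightarrow$ (ii) $\Rightarrow$ (i) rests on the same conjugation/telescoping computation as \eqref{e.gkgi}, read in both directions, and your treatment of the ``moreover'' clause via Lemmas \ref{l.red_in}(iii) and \ref{l.d-decomp} matches the paper's. What your version buys is self-containedness (nothing from Humphreys beyond the enumeration of $I(x^{-1})$ by the attached roots of a reduced word), at the cost of length; the paper's version is shorter by outsourcing (i) $\Leftrightarrow$ (ii). One small point of care: the set you invoke for $w''=s_1\cdots\wh{s}_i\cdots s_{k-1}$ is, in the paper's convention, the inversion set of $(w'')^{-1}$, which is indeed the set enumerated by the attached roots of $\vs''$ and is the right one for your purpose, since $(w'')^{-1}\bigl(-s_{\gg_i}\gg_k\bigr)=-\ga_k<0$.
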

  \begin{proof}
  (i) $\Leftrightarrow$ (ii) See \cite[Theorem 1.7]{Hum:90}; (iii)
  $\Leftrightarrow$ (iv) by definition.

  Now suppose $j<i<k$. Let $\gb=s_{i+1}\cdots s_{k-1}(\ga_k)$.
  Then $\IP{\gg_k}{\gg_i^{\vee}}=\IP{s_1\cdots s_i\gb}{-s_1\cdots
    s_i\ga_i^{\vee}}=\IP{-\gb}{\ga_i^{\vee}}$. Thus
  \begin{equation}\label{e.gkgi}
    \begin{split}
      \IP{\gg_k}{\gg_i^{\vee}}\gg_i &= \IP{-\gb}{\ga_i^{\vee}}\gg_i\\
      &=  s_1\cdots s_{i-1}(\IP{-\gb}{\ga_i^{\vee}}\ga_i)\\
      &= s_1\cdots s_{i-1}(-\gb+s_i\gb) = - s_1\cdots 
      \wh{s}_i\cdots s_{k-1}(\ga_k)+\gg_k.
    \end{split}
  \end{equation}

  \noindent (ii) $\Rightarrow$ (iii) Let $j<i<k$ be as in
  (ii). Substituting $\ga_j$ for $s_{j+1}\cdots\wh{s}_i\cdots
  s_{k-1}(\ga_k)$ in \eqref{e.gkgi} produces
  $\IP{\gg_k}{\gg_i^{\vee}}\gg_i=\gg_j+\gg_k$.

  \noindent (iii) $\Rightarrow$ (ii) Lemma \ref{l.red_in}(iii) forces
  $j<i<k$. By Lemma \ref{l.d-decomp},
  $c=\IP{\gg_k}{\gg_i^{\vee}}$, so
 $\IP{\gg_k}{\gg_i^{\vee}} \gg_i=\gg_j+\gg_k$.
  Substituting this into  \eqref{e.gkgi}, we
  obtain $\gg_j = - s_1\cdots  \wh{s}_i\cdots
    s_{k-1}(\ga_k)$.  On the other hand, by definition,
    $\gg_j = s_1\cdots s_{j-1}(\ga_j)$.  Equating these two
    expressions for $\gg_j$ and simplifying yields
$s_{j+1}\cdots \wh{s}_i\cdots s_{k-1}(\ga_k)=\ga_j$,
  as required.
 \end{proof}

 \begin{corollary}\label{c.ind-zixi}
   Let $\rs=\pam$. If $\gg_i\in \rs\uis$, then $z_i=x_i$.
 \end{corollary}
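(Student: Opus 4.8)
The plan is to combine Theorem \ref{t.reduced} with Corollary \ref{c.zixi}; the argument is short because both ingredients are already in place. Suppose $\gg_i \in \rs\uis$ with $\rs = \pam$, so that by definition $\gg_i$ is iso-indecomposable in $I(x^{-1})$. This is precisely the negation of condition (iv) of Theorem \ref{t.reduced}. Since conditions (i)--(iv) of that theorem are equivalent, the negation of (iv) is equivalent to the negation of (i); that is, $s_1 \cdots \wh{s}_i \cdots s_l$ is reduced, or equivalently $(s_1, \ldots, \wh{s}_i, \ldots, s_l)$ is a reduced expression.

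Next I would invoke Corollary \ref{c.zixi}, which asserts $z_i \geq x_i$ with equality whenever $(s_1, \ldots, \wh{s}_i, \ldots, s_l)$ is reduced. Applying this in the case at hand yields $z_i = x_i$, as desired.

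I do not expect a genuine obstacle here: the substantive content has already been absorbed into Theorem \ref{t.reduced} (the equivalence of iso-indecomposability of $\gg_i$ with reducedness of the deleted word) and into the $0$-Hecke bookkeeping behind Corollary \ref{c.zixi}. The only point worth stating with a little care is the translation between the phrasings ``$s_1 \cdots \wh{s}_i \cdots s_l$ is not reduced'' and ``$\gg_i$ is iso-decomposable in $I(x^{-1})$,'' but this is exactly the (i) $\Leftrightarrow$ (iv) clause of Theorem \ref{t.reduced}, so nothing further is needed.
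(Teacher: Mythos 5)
Your proposal is correct and follows exactly the paper's own argument: iso-indecomposability of $\gg_i$ negates condition (iv) of Theorem \ref{t.reduced}, hence $(s_1,\ldots,\wh{s}_i,\ldots,s_l)$ is reduced, and Corollary \ref{c.zixi} then gives $z_i=x_i$. Nothing is missing.
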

 \begin{proof}
   If $\gg_i\in\rs\uis$, then $(s_1,\ldots,\wh{s}_i,\ldots,s_l)$ is
   reduced by Theorem \ref{t.reduced}, and thus $z_i=x_i$ by Corollary
   \ref{c.zixi}.
 \end{proof}

\begin{corollary}\label{c.inds}
  Let $\rs=\pam$. Then $(\rs\uis)\zgw = (\rs\uis)\xgw$ and $(\rs\ua)\zgw =
  (\rs\ua)\xgw$.
\end{corollary}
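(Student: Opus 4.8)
The plan is to reduce both equalities to the single fact, already established as Corollary \ref{c.ind-zixi}, that $z_i = x_i$ whenever $\gg_i$ is iso-indecomposable in $\rs = \pam$. Once one knows $z_i = x_i$ for every root $\gg_i$ lying in whichever indecomposable subset is under consideration, the conditions $z_i \geq w$ and $x_i \geq w$ select exactly the same roots of that subset, so the two weighted subsets (in the sense of Definition \ref{d.sgew}) coincide.

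For $(\rs\uis)\zgw = (\rs\uis)\xgw$ there is nothing more to do: every $\gg_i \in \rs\uis$ satisfies $z_i = x_i$ by Corollary \ref{c.ind-zixi}, so the defining conditions $z_i \geq w$ and $x_i \geq w$ agree on $\rs\uis$, forcing the two sets to be equal.

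For $(\rs\ua)\zgw = (\rs\ua)\xgw$ I first establish the inclusion $\rs\ua \subseteq \rs\uis$. When $\A = \Q$ this is immediate from Definition \ref{d.legion}: an iso-decomposition is by definition a particular $\Q$-decomposition, so a root admitting no $\Q$-decomposition admits no iso-decomposition (this is one of the arrows of Figure \ref{f.indec}). When $\A = \Z$ --- which, by the standing convention, is only in force when $\gP$ is simply laced --- the inclusion is precisely Proposition \ref{p.int-iso}. Hence in every case under consideration $\rs\ua \subseteq \rs\uis$, Corollary \ref{c.ind-zixi} applies to each $\gg_i \in \rs\ua$, giving $z_i = x_i$, and the same argument as before yields $(\rs\ua)\zgw = (\rs\ua)\xgw$.

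I do not expect any genuine obstacle here; all the substance lies in Theorem \ref{t.reduced} and Corollary \ref{c.ind-zixi}. The one point needing care is the bookkeeping around $\A$: the $\A = \Z$ half of the second equality genuinely uses the simply-laced hypothesis (through Proposition \ref{p.int-iso}), whereas the first equality and the $\A = \Q$ half of the second hold for any $\gP$.
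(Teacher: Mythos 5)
Your proposal is correct and follows essentially the same route as the paper: the first equality comes directly from Corollary \ref{c.ind-zixi}, and the second is reduced to it via the inclusions $\rs\uq \st \rs\uis$ (immediate from the definitions) and, in the simply laced case, $\rs\uz \st \rs\uis$ from Proposition \ref{p.int-iso}. Your remark on the bookkeeping of $\A$ matches the paper's standing convention, so nothing is missing.
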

\begin{proof}
  The first equation is due to Corollary \ref{c.ind-zixi}.  
Note that the first equation implies that for any $E \subseteq \rs\uis$,
  we have $E\zgw = E\xgw$.  The second
  equation now follows by observing that $\rs\uq \st
  \rs\uis$, and if $\gP$ is simply laced, then $\rs\uz\st \rs\uis$ by
  Proposition \ref{p.int-iso}.
\end{proof}

\begin{remark}\label{r.prev-paper}
  Corollary \ref{c.inds} proves the assertions of \cite[Remark
  5.9]{GrKr:20}.
\end{remark}

\section{Increasing and iso-indecomposability}\label{s.inc-iso}

In this section we show that for $\rs=\pam$ we have that
$\rs\uia \st \rs\uis$, where increasing $\A$-decompositions are
relative to the Demazure weight.  Using this and results of previous
sections, we prove our main technical result:
$\conea (S\zgw) = \conea (S\xgw)$.

\begin{lemma}\label{l.iso-inc}
  Let $\rs=\pam$, and let $\gg\in \rs$. If there exists an
  iso-decomposition of $\gg$ in $\rs$, then there exists an increasing
  iso-decomposition of $\gg$ in $\rs$.
\end{lemma}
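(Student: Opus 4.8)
The plan is to start from an arbitrary iso-decomposition $c\gg = \gg_j + \gg_k$ of $\gg$ in $\rs = \pam$ (with $\|\gg_j\| = \|\gg_k\|$) and upgrade it to an increasing one, i.e.\ one in which $z(\gg_j) \geq z(\gg)$ and $z(\gg_k) \geq z(\gg)$, where $z$ is the Demazure weight $\gg_m \mapsto z_m$. Write $\gg = \gg_i$ for the appropriate index. By Theorem~\ref{t.reduced}, the existence of this iso-decomposition is equivalent to $s_1 \cdots \wh{s}_i \cdots s_l$ being \emph{not} reduced, and moreover forces $j < i < k$ together with the explicit relation $\ga_j = s_{j+1} \cdots \wh{s}_i \cdots s_{k-1}(\ga_k)$ and $c = \IP{\gg_k}{\gg_i^\vee} > 0$. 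So the content of the lemma is really about the Demazure products $z_j, z_i, z_k$ attached to the indices appearing in this combinatorial relation.

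The key step will be to compare $z_i$ with $z_j$ and $z_k$ using the $0$-Hecke algebra relations. Recall $H_{z_m} = H_{s_1} \cdots \wh{H}_{s_m} \cdots H_{s_l}$. The sequence $(s_1, \ldots, \wh{s}_j, \ldots, \wh{s}_i, \ldots, s_l)$ (delete both $s_j$ and $s_i$) is a common subexpression of both $(s_1,\ldots,\wh{s}_i,\ldots,s_l)$ and $(s_1,\ldots,\wh{s}_j,\ldots,s_l)$; by Corollary~\ref{c.heckecons}(ii), deleting more letters can only decrease the Demazure product, so one gets inequalities relating $z_i$, $z_j$ to this doubly-deleted product. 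The real work is to show $z_i \leq z_j$ and $z_i \leq z_k$. Here I expect to exploit the relation in Theorem~\ref{t.reduced}(ii): because $\ga_j = s_{j+1}\cdots \wh{s}_i \cdots s_{k-1}(\ga_k)$, the letter $s_j$ ``is'' the conjugate of $s_k$ by the intervening word with $s_i$ deleted, which should let one rewrite $(s_1, \ldots, \wh{s}_i, \ldots, s_l)$ so that deleting $s_j$ from it gives the same Demazure product as deleting $s_i$ — or more precisely, that $H_{z_i} = H_{s_1} \cdots \wh{H}_{s_i} \cdots H_{s_l}$ can be factored to exhibit $H_{z_i} \leq H_{z_j}$ in the natural order on the $0$-Hecke algebra (equivalently $z_i \leq z_j$ in Bruhat order via Lemma~\ref{l.heckefact}). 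A symmetric argument, using that the relation also reads as expressing $\ga_k$ in terms of a word around $\ga_j$ with $s_i$ omitted, should give $z_i \leq z_k$.

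The main obstacle I anticipate is the bookkeeping in that $0$-Hecke computation: one must carefully track how the braid-type relation $\ga_j = s_{j+1}\cdots \wh{s}_i\cdots s_{k-1}(\ga_k)$ translates into an identity of products $H_{s_j} H_{s_{j+1}} \cdots$ versus $H_{s_{j+1}} \cdots H_{s_i} \cdots$, and confirm the deletion of $s_i$ from the full word is dominated (in the $0$-Hecke order) by the deletion of $s_j$. Once $z_i \leq z_j$ and $z_i \leq z_k$ are established, i.e.\ $z(\gg_i) \leq z(\gg_j)$ and $z(\gg_i) \leq z(\gg_k)$, the decomposition $c\gg_i = \gg_j + \gg_k$ is by definition increasing, and the lemma follows. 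As a sanity check this should be compatible with Corollary~\ref{c.ind-zixi}/Theorem~\ref{t.reduced}: if no iso-decomposition exists then $z_i = x_i$ and there is nothing to prove, and when one does exist the indices $j,k$ with $z_j, z_k \geq z_i$ are exactly what is needed downstream in Section~\ref{s.inc-iso} to conclude $\rs\uia \subseteq \rs\uis$.
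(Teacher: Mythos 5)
Your proposal correctly identifies the target — produce a pair $j<i<k$ with $c\gg_i=\gg_j+\gg_k$, $\|\gg_j\|=\|\gg_k\|$, and $z_j,z_k\geq z_i$ — and correctly invokes Theorem \ref{t.reduced} to translate iso-decomposability into non-reducedness of $s_1\cdots\wh{s}_i\cdots s_l$. But the central step, proving $z_j,z_k\geq z_i$, is left as an ``anticipated obstacle'' rather than carried out, and the route you sketch for it does not work as stated. First, you aim to show the inequalities for an \emph{arbitrary} iso-decomposition, i.e.\ for whatever pair $(j,k)$ you are handed; the paper instead \emph{chooses} the pair: take $k>i$ minimal with $\ell(s_1\cdots\wh{s}_i\cdots s_k)<\ell(s_1\cdots\wh{s}_i\cdots s_{k-1})$ and then $j<i$ maximal with $\ell(s_j\cdots\wh{s}_i\cdots s_k)<\ell(s_{j+1}\cdots\wh{s}_i\cdots s_k)$ (this is exactly the pair produced in the proof of the exchange property, \cite[Theorem 1.7]{Hum:90}, so it satisfies Theorem \ref{t.reduced}(ii)--(iii)). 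This choice is what makes the $0$-Hecke argument go: since $s_1\cdots\wh{s}_i\cdots s_{k-1}$ is reduced and appending $s_k$ drops the length, one has the absorption identity $H_{s_1}\cdots\wh{H}_{s_i}\cdots H_{s_{k-1}}=H_{s_1}\cdots\wh{H}_{s_i}\cdots H_{s_k}$; multiplying by $H_{s_{k+1}}\cdots H_{s_l}$ shows that $z_i$ equals the Demazure product of the doubly-deleted word $\vr=(s_1,\ldots,\wh{s}_i,\ldots,\wh{s}_k,\ldots,s_l)$, and since $\vr$ is a subexpression of $(s_1,\ldots,\wh{s}_k,\ldots,s_l)$, Corollary \ref{c.heckecons}(ii) gives $z_k\geq z_{\vr}=z_i$; the inequality $z_j\geq z_i$ follows symmetrically from the maximality of $j$. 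For an arbitrary pair $(j,k)$ satisfying Theorem \ref{t.reduced}(ii), the prefix $s_1\cdots\wh{s}_i\cdots s_{k-1}$ need not be reduced, the absorption identity is not available, and you have no replacement for it; note also that the lemma only requires \emph{some} increasing iso-decomposition to exist, so nothing forces you to work with the given pair.

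Second, the specific mechanism you propose is pointed in the wrong direction: comparing both $(s_1,\ldots,\wh{s}_i,\ldots,s_l)$ and $(s_1,\ldots,\wh{s}_j,\ldots,s_l)$ to their common subexpression $(s_1,\ldots,\wh{s}_j,\ldots,\wh{s}_i,\ldots,s_l)$ via Corollary \ref{c.heckecons}(ii) only bounds $z_i$ and $z_j$ \emph{from below} by the doubly-deleted Demazure product, which cannot yield $z_i\leq z_j$. What is needed is an \emph{equality} between $z_i$ and a doubly-deleted Demazure product (the absorption step above), after which the subexpression comparison is applied to the word with the \emph{other} letter deleted. So the proposal has the right ingredients in view (Theorem \ref{t.reduced}, the $0$-Hecke relations, Corollary \ref{c.heckecons}) but is missing the decisive idea — the minimal/maximal choice of $(j,k)$ and the resulting absorption identity — and as written the sketched comparison would not close the gap.
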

\begin{proof}
  We have that $\gg=\gg_i$ for some $i$.  Assume that $\gg_i$ is
  iso-decomposable.  By Theorem \ref{t.reduced},
  $s_1\cdots \wh{s}_i\cdots s_l$ is not reduced.  Choose $k>i$ minimal
  such that
  $\ell(s_1\cdots \wh{s}_i\cdots s_k)<\ell(s_1\cdots \wh{s}_i\cdots
  s_{k-1})$, and then $j<i$ maximal such that
  $\ell(s_j\cdots \wh{s}_i\cdots s_{k})<\ell(s_{j+1}\cdots
  \wh{s}_i\cdots s_{k})$. It is shown in the proof of \cite[Theorem
  1.7]{Hum:90} that $j,k$ satisfy Theorem \ref{t.reduced}(ii); thus
  they satisfy Theorem \ref{t.reduced}(iii), i.e.,
  $c\gg_i=\gg_j+\gg_k$ with $\|\gg_j\|=\|\gg_k\|$. We will show that
  $z_j,z_k\geq z_i$, thus completing the proof.

  By choice of $k$, $s_1\cdots \wh{s}_i\cdots s_{k-1}$ is reduced but
  $s_1\cdots \wh{s}_i\cdots s_k$ is not.  Thus
  \begin{equation*}
    H_{s_1}\cdots \wh{H}_{s_i}\cdots H_{s_{k-1}}=H_{s_1\cdots
      \wh{s}_i\cdots s_{k-1}}=H_{s_1\cdots
      \wh{s}_i\cdots s_{k-1}}H_{s_k} = H_{s_1}\cdots \wh{H}_{s_i}\cdots H_{s_k}
  \end{equation*}
  If we multiply on the right by $H_{s_{k+1}}\cdots H_{s_l}$, we
  obtain $H_{s_1}\cdots \wh{H}_{s_i}\cdots \wh{H}_{s_k}\cdots
  H_{s_l}=$ $H_{s_1}\cdots \wh{H}_{s_i}\cdots H_{s_l}$.  Letting
  $\vr=(s_1,\ldots,\wh{s}_i,\ldots,\wh{s}_k,\ldots,s_l)$, we have
  $z_{\vr}=z_i$. Since $\vr$ is a subexpression of
  $(s_1,\ldots,\wh{s}_k,\ldots,s_l)$, Corollary \ref{c.heckecons}(ii)
  implies $z_k\geq z_{\vr}$. Therefore $z_k\geq z_i$.

  Using the fact that $s_{j+1}\cdots \wh{s}_i\cdots s_{k}$ is reduced
  but $s_j\cdots \wh{s}_i\cdots s_{k}$ is not, a similar argument
  yields $z_j\geq z_i$.
\end{proof}

\begin{lemma}\label{l.iso-preserve}
  Let $\rs=\pam$, and let $\gg\in \rs\gew$. If there exists an
  iso-decomposition of $\gg$ in $\rs$, then there exists an
  iso-decomposition of $\gg$ in $\rs\gew$.
\end{lemma}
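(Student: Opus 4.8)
The plan is to deduce the statement immediately from Lemma \ref{l.iso-inc} together with transitivity of the partial order on $W$, in exactly the spirit of Remark \ref{r.inc-pres-zgw}. Write $\gg = \gg_i$. Recall that $\rs\gew = \rs_{z \geq w}$ is defined using the Demazure weight $z(\gg_i) = z_i$, so the hypothesis $\gg \in \rs\gew$ says precisely that $z_i \geq w$. Since $\gg_i$ is assumed iso-decomposable in $\rs$, Lemma \ref{l.iso-inc} produces an \emph{increasing} iso-decomposition of $\gg_i$ in $\rs$, say $c\gg_i = \gg_j + \gg_k$ with $\|\gg_j\| = \|\gg_k\|$ and, because the decomposition is increasing relative to the Demazure weight, $z_j \geq z_i$ and $z_k \geq z_i$.

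First I would observe that $z_j \geq z_i \geq w$ and $z_k \geq z_i \geq w$ by transitivity, so both $\gg_j$ and $\gg_k$ lie in $\rs\gew$. Hence the relation $c\gg_i = \gg_j + \gg_k$, which is a $\Q$-decomposition of $\gg$ of the required symmetric form $\gg = (c^{-1})\gg_j + (c^{-1})\gg_k$ with equal norms, is an iso-decomposition of $\gg$ all of whose summands lie in $\rs\gew$; equivalently, it is an iso-decomposition of $\gg$ in the root set $\rs\gew$. That finishes the argument.

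There is no genuine obstacle here: the real work is carried out in Lemma \ref{l.iso-inc}, and the present lemma is just the remark that increasing decompositions preserve the sublevel set $\rs\gew$, as already recorded in Remark \ref{r.inc-pres-zgw}. The only point requiring care is the bookkeeping of which weight is meant by ``increasing'': it must be the Demazure weight, i.e.\ the same weight used to define $\rs\gew$, which is exactly the convention under which Lemma \ref{l.iso-inc} is stated.
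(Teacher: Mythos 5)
Your proposal is correct and is essentially the paper's own argument: the paper likewise deduces the lemma directly from Lemma \ref{l.iso-inc} together with the observation (Remark \ref{r.inc-pres-zgw}) that increasing decompositions relative to the Demazure weight preserve $\rs\gew$. Your write-up just makes explicit the transitivity step $z_j, z_k \geq z_i \geq w$ that the paper leaves implicit.
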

\begin{proof}
  This follows from Lemma \ref{l.iso-inc} and the fact that increasing
  linear combinations preserve $\rs\gew$ (see Remark
  \ref{r.inc-pres-zgw}).
\end{proof}

\begin{proposition}\label{p.invc}
  Let $\rs=\pam$. Then $\rs\uia\st \rs\uis$.
\end{proposition}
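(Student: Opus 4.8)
The plan is to establish the contrapositive. Recall that $\rs\uis$ consists of the iso-indecomposable elements of $\rs=\pam$ and $\rs\uia$ of the increasing $\A$-indecomposable elements, where ``increasing'' is taken relative to the Demazure weight $z$. So it suffices to show: if $\gg\in\rs$ is iso-decomposable in $\rs$, then $\gg$ admits an increasing $\A$-decomposition in $\rs$.

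First I would feed $\gg$ into Lemma \ref{l.iso-inc}, which upgrades the given iso-decomposition to an \emph{increasing} one. Unwinding Theorem \ref{t.reduced}(iii) and writing $\gg=\gg_i$, this increasing iso-decomposition has the shape $\gg=\tfrac1c\gg_j+\tfrac1c\gg_k$ with $j<i<k$, $c=\IP{\gg_k}{\gg_i^\vee}>0$, $\|\gg_j\|=\|\gg_k\|$, and $z(\gg_j),z(\gg_k)\geq z(\gg)$; it is genuinely a decomposition since $\gg_j,\gg_k\neq\gg_i$ by Lemma \ref{l.red_in}(i). By Definition \ref{d.legion} an iso-decomposition is in particular a $\Q$-decomposition, so this expression is an increasing $\Q$-decomposition of $\gg$. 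Hence $\gg\notin\rs\uiq$, which settles the case $\A=\Q$.

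It remains to treat $\A=\Z$, which by the standing convention need only be checked when $\gP$ is simply laced. There I would invoke Lemma \ref{l.simpiso}: from $c\gg_i=\gg_j+\gg_k$ with $\gg_j\neq\gg_i$ one gets $c=1$, so $\gg=\gg_j+\gg_k$ is already an increasing $\Z$-decomposition, whence $\gg\notin\rs\uiz$. Combining the two cases, $\gg\notin\rs\uia$, as required.

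I do not anticipate a real obstacle here: all the substance has been front-loaded into Lemma \ref{l.iso-inc} (whose proof carries out the $0$-Hecke-algebra computation yielding $z_j,z_k\geq z_i$) and into the elementary simply-laced Lemma \ref{l.simpiso}. The only care needed is bookkeeping — confirming that the decomposition produced is nontrivial, and reconciling the coefficient convention $c\gg_i=\gg_j+\gg_k$ of Theorem \ref{t.reduced}(iii) with the form $\ga=c\ga_1+c\ga_2$ demanded by the definition of iso-decomposition.
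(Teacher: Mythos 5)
Your proposal is correct and is essentially the paper's own argument: both take the contrapositive, feed the iso-decomposable $\gg_i$ into Lemma \ref{l.iso-inc} to obtain an increasing iso-decomposition $c\gg_i=\gg_j+\gg_k$, observe this is an increasing $\Q$-decomposition, and in the simply laced case force $c=1$ (the paper cites Lemma \ref{l.d-decomp} directly, you cite its corollary Lemma \ref{l.simpiso} — no substantive difference). The extra bookkeeping you include (nontriviality via Lemma \ref{l.red_in}(i) and the coefficient convention) is fine but not needed beyond what the paper records.
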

\begin{proof}
  Suppose $\gg_i\notin\rs\uis$.  By Lemma \ref{l.iso-inc}, there exist
  $j,k$ such that $c\gg_i=\gg_j+\gg_k$ is increasing and $c\in\Q$.  If
  $\gP$ is simply laced, then $c=1$, by Lemma \ref{l.d-decomp}. Thus
  $\gg_i\notin \rs\uia$.
\end{proof}

\begin{corollary}\label{c.isoind}
  Let $\rs=\pam$. Then $\isnd{(S\zgw)} = (\isnd{S})\zgw$.
\end{corollary}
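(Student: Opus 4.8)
The plan is to prove the two inclusions of the asserted equality $\isnd{(S\zgw)} = (\isnd{S})\zgw$ separately, where throughout $\isnd{T} = T\uis$ denotes the set of iso-indecomposable elements of a root set $T$, $\rs = \pam$, and $z$ is the Demazure weight. The substantive content has already been isolated in Lemma \ref{l.iso-preserve}; what remains is a short formal argument, parallel to Lemma \ref{l.sam}(i)--(ii) but with iso-indecomposability in place of $\A$-indecomposability. As in the proof of Lemma \ref{l.sam}, I would start from the identity $(S\uis)\zgw = S\uis \cap S\zgw$.

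First I would treat the inclusion $(S\uis)\zgw \subseteq (S\zgw)\uis$. Suppose $\gg \in S\uis \cap S\zgw$. If $c\gg = \gg_1 + \gg_2$ were an iso-decomposition of $\gg$ with $\gg_1, \gg_2 \in S\zgw$, then, since $S\zgw \subseteq S$, it would also be an iso-decomposition of $\gg$ in $S$, contradicting $\gg \in S\uis$. Hence $\gg$ is iso-indecomposable in $S\zgw$, i.e.\ $\gg \in (S\zgw)\uis$. This direction uses nothing special about $\pam$: passing to a subset can only remove, never create, iso-decompositions.

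For the reverse inclusion $(S\zgw)\uis \subseteq (S\uis)\zgw$, take $\gg \in (S\zgw)\uis$; in particular $\gg \in S\zgw$. Suppose toward a contradiction that $\gg \notin S\uis$, i.e.\ $\gg$ is iso-decomposable in $\rs = \pam$. Since $\gg \in S\zgw = \rs\gew$, Lemma \ref{l.iso-preserve} produces an iso-decomposition of $\gg$ lying entirely inside $S\zgw$, contradicting that $\gg$ is iso-indecomposable in $S\zgw$. Therefore $\gg \in S\uis$, so $\gg \in S\uis \cap S\zgw = (S\uis)\zgw$. Combining the two inclusions finishes the proof.

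I do not expect a genuine obstacle here: the only hypothesis to verify is that of Lemma \ref{l.iso-preserve}, namely that the ambient root set is $\pam$ and the element in question lies in $\rs\gew$, and both hold by construction. The real work --- converting an arbitrary iso-decomposition into an \emph{increasing} one, which is precisely what allows one to stay within $S\zgw$ --- was carried out in Lemmas \ref{l.iso-inc} and \ref{l.iso-preserve} via Theorem \ref{t.reduced}, and I would simply invoke it.
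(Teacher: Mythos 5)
Your proposal is correct and follows essentially the same route as the paper: one inclusion by the observation that iso-indecomposability in $S$ passes to the subset $S\zgw$, and the reverse inclusion as the contrapositive of Lemma \ref{l.iso-preserve}. No gaps.
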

\begin{proof}
  By definition, $ (\isnd{S})\zgw = \isnd{S} \cap S\zgw$.  The
  inclusion $\isnd{(S\zgw)} \supseteq (\isnd{S})\zgw$ holds because
  any element of $S\zgw$ that is iso-indecomposable in $S$ is
  iso-indecomposable in the smaller set $S\zgw$.  The reverse
  inclusion is the contrapositive of Lemma \ref{l.iso-preserve}.
\end{proof}

\begin{corollary}\label{c.ind-opp}
  Let $\rs=\pam$. Then $\conea (\rs\zgw) = \conea (\rs \xgw)$, or
  equivalently, $(\rs\zgw)\ua=(\rs\xgw)\ua$.
\end{corollary}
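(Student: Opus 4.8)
The plan is to deduce Corollary~\ref{c.ind-opp} by combining three facts established earlier in the paper, following the outline given in Section~\ref{ss.proofoutline}. First, note that the equivalence of the two formulations---$\conea(\rs\zgw) = \conea(\rs\xgw)$ versus $(\rs\zgw)\ua = (\rs\xgw)\ua$---is immediate from Corollary~\ref{c.conez-zind}, so it suffices to prove the cone equality. One inclusion is easy: since $x_i \leq z_i$ for all $i$ by Corollary~\ref{c.zixi}, if $x_i \geq w$ then $z_i \geq w$, so $\rs\xgw \subseteq \rs\zgw$, giving $\conea(\rs\xgw) \subseteq \conea(\rs\zgw)$ at once.

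The reverse inclusion $\conea(\rs\zgw) \subseteq \conea(\rs\xgw)$ is the substance. First I would invoke Corollary~\ref{c.inc-comb} with $\A$ and the Demazure weight $z$: every element of $\rs\zgw$ is a positive $\A$-linear combination of elements of $(\rs\uia)\zgw$, so $\conea(\rs\zgw) = \conea((\rs\uia)\zgw)$. Next, apply Proposition~\ref{p.invc}, which says $\rs\uia \subseteq \rs\uis$; hence $(\rs\uia)\zgw \subseteq (\rs\uis)\zgw$. Then apply the first equation of Corollary~\ref{c.inds}, namely $(\rs\uis)\zgw = (\rs\uis)\xgw$, which rests on Corollary~\ref{c.ind-zixi} (iso-indecomposable $\gg_i$ satisfy $z_i = x_i$). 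Since $\rs\uia \subseteq \rs\uis$, restricting to $\rs\uia$ gives $(\rs\uia)\zgw = (\rs\uia)\xgw$ (an element $\gg_i \in \rs\uia$ has $z_i = x_i$, so the conditions $z_i \geq w$ and $x_i \geq w$ coincide). Finally, $(\rs\uia)\xgw \subseteq \rs\xgw$, so $\conea((\rs\uia)\xgw) \subseteq \conea(\rs\xgw)$. Chaining these: $\conea(\rs\zgw) = \conea((\rs\uia)\zgw) = \conea((\rs\uia)\xgw) \subseteq \conea(\rs\xgw)$.

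I do not expect a genuine obstacle here, since all the hard work---the analysis of increasing $\A$-indecomposability in Section~\ref{s.decind}, the structure theorem for iso-indecomposability in Section~\ref{s.inversion-set}, and the containment $\rs\uia \subseteq \rs\uis$ in the present section---has already been done. The only point requiring a word of care is the passage from $(\rs\uis)\zgw = (\rs\uis)\xgw$ to $(\rs\uia)\zgw = (\rs\uia)\xgw$; this is exactly the observation recorded in the proof of Corollary~\ref{c.inds} that the first equation of that corollary implies $E\zgw = E\xgw$ for any $E \subseteq \rs\uis$, applied with $E = \rs\uia$. With that in hand, the proof is just the four-line chain of inclusions above. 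If one prefers to be even more economical, one can simply cite Corollary~\ref{c.inc-comb}, Proposition~\ref{p.invc}, and Corollary~\ref{c.inds} in sequence and let the reader assemble the chain.
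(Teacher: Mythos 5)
Your proposal is correct and follows essentially the same route as the paper: the easy inclusion from $x_i\leq z_i$, and the hard inclusion via the chain Corollary~\ref{c.inc-comb}, Proposition~\ref{p.invc}, and Corollary~\ref{c.inds}, with the equivalence of the two formulations coming from Corollary~\ref{c.conez-zind}. The only cosmetic difference is that you apply $z_i=x_i$ at the level of $\rs\uia$ rather than $\rs\uis$, which is the same observation the paper records in the proof of Corollary~\ref{c.inds}.
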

\begin{proof}
  Since $x_i\leq z_i$ for all $i$, $\conea (\rs\xgw) \st \conea (\rs
  \zgw)$. The other inclusion follows from $ \rs\zgw \st \conea
  ((\rs\uia)\zgw) \st \conea ((\rs\uis)\zgw) = \conea
  ((\isnd{\rs})\xgw) \st \conea (\rs\xgw)$, where the first and second
  inclusions are due to Propositions \ref{c.inc-comb} and
  \ref{p.invc}, and the equality is due to Corollary \ref{c.inds}.
  The equivalence of the second equality of this corollary is due to
  Corollary \ref{c.conez-zind}.
\end{proof}

\begin{remark} \label{r.ind-opp} By Lemma \ref{l.sam}(iii), Corollary
  \ref{c.ind-opp} is a stronger statement than
  $(\rs\ua)\zgw = (\rs\ua)\xgw$, which was proved in Corollary
  \ref{c.inds}. 
\end{remark}

\section{Equivalent indecomposabilities}\label{s.dec-iso-dec}
The main theorem of this section is the following.

\begin{theorem} \label{t.iso-gen} Let $\Phi$ be of classical type, and let $S = I(x^{-1})$.
Let $\ga \in S$.  Then
$\ga$ is rationally indecomposable $\Leftrightarrow$ $\ga$ is iso-indecomposable.
If $\Phi$ is simply laced, these conditions are equivalent to the condition
that $\ga$ is integrally indecomposable.
\end{theorem}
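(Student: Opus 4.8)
The plan is to prove the two equivalences separately, exploiting the general implications already recorded in Figure \ref{f.indec} and the inversion-set results of Section \ref{s.inversion-set}, and then to reduce the remaining implications to a type-by-type combinatorial analysis of inversion sets in classical root systems. First, for the simply laced case, recall that Proposition \ref{p.int-iso} gives $S^{\Z}\subseteq S^{\ddag}$ for any root set in a simply laced system; and trivially any integral decomposition is a rational decomposition, so $S^{\Q}\subseteq S^{\Z}$. Thus it suffices to prove, for $S=I(x^{-1})$ in a simply laced classical type (that is, type $A$ or $D$), that $S^{\ddag}\subseteq S^{\Q}$, i.e. that every iso-indecomposable root is rationally indecomposable. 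For the non-simply-laced classical types (that is, types $B$ and $C$), I must prove both inclusions $S^{\Q}\subseteq S^{\ddag}$ and $S^{\ddag}\subseteq S^{\Q}$, where again $S^{\Q}\subseteq S^{\ddag}$ is immediate from the definitions, so the real content is $S^{\ddag}\subseteq S^{\Q}$ in all classical types.

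So the crux is a single statement: \emph{for $S=I(x^{-1})$ in a classical root system, a rationally decomposable root is iso-decomposable}, equivalently $S\setminus S^{\Q}\subseteq S\setminus S^{\ddag}$, equivalently $S^{\ddag}\subseteq S^{\Q}$. I would attack this using the explicit coordinate description of roots and of inversion sets in each classical type. In type $A_n$ the roots are $\gre_a-\gre_b$ and every decomposition $\gre_a-\gre_b=(\gre_a-\gre_c)+(\gre_c-\gre_b)$ already has $c=1$ and equal-length summands, so $S^{\Q}=S^{\ddag}$ is essentially automatic; in type $D_n$ the roots are $\pm\gre_a\pm\gre_b$, all of the same length, so any rational decomposition $c\ga=\gb+\gg$ forces $c=1$ by Lemma \ref{l.simpiso} (this is precisely Proposition \ref{p.int-iso}), and one only needs to check that a genuine decomposition inside $I(x^{-1})$ can be taken two-term with equal norms — which follows by writing any multi-term decomposition, grouping, and using Lemma \ref{l.red_in}(iii) to locate a summand on each side of $\gg_i$. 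The substantive work is in types $B_n$ and $C_n$. There the root $\gre_a$ (short, type $B$) or $2\gre_a$ (long, type $C$) can decompose with $c\neq 1$, e.g. $\gre_a=\tfrac12(\gre_a+\gre_b)+\tfrac12(\gre_a-\gre_b)$, and this \emph{is} an iso-decomposition, so it is consistent with the claim; what must be ruled out is a rational decomposition of some root in $I(x^{-1})$ that is genuinely not reducible to an equal-norm two-term form. The strategy is: given a rational decomposition $\ga=\sum c_i\ga_i$ of $\ga\in I(x^{-1})$, apply Lemma \ref{l.red_in}(ii)–(iii) (and the enumeration $\gg_i=s_1\cdots s_{i-1}(\ga_i)$) to extract two indices $j<i<k$ with a relation $c\gg_i=\gg_j+\gg_k$; then do the finite case analysis on the coordinate form of $\gg_j,\gg_k$ (long vs.\ short, signs of coordinates) to show $\|\gg_j\|=\|\gg_k\|$, invoking the fact that $I(x^{-1})$ is an inversion set (closed under the relevant "betweenness", and the biconvexity constraints from $x^{-1}$ sending exactly these roots negative) to exclude the bad mixed-length configurations.

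The main obstacle I anticipate is exactly this type $B$/$C$ case analysis: in an inversion set, both a long root and its two short "halves" can be present, so one cannot argue purely root-system-theoretically (the statement is false for $S=\Phi^+$ — see Example \ref{ex.nonterminate}, where $\gre_1+\gre_2\in (\Phi^+)^{\Q}$ but is iso-decomposable hence $\notin (\Phi^+)^{\ddag}$?—no, there $\gre_1+\gre_2$ is iso-decomposed as $\gre_1+\gre_2$, so it is fine; the real point is that one must use inversion-set structure, not just root-system structure, to control which decompositions occur). I would organize the proof as: (1) reduce to the claim $S^{\ddag}\subseteq S^{\Q}$ using Proposition \ref{p.int-iso} and the trivial inclusions; (2) dispose of types $A$ and $D$ quickly via Lemma \ref{l.simpiso} plus Lemma \ref{l.red_in}(iii); (3) handle types $B$ and $C$ by the coordinate/inversion-set analysis above, the heart being a lemma that any rational decomposition of $\gg_i\in I(x^{-1})$ yields indices $j<i<k$ with $c\gg_i=\gg_j+\gg_k$ and $\|\gg_j\|=\|\gg_k\|$, which is just Theorem \ref{t.reduced}(iii)⇔(iv) once one knows a decomposition forces non-reducedness of $s_1\cdots\wh{s}_i\cdots s_l$ — so in fact the cleanest route may be to show directly that \emph{rational decomposability of $\gg_i$ in $I(x^{-1})$ implies $s_1\cdots\wh{s}_i\cdots s_l$ is not reduced}, after which Theorem \ref{t.reduced} gives iso-decomposability for free, and the type-by-type work collapses into verifying that non-reducedness implication in $B$ and $C$.
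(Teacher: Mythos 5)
Your reduction is fine: the inclusions $S\uq\st S\uis$ and $S\uq\st S\uz$ are trivial, Proposition \ref{p.int-iso} gives $S\uz\st S\uis$ in simply laced types, and everything hinges on showing that a rationally decomposable element of $S=\pam$ is iso-decomposable. But that is exactly the step you do not actually prove, and the tools you invoke cannot do the job. Lemma \ref{l.red_in}(iii) and Theorem \ref{t.reduced}(iii)$\Leftrightarrow$(iv) only apply once a \emph{two-term} relation $c\gg_i=\gg_j+\gg_k$ is already in hand; they give no way to ``extract'' such a relation from a general rational decomposition $\ga=\sum_i c_i\ga_i$ with three or more terms and non-integer coefficients, and naive ``grouping'' fails because partial sums need not be roots, let alone elements of $S$. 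This multi-term-to-two-term reduction is the real content of the theorem (it is Proposition \ref{p.iso-bi-gen}, (iii)$\Rightarrow$(ii), in the paper), and it is false for arbitrary root sets: in type $D$, the set $\{\gre_p\pm\gre_a,\ \gre_q\pm\gre_b,\ \gre_p+\gre_q\}$ has $\gre_p+\gre_q=\tfrac12\sum(\gre_p\pm\gre_a)+\tfrac12\sum(\gre_q\pm\gre_b)$ with no bi-decomposition, so the closedness of inversion sets (Lemma \ref{l.closed}: closure under root-sums of members \emph{and} the condition that $\ga+\gb\in S$ forces $\ga\in S$ or $\gb\in S$) must enter the argument in an essential way, which your sketch never does. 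In particular your claim that types $A$ and $D$ are ``quick'' is wrong: type $D$ is the hardest case in the paper's proof, where precisely the configuration above arises and is killed only by the biconvexity of $\pam$. The paper handles all this by a minimal-counterexample argument (clear denominators, minimize $d$, then $\sum m_i$, then $\sum m_i|\ga_i|^2$), deduces that no two summands add to a root and all pairwise inner products are nonnegative, and then does the coordinate case analysis type by type; nothing of this structure appears in your proposal, and the $B$/$C$ analysis you describe remains a plan rather than a proof.

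Your proposed ``cleanest route'' at the end is also circular: by Theorem \ref{t.reduced}, non-reducedness of $s_1\cdots\wh{s}_i\cdots s_l$ is \emph{equivalent} to iso-decomposability of $\gg_i$ in $\pam$, so ``rational decomposability implies non-reducedness'' is literally a restatement of the claim to be proved, not a reduction of it. One further ingredient you would need even after obtaining a bi-decomposition is the implication bi-decomposable $\Rightarrow$ iso-decomposable (the paper proves this via rank-two subsystems $\Phi\cap V_X$ and the nearest-neighbor roots $\ga',\ga''$, again using closedness of $S$); in types $B$ and $C$ a bi-decomposition need not have equal-norm summands, so this step is not automatic either.
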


A more precise statement, Proposition \ref{p.iso-bi-gen}, is given in Section \ref{ss.indclos}.
The special case of $S = \Phi^+$ is studied in Section \ref{ss.indgp}.

The equivalence of these indecomposabilities, which we view as of
independent interest, has two main applications in this paper.  First,
it is used in this section to prove that in classical types,
$(\rs\ua)\zgw=(\rs\zgw)\ua$ and $(\rs\ua)\xgw=(\rs\xgw)\ua$. In
Section \ref{s.ind-tinv}, we show that the first of these equalities
implies that in classical types, $\ptak\cap \rs\ua =
(\ptak)\ua$. Second, it allows us to prove Corollary \ref{c.adind}: in
types $A$ and $D$, $x$ is fully commutative if and only if all
elements of $\rs$ are integrally indecomposable. This leads, in
Section \ref{s.smoothness}, to smoothness criteria for fully
commutative $x$ in types $A$ and $D$.

\subsection{Indecomposability in closed subsets of $\gP^+$} \label{ss.indclos}

It is convenient to introduce the following characterization of inversion sets,
which is a slight variation of the characterization given by Papi \cite{Pap:94}.
We shall say that a root set $S\st \gP^+$ is {\it closed} if (i) $\ga,
\gb \in S$ and $r \ga+s\gb \in \Phi$ for positive real numbers $r$ and
$s$ imply $r\ga+s\gb \in S$, and (ii) $\ga,\gb\in \Phi^+$ and
$\ga+\gb\in S$ imply $\ga\in S$ or $\gb\in S$.  
The following lemma is a simple consequence of Papi's characterization.

\begin{lemma} \label{l.closed}
Let $S \st \gP^+$.  Then $S$ is closed $\Leftrightarrow$ $S = \pam$ for some $x \in W$.
\end{lemma}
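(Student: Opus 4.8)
The statement asserts that a root set $S\st\gP^+$ is closed in the sense just defined if and only if $S=\pam$ for some $x\in W$. I would prove this by reducing to Papi's characterization \cite{Pap:94}, which (in the formulation we will cite) says that $S\st\gP^+$ is the inversion set of some $w\in W$ if and only if $S$ is \emph{biconvex}: both $S$ and its complement $\gP^+\setminus S$ are closed under taking sums of roots that land in $\gP^+$. So the entire content of the lemma is the equivalence between our two-part condition (i)--(ii) and biconvexity; the existence of $x$ then comes for free from Papi (noting $\pam=\gP^+\setminus x(\gP^-)$ is the inversion set of $x^{-1}$, which ranges over all inversion sets as $x$ ranges over $W$).

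The plan is therefore as follows. First I would record the precise statement of Papi's criterion that I am citing, namely: $S$ is an inversion set $\Leftrightarrow$ ($\ga,\gb\in S$, $\ga+\gb\in\gP^+$ $\Rightarrow$ $\ga+\gb\in S$) and ($\ga,\gb\in\gP^+\setminus S$, $\ga+\gb\in\gP^+$ $\Rightarrow$ $\ga+\gb\in\gP^+\setminus S$). Condition (ii) in our definition is literally the contrapositive of the second of these (if $\ga,\gb\notin S$ but $\ga+\gb\in S$, that violates coclosedness; conversely coclosedness says exactly that $\ga+\gb\in S$ forces $\ga\in S$ or $\gb\in S$). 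So (ii) $\Leftrightarrow$ coclosedness of $S$ is immediate. The only real work is matching our condition (i)---closure under $r\ga+s\gb\in\gP$ for \emph{positive reals} $r,s$---with the ordinary additive closure ($\ga,\gb\in S$, $\ga+\gb\in\gP^+\Rightarrow\ga+\gb\in S$). Clearly (i) implies ordinary additive closure (take $r=s=1$, and note $\ga+\gb\in\gP$ with $\ga,\gb>0$ forces $\ga+\gb\in\gP^+$). For the converse, I would argue that if $r\ga+s\gb$ is a root with $r,s>0$ and $\ga,\gb$ roots, then in a rank-two root subsystem $r\ga+s\gb$ can be rewritten as a \emph{nonnegative integer} combination $p\ga+q\gb$ of $\ga,\gb$ with $p,q\geq 1$ (this is the standard fact that a positive root in the span of $\ga,\gb$ lying strictly inside the cone $\R_{>0}\ga+\R_{>0}\gb$ is an $\N$-combination with both coefficients $\geq 1$), and then iterate ordinary additive closure: $p\ga+q\gb=\ga+(\,(p-1)\ga+q\gb\,)$, etc., peeling off one $\ga$ or one $\gb$ at a time, at each stage staying inside $\gP^+$ so that additive closure applies. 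Care is needed that the intermediate sums are genuinely roots; this holds because in a rank-two system every lattice point of the form $p'\ga+q'\gb$ with $1\le p'\le p$, $1\le q'\le q$ that lies in the closed cone between $\ga$ and a given root of that cone is itself a root—a finite rank-two check, which I would dispatch by going through the types $A_1\times A_1$, $A_2$, $B_2$, $G_2$.

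The main obstacle is precisely this last point: verifying that condition (i), phrased with arbitrary positive real scalars, is equivalent over root \emph{lattices} to iterated ordinary addition. The subtlety is that $r\ga+s\gb$ being a root does not a priori tell us $r/s$ is rational, nor that the "path" of intermediate roots from $\ga$ (or $\gb$) up to $r\ga+s\gb$ consists of roots—but restricting to the rank-two subsystem generated by $\ga,\gb$ (and replacing $\ga,\gb$ by the simple roots of that subsystem if necessary, absorbing the change into additive closure) makes this a finite, type-by-type verification. Once that equivalence is in hand, the lemma follows: $S$ satisfies (i) and (ii) $\Leftrightarrow$ $S$ is closed and coclosed $\Leftrightarrow$ (by Papi) $S$ is an inversion set $\Leftrightarrow$ $S=\pam$ for some $x\in W$.
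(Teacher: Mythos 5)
Your reduction to Papi's theorem, your identification of condition (ii) with coclosedness, and the easy direction (closed $\Rightarrow$ inversion set, taking $r=s=1$) are all fine. The gap is in your treatment of condition (i) in the converse direction. The ``standard fact'' you invoke --- that a root lying strictly inside the cone $\R_{>0}\ga+\R_{>0}\gb$ is an $\N$-combination of $\ga,\gb$ with both coefficients at least $1$ --- is false. In $B_2$, take $\ga=\gre_1+\gre_2$ and $\gb=\gre_1-\gre_2$: the root $\gre_1=\frac{1}{2}\ga+\frac{1}{2}\gb$ lies strictly inside the cone but is not an integral combination of $\ga,\gb$ (this is exactly the phenomenon in Example \ref{ex.nonterminate} of the paper); likewise in $G_2$ a short root can be $\frac{1}{3}$ of a sum of two long roots. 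Passing to the simple roots of the rank-two subsystem generated by $\ga,\gb$ does not rescue the argument, because those simple roots need not belong to $S$, so additive closure cannot be applied to them.

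More fundamentally, condition (i) cannot be extracted from ordinary additive closure by any amount of peeling: the set $S=\{\gre_1+\gre_2,\ \gre_1-\gre_2\}$ in $B_2$ is closed under addition of roots (vacuously, since $2\gre_1\notin\gP$), yet fails (i) because $\gre_1\notin S$. So the equivalence ``(i) $\Leftrightarrow$ additive closure'' that your plan rests on is false for a general subset of $\gP^+$; it does hold for inversion sets, but proving that requires using the inversion-set structure rather than iterated addition. The paper's proof supplies exactly this missing step, and it is short: once Papi gives $S=\pam$, suppose $\ga,\gb\in S$ and $r\ga+s\gb\in\gP$ with $r,s>0$; then $x^{-1}\ga$ and $x^{-1}\gb$ are negative roots, so $x^{-1}(r\ga+s\gb)=rx^{-1}\ga+sx^{-1}\gb$ is a root all of whose simple-root coefficients are nonpositive, hence a negative root, and therefore $r\ga+s\gb\in\pam=S$. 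Replacing your rank-two peeling argument with this observation closes the gap; the rest of your outline then goes through.
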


\begin{proof}
Consider the condition ($\mbox{i}'$): $\ga,
\gb \in S$ and $\ga+s\gb \in \Phi$ 
$s$ implies $\ga+\gb \in S$.  Papi proved that $S$ satisfies ($\mbox{i}'$) and (ii) if and only
if $S$ is of the form $\pam$.

Suppose $S$ is closed.  Since $S$ satisfies (i) and (ii),
it satisfies ($\mbox{i}'$) and (ii), so $S = \pam$ for some $x \in W$.
Conversely, suppose $S = \pam$.  The set $S$ satisfies (ii) by Papi's result.  If $\ga,
\gb \in S$ and $r \ga+s\gb \in \Phi$ for positive real numbers $r$ and
$s$, then $x^{-1} \ga$ and $x^{-1} \gb$ are negative roots, so the root $x^{-1}(r \ga + s \gb)$ must
be negative as well (because when written as a sum of simple roots, all coefficients of
$x^{-1}(r \ga + s \gb)$ are negative).  Hence $r \ga+s\gb \in S$, so $S$ satisfies (i).  Hence $S$ is closed.
\end{proof}

Theorem \ref{t.iso-gen} is an immediate consequence of Proposition
\ref{p.iso-bi-gen} below.  We divide the proof into smaller steps by
introducing a new variation of indecomposability.

\begin{definition}\label{d.bdecomp}
  Let $S$ be a root set. A rational decomposition in $S$ of the form
  $\ga= c_1 \ga_1 + c_2 \ga_2$ is called a \textbf{bi-decomposition}
  of $\ga$. The element $\alpha$ is called \textbf{bi-decomposable} if
  $\alpha$ has a bi-decomposition and \textbf{bi-indecomposable}
  otherwise.
\end{definition}

\begin{proposition}\label{p.iso-bi-gen}
  Let $S$ be a closed subset of $\Phi^+$ and let $\alpha\in
  S$. Consider the following conditions:
  \begin{enumerate}
  \item  $\alpha$ is iso-decomposable.
  \item $\alpha$ is bi-decomposable.
  \item $\alpha$ is rationally decomposable.
  \item $\ga$ is integrally decomposable.
  \end{enumerate}
  We have (i) $\Leftrightarrow$ (ii) $\Rightarrow$ (iii) $\Leftarrow$
  (iv). If $\Phi$ is of classical type, then (iii) $\Rightarrow$ (ii)
  and thus (i) -- (iii) are equivalent. If $\gP$ is simply laced, then
  (i) $\Rightarrow$ (iv). Thus in types $A$ and $D$, (i) - (iv) are
  equivalent.
\end{proposition}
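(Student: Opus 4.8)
The implications that hold in full generality should come first. The implication (iv) $\Rightarrow$ (iii) is immediate since an integral decomposition is a rational one. For (i) $\Rightarrow$ (ii): an iso-decomposition $\ga = c\ga_1 + c\ga_2$ with $\|\ga_1\| = \|\ga_2\|$ is by definition a $\Q$-decomposition with two summands, hence a bi-decomposition. For (ii) $\Rightarrow$ (iii): a bi-decomposition is a rational decomposition. The substance is in (ii) $\Rightarrow$ (i), which says that whenever $\ga$ admits \emph{some} two-term rational decomposition inside the closed set $S$, it admits one in which the two summands have equal length and equal coefficient. Here I expect to use Lemma \ref{l.d-decomp} as an organizing tool: if $c\ga = \gb + \gg$ with $\|\gb\| = \|\gg\|$ we already know $c = \IP{\gb}{\ga\w} > 0$, so the real issue is forcing the equal-length condition. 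The idea is that if $\ga = c_1\ga_1 + c_2\ga_2$ with $\ga_1 \ne \ga$, I can use closedness — condition (ii) of the definition of closed — applied to a suitable integer rescaling: write $\ga$ as a positive combination of roots, and among the roots $\beta$ with $c\ga = \beta + (\text{something in } S)$, look for one realizing the equal-norm condition. The cleanest route is probably: a two-term rational relation $c_1\ga_1 + c_2\ga_2 = \ga$ in a root \emph{system} forces $\ga_1,\ga_2,\ga$ to span a rank-$\le 2$ subsystem (of type $A_1\times A_1$, $A_2$, $B_2$, or $G_2$), and in each such subsystem one checks by inspection that the presence of a two-term relation producing $\ga$ forces (via closedness, to stay inside $S$) the existence of an iso-decomposition of $\ga$.

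Next come the classical-type refinement (iii) $\Rightarrow$ (ii) and the simply-laced refinement (i) $\Rightarrow$ (iv). For (iii) $\Rightarrow$ (ii): suppose $\ga = \sum_{i} c_i\ga_i$ is a rational decomposition with possibly many terms. I want to extract a two-term decomposition. Group the terms and use that $\ga$ lies on a ray; since $S$ is closed, partial sums $\sum_{i \in J} c_i \ga_i$ that happen to be roots lie in $S$, and one can try to split the index set into two blocks $J_1 \sqcup J_2$ so that each $\sum_{i\in J_k} c_i\ga_i$ is a positive multiple of a root in $S$. The obstruction is that an arbitrary partial sum need not be a root. This is exactly where the classical-type hypothesis enters: in classical root systems the geometry is constrained enough (roots are $\pm\gre_i \pm \gre_j$, $\pm\gre_i$, $\pm 2\gre_i$) that a minimal rational relation among roots of $S$ summing to $\ga$ can be shown to have length $2$, or can be rearranged into one that does. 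I would likely argue by induction on the number of terms, at each step combining two of the $\ga_i$ into a root of $S$ using closedness condition (i), or peeling off a two-term sub-relation. For (i) $\Rightarrow$ (iv) in the simply-laced case: if $\ga$ is iso-decomposable, $c\ga = \gb + \gg$ with $\|\gb\| = \|\gg\|$, and Lemma \ref{l.simpiso} (which itself follows from Lemma \ref{l.d-decomp}) gives $c = 1$ as soon as $\gb \ne \ga$; since a decomposition requires $\gb, \gg \ne \ga$, we get $c=1$, so $\ga = \gb + \gg$ is an integral decomposition.

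Assembling: (i) $\Leftrightarrow$ (ii) from the two implications in the first paragraph; (ii) $\Rightarrow$ (iii) and (iv) $\Rightarrow$ (iii) are trivial; in classical type add (iii) $\Rightarrow$ (ii) to close the loop (i)--(iii); in simply-laced type add (i) $\Rightarrow$ (iv), which together with (iv) $\Rightarrow$ (iii) $\Rightarrow$ (ii) $\Rightarrow$ (i) makes all four equivalent — and types $A$ and $D$ are simultaneously classical and simply laced, giving (i)--(iv) there. The main obstacle is (iii) $\Rightarrow$ (ii) in classical type: reducing an arbitrary-length rational relation to a two-term one. The key lemma I would isolate is that in a classical root system, if $\ga \in S$ (closed) is rationally decomposable, then there exist $\ga_1, \ga_2 \in S$ with $\ga$ a positive rational combination of $\ga_1$ and $\ga_2$ alone; I expect this to come down to a finite case analysis on rank-$2$ and rank-$3$ sub-configurations together with repeated use of the closedness axioms, and I would organize it by first handling $\gP^+$ itself (the case $S = \gP^+$, flagged in the excerpt as Section \ref{ss.indgp}) and then deducing the general closed case.
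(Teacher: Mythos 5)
Your general implications, your rank-two reduction for (ii) $\Rightarrow$ (i), and your use of Lemma \ref{l.simpiso} for (i) $\Rightarrow$ (iv) in the simply laced case all line up with the paper's argument (the paper makes the rank-two step precise via the ``nearest neighbors'' $\ga'$, $\ga''$ of $\ga$ in $\gP\cap V_X$, which lie in the cone spanned by the two summands and hence in $S$ by closedness (i), giving $\ga = c\ga' + c\ga''$ with $c=1$ or $1/2$). The problem is the implication you yourself identify as the main obstacle, (iii) $\Rightarrow$ (ii) in classical type: what you offer there is a strategy sketch, not a proof, and the specific strategy would not work as described. Your plan is to induct on the number of terms, ``at each step combining two of the $\ga_i$ into a root of $S$ using closedness condition (i).'' But in a minimal counterexample this is exactly what cannot happen: if some $\ga_i + \ga_j$ were a root, closedness would let you merge the two summands and shorten the relation, so in the configuration you actually have to rule out, $(\ga_i,\ga_j) \geq 0$ for all $i \neq j$ and no pair of summands adds to a root. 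Your induction therefore stalls precisely where the real work begins; you have correctly named the obstruction (partial sums need not be roots) but not overcome it.

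The paper's proof of (iii) $\Rightarrow$ (ii) is a genuine case analysis in coordinates. It clears denominators to get $d\ga = \sum m_i \ga_i$, minimizes lexicographically in $d$, then $\sum m_i$, then $\sum m_i|\ga_i|^2$, derives the sign constraints above plus a structural observation about which coordinates $\gre_s$ can appear, and then argues type by type on the form of $\ga$. Two features of that argument are missing from your plan. First, the ``combining'' moves are not always of the form $\ga_i + \ga_j \in \gP^+$: in type $B_n$ one replaces the pair $\gre_p \pm \gre_s$ by two copies of the shorter root $\gre_p = \tfrac12\ga_i + \tfrac12\ga_j$, which decreases the length statistic $\sum m_i|\ga_i|^2$ rather than the number of terms --- this is why the third minimality condition is needed. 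Second, in type $D_n$ the relation is pinned down to $2(\gre_p+\gre_q) = (\gre_p+\gre_a)+(\gre_p-\gre_a)+(\gre_q+\gre_b)+(\gre_q-\gre_b)$ and the contradiction is produced using the \emph{second} closedness axiom ($\gb+\gg \in S$ implies $\gb \in S$ or $\gg \in S$), which your outline never invokes for this step; closedness (i) alone does not suffice. Finally, the proposed organization ``first handle $S = \gP^+$, then deduce the general closed case'' is backwards: the $\gP^+$ case is easy because the simple roots form a basis, but nothing in it transfers to an arbitrary closed $S$, and in the paper the $\gP^+$ statement (Proposition \ref{p.equivs}) is deduced with the help of this proposition, not the other way around.
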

\begin{proof}
  (i) $\Rightarrow$ (ii) $\Rightarrow$ (iii) $\Leftarrow$ (iv) is
  clear from the definitions; (i) $\Rightarrow$ (iv) if $\Phi$ is
  simply laced, by Proposition \ref{p.int-iso}.

  \noindent (ii) $\Rightarrow$ (i) We first provide several facts
  about rank 2 root systems, referring the reader to \cite[Ch. 5, \S 3
  and \S 7]{Ser:01} for additional background and details.  There are
  four rank 2 root systems: $A_1\times A_1$, $A_2$, $B_2$, and
  $G_2$. Every root $\lambda$ of a rank 2 root system has two
  ``nearest neighbors'', one to either side, which we denote by
  $\lambda'$ and $\lambda''$. One checks that $|\gl'|=|\gl''|$, and in
  types $A_2$, $B_2$, and $G_2$, $\gl = c \gl' + c \gl''$, where $c=1$
  or $c=1/2$. This gives an iso-decomposition of $\gl$ in $\Phi$.

Since $\alpha$ is bi-decomposable, we can write
\begin{equation}\label{e.bdecomp}
  \ga = r \gb + t \gg
\end{equation}
where $r,t$ are positive rational numbers, $\gb,\gg\in S$, and
$\ga,\gb,\gg$ are distinct. Let $X=\{\ga,\gb,\gg \}$. Since the three
elements of $X$ are distinct, they do not lie on the same line through
0. By \eqref{e.bdecomp}, they lie on the same plane through 0. Thus
the $\R$ span of $X$, which we denote by $V_X$, is two dimensional. By
\cite[Ch. VI, \S 1, no. 1, Prop. 4(ii)]{Bou:02}, $\Phi_X:= \Phi \cap
V_X$ is a root system. Its rank is two, and thus it must be of type
$A_1\times A_1$, $A_2$, $B_2$, or $G_2$. Since it contains $X$, and
$X$ contains three elements none of which is the negative of any
other, we can rule out type $A_1\times A_1$.

Let $C=\coner \{\gb,\gg\}=\{x\gb + y\gg: x,y\in \R, x,y\geq 0\}$, the
convex polyhedral cone generated by $\{\gb,\gg\}$. Geometrically, $C$
is the locus of points in $V_X$ lying on or between the ray through
$\beta$ and the ray through $\gamma$. By \eqref{e.bdecomp}, $\alpha$
is in the interior of $C$. Since $\gb,\gg \in \Phi_X$, it follows that
$\alpha', \alpha''$, the nearest neighbors to $\alpha$ in $\Phi_X$,
must also lie in $C$. Since $\gb,\gg \in S$ and $\ga', \ga'' \in
\Phi$, condition (i) of $S$ being closed implies $\ga',\ga'' \in
S$. As shown above, $\ga = c\ga' + c \ga''$, where $c=1$ or
$c=1/2$. Hence $\ga$ is iso-decomposable in $S$.

\noindent (iii) $\Rightarrow$ (ii) if $\Phi$ is of classical type. We
proceed by contradiction.  Suppose that $\alpha$ is rationally
decomposable but not bi-decomposable.  Then we can write
$\ga=r_1\ga_1+\cdots+r_n\ga_n$, where $n\geq 3$, $r_i$ positive
rational numbers, $\ga_i \in \gP^+$ distinct, and $\ga_i \neq \ga $,
but we cannot express $\ga$ as such a linear combination with
$n=2$. Clearing denominators, we obtain
  \begin{equation}\label{e.zcomb}
    d\ga = m_1\ga_1 + \cdots + m_n\ga_n,
  \end{equation}
  where $d,m_1,\ldots,m_n$ are positive integers.  Among such
  expressions, consider those with $d$ minimal; among these, consider
  those with $m_1+\cdots+m_n$ minimal; among these, choose one with
  $m_1|\ga_1|^2+\cdots+m_n|\ga_n|^2$ minimal. Assume that $d$, $n$,
  $m_1,\ldots,m_n$, $\ga_1,\ldots,\ga_n$ are so chosen.  We can
  rewrite \eqref{e.zcomb} as
  \begin{equation}\label{e.zsum}
    \ga+\cdots + \ga = (\ga_1+\cdots + \ga_1) + \cdots + (\ga_n+ \cdots
    + \ga_n),
  \end{equation}
  where $\ga$ occurs $d$ times and each $\ga_i$ occurs $m_i$
  times. The total number of summands on the right hand side is
  $m_1+\cdots + m_n$.

  We make several preliminary observations about \eqref{e.zsum}: for
  all $i,j$ such that $i\neq j$,
  \begin{itemize}
  \item[(a)] $\ga_i+\ga_j$ is not an integer multiple of $\ga$.
  \item[(b)] $\ga_i+\ga_j\notin \Phi^+$.
  \item[(c)] $(\ga_i,\ga_j)\geq 0$.
  \item[(d)] $(\ga,\ga_i)>0$.
  \end{itemize}
  To prove (a), note that if $\ga_i+\ga_j=e\ga$ for some positive
  integer $e$, then $\alpha$ is bi-decomposable in $S$, a
  contradiction.  To prove (b), suppose that $\ga_i+\ga_j\in \Phi^+$.
  Then, since $S$ is closed, $\ga_i+\ga_j\in S$. Thus two summands
  $\ga_i,\ga_j$ on the right hand side of \eqref{e.zsum} can be
  replaced by the single summand $\ga_i+\ga_j$. This replacement
  decreases $m_1+\cdots +m_n$ by 1, contradicting the minimality of
  this sum.  Now (c) follows immediately from (b) and (d) follows from
  (c), since $(\ga,\ga_i)=(1/d)\sum m_j(\ga_j,\ga_i)>0$.

  Let us recall the positive roots of classical type:
  \begin{alignat*}{2}
    &A_{n-1}   && \Phi^+=\{\gre_p-\gre_q:1\leq p < q \leq n\}\\
    &B_n && \Phi^+=\{\gre_p \pm \gre_q: 1\leq p <q\leq n\} \cup
    \{\gre_p:1\leq p\leq n\}\\
    &C_n && \Phi^+ = \{\gre_p \pm \gre_q:1\leq p < q\leq n\} \cup \{
    2\gre_p:1\leq p\leq n\}\\
    &D_n \qquad && \Phi^+=\{\gre_p\pm \gre_q:1\leq p<q \leq n\}
  \end{alignat*}
  Based on these representations, we can make an additional
  observation about \eqref{e.zsum}:
  \begin{itemize}
  \item[(e)] Suppose that some $\alpha_i$ has a component of $
    \gre_s$ but $\alpha$ does not. Then $\alpha_i = \gre_r\pm \gre_s$
    for some $r<s$, and there exists $j$ such that $\alpha_j=\gre_r\mp
    \gre_s$. Moreover, $\alpha$ has a component of $\epsilon_r$ with
    positive coefficient.
  \end{itemize}
  Indeed, since $\alpha_i$ has a component of $ \gre_s$ but $\alpha$
  does not, some $\alpha_j$ must have a component of $\gre_s$ but with
  coefficient of opposite sign. This results in $(\ga_i,\ga_j)<0$,
  unless $\alpha_i=\gre_r\pm \gre_s$ and $\alpha_j=\gre_r\mp \gre_s$
  for some $r<s$. The final claim of (e) now follows from (d), which
  tells us that $(\alpha,\alpha_i)>0$ and $(\alpha,\alpha_j)>0$.

  If $\ga = \gre_p - \gre_q$ (in any type), then all terms on the
  right hand side of \eqref{e.zsum} must be of the form $\gre_a -
  \gre_b$ (since the sum of the coefficients of the $\gre_i$ is $0$).
  There must be one root $\ga_i$ on the right hand side of
  \eqref{e.zsum} of the form $\gre_r - \gre_q$, with $r \neq p$, since
  the coefficient of $\gre_q$ is negative.  Then since the coefficient
  of $\gre_r$ in the sum is $0$, there must be a root $\ga_j$ of the
  form $\gre_a - \gre_r$, but then $(\ga_i, \ga_j) = -1$, which is
  impossible by (c).  Therefore $\ga$ is not of the form $\gre_p -
  \gre_q$. This completes the proof for type $A_{n-1}$.

  If $\ga=\gre_p$ in type $B_n$ or $\ga=2\gre_p$ in type $C_n$, then
  some $\alpha_i$ must be of the form $\gre_p\pm \gre_s$, $s\neq
  p$. By (e), $p<s$ and some $\alpha_j$ must be of the form $\gre_p\mp
  \gre_s$. But this contradicts (a).  Therefore $\alpha$ is not of the
  form $\epsilon_p$ in type $B_n$ or $2\epsilon_p$ in type $C_n$.

  Hence, $\ga=\gre_p+\gre_q$ in type $B_n$, $C_n$, or $D_n$. Suppose
  that some $\alpha_i=\epsilon_p-\epsilon_q$. Then, since $\alpha$ has
  a component of $\epsilon_q$ with positive coefficient, some
  $\alpha_j$ must as well. But then $(\ga_i,\ga_j)<0$, contradicting
  (c). We conclude that no $\alpha_i$ equals $\gre_p-\gre_q$.

  Consider types $B_n$ and $C_n$.  Some $\alpha_i$ must have a
  component of $\epsilon_s$ which $\alpha$ does not.  (In type $C_n$,
  this is true because the only alternative is $\alpha_i=2\epsilon_p$,
  $\alpha_j=2\epsilon_q$ for some $i,j$, contradicting (a). In type
  $B_n$, a similar argument applies.)  Thus, by (e), either $\alpha_i$
  is of the form $\epsilon_p\pm \epsilon_s$ and there exists $j$ such
  that $\alpha_j$ is of the form $\epsilon_p\mp \epsilon_s$, or
  $\alpha_i$ is of the form $\epsilon_q\pm \epsilon_s$ and there
  exists $j$ such that $\alpha_j$ is of the form $\epsilon_q\mp
  \epsilon_s$.  Assume the former; the proof for the latter is
  similar. In type $B_n$, since $S$ is closed,
  $\epsilon_p=(1/2)\alpha_i+(1/2)\alpha_j \in S$. Thus $\alpha_i,
  \alpha_j$, two summands of the right hand side of \eqref{e.zsum},
  can be replaced by $\epsilon_p, \epsilon_p$.  With this replacement,
  $m_1|\ga_1|^2+\cdots +m_n|\ga_n|^2$ decreases by 2, contradicting
  the minimality of this quantity. In type $C_n$, $2\gre_p=\ga_i+\ga_j
  \in S$. Now $\alpha_i, \alpha_j$ can be replaced by the single root
  $2\epsilon_p$. This replacement decreases $m_1+\cdots + m_n$ by 1,
  contradicting the minimality of this sum. This completes the proof
  for types $B_n$ and $C_n$.

  This leaves only the possibility that $\alpha=\epsilon_p+\epsilon_q$
  in type $D_n$. We have seen that no $\alpha_i$ equals
  $\epsilon_p-\epsilon_q$. By (e), all of the $\alpha_i$ are of the
  form $\gre_p \pm \gre_a$ and $\gre_q \pm \gre_b$, where $a,b$ are
  not equal to either $p$ or $q$.  Note that we cannot have both
  $\gre_p + \gre_a$ and $\gre_q - \gre_a$ on the right hand side,
  since the inner product would be $-1$.  Similarly, we cannot have
  both $\gre_p - \gre_a$ and $\gre_q + \gre_a$ on the right hand side.
  All coefficients except those of $\gre_p$ and $\gre_q$ are $0$, so
  we conclude that on the right hand side, $\gre_p + \gre_a$ occurs
  iff $\gre_p - \gre_a$ occurs, and $\gre_q + \gre_b$ occurs iff
  $\gre_q - \gre_b$ occurs.  By minimality of the expression
  \eqref{e.zsum}, we conclude that this expression must have the form
\begin{equation} \label{e.zsum3}
2 (\gre_p + \gre_q) = (\gre_p + \gre_a) + (\gre_p - \gre_a)
+ (\gre_q + \gre_b) + (\gre_q - \gre_b)
\end{equation}
for $a,b$ not equal to $p$ or $q$, and $a \neq b$.  Now, $\gre_p +
\gre_a \in S$, and
$$
\gre_p + \gre_a = (\gre_p - \gre_q) + (\gre_a + \gre_q).
$$
Since $S$ is closed, at least one of the roots $\gre_p - \gre_q$ or
$\gre_a + \gre_q$ must be in $S$.  If $\gre_p - \gre_q \in S$, then
$\gre_p - \gre_b$ is in $S$ as it is the sum $(\gre_p - \gre_q) +
(\gre_q - \gre_b)$, where both summands are in $S$; but then
$$
\ga = \gre_p + \gre_q = (\gre_p - \gre_b) + (\gre_q + \gre_b),
$$
which contradicts the assumption that $\alpha$ is not bi-decomposable.
On the other hand, if $\gre_a + \gre_q \in S$, then
$$
\ga = \gre_p + \gre_q = (\gre_p - \gre_a) + (\gre_a + \gre_q),
$$
again contradicting the assumption that $\alpha$ is not
bi-decomposable.
\end{proof}

The proof of the implication (iii) $\Rightarrow$ (ii) in classical
types is clearly the most difficult part of this proof. The question
of whether this implication holds in exceptional types is open.

\begin{remark} \label{r.notequiv}
If $\Phi$ is not simply laced, then rational decomposability is not equivalent to
integral decomposability.  For example, suppose $\ga$ and $\gb$ are
simple roots with $\IP{\ga}{\gb^{\vee}} = -c$ and $\IP{\gb}{\ga^{\vee}} = -1$
for $c>0$.  If $\Phi$ is not simply laced, then there exist $\ga, \gb$
with $c \geq 2$.  If $x = s_{\ga} s_{\gb} s_{\ga}$ then 
$\pam = \{\ga, \ga+ \gb, (c-1) \ga + c \gb \}$.  The rationally indecomposable
roots in $\pam$ are $\ga$ and $(c-1) \ga + c \gb$, but if 
if $c \geq 2$, all roots in $\pam$ are integrally indecomposable.
\end{remark}

\begin{proposition} \label{p.Aind-isoind}
If $\rs=\pam$ and $\gP$ is of classical type, then $(S\zgw)\ua = \isnd{(S\zgw)}$.
\end{proposition}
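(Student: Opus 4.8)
The plan is to prove the two inclusions $(\rs\zgw)\ua \st \isnd{(\rs\zgw)}$ and $\isnd{(\rs\zgw)} \st (\rs\zgw)\ua$ separately. Both follow quickly from results already established, once one observes that $\rs\zgw$ is itself a root set contained in $\gP^+$, being a subset of $\rs = \pam \st \gP^+$.

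For the inclusion $(\rs\zgw)\ua \st \isnd{(\rs\zgw)}$: when $\A = \Q$ this is immediate, since an iso-decomposition is by definition a rational decomposition, so a rationally indecomposable element of any root set is automatically iso-indecomposable. When $\A = \Z$ --- in which case $\gP$ is simply laced, by the standing convention --- I would apply Proposition \ref{p.int-iso} to the root set $\rs\zgw$, which yields exactly $(\rs\zgw)\uz \st \isnd{(\rs\zgw)}$. This half uses neither that $\gP$ is of classical type nor that $\rs$ is an inversion set.

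For the reverse inclusion, the key is to transfer iso-indecomposability from the subset $\rs\zgw$ back to all of $\rs$, where Theorem \ref{t.iso-gen} applies. Corollary \ref{c.isoind} --- whose nontrivial content, via Lemma \ref{l.iso-preserve}, is precisely this transfer --- gives $\isnd{(\rs\zgw)} = (\isnd{\rs})\zgw$. Since $\gP$ is of classical type and $\rs = \pam$, Theorem \ref{t.iso-gen} identifies $\isnd{\rs}$ with $\rs\ua$, so $\isnd{(\rs\zgw)} = (\rs\ua)\zgw$. Finally Lemma \ref{l.sam}(i) --- equivalently Lemma \ref{l.indrel}(i), the remark that an $\A$-indecomposable element of $\rs$ stays $\A$-indecomposable in the smaller set $\rs\zgw$ --- gives $(\rs\ua)\zgw \st (\rs\zgw)\ua$. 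Chaining these, $\isnd{(\rs\zgw)} = (\isnd{\rs})\zgw = (\rs\ua)\zgw \st (\rs\zgw)\ua \st \isnd{(\rs\zgw)}$, so equality holds throughout.

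There is no real obstacle here: the substantive work is packaged into Theorem \ref{t.iso-gen} and Corollary \ref{c.isoind}, both already proved. The point requiring care is the use of Corollary \ref{c.isoind}: it is the equality $\isnd{(\rs\zgw)} = (\isnd{\rs})\zgw$, rather than a naive ``indecomposability passes to subsets'', that makes the bridge to Theorem \ref{t.iso-gen}; and one must keep the $\A = \Q$ and $\A = \Z$ cases straight, the latter being the only place where simple-lacedness of $\gP$ is actually invoked.
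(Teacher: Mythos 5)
Your proof is correct and is essentially the paper's argument: the paper proves the reverse inclusion by contradiction using Theorem \ref{t.iso-gen} together with Lemma \ref{l.iso-preserve}, while you invoke the same content packaged as Corollary \ref{c.isoind} and finish with the trivial inclusion of Lemma \ref{l.sam}(i), which is just the contrapositive form of the same steps. Your explicit handling of the $\A=\Z$ case via Proposition \ref{p.int-iso} applied to $\rs\zgw$ spells out what the paper's one-line first inclusion leaves implicit under its simply laced convention.
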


\begin{proof}
  The inclusion $\subseteq$ holds because any $A$-indecomposable
  element is iso-indecomposable.  We prove the reverse inclusion.
  Suppose $\gg \in \isnd{(S\zgw)}$.  If $\gg$ is $A$-decomposable in
  $S\zgw$, then $\gg$ is $A$-decomposable in $S$, so by Theorem
  \ref{t.iso-gen}, $\gg$ is iso-decomposable in $S$. By Lemma
  \ref{l.iso-preserve}, $\gg$ is iso-decomposable in $\rs\zgw$, a
  contradiction.  Hence $\gg \in (S\zgw)\ua$, proving the reverse
  inclusion.
\end{proof}

Suppose that $\rs=\pam$.  One sees easily (see Lemma \ref{l.sam}(i))
that $(\rs\ua)\zgw \st (\rs\zgw)\ua$ and $(\rs\ua)\xgw \st (\rs\xgw)\ua$.
The following corollary shows that in classical types, all four of these sets
are equal.

\begin{corollary}\label{c.summary-class}
  If $\rs=\pam$ and $\gP$ is of classical type, then $(\rs\ua)\xgw =  (\rs\xgw)\ua = 
  (\rs\ua)\zgw = (\rs\zgw)\ua$.
  
\end{corollary}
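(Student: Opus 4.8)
The plan is to deduce the corollary formally from results already established, observing that two of the four claimed equalities are ``horizontal'' and already known: $(\rs\ua)\xgw = (\rs\ua)\zgw$ is the second equation of Corollary \ref{c.inds}, and $(\rs\xgw)\ua = (\rs\zgw)\ua$ is Corollary \ref{c.ind-opp}. Thus it remains only to prove a single ``vertical'' equality, for which I would take $(\rs\zgw)\ua = (\rs\ua)\zgw$; combined with the two horizontal identities this forces all four sets to coincide.

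To prove $(\rs\zgw)\ua = (\rs\ua)\zgw$ I would chain three identities. First, $(\rs\zgw)\ua = \isnd{(\rs\zgw)}$ by Proposition \ref{p.Aind-isoind}; this is the step that uses the hypothesis that $\gP$ is of classical type, through Theorem \ref{t.iso-gen} and Lemma \ref{l.iso-preserve}. Second, $\isnd{(\rs\zgw)} = (\isnd{\rs})\zgw$ by Corollary \ref{c.isoind}. Third, $(\isnd{\rs})\zgw = (\rs\ua)\zgw$: here I would invoke Theorem \ref{t.iso-gen} again, in the form that for $\rs = \pam$ in classical type an element of $\rs$ is iso-indecomposable if and only if it is $\A$-indecomposable --- recalling the standing convention that $\A = \Q$ always and $\A = \Z$ when $\gP$ is simply laced, the latter case also being covered by Theorem \ref{t.iso-gen} --- so that $\isnd{\rs} = \rs\ua$ as subsets of $\rs$ and the two sets $(\isnd{\rs})\zgw$ and $(\rs\ua)\zgw$ are literally the same. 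Concatenating the three identities yields $(\rs\zgw)\ua = (\rs\ua)\zgw$, and the corollary follows.

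Since all the real content has been carried by the earlier sections, there is no genuine obstacle in this argument beyond bookkeeping; the one subtlety worth flagging is that the inclusion $(\rs\zgw)\ua \subseteq (\rs\ua)\zgw$ fails for a general weighted root set --- $\A$-indecomposability in the larger set $\rs$ is strictly stronger than $\A$-indecomposability in the subset $\rs\zgw$ --- and it is precisely the classical-type equivalence of iso- and $\A$-indecomposability, together with the stability of iso-decomposability under restriction to $\rs\zgw$ (Lemma \ref{l.iso-preserve}), both packaged into Proposition \ref{p.Aind-isoind}, that repairs this. The opposite inclusion $(\rs\ua)\zgw \subseteq (\rs\zgw)\ua$ is the easy half, valid for any weighted root set by Lemma \ref{l.sam}(i), so one could equally well present the vertical equality as this trivial inclusion plus the chain of the previous paragraph.
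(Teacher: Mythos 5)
Your proposal is correct and follows essentially the same route as the paper: the two horizontal identities come from Corollary \ref{c.inds} and Corollary \ref{c.ind-opp}, and the vertical equality is the same chain $(\rs\ua)\zgw = (\isnd{\rs})\zgw = \isnd{(\rs\zgw)} = (\rs\zgw)\ua$ via Theorem \ref{t.iso-gen}, Corollary \ref{c.isoind}, and Proposition \ref{p.Aind-isoind}, merely read in the opposite direction. Your closing remark correctly identifies where the classical-type hypothesis enters, matching the paper's use of these results.
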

\begin{proof}
By Corollary \ref{c.ind-opp} and Remark \ref{r.ind-opp},  $(\rs\ua)\xgw = (\rs\ua)\zgw$ and $(\rs\xgw)\ua=(\rs\zgw)\ua$.
The proof is completed by observing that
 $$
 (S\ua)\zgw =  (\isnd{S})\zgw = \isnd{(S\zgw)} =  (S\zgw)\ua,
 $$
 where the first equality is due to Theorem \ref{t.iso-gen}, the second to Corollary \ref{c.isoind}, and
 the third to Proposition \ref{p.Aind-isoind}.  
\end{proof}

\begin{remark} \label{r.significance}
This corollary implies that if $x$ is fixed and one wants to calculate $(S\zgw)\ua$ for multiple $z$,
it is not necessary to check indecomposability separately for each $z$.  Rather, one can compute
the set $S^A$ of $A$-indecomposable elements in $S$, and then intersect with $S\zgw$.
\end{remark}

Figure \ref{f.indecpam} summarizes the relationships we have found among
the five main types of indecomposability in $\pam$.  For general root
sets, rational indecomposability implies the other four types (see
Figure \ref{f.indec}). The other four implications of Figure
\ref{f.indecpam} are proved in Propositions 
\ref{p.int-iso}, \ref{p.invc}, and \ref{p.iso-bi-gen}.

\begin{figure}[tbh]
\centering
    \begin{tikzpicture}[scale=0.6]
      \matrix (m) [matrix of math nodes, row sep = 10em, column sep = 10
      em] %
      {& |(1)| S\uiq &\\
        |(2)| S\uq & |(3)| S\uis & |(4)| S\uiz\\
        & |(5)| S\uz & \\};%
%
      \path[line width=1.2pt,>=stealth,->] %
      (1) edge (4) %
      (2) edge (5) %
      (2) edge (1) %
      (5) edge (4);
      \path[line width=1.2pt,>=stealth,->] %
      (2.15) edge (3.165) %
      (3.195) edge node[auto] {\small $S =\pam \atop
        \text{classical type}$} (2.-15); %
      \path[line width=1.2pt,>=stealth,->] %
      (4) edge node[auto] {\small $S=\pam \atop
        \text{simply laced}$} (3); %
      \draw[line width=1.2pt,xshift=-2pt, yshift=-2pt,>=stealth,<-]
      (3)-- node[above,sloped]{\tiny $S=\pam$}(1);
%
      \draw[line width=1.2pt,xshift=-2pt, yshift=-2pt, >=stealth,->]
      (5)-- node[above,sloped]{\small $S\st \gP^+ \atop
        \text{simply laced}$}(3);
    \end{tikzpicture}
    \caption{Relationships among five types of
      indecomposability. Arrows represent inclusions. (See also Figure
      \ref{f.indec} on page~\pageref{f.indec}.)}
\label{f.indecpam}
\end{figure}
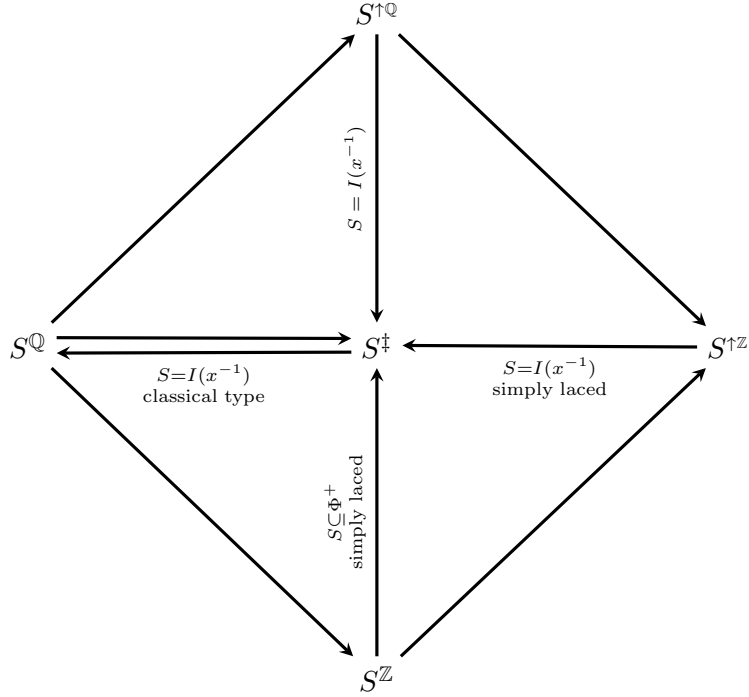

\subsection{Indecomposability in $\gP^+$}\label{ss.indgp}

When $S=\gP^+$, several types of indecomposability are easily proved
to be equivalent in all types. This is due to the following properties
of the base $\gD$ of $\gP^+$:
\begin{itemize}
\item[(a)] $\gD$ is linearly independent.
\item[(b)] Each root of $\gP^+$ is a non-negative integer linear
  combination of elements of $\gD$.
\item[(c)] Any root of $\gP^+$ not in $\gD$ can be written as a sum of two positive roots.
\end{itemize}

\begin{proposition}\label{p.equivs}
  Let $S=\gP^+$ and $\ga\in S$. The following conditions are
  equivalent:
  \begin{enumerate}
  \item $\ga$ is iso-decomposable.
  \item $\ga$ is bi-decomposable.
  \item $\ga\notin \gD$.
  \item $\ga$ is integrally decomposable.
  \item $\ga$ is rationally decomposable.  
  \end{enumerate}
\end{proposition}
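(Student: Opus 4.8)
The plan is to deduce the proposition from Proposition \ref{p.iso-bi-gen} together with the three defining properties (a)--(c) of the base $\gD$. First I would record that $\gP^+ = I(w_0^{-1})$, so $\gP^+$ is a closed subset of $\gP^+$ by Lemma \ref{l.closed} (this is also immediate from the definition of closed), and hence Proposition \ref{p.iso-bi-gen} applies with $S = \gP^+$. Under the evident relabeling — the conditions (i),(ii),(iii),(iv) of Proposition \ref{p.iso-bi-gen} are our (i),(ii),(v),(iv) — this already supplies, in all types, the implications (i) $\Leftrightarrow$ (ii), (ii) $\Rightarrow$ (v), and (iv) $\Rightarrow$ (v). Thus the only thing left is to fold condition (iii), namely $\ga \notin \gD$, into the cycle; concretely I would prove (iii) $\Rightarrow$ (ii), (iii) $\Rightarrow$ (iv), and (v) $\Rightarrow$ (iii).

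The two implications out of (iii) use property (c). If $\ga \notin \gD$, then (c) gives $\gb, \gg \in \gP^+$ with $\ga = \gb + \gg$. Pairing with the element $\gd$, chosen so that $\IP{\gd}{\cdot}$ is positive on all of $\gP^+$, yields $\IP{\gd}{\gb}, \IP{\gd}{\gg} < \IP{\gd}{\ga}$, so $\gb \neq \ga \neq \gg$. Hence the single expression $\ga = \gb + \gg$ is simultaneously a bi-decomposition and an integral decomposition of $\ga$, giving (iii) $\Rightarrow$ (ii) and (iii) $\Rightarrow$ (iv).

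The one implication with genuine content is (v) $\Rightarrow$ (iii), which I would prove by contraposition: a simple root $\ga \in \gD$ admits no rational decomposition in $\gP^+$. Suppose $\ga = \sum_i c_i \ga_i$ with $c_i \in \Q_{>0}$ and $\ga_i \in \gP^+ \setminus \{\ga\}$. Using property (b), expand each summand in the base, $\ga_i = \sum_{\gb \in \gD} n_{i\gb}\gb$ with $n_{i\gb} \in \Z_{\geq 0}$, so that $\ga = \sum_{\gb \in \gD}\bigl(\sum_i c_i n_{i\gb}\bigr)\gb$. Linear independence of $\gD$ (property (a)) forces $\sum_i c_i n_{i\gb} = 0$ for every $\gb \in \gD$ with $\gb \neq \ga$; since the $c_i$ are positive and the $n_{i\gb}$ nonnegative, this means $n_{i\gb} = 0$ for all $i$ and all $\gb \neq \ga$. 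Hence each $\ga_i$ is a positive integer multiple of $\ga$, and since $\gP^+$ is a root set the only such root is $\ga$ itself, so $\ga_i = \ga$ for all $i$, contradicting $\ga_i \neq \ga$.

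Assembling the pieces, (iii) $\Rightarrow$ (ii) $\Rightarrow$ (v) $\Rightarrow$ (iii) together with (iii) $\Rightarrow$ (iv) $\Rightarrow$ (v) and (i) $\Leftrightarrow$ (ii) establish the equivalence of all five conditions. I do not anticipate a real obstacle: the only nontrivial ingredient is the rank-$2$ geometry behind the implication (ii) $\Rightarrow$ (i) of Proposition \ref{p.iso-bi-gen}, which is already in place, and the one step that is not essentially formal is (v) $\Rightarrow$ (iii); the practical point to watch is the relabeling of conditions when invoking Proposition \ref{p.iso-bi-gen}.
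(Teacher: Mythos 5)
Your proposal is correct and follows essentially the same route as the paper: the nontrivial ingredients in both are the appeal to Proposition \ref{p.iso-bi-gen} for bi-decomposable $\Rightarrow$ iso-decomposable and the linear-independence argument (expand a putative rational decomposition of a simple root in the base via properties (a) and (b)) for (v) $\Rightarrow$ (iii), with the remaining implications handled by property (c) and trivial observations. Your explicit check that $\gP^+ = I(w_0^{-1})$ is closed, and your verification that the summands in $\ga=\gb+\gg$ differ from $\ga$ via pairing with $\gd$, are fine minor additions to what the paper leaves implicit.
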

\begin{proof}

  \noindent (iii) $\Rightarrow$ (iv) $\Rightarrow$ (v) $\Leftarrow$
  (i) are clear from the definitions, (ii) $\Leftarrow$ (iii) follows
  from (c), and (i) $\Leftarrow$ (ii) holds by Proposition
  \ref{p.iso-bi-gen}.

\noindent (iii) $\Leftarrow$ (v) 
Suppose that $\ga\in\gD$ is
rationally decomposable. Then $\ga=\sum_ic_i\gb_i$ for some $c_i\in
\Q_{>0}$ and $\gb_i\in\gP^+\setminus \{\ga\}$.  By (b), each $\gb_i$ in this sum can be expressed as
$  \gb_i=\sum_jd_{i,j}\ga_j$,
where $d_{i,j}\in \Z_{\geq}0$ and $\ga_j\in\gD$.
Thus $\ga = \sum_{i,j} c_id_{i,j} \ga_j$.

Suppose $j$ is such that $\ga_j\neq \ga$. By (a),
$\sum_ic_id_{i,j}=0$. Since $c_i>0$ and $d_{i,j}\geq 0$ for all $i$,
we must have $d_{i,j}=0$ for all $i$.
We conclude that $d_{i,j}=0$ for all $(i,j)$ such that $\ga_j\neq
\ga$. Hence $\gb_i$ is a positive
multiple of $\ga$, which is a contradiction.
\end{proof}

\begin{corollary}\label{c.indsimple}
  $(x\gP^-)\uis = x\gD^- = (x\gP^-)\uz=(x\gP^-)\uq$.
\end{corollary}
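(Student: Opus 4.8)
The plan is to reduce the statement to Proposition \ref{p.equivs} by transporting everything through the invertible linear map $\ga\mapsto -x\ga$. First I would note that $x\gP^- = -x\,\gP^+$, since $-x\ga = x(-\ga)$ and $-\ga$ runs over $\gP^-$ as $\ga$ runs over $\gP^+$. The map $\tau\colon \vrr\to\vrr$, $v\mapsto -xv$, is an automorphism of the root lattice $M$ (both $x$ and $-\id$ preserve $M$) which is orthogonal for the $W$-invariant inner product. Consequently $\tau$ restricts to a bijection $\gP^+\to x\gP^-$; and $x\gP^-$ is a root set, since it lies in the open half-space $\{v : \IP{-x\gr}{v}>0\}$ (with $\gr$ the half-sum of positive roots) and inherits from $\gP^-$ the property that it does not contain both $\ga$ and $c\ga$ for any scalar $c\neq 1$.

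Next I would check that $\tau$ carries the relevant decompositions to decompositions, and back. If $\ga=\sum_i c_i\ga_i$ is an $\A$-linear combination, resp.\ an $\A$-decomposition, in $\gP^+$, then $\tau(\ga)=\sum_i c_i\tau(\ga_i)$ is one in $x\gP^-$, using linearity and injectivity of $\tau$; and if $\ga = c\ga_1 + c\ga_2$ is an iso-decomposition, i.e.\ $\|\ga_1\|=\|\ga_2\|$, then $\tau(\ga)=c\tau(\ga_1)+c\tau(\ga_2)$ with $\|\tau(\ga_1)\|=\|\tau(\ga_2)\|$ because $\tau$ preserves norms. The same applies to $\tau^{-1}\colon v\mapsto -x^{-1}v$. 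Hence $\ga$ is $\A$-indecomposable (resp.\ iso-indecomposable) in $\gP^+$ if and only if $\tau(\ga)$ is $\A$-indecomposable (resp.\ iso-indecomposable) in $x\gP^-$, so that
\[
(x\gP^-)\uis = \tau\big((\gP^+)\uis\big),\quad
(x\gP^-)\uz = \tau\big((\gP^+)\uz\big),\quad
(x\gP^-)\uq = \tau\big((\gP^+)\uq\big).
\]

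Finally I would invoke Proposition \ref{p.equivs}, which (in all types) identifies $(\gP^+)\uis=(\gP^+)\uz=(\gP^+)\uq=\gD$. Applying $\tau$ then gives $(x\gP^-)\uis=(x\gP^-)\uz=(x\gP^-)\uq=\tau(\gD)=-x\gD=x\gD^-$, which is the assertion. I do not anticipate a genuine obstacle: the only points needing a word of justification are that $\tau$ is orthogonal, which is what makes iso-decompositions transport, and that it is a lattice automorphism, which is what lets the case $\A=\Z$ be treated on the same footing; both are immediate.
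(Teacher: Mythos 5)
Your proof is correct and is essentially the paper's argument: the paper likewise deduces the corollary from Proposition \ref{p.equivs} using that $x$ (equivalently your $\tau=-x$, since $x\gP^-=-x\gP^+$) is a length-preserving linear automorphism transporting all notions of decomposition. You have merely spelled out the details that the paper leaves implicit.
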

\begin{proof}
  This follows from Proposition \ref{p.equivs}, as $x$ is a
  length-preserving linear automorphism of $\gP$.
\end{proof}

\section{When all elements are indecomposable}\label{s.allin}

In this section we study root sets $S\st \gP^+$.  We are interested in
examining conditions under which all elements of $S$ are rationally,
integrally, or iso-indecomposable.  When this occurs, results in other
sections concerning such indecomposable elements of $\rs$ will of
course apply to all elements of $\rs$.

We say that $S$ is \textbf{coplanar} if there exists $v\in\ft$ such
that $\ga(v)=-1$ for all $\ga\in S$. Two cases are of particular
interest to us: if $\pam$ is coplanar, then $x$ is said to be a
\textbf{cominuscule} Weyl group element, and if $\gP(T_x\kxw)$ is
coplanar, then $x$ is called a \textbf{KL cominuscule point} of
$X^w$. Cominuscule Weyl group elements were first studied by Peterson,
and KL cominuscule points are studied in \cite{GrKr:23b} (see also
\cite{GrKr:19}).

The Weyl group element $x$ is said to be \textbf{fully commutative} if
there does not exist a reduced expression for $x$ which contains a
subword of the form $s_is_js_i\cdots$ of length $m\geq 3$, where $m$
is the order of $s_is_j$. In \cite{BJS:93}, Billey, Jockusch, and
Stanley showed that in type $A$, fully commutative Weyl group elements
can alternatively be characterized as 321-avoiding permutations, and
that their Schubert polynomials are flag skew Schur functions.  Full
commutativity was studied extensively by Fan and Stembridge in
\cite{Fan:95}, \cite{Fan:97}, \cite{FaSt:97}, \cite{Ste:96},
\cite{Ste:97}.

\begin{theorem}\label{t.fullcomm}
  If $x$ is a cominuscule Weyl group element, then $x$ is a KL
  cominuscule point and fully commutative.
\end{theorem}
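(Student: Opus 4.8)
The plan is to obtain both conclusions directly from the hypothesis that $\pam$ is coplanar. Fix $v\in\ft$ with $\ga(v)=-1$ for every $\ga\in\pam$. The KL cominuscule assertion is then immediate: it is a standard fact (reviewed in Section \ref{s.schubert}) that the tangent space $T_x\kxw$ embeds in an ambient $T$-module whose set of weights is $\pam$, so $\gP(T_x\kxw)\st\pam$. Hence $\ga(v)=-1$ for every $\ga\in\gP(T_x\kxw)$, so $\gP(T_x\kxw)$ is coplanar and $x$ is a KL cominuscule point of $X^w$.

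For full commutativity I would argue by contradiction. Suppose $x$ has a reduced expression $\vs=(s_1,\ldots,s_l)$ containing a subword $s_p\,s_{p+1}\cdots s_{p+m-1}$ which is an alternating product $s's''s'\cdots$ of $m=\operatorname{ord}(s's'')\geq 3$ letters, for distinct simple reflections $s',s''$ with simple roots $\ga',\ga''$. Let $\gP'$ be the rank-two root subsystem spanned by $\ga'$ and $\ga''$, and set $u=s_1\cdots s_{p-1}$. For $0\le t\le m-1$ the inversion root $\gg_{p+t}=s_1\cdots s_{p+t-1}(\ga_{p+t})$ (here $\ga_{p+t}\in\{\ga',\ga''\}$ is the simple root of $s_{p+t}$) factors as $\gg_{p+t}=u\,\delta_t$, where $\delta_t=(s_p\cdots s_{p+t-1})(\ga_{p+t})\in(\gP')^+$; since $(s_p,\ldots,s_{p+m-1})$ is a reduced word for the longest element of the dihedral group generated by $s',s''$, the roots $\delta_0,\ldots,\delta_{m-1}$ run over all of $(\gP')^+$. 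Because $\gg_p,\ldots,\gg_{p+m-1}$ lie in $\pam$, coplanarity forces $\delta(u^{-1}v)=-1$ for every $\delta\in(\gP')^+$. Applying this to $\ga'$, to $\ga''$, and to the non-simple positive root $\ga'+\ga''\in(\gP')^+$ (which is a root of $\gP'$ since $m\geq 3$) yields $-1=(\ga'+\ga'')(u^{-1}v)=\ga'(u^{-1}v)+\ga''(u^{-1}v)=-2$, a contradiction. Hence no such reduced expression exists, and $x$ is fully commutative.

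The one step that requires careful bookkeeping is the claim that, after stripping off the fixed prefix $u$, the inversion roots attached to a braid subword sweep out the entire positive system $(\gP')^+$ of the rank-two reflection subgroup generated by $s',s''$; this is the standard description of the inversion set of the longest element of a dihedral subgroup, applied to a contiguous block of a reduced word, but it deserves to be written out explicitly. Once that is in place, the contradiction uses only the elementary fact that a non-simple positive root of a rank-two root system has height at least $2$, so I do not anticipate any genuine obstacle beyond this combinatorial verification.
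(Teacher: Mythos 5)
Your proposal is correct. The first half is exactly the paper's argument: the paper disposes of the KL cominuscule claim by the single observation $\gP(T_x\kxw)\st \pam$, so the same $v\in\ft$ witnessing coplanarity of $\pam$ works for $\gP(T_x\kxw)$, just as you say. For full commutativity, however, the paper does not argue at all but simply cites \cite[Proposition 2.1]{Ste:01}, whereas you give a self-contained proof: factor the inversion roots attached to a putative braid block as $\gg_{p+t}=u\,\delta_t$ with $u=s_1\cdots s_{p-1}$, note that the $\delta_t$ enumerate the full positive system of the rank-two standard parabolic subsystem $\gP'$ generated by $\ga',\ga''$ (the inversion set of the longest element of the dihedral parabolic, computed from the alternating reduced word), and then use coplanarity plus linearity on $\ga'$, $\ga''$, $\ga'+\ga''$ (a root since $m\geq 3$) to get $-1=-2$. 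All the steps check out: the block is reduced since parabolic length agrees with ambient length, the $\delta_t$ are distinct and exhaust $(\gP')^+$ because $\ell(w_0')=m=|(\gP')^+|$, and $(u\delta)(v)=\delta(u^{-1}v)$ is the correct equivariance. What your route buys is independence from the external reference (and it is essentially the dual of Stembridge's argument for minuscule elements, transplanted to inversion sets); what the paper's route buys is brevity. If you write this up, do include the explicit verification you flag, namely that the $\delta_t$ sweep out $(\gP')^+$, since that is the only step not already recorded in the paper's Section \ref{ss.prelim}.
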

\begin{proof}
  The first implication is due to $\gP(T_x\kxw)\st \pam$, and the
  second is proved  in \cite[Proposition 2.1]{Ste:01}.
\end{proof}

\begin{theorem}\label{t.allin}
  For $S\st \gP^+$, consider the following statements: 
  \begin{enumerate}
  \item $S$ is coplanar.
  \item $S\uz=S$.
  \item $S$ does not contain three roots of the form
    $\ga,\gb,\ga+\gb$.
  \item $S\uq = S$.
  \item $S\uis = S$.
  \item $x$ is fully commutative.  \end{enumerate} We have (i)
  $\Rightarrow$ (ii) $\Rightarrow$ (iii) and (iv) $\Rightarrow$
  (ii). If $\gP$ is simply laced, then (iii) $\Leftrightarrow$ (v). If
  $\gP$ is simply laced and $S=\pam$, then (i) $\Rightarrow$ (iv) and
  (iii) $\Leftrightarrow$ (vi). (See Figure \ref{f.allin}.)
\end{theorem}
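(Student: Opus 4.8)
I would prove the implications in the stated order. For $(i)\Rightarrow(ii)$, fix $v\in\ft$ with $\ga(v)=-1$ for all $\ga\in S$; evaluating a putative $\Z$-decomposition $\ga=\sum_i c_i\ga_i$ in $S$ at $v$ yields $-1=-\sum_i c_i$, so $\sum_i c_i=1$, which is impossible for nonnegative integers $c_i$ with no $\ga_i$ equal to $\ga$. Hence $S\uz=S$. For $(ii)\Rightarrow(iii)$: if $S$ contained roots $\ga,\gb,\ga+\gb$, then $\ga+\gb=\ga+\gb$ would be a $\Z$-decomposition of $\ga+\gb$, contradicting $S\uz=S$. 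For $(iv)\Rightarrow(ii)$: every $\Z$-decomposition is a $\Q$-decomposition, so $S\uq\subseteq S\uz$, whence $S\uq=S$ forces $S\uz=S$. For $(iii)\Leftrightarrow(v)$ when $\gP$ is simply laced: if $S$ contains $\ga,\gb,\ga+\gb$ then these are distinct (a root set cannot contain $\ga$ and $2\ga$), $\|\ga\|=\|\gb\|$, and $\ga+\gb=1\cdot\ga+1\cdot\gb$ is an iso-decomposition, so $S\uis\neq S$; conversely if $\gg\in S$ is iso-decomposable, say $c\gg=\gd_1+\gd_2$ with $\|\gd_1\|=\|\gd_2\|$ and $\gd_1,\gd_2\neq\gg$, then $c=1$ by Lemma~\ref{l.simpiso}, so $\gg=\gd_1+\gd_2$ exhibits three roots of the forbidden form.

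For $(i)\Rightarrow(iv)$ with $S=\pam$ and $\gP$ simply laced, suppose some $\gg\in S$ had a rational decomposition $\gg=\sum_i c_i\gg_i$ with each $c_i>0$ and $\gg_i\in S\setminus\{\gg\}$. Evaluating at the coplanarity functional gives $\sum_i c_i=1$, while pairing with $\gg^{\vee}$ gives $2=\IP{\gg}{\gg^{\vee}}=\sum_i c_i\IP{\gg_i}{\gg^{\vee}}$. Since $\IP{\gb}{\ga^{\vee}}\in\{-1,0,1\}$ for distinct positive roots $\ga,\gb$ in a simply laced system, the right-hand side is at most $\sum_i c_i=1<2$, a contradiction; hence $S\uq=S$. (The same argument in fact gives $S\uq=S$ for any coplanar root set $S\subseteq\gP^+$ in simply laced type, not merely for $\pam$.)

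For $(iii)\Leftrightarrow(vi)$ with $S=\pam$ and $\gP$ simply laced, recall from Theorem~\ref{t.reduced} that $\gg_i$ is iso-decomposable in $\pam$ precisely when $s_1\cdots\wh{s}_i\cdots s_l$ is not reduced, so by $(iii)\Leftrightarrow(v)$ it suffices to show that $\pam$ contains three roots $\ga,\gb,\ga+\gb$ if and only if $x$ is not fully commutative. If $x$ is not fully commutative, some reduced expression contains a consecutive factor $sts$ with $st$ of order $3$ (the only relevant case in simply laced type); using $s(\ga_t)=\ga_s+\ga_t$ and $t(\ga_s)=\ga_s+\ga_t$ one checks that the corresponding three consecutive roots satisfy $\gg_{p+1}=\gg_p+\gg_{p+2}$, so $\pam$ contains $\ga,\gb,\ga+\gb$, and since this is a property of $\pam$ alone it is insensitive to the chosen expression. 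Conversely, if $\pam$ contains $\ga,\gb,\ga+\gb$, write $\ga+\gb=\gg_i$; then $\gg_i=\gg_j+\gg_k$ with $\{\gg_j,\gg_k\}=\{\ga,\gb\}$ and $\|\gg_j\|=\|\gg_k\|$, so (Lemma~\ref{l.red_in}(iii)) we may take $j<i<k$, and $\gg_i$ is iso-decomposable in $\pam$, whence $s_1\cdots\wh{s}_i\cdots s_l$ is not reduced by Theorem~\ref{t.reduced}. What remains---and is the part I expect to be the main obstacle---is to deduce that $x$ is not fully commutative from the existence of a non-reduced single-letter deletion of $\vs$. My plan is to argue as in the proof of \cite[Theorem~1.7]{Hum:90}: choosing $k>i$ minimal and then $j<i$ maximal one obtains $\ga_j=s_{j+1}\cdots\wh{s}_i\cdots s_{k-1}(\ga_k)$, and using the resulting conjugation identities one rewrites the reduced consecutive factor $(s_j,\dots,s_k)$ of $\vs$ so that it acquires a reduced expression containing a short braid $s_as_bs_a$ with $s_as_b$ of order $3$; since full commutativity is inherited by the products of consecutive factors of reduced expressions, $x$ is then not fully commutative. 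Alternatively, one may invoke Stembridge's combinatorial characterization of fully commutative elements \cite{Ste:01} directly. Making the braid extraction precise is the most delicate point.
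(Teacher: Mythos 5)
Most of your proposal is correct and close to the paper's: (i)$\Rightarrow$(ii) (the paper outsources this to the argument of \cite[Proposition 6.4]{GrKr:20}), (ii)$\Rightarrow$(iii), (iv)$\Rightarrow$(ii), and (iii)$\Leftrightarrow$(v) via Lemma \ref{l.simpiso} all match. Your proof of (i)$\Rightarrow$(iv) is actually cleaner and more general than the paper's: the paper first uses closure of $\pam$ to show that all pairwise inner products in $S$ are nonnegative and only then pairs with $\gb^{\vee}$, whereas you bypass closure entirely by bounding $\IP{\gg_i}{\gg^{\vee}}\leq 1$ for distinct positive roots in a simply laced system, so your argument applies to any coplanar $S\st\gP^+$, as your parenthetical notes. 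Your computation $\gg_{p+1}=\gg_p+\gg_{p+2}$ for a consecutive $sts$ factor correctly gives (iii)$\Rightarrow$(vi).

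The genuine gap is the implication (vi)$\Rightarrow$(iii), i.e.\ that a triple $\ga,\gb,\ga+\gb\st\pam$ forces $x$ to be non--fully commutative: this is precisely the step you leave as a sketch, and it is the entire nontrivial direction of the Fan--Stembridge theorem, which the paper simply cites (\cite[Theorem 2.4]{FaSt:97}). By your own reduction through Theorem \ref{t.reduced} and Lemma \ref{l.red_in}(iii), the missing claim is equivalent to ``some one-letter deletion of $\vs$ is non-reduced $\Rightarrow$ $x$ is not fully commutative,'' which is false without the simply laced hypothesis (in $B_2$ the element $sts$ is fully commutative, yet deleting the middle letter gives $ss$); so no purely Coxeter-theoretic manipulation of the deletion condition \`a la \cite[Theorem 1.7]{Hum:90} can suffice, and simply-lacedness must enter the braid extraction in an essential way. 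Concretely, the minimal-$k$/maximal-$j$ choice only yields $\ga_j=s_{j+1}\cdots\wh{s}_i\cdots s_{k-1}(\ga_k)$, equivalently $s_ju=us_k$ for $u=s_{j+1}\cdots\wh{s}_i\cdots s_{k-1}$; to produce a reduced expression of the consecutive factor $s_j\cdots s_k$ containing an honest $s_as_bs_a$ one still needs commutation moves justified by a further induction (say on $k-j$), and restricting the triple to that factor merely reproduces the same problem at smaller length. That argument can be carried out, but it amounts to reproving Fan--Stembridge; as written, your proof of the equivalence (iii)$\Leftrightarrow$(vi) is incomplete unless you take your own fallback and cite the known characterization (\cite{FaSt:97}, or the relevant results of \cite{Ste:01}), which is exactly what the paper does.
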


\begin{figure}[tbh]
\centering
    \begin{tikzpicture}[scale=0.4]
      \matrix (m) [matrix of math nodes, row sep = 10em, column sep =
      6 em] %
      {|(1)| \text{(i) } S \text{ coplanar} & |(2)| \text{(ii) }S\uz =
        S & |(6)| \text{(vi) }x \text{
          fully commut.} \\
        |(4)| \text{(iv) }S\uq = S & |(3)| \text{(iii) }\nexists\,
        \ga,\gb,\ga+\gb\in S & |(5)| \text{(v) }S\uis=S\\};
      \path[line width=1.2pt,>=stealth,shorten >=8pt,shorten
      <=8pt,->]%
      (1) edge (2) %
      (4) edge (2) %
      (2) edge (3);
      \path[line width=1.2pt,>=stealth,shorten >=8pt,shorten
      <=8pt,<->]%
      (3) edge (5); %
      \draw[xshift=-2pt,yshift=-2pt,line width=1.2pt,>=stealth,shorten
      >=8pt,shorten <=8pt,->] %
      (1)-- node[above,sloped]{${S = \pam \atop
          \text{simply laced}}$} (4);
      \draw[xshift=-2pt,yshift=-2pt,shorten >=8pt,shorten <=8pt,line
      width=1.2pt,>=stealth] %
      (3)-- node[above]{\small simply laced} (5);
      \draw[xshift=-2pt,yshift=-2pt,line width=1.2pt,>=stealth,shorten
      >=8pt,shorten <=8pt,<->] %
      (3)-- node[above,sloped]{${S=\pam \atop \text{simply laced}}$}
      (6);
    \end{tikzpicture}
    \caption{To accompany Theorem \ref{t.allin}.  Arrows represent
      implications. Labelings of an arrow indicate conditions under
      which the implication or equivalence holds.}
\label{f.allin}
\end{figure}
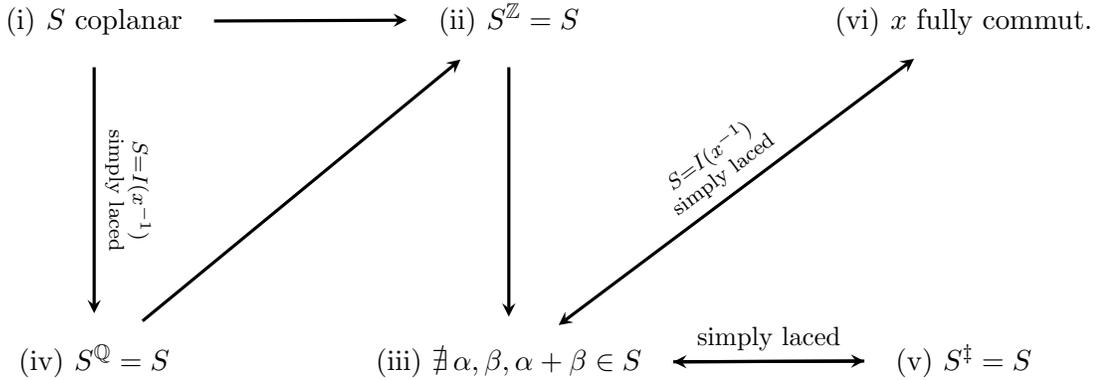

\begin{proof}
  (i) $\Rightarrow$ (ii) is proved in the same manner as
  \cite[Proposition 6.4]{GrKr:20}, but with $\pam$ replaced by $S$;
  (ii) $\Rightarrow$ (iii) is clear from definitions; (iv)
  $\Rightarrow$ (ii) since $S\uq\st S\uz$; (iii) $\Leftrightarrow$ (v)
  if $\gP$ is simply laced follows from Lemma \ref{l.simpiso}; (iii)
  $\Leftrightarrow$ (vi) if $\gP$ is simply laced and $S=\pam$ by
  \cite[Theorem 2.4]{FaSt:97}.

  \noindent (i) $\Rightarrow$ (iv) if $\gP$ is simply laced and
  $S=\pam$. Assume $S$ is coplanar. Then for all $\ga,\gb\in S$,
  $(\ga,\gb)\geq 0$. Indeed, if this were not true, then we would have
  $\ga+\gb\in\gP^+$. Since $S=\pam$ is closed under addition, this
  would imply $\ga+\gb\in S$, violating (iii).

  Suppose there exists $\gb\in S\setminus S\uq$. Let
  \begin{equation}\label{e.atech}
    \gb=\sum r_i\gb_i
  \end{equation}
  $r_i\in\Q_{>0}$, $\gb_i\in S\setminus \{\gb\}$ be a positive
  rational decomposition of $\gb$. Applying $(\, \cdot\, ,\gb_i)$ to
  both sides of \eqref{e.atech} yields $(\gb,\gb_i)>0$. Thus
  $\IP{\gb_i}{\gb}=1$.

  Applying $\IP{\cdot}{\gb}$ to both sides of \eqref{e.atech}, we
  obtain $2=\sum r_i$. Let $v$ be such that $\ga(v)=1$ for all $\ga\in
  S$.  Applying $v$ to both sides of \eqref{e.atech}, we find that
  $1=\sum r_i$, a contradiction.
\end{proof}

\begin{remark}
  Some of the conclusions of Theorem \ref{t.allin} (i) - (vi) can be
  strengthened. For example, (iii) $\Rightarrow$ (vi) does not require
  $S$ to be simply laced (although the converse does).
\end{remark}

\begin{corollary}\label{c.adind}
  If $\gP$ is of type $A$ or $D$, then all elements of $\pam$ are
  integrally indecomposable if and only if $x$ if fully commutative.
\end{corollary}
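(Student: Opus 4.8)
The plan is to obtain the corollary as a short chain of equivalences drawn from results already established. Since $\gP$ is of type $A$ or $D$, it is simply laced, and by Lemma~\ref{l.closed} the inversion set $\pam$ is a closed subset of $\gP^+$. Hence Theorem~\ref{t.iso-gen} (equivalently Proposition~\ref{p.iso-bi-gen}) applies: for each $\ga\in\pam$, the element $\ga$ is integrally indecomposable if and only if it is iso-indecomposable. Therefore $(\pam)\uz=(\pam)\uis$, and so ``every element of $\pam$ is integrally indecomposable'' is equivalent to $(\pam)\uis=\pam$.

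I would then invoke Theorem~\ref{t.allin} with $S=\pam$. As $\gP$ is simply laced, statement (v), namely $S\uis=S$, is equivalent to statement (iii), that $S$ contains no three roots of the form $\ga,\gb,\ga+\gb$; and since moreover $S=\pam$, statement (iii) is in turn equivalent to statement (vi), that $x$ is fully commutative. Combining these,
$$
(\pam)\uz=\pam \ \Longleftrightarrow\ (\pam)\uis=\pam \ \Longleftrightarrow\ x \text{ is fully commutative},
$$
which is exactly the assertion of the corollary.

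There is no genuine obstacle remaining here; the corollary is a bookkeeping consequence of two substantial earlier results. The real content sits in the implication (iii) $\Rightarrow$ (ii) of Proposition~\ref{p.iso-bi-gen} for classical types, which is what forces iso-indecomposability to coincide with integral indecomposability; Remark~\ref{r.notequiv} shows this can fail outside types $A$ and $D$, so the type hypothesis is essential and one must use the simply-laced (indeed classical-type) hypothesis rather than merely closedness of $\pam$. The remaining ingredient, the equivalence of (iii) with full commutativity of $x$, is (iii) $\Leftrightarrow$ (vi) of Theorem~\ref{t.allin}, quoted there from \cite{FaSt:97}. The only point requiring care is to apply Theorem~\ref{t.iso-gen} in the correct direction, since it is the implication ``iso-indecomposable $\Rightarrow$ integrally indecomposable'' for elements of $\pam$ that is special to simply-laced types.
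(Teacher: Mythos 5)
Your argument is correct and is essentially the paper's own proof: both combine Proposition~\ref{p.iso-bi-gen} (equivalently Theorem~\ref{t.iso-gen}), which gives $\pam\uz=\pam\uis$ in types $A$ and $D$, with the chain (ii) $\Leftrightarrow$ (v) $\Leftrightarrow$ (iii) $\Leftrightarrow$ (vi) of Theorem~\ref{t.allin}. One small slip in your closing commentary (not affecting the proof): the implication special to simply-laced types is ``integrally indecomposable $\Rightarrow$ iso-indecomposable'' (Proposition~\ref{p.int-iso}), whereas ``iso-indecomposable $\Rightarrow$ integrally indecomposable'' is the contrapositive of the classical-type implication (iii) $\Rightarrow$ (i) of Proposition~\ref{p.iso-bi-gen} combined with the trivial inclusion, so the two hypotheses enter in the opposite directions from what you state.
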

\begin{proof}
  In types $A$ and $D$, $\pam\uq=\pam\uis$, by Proposition
  \ref{p.iso-bi-gen}. Thus, in Theorem \ref{t.allin}, conditions (ii)
  through (vi) are equivalent (see Figure \ref{f.allin}).
\end{proof}

\begin{remark} \label{r.stem}
  In \cite[Remark 6.5]{GrKr:20}, the Weyl group element
  $x=s_2s_1s_3s_4s_2$ of type $D_4$ is considered. It is observed that
  all elements of $\pam$ are integrally indecomposable, but $x$ is not
  cominuscule (see \cite[Remark 5.4]{Ste:01}). Nevertheless, $x$ is
  fully commutative, as required by Corollary \ref{c.adind}.  Note that
Stembridge's results
apply since cominuscule Weyl group elements are exactly the minuscule elements
for the dual root system (see \cite[Section 5.2]{GrKr:19}).
\end{remark}


\section{Tangent spaces and $T$-invariant curves}
\label{s.schubert}

In this section we recall some known results on tangent spaces and
$T$-invariant curves of Schubert and Kazhdan-Lusztig varieties. We
include proofs for the convenience of the reader.

Let $G$ be a semisimple Lie group defined over an algebraically closed
field of characteristic 0, $B\supset T$ a Borel subgroup and maximal
torus respectively.  For any representation $V$ of $T$, we denote the
weights of $V$ by $\gP(V)$.  Let $W=N_G(T)/T$, the Weyl group of
$G$. Let $P$ be a parabolic subgroup containing $B$, $L$ be the Levi
subgroup of $P$ containing $T$, and $W_P=N_L(T)/T$, the Weyl group of
$L$. Each coset $uW_P$ in $W/W_P$ contains a unique representative of
minimal length; denote the set of minimal length coset representatives
by $W^P\subseteq W$. When $P=B$, $L=T$, and $W^P=W$.  Fix
$w\leq x\in W^P$.

\subsection{Schubert and Kazhdan-Lusztig
  varieties}\label{ss.unipotent}

Let $B^-$ be the opposite Borel subgroup to $B$, and let $P^-$ be the
opposite parabolic subgroup to $P$.  Let $U$, $U^-$, and $U_P^-$ be
the unipotent radicals of $B$, $B^-$, and $P^-$ respectively. Define
$U^-(x)=xU^-x^{-1}$ and $U_P^-(x)=xU_P^-x^{-1}$. The subgroups $U$,
$U^-$, $U_P^-$, $U^-(x)$, and $U_P^-(x)$ are unipotent, and are
$T$-equivariantly isomorphic to their Lie algebras, with which we
often identify them.  The weights of the first three of these under
the $T$ action are denoted by $\gP^+$, $\gP^-$, and $\gP_P^-$
respectively; the weights of the last two are then $x\gP^-$ and
$x\gP_P^-$ respectively.

Let $U_{\ga}$ denote the root subgroup of $G$ corresponding to
$\ga\in\gP$. For unipotent subgroups $V$ of the above paragraph, we
have that $U_{\ga}\subseteq V$ if and only if $\ga\in \gP(V)$;
otherwise $U_{\ga}\cap V$ is the identity.  Moreover, $V\cong
\prod_{\ga\in\gP(V)}U_{\ga}$.

\begin{lemma}\label{l.uup} 
  We have
  \begin{enumerate}
  \item   $\uup= \uub$.
  \item $\puup=\pam$.
  \item $\pupm = (\pupm\cap \gP^-) \sqcup \pam$.
  \end{enumerate}
\end{lemma}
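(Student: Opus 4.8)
The plan is to prove all three parts by translating the group-theoretic statements into statements about root systems, using the standard facts recalled just before the lemma: namely that each of the relevant unipotent subgroups $V$ satisfies $U_\ga\subseteq V$ iff $\ga\in\gP(V)$, that $V\cong\prod_{\ga\in\gP(V)}U_\ga$, and that $\gP(U)=\gP^+$, $\gP(U^-)=\gP^-$, $\gP(U_P^-)=\gP_P^-$, $\gP(U^-(x))=x\gP^-$, $\gP(U_P^-(x))=x\gP_P^-$. Since $U^-(x)$, $U_P^-(x)$, $U$ are all products of root subgroups, an intersection such as $U^-(x)\cap U$ is again a product of root subgroups, namely $\prod_{\ga\in x\gP^-\cap\gP^+}U_\ga$; so each containment of groups is equivalent to the corresponding containment of weight sets. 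Thus (ii) reduces to the identity $x\gP^-\cap\gP^+=\pam$ and (iii) reduces to $x\gP_P^-=(x\gP_P^-\cap\gP^-)\sqcup(x\gP_P^-\cap\gP^+)$ together with the identification $x\gP_P^-\cap\gP^+=\pam$; part (i) reduces to $x\gP_P^-\cap\gP^+=x\gP^-\cap\gP^+$.

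For (ii): a root $\ga\in\gP^+$ lies in $x\gP^-$ iff $x^{-1}\ga\in\gP^-$, which is exactly the defining condition for $\ga\in\pam=\{\ga\in\gP^+\mid x^{-1}\ga\in\gP^-\}$. For (iii): the decomposition $x\gP_P^- = (x\gP_P^-\cap\gP^-)\sqcup(x\gP_P^-\cap\gP^+)$ is just the partition of any set of roots by sign, and it is disjoint because no root is both positive and negative. It then remains to identify $x\gP_P^-\cap\gP^+$ with $\pam$, which is the same computation as in (i), so I would prove (i) and use it. For (i): I must show $x\gP_P^-\cap\gP^+ = x\gP^-\cap\gP^+$, i.e. for $\ga\in\gP^+$, $x^{-1}\ga\in\gP_P^-$ iff $x^{-1}\ga\in\gP^-$. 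The inclusion $\gP_P^-\subseteq\gP^-$ gives one direction immediately. For the other direction, suppose $\ga\in\gP^+$ with $\gb:=x^{-1}\ga\in\gP^-$; I must show $\gb\in\gP_P^-$, equivalently $\gb\notin\gP_L$ (the roots of the Levi), equivalently $-\gb\notin\gP_L^+$. Here is where the hypothesis $x\in W^P$ enters: since $x$ is the minimal-length representative of $xW_P$, it satisfies $x(\gP_L^+)\subseteq\gP^+$ (a standard characterization of $W^P$). If $-\gb\in\gP_L^+$, then $x(-\gb)=-\ga\in\gP^+$, contradicting $\ga\in\gP^+$. Hence $\gb\in\gP_P^-$, as needed.

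The main obstacle, such as it is, is part (i): one must remember and invoke the correct characterization of minimal-length coset representatives, $x\in W^P \iff x(\gP_L^+)\subseteq\gP^+$ (equivalently $x^{-1}$ sends no positive root of $L$ to a negative root). Everything else is bookkeeping with signs of roots and the product-of-root-subgroups structure. I would state this characterization of $W^P$ explicitly (citing \cite{Hum:90} or the standard references already in use), then assemble (i), then read off (ii) from the definition of $\pam$, and finally deduce (iii) by combining the sign-partition with (i) and (ii).
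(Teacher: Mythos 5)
Your proposal is correct, and all the root-level computations check out: for $\ga\in\gP^+$ with $\gb=x^{-1}\ga\in\gP^-$, the characterization $x\in W^P\Rightarrow x(\gP_L^+)\st\gP^+$ does force $-\gb\notin\gP_L^+$, hence $\gb\in\gP_P^-$, which gives $x\gP_P^-\cap\gP^+=x\gP^-\cap\gP^+=\pam$; parts (ii) and (iii) then follow exactly as in the paper (sign partition plus the definition of $\pam$). Where you differ from the paper is part (i) and the direction of deduction: the paper simply cites \cite[Section 3]{Knu:09} for the group equality $U_P^-(x)\cap U=U^-(x)\cap U$ and then \emph{deduces} the weight identity (ii) by taking weights of the two equal groups, whereas you prove the weight identity first (making the role of $x\in W^P$ explicit, which the paper's citation hides) and then upgrade it to the group equality. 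That upgrade rests on your asserted claim that an intersection such as $U^-(x)\cap U$ is again directly spanned by the common root subgroups; this is true here — the intersection is a closed $T$-stable subgroup of $U$, automatically connected in characteristic $0$, and such subgroups are directly spanned by the root subgroups they contain — but it is precisely the group-theoretic content that the Knutson citation is doing for the paper, so you should either cite a standard reference for direct spanning (e.g.\ Borel or Humphreys) or \cite{Knu:09} itself at that step. The trade-off: your route is self-contained and explains \emph{why} $x\in W^P$ matters, at the cost of invoking (and ideally justifying) the direct-spanning fact; the paper's route is shorter but leaves both the group fact and the $W^P$ mechanism to the reference. One small wording caveat: your opening claim that ``(ii) reduces to $x\gP^-\cap\gP^+=\pam$'' is only valid after the (i)-level identification $x\gP_P^-\cap\gP^+=x\gP^-\cap\gP^+$, which you do supply later, so the logic closes, but the dependence should be stated in that order.
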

\begin{proof}
  (i) See \cite[Section 3]{Knu:09}.

  \noindent (ii) $\puup=\gP(\uup)=\gP(\uub)=\puub=\pam$.

  \noindent (iii) $\pupm = (\pupm\cap \gP^-)\sqcup (\puup) =
  (\pupm\cap \gP^-) \sqcup \pam$.
\end{proof}

The variety $G/P$ is called a generalized flag variety. The torus $T$
acts on $G/P$ by left translations, with fixed points being the cosets
$zP$, $z\in W^P$.  The unipotent subgroup $\upm$ embeds as an open
subset of $G/P$ under the mapping $\upm\to \upm xP$.  Thus we may view
$\upm xP$ as an open affine space in $G/P$, with subspace $(\uup)xP$.
The Schubert variety $\xw\subseteq G/P$ is defined to be
$\overline{B^-wP}$, the Zariski closure of the $B^-$ orbit through
$wP$. The Kazhdan-Lusztig variety $\kxw\subseteq G/P$ is defined to be
$\overline{B^-wP}\cap BxP$.  Since $BxP = UxP = (\uup)xP$, we
can alternatively write
\begin{equation}\label{e.kxw}
  \kxw=  \xw\cap
  (U_P^-(x)\cap U)xP.
\end{equation}
We see that $\kxw$ is an affine subvariety of $(\uup)xP$.  The
$T$-fixed points of $\xw$ are the cosets $zP\in G/P$ such that $z\in
W^P$ and $z\geq w$, whereas $xP$ is the unique $T$-fixed point of
$\kxw$.

\subsection{$T$-invariant curves of Schubert and Kazhdan-Lusztig
  varieties}

If $Z$ is a variety with a $T$-action, then a $T$-invariant curve of
$Z$ is defined to be an irreducible curve $C$ which is closed in $Z$
and stable under the $T$-action, i.e., $t\cdot C \subseteq C$ for all
$t \in T$.  If $z$ is a $T$-fixed point of $Z$, then denote the set of
all $T$-invariant curves of $Z$ containing $z$ by $E(Z,z)$. Denote the
tangent space to $Z$ at $z$ by $T_zZ$, and
$\sum_{C\in E(Z,z)}T_zC\st T_zZ$ by
$TE_zZ$. 

For $\ga\in x\gP_P^-$, the Zariski closure of $U_{\ga}xP$ in $G/P$,
denoted by $\overline{U_{\ga}xP}$, is a $T$-invariant curve of $G/P$.
It is isomorphic to $\PP^1$ and has decomposition $U_{\ga}xP\cup
s_{\ga}xP$.  Note that $U_{\ga}xP\st (\uup)xP$ if and only if
$U_{\ga}\st \uup$ if and only if $\ga\in\gP(\uup)=\pam$.

\begin{proposition}\label{p.exw}
  Let $w\leq x\in W^P$. 
  \begin{enumerate}
  \item $E(\xw,xP)=\{\overline{U_{\ga}xP}\mid \ga\in x\gP_P^-,
    s_{\ga}x\geq w \}$.
  \item $E(\kxw,xP)=\{U_{\ga}xP\mid \ga\in \pam, s_{\ga}x\geq w \}$.
  \end{enumerate}
\end{proposition}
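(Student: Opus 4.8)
The plan is to prove (ii) first and then deduce (i) from it, using the structure of $\kxw$ as an affine subvariety of the open cell $(\uup)xP$. For part (ii), I would start with the classical fact recalled just before the proposition: for $\ga\in x\gP_P^-$, the closure $\overline{U_{\ga}xP}$ is a $T$-invariant $\PP^1$ in $G/P$ with the two $T$-fixed points $xP$ and $s_{\ga}xP$, and $U_{\ga}xP\st (\uup)xP$ exactly when $\ga\in\pam$ (Lemma \ref{l.uup}(ii)). The curve $C_{\ga}:=U_{\ga}xP$ is then the affine part of this $\PP^1$ lying in the open cell; it is contained in $\kxw$ if and only if $\overline{U_{\ga}xP}\st\xw$ (its closure must meet $\xw$ in a curve, and $\xw$ is closed), which by the description of the $T$-fixed points of $\xw$ happens if and only if $s_{\ga}xP\geq w$, i.e. $s_{\ga}x\geq w$ in $W^P$. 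This shows that the curves listed on the right side of (ii) are exactly the $T$-invariant curves of $G/P$ through $xP$ that lie in $\kxw$.

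What remains for (ii) is to check that these are \emph{all} the $T$-invariant curves of $\kxw$ through $xP$, i.e. that a $T$-invariant curve of $\kxw$ through $xP$ is automatically a $T$-invariant curve of $G/P$. Since $\kxw$ is locally closed (indeed closed) in the affine space $(\uup)xP$, and a $T$-invariant irreducible curve $C\subseteq\kxw$ through $xP$ is then a locally closed $T$-stable curve in the affine $T$-representation $\uup$ (with $xP$ the origin), I would argue that $C$ must be of the form $U_{\gl}xP$ for a single weight $\gl\in\pam$: a one-dimensional $T$-stable subvariety of a representation space passing through the origin, whose tangent line at the origin is a $T$-eigenline of weight $\gl$, coincides with the coordinate line $U_\gl$ near the origin, hence everywhere by irreducibility; and its closure in $G/P$ is the $T$-invariant $\PP^1$ above. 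This is the step I expect to be the main obstacle, so it is worth being careful: one needs that $C$ has a well-defined tangent weight (true because $T_{xP}C$ is a one-dimensional $T$-submodule of $T_{xP}\kxw\st\uup$), and that the orbit map $U_\gl\to U_\gl xP$ identifies $U_\gl xP$ with the unique $T$-curve of that weight through $xP$.

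For part (i), I would transfer the result from $\kxw$ using the open cover of $\xw$ by translates of cells. A $T$-invariant curve $C$ of $\xw$ through $xP$ meets the open $T$-stable affine neighborhood $(\uub)xP\cdot$(closure) of $xP$, and by \eqref{e.kxw} the intersection $C\cap\kxw$ is a nonempty open $T$-stable subcurve of $\kxw$ through $xP$; its closure in $\xw$ is again $C$. Conversely, if $\ga\in x\gP_P^-$ with $s_{\ga}x\geq w$, then $\overline{U_{\ga}xP}$ is a $T$-invariant $\PP^1$ whose two fixed points $xP$ and $s_{\ga}xP$ both lie in $\xw$ (using the $T$-fixed-point description of $\xw$), and since $\overline{U_\ga xP}$ is the closure of a single $T$-orbit while $\xw$ is $B^-$-stable and closed, one gets $\overline{U_\ga xP}\st\xw$. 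So the curves listed in (i) all lie in $\xw$, and by the correspondence $C\leftrightarrow C\cap\kxw$ together with (ii) — where the condition $\ga\in\pam$ drops because now $\ga$ ranges over all of $x\gP_P^-$ — these are exactly the $T$-invariant curves of $\xw$ through $xP$. (One should note that $s_\ga x\geq w$ is to be read in $W^P$ after replacing $s_\ga x$ by its minimal coset representative; this is the standard convention, and the membership of $s_\ga xP$ in $\xw$ depends only on the coset.) I expect parts of this transfer to be routine given the explicit cell structure, with the only real content being the local classification of $T$-curves carried out in the proof of (ii).
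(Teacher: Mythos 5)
Your plan inverts the paper's logic: the paper quotes part (i) as the known Carrell--Peterson/Carrell--Kuttler description of $E(\xw,xP)$ (citing \cite{Car:94}, \cite{CaKu:03}) and then deduces (ii) from (i) by intersecting with the cell, whereas you prove (ii) directly and try to recover (i) from it. The deduction of (i) from (ii) is where your argument genuinely fails. The set $BxP=(\uup)xP$ is \emph{not} an open neighborhood of $xP$ in $G/P$ (it is the Bruhat cell, of dimension $\ell(x)$), and $\kxw$ is not open in $\xw$; so the claim that for every $C\in E(\xw,xP)$ the intersection $C\cap\kxw$ is a nonempty open subcurve is false. Indeed, for $\ga\in x\gP_P^-\cap\gP^-$, equivalently $s_\ga x>x$ (Lemma \ref{l.xgp}), the open $T$-orbit of $\overline{U_\ga xP}$ lies in the Bruhat cell of the larger fixed point $s_\ga xP$, so $\overline{U_\ga xP}\cap\kxw=\{xP\}$ is a single point --- exactly the dichotomy the paper's proof of (ii) records. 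Every such curve belongs to $E(\xw,xP)$, since $s_\ga x>x\geq w$, yet it corresponds to no curve of $\kxw$; thus the correspondence $C\leftrightarrow C\cap\kxw$ loses precisely the curves in the negative-root directions, and (i) cannot be recovered from (ii) this way. To salvage your route for (i) you must either cite the classification of $T$-invariant curves of $G/P$ through $xP$ (which is what (i) essentially is), or run your local argument inside the genuine open $T$-stable affine neighborhood $\upm xP\cong U_P^-(x)$, whose weights are all of $x\gP_P^-$, rather than inside $\kxw$.

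There is also a flaw in the key local step of your direct proof of (ii). The principle you invoke --- that a one-dimensional $T$-stable subvariety of a $T$-module through the origin whose tangent line at the origin is a weight line must coincide with that coordinate line --- is false in general: for $T=\C^*$ acting on $\C^2$ with weights $1$ and $2$, the parabola $y=x^2$ is irreducible, $T$-stable, passes through $0$ and is tangent there to the weight-$1$ line, but is not a coordinate line. What is true, and what you need, is this: such a curve $C$ is the closure of a one-dimensional $T$-orbit (the fixed locus in $\uup$ is only the origin), and for that orbit to be one-dimensional with the origin in its closure, all weights occurring in the support of a point of the dense orbit must be positive rational multiples of one another; since distinct roots in $\pam$ are never positively proportional (the root system is reduced), the support is a single root $\gl$ and $C=U_\gl xP$. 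This argument also avoids assuming $T_{xP}C$ is one-dimensional, which need not hold a priori for a singular $T$-curve. With that repair, and with the containment direction spelled out (for $\ga\in\pam$ write $\overline{U_\ga xP}=\overline{U_{-\ga}\,s_\ga xP}$ and use $U_{-\ga}\st B^-$ together with $s_\ga xP\in\xw$ and $B^-$-stability and closedness of $\xw$), your proof of (ii) can be made independent of (i); but as written, both the local classification step and, more seriously, the passage from (ii) to (i) have gaps.
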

\begin{proof}
  (i) See \cite{Car:94}, \cite{CaKu:03}.

  \noindent (ii) Let $\ga\in x\gP_P^-$ be such that $s_{\ga}x\geq
  w$. By (i), $\overline{U_{\ga}xP}\st \xw$.  Since $\kxw=\xw\cap
  (U_P^-(x) \cap U)xP$,
\begin{equation*}
  \overline{U_{\ga}xP}\cap \kxw = 
\begin{cases}
  U_{\ga}xP, & \text{if }\ga\in x\gP_P^-\cap \gP^+=\pam\\
  xP, &\text{otherwise}
\end{cases}
\end{equation*}

Suppose that $\ga\in \pam$ and $s_{\ga}x\geq w$. Then
$U_{\ga}xP=\overline{U_{\ga}xP}\cap \kxw \in E(\kxw,xP)$. On the other
hand, suppose that $C\in E(\kxw,xP)$. Then $\overline{C}\in
E(X^w,xP)$, so $\overline{C}=\overline{U_{\gb}xP}$ for some $\gb\in
x\gP_P^-$ such that $s_{\gb}x\geq w$. Therefore $C=\overline{C}\cap
\kxw=U_{\gb}xP$, and $\gb\in\pam$.
\end{proof}

\begin{corollary}\label{c.exw}   Let $w\leq x\in W^P$. 
  \begin{enumerate}
  \item $\gP(TE_x\xw)=\{\ga\in x\gP_P^-\mid s_{\ga}x\geq w\}$. 

  \item $\gP(TE_x\kxw)=\{\ga\in\pam \mid s_{\ga}x\geq w\}$.
  \end{enumerate}
\end{corollary}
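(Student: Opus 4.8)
The plan is to read the corollary off directly from Proposition \ref{p.exw} once we know, for each of the $T$-invariant curves listed there, the weight of its tangent line at $xP$. By definition $TE_x\xw = \sum_{C\in E(\xw,xP)} T_{xP}C$, so it suffices to show that each such curve $C$ contributes a one-dimensional $T$-submodule of $T_{xP}(G/P)$ whose weight we can identify, and that the lines contributed by distinct curves are linearly independent. The same reasoning will handle $\kxw$ in part (ii).

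First I would compute the tangent weight of a single curve. Fix $\ga\in x\gP_P^-$. As recalled just before Proposition \ref{p.exw}, $\overline{U_\ga xP}\cong\PP^1$ contains $U_\ga xP$ as an open neighborhood of $xP$, and $U_\ga xP$ is the image of the orbit map $U_\ga\to G/P$, $u\mapsto u\cdot xP$. Since $x$ normalizes $T$, the point $xP$ is $T$-fixed, and hence this orbit map is $T$-equivariant when $T$ acts on $U_\ga$ by conjugation: indeed $(tut^{-1})\cdot xP = tu\cdot(t^{-1}xP) = tu\cdot xP$. Moreover the map is a bijection onto $U_\ga xP$ (because $U_\ga\cap xPx^{-1}=\{e\}$, as $\ga\in x\gP_P^-$), hence an isomorphism onto $U_\ga xP$ in characteristic $0$. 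The $T$-equivariant identification $U_\ga\cong\fg_\ga$ identifies the tangent space of $U_\ga$ at the identity with the root space $\fg_\ga$, of weight $\ga$; differentiating the orbit map at the identity therefore identifies $T_{xP}(\overline{U_\ga xP})$ with $\fg_\ga$ as a $T$-module, so this tangent line has weight $\ga$. The identical argument applies to the curve $U_\ga xP\st\kxw$ for $\ga\in\pam$.

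Next I would put the curves together. The assignment sending a $T$-invariant curve to the weight of its tangent line at $xP$ is injective on $E(\xw,xP)$, since the curves $\overline{U_\ga xP}$ for distinct $\ga$ are distinct; likewise on $E(\kxw,xP)$. As $T$-submodules of distinct weights of $T_{xP}(G/P)$ always span a direct sum, $\sum_{C\in E(\xw,xP)}T_{xP}C$ is a direct sum of one-dimensional $T$-submodules of pairwise distinct weights. Hence $\gP(TE_x\xw)$ equals exactly the set of those weights, which by Proposition \ref{p.exw}(i) is $\{\ga\in x\gP_P^-\mid s_\ga x\geq w\}$. This proves (i); part (ii) follows identically from Proposition \ref{p.exw}(ii), with $\xw$ and $x\gP_P^-$ replaced by $\kxw$ and $\pam$.

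The only delicate point is the weight computation for a single curve, i.e.\ checking that the orbit map $U_\ga\to U_\ga xP$ is $T$-equivariant and an isomorphism near the identity; this is immediate from the facts that $xP$ is $T$-fixed and that $\overline{U_\ga xP}$ is smooth (being $\cong\PP^1$). Everything else is formal once Proposition \ref{p.exw} is in hand, so I do not anticipate a real obstacle.
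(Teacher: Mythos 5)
Your proposal is correct and follows essentially the same route as the paper: the paper's proof likewise observes that the tangent space at $x$ to each curve $\overline{U_{\ga}xP}$ (resp.\ $U_{\ga}xP$) is $T$-isomorphic to $U_{\ga}$, hence has weight $\ga$, and then reads off the weight sets from Proposition \ref{p.exw}. Your extra verifications (equivariance of the orbit map, injectivity, distinctness of weights) are just a more detailed spelling-out of the same argument.
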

\begin{proof}
  For $\ga\in x\gP^-$, the tangent spaces to $\overline{U_{\ga}xP}$
  and $U_{\ga}xP$ at $x$ are both isomorphic to $U_{\ga}$, and thus
  both have weight $\ga$.  Hence (i) and (ii) follow from Proposition
  \ref{p.exw}(i) and (ii) respectively.
\end{proof}

\begin{lemma}\label{l.xgp}
  Let $w\leq x\in W^P$. Then $x\gP_P^-\cap \gP^- =\{\ga\in
  x\gP_P^-\mid s_{\ga} x>x\}$ and $x\gP_P^-\cap \gP^+=\pam = \{\ga\in
  x\gP_P^-\mid s_{\ga}x<x\}$.
\end{lemma}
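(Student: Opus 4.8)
The plan is to deduce the three asserted equalities from two ingredients: the identity $x\gP_P^-\cap\gP^+=\pam$, which is already available, and the standard sign-versus-length criterion for the reflections $s_{\ga}$.

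First I would record the trivial remark that $s_{\ga}x\neq x$ for every root $\ga$, since no reflection is the identity. Hence $x\gP_P^-$ is the disjoint union of $\{\ga\in x\gP_P^-\mid s_{\ga}x>x\}$ and $\{\ga\in x\gP_P^-\mid s_{\ga}x<x\}$; it is of course also the disjoint union of $x\gP_P^-\cap\gP^+$ and $x\gP_P^-\cap\gP^-$. So it suffices to prove two things: (a) $x\gP_P^-\cap\gP^+=\pam$; and (b) for $\ga\in x\gP_P^-$, $s_{\ga}x<x$ if and only if $\ga\in\gP^+$. The remaining equalities then follow by taking complements inside $x\gP_P^-$.

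For (a) I would simply invoke Lemma \ref{l.uup}(ii), which gives $\puup=\pam$; this is the only place the hypothesis $x\in W^P$ is needed (and $w$ plays no role). For (b), the key step is the identity $s_{\ga}x=xs_{x^{-1}(\ga)}$, which follows from $s_{x\gm}=xs_{\gm}x^{-1}$. Putting $\gm=x^{-1}(\ga)$, the hypothesis $\ga\in x\gP_P^-$ gives $\gm\in\gP_P^-\st\gP^-$, so $-\gm\in\gP^+$ and $s_{\ga}x=xs_{-\gm}$. I would then apply the well-known criterion (see, e.g., \cite[\S 5.7]{Hum:90}) that for $y\in W$ and a positive root $\gb$, the elements $y$ and $ys_{\gb}$ are comparable in the Bruhat order, with $ys_{\gb}<y$ exactly when $y(\gb)\in\gP^-$. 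With $y=x$ and $\gb=-\gm$ this becomes $s_{\ga}x<x\Leftrightarrow x(-\gm)=-\ga\in\gP^-\Leftrightarrow\ga\in\gP^+$, which is (b).

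Combining (a) and (b) then gives $\{\ga\in x\gP_P^-\mid s_{\ga}x<x\}=x\gP_P^-\cap\gP^+=\pam$, and complementing inside $x\gP_P^-$ gives $\{\ga\in x\gP_P^-\mid s_{\ga}x>x\}=x\gP_P^-\cap\gP^-$, which finishes the proof. I do not expect a genuine obstacle here; the only points requiring care are to cite the length criterion in the right form (right multiplication, with $\gb$ allowed to be non-simple) and to note that two Weyl group elements differing by a reflection are always Bruhat-comparable, with the comparison determined by lengths.
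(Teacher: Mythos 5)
Your proof is correct and is essentially the paper's argument: both rest on the standard criterion that multiplication by a reflection lowers Bruhat order exactly when the associated positive root is sent to a negative root, combined with $x\gP_P^-\cap\gP^+=\pam$ and the dichotomy between $s_{\ga}x>x$ and $s_{\ga}x<x$. The only cosmetic difference is that you transfer the comparison via the conjugation identity $s_{\ga}x=xs_{x^{-1}(\ga)}$, whereas the paper passes to inverses ($s_{\ga}x>x\Leftrightarrow x^{-1}s_{\ga}>x^{-1}$) and reads off the sign of $x^{-1}\ga$ directly.
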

\begin{proof}
  Observe that $s_{\ga}x>x$ if and only if $x^{-1}s_{\ga}>x^{-1}$ if
  and only if $\ga$ and $x^{-1}\ga$ have the same sign. Thus, if
  $\ga\in \pupm$, then $s_{\ga}x>x$ implies $\ga\in \pupm\cap \gP^-$,
  and $s_{\ga}x<x$ implies $\ga\in \pupm\cap \gP^+=\pam$.
\end{proof}

\begin{corollary}\label{c.tanwts}
  Let $w\leq x\in W^P$.
  \begin{enumerate}
  \item $\gP(T_x\xw)\st \pupm$.
  \item $\gP(T_x\xw)=(\pupm\cap \gP^-)\sqcup \gP(T_x\kxw)= \{\ga\in
    x\gP_P^-\mid s_{\ga}x>x\} \sqcup \gP(T_x\kxw)$.
  \item $\gP(T_x\kxw)=\gP(T_x\xw) \cap \pam$.
  \item $\gP(TE_x\xw)=(\pupm\cap \gP^-)\sqcup \gP(TE_x\kxw)$.
  \item $\gP(TE_x\kxw)=\gP(TE_x\xw)\cap \pam =\{\ga\in x\gP_P^-\mid x
    > s_{\ga}x\geq w\}$.
  \end{enumerate}
\end{corollary}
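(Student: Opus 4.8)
The plan is to deduce all five parts from a single geometric input --- a $T$-equivariant local product decomposition of $\xw$ at $xP$ --- together with the description of $T$-invariant curves in Corollary \ref{c.exw} and the sign criterion of Lemma \ref{l.xgp}. The input I want is a $T$-equivariant isomorphism $\xw\cap\upm xP\cong(\upm\cap U^-)\times\kxw$. To obtain it, I would start from the partition $\pupm=(\pupm\cap\gP^-)\sqcup\pam$ of Lemma \ref{l.uup}(iii): each half is closed under root addition --- if $\ga,\gb$ lie in one half and $\ga+\gb$ is a root, then $\ga+\gb\in\gP(\upm)=\pupm$ and keeps the common sign of $\ga,\gb$ --- so $\upm\cap U^-=\prod_{\ga\in\pupm\cap\gP^-}U_\ga$ and $\uup=\upm\cap U=\prod_{\ga\in\pam}U_\ga$ are $T$-stable subgroups and multiplication is an isomorphism of varieties $(\upm\cap U^-)\times\uup\to\upm$. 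Transporting this along the open immersion $\upm\hookrightarrow G/P$, $u\mapsto uxP$, gives a $T$-equivariant isomorphism $\upm xP\cong(\upm\cap U^-)\times(\uup)xP$ under which $\upm\cap U^-$ acts on the left factor by free left translation. Since $\xw=\overline{B^-wP}$ is $B^-$-stable it is stable under $\upm\cap U^-\subseteq U^-\subseteq B^-$, so $\xw\cap\upm xP$ is a subvariety of $(\upm\cap U^-)\times(\uup)xP$ invariant under that free action on the first factor; hence it has the form $(\upm\cap U^-)\times Y$ with $Y=\xw\cap(\uup)xP=\kxw$ by \eqref{e.kxw}.

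Parts (i)--(iii) are then formal. Since $\upm xP$ is open in $G/P$ and contains $xP$, and $\kxw$ sits inside the affine space $(\uup)xP$, which is $T$-equivariantly a vector space with weights $\gP(\uup)=\pam$ (Lemma \ref{l.uup}(ii)), taking tangent spaces at $xP$ yields $T_x\xw=\mathrm{Lie}(\upm\cap U^-)\oplus T_x\kxw$ as $T$-modules, whence $\gP(T_x\xw)=(\pupm\cap\gP^-)\sqcup\gP(T_x\kxw)$ with $\gP(T_x\kxw)\st\pam$. Rewriting $\pupm\cap\gP^-=\{\ga\in\pupm\mid s_\ga x>x\}$ by Lemma \ref{l.xgp} gives (ii); (i) follows because $(\pupm\cap\gP^-)\sqcup\pam=\pupm$; and (iii) follows by intersecting (ii) with $\pam$, using $\gP^-\cap\pam=\es$ and $\gP(T_x\kxw)\st\pam$.

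Parts (iv) and (v) use only Corollary \ref{c.exw} and Lemma \ref{l.xgp}. From Corollary \ref{c.exw}(i), $\gP(TE_x\xw)=\{\ga\in\pupm\mid s_\ga x\geq w\}$; splitting along $\pupm=(\pupm\cap\gP^-)\sqcup\pam$, the $\pam$-part is exactly $\gP(TE_x\kxw)$ by Corollary \ref{c.exw}(ii), while on the $(\pupm\cap\gP^-)$-part the condition $s_\ga x\geq w$ is automatic since $s_\ga x>x\geq w$ by Lemma \ref{l.xgp}; this is (iv). For (v), Lemma \ref{l.xgp} rewrites $\pam=\{\ga\in\pupm\mid s_\ga x<x\}$, so Corollary \ref{c.exw}(ii) reads $\gP(TE_x\kxw)=\{\ga\in\pupm\mid x>s_\ga x\geq w\}$; intersecting (iv) with $\pam$ (again using $\gP^-\cap\pam=\es$ and $\gP(TE_x\kxw)\st\pam$) gives $\gP(TE_x\kxw)=\gP(TE_x\xw)\cap\pam$.

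The one nonroutine step is the local product decomposition in the first paragraph --- concretely, checking that the two halves of $\pupm$ give honest subgroups and that invariance under the free translation action forces a product structure. This is standard in the theory of Kazhdan--Lusztig varieties and could alternatively be cited (e.g.\ from \cite{Knu:09} or \cite{KnMi:04}); everything downstream is bookkeeping with the partition $\pupm=(\pupm\cap\gP^-)\sqcup\pam$ and the sign criterion of Lemma \ref{l.xgp}.
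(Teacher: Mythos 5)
Your proposal is correct, and for parts (i) and (iii)--(v) it is essentially the paper's own argument: the same bookkeeping with the partition $\pupm=(\pupm\cap \gP^-)\sqcup \pam$ of Lemma \ref{l.uup}(iii), Corollary \ref{c.exw}, and the sign criterion of Lemma \ref{l.xgp} (including the observation that $s_\ga x>x\geq w$ makes the curve condition automatic on the $\gP^-$ half). The only real difference is in (ii): the paper simply cites \cite[Lemma 6.2(i)]{GrKr:20} for the equality $\gP(T_x\xw)=(\pupm\cap \gP^-)\sqcup \gP(T_x\kxw)$, whereas you reprove the underlying local product structure $\xw\cap \upm xP\cong (\upm\cap U^-)\times \kxw$ from scratch (the closedness of the two halves of $\pupm$, the factorization $\upm\cong(\upm\cap U^-)\times\uup$, and the free-translation argument forcing the product form); this is in substance \cite[Lemma 5.1(ii)]{GrKr:20}, which the paper itself invokes later in the proof of Theorem \ref{t.smooth}. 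So your route is not a different decomposition but a self-contained derivation of the imported lemma; what it buys is independence from \cite{GrKr:20} at the cost of a page of standard unipotent-group arguments, and your verification of the key step (invariance under the free action of $\upm\cap U^-$ on the first factor forces $Z=(\upm\cap U^-)\times Y$ with $Y=\xw\cap(\uup)xP=\kxw$, using reducedness of $\xw$ for the scheme structure) is sound.
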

\begin{proof} 
  (i) $T_x\xw\st T_x(G/P) \cong \upm$, which has weights $\pupm$.

  \noindent (ii) The first equality appears in \cite[Lemma
  6.2(i)]{GrKr:20}; the second then follows from Lemma \ref{l.xgp}.

  \noindent (iii) follows from (i), (ii), and Lemma \ref{l.uup}(iii).

  \noindent (iv) By Lemma \ref{l.uup}(iii) and Corollary \ref{c.exw},
  $\gP(TE_x\xw)=\{ \ga\in x\gP_P^-\cap \gP^- \mid s_{\ga}x\geq w\}
  \sqcup \gP(TE_x\kxw)$. But by Lemma \ref{l.xgp}, for all $\ga\in
  x\gP_P^-\cap \gP^-$, $s_{\ga}x\geq w$. 

  \noindent (v) The first equality follows from (i), (iv), and Lemma
  \ref{l.uup}(iii). The second can be deduced from Corollary
  \ref{c.exw}(ii) and Lemma \ref{l.xgp}.
\end{proof}

\section{Indecomposable weights of tangent spaces and $T$-invariant
  curves}\label{s.ind-tinv}

Fix $w\leq x\in W^P$.  Define $\pta=\gP(T_x\xw)$, $\pcu=\gP(TE_x\xw)$,
$\ptak=\gP(T_x\kxw)$, and $\pcuk=\gP(TE_x\kxw)$.  We refer to
$\pta$ and $\ptak$ as sets of tangent weights, and $\pcu$ and $\pcuk$
as sets of curve weights.

In this section we prove the main result of this paper, Theorem \ref{t.main}
(see Corollary \ref{c.contain-sch}): $\pta \st
\conea \pcu$. We deduce this from the stronger statement $\ptak \st
\conea \pcuk$. This result relies on properties of $\ptak$ studied in
\cite{GrKr:20}, the characterization of $\pcuk$ by Carrell-Peterson
\cite{Car:94}, and Corollary \ref{c.ind-opp}.

\begin{proposition}\label{p.pcu}
  We have:
  \begin{enumerate}
  \item $\pcuk\subseteq \ptak\subseteq \pam$.
  \item $\pcuk = \pam\xgw$.
  \item $\ptak \st \conez (\pam\zgw)$.
  \end{enumerate}
\end{proposition}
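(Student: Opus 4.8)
The plan is to get (i) and (ii) as formal consequences of the geometric facts assembled in Section~\ref{s.schubert}, and to deduce (iii) --- which is Theorem~\ref{t.known} of the introduction --- from the methods of \cite{GrKr:20}; part (iii) is where essentially all the work lies. For (i): $\pcuk\subseteq\ptak$ is immediate, since $TE_x\kxw$ is a sum of subspaces of $T_x\kxw$, hence a subspace, so its weight set lies in $\ptak$. The inclusion $\ptak\subseteq\pam$ is Corollary~\ref{c.tanwts}(iii); alternatively, \eqref{e.kxw} realizes $\kxw$ as a closed subvariety of the affine chart $(\uup)xP$, whose tangent space at $xP$ is a multiplicity-free $T$-module with weight set $\gP(\uup)=\pam$, forcing $\ptak\subseteq\pam$. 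For (ii): Corollary~\ref{c.exw}(ii) gives $\pcuk=\{\ga\in\pam\mid s_\ga x\geq w\}$. Since $\gg_1,\dots,\gg_l$ enumerate $\pam$ and $x_i=s_{\gg_i}x$ (Section~\ref{ss.prelim}), the condition $s_\ga x\geq w$ on a root $\ga=\gg_i\in\pam$ reads $x_i\geq w$, so $\pcuk=\{\gg_i\in\pam\mid x_i\geq w\}=\pam\xgw$ by the definition of the Coxeter weight; by Remark~\ref{r.redmap} this set does not depend on the chosen reduced expression.

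For (iii) the plan is to run the argument of \cite{GrKr:20}. Its main result determines, for each \emph{integrally indecomposable} $\gg_j\in\pam$, whether $\gg_j\in\ptak$, the answer being: iff $z_j\geq w$. One clean way to upgrade this to all weights: given $\gg\in\ptak\subseteq\pam$, use Corollary~\ref{c.sinco} (with $\A=\Z$) to write $\gg=\sum_k c_k\gb_k$ with $c_k\in\Z_{>0}$ and each $\gb_k$ integrally indecomposable in $\pam$; if every $\gb_k$ again lies in $\ptak$, then the \cite{GrKr:20} result gives $z(\gb_k)\geq w$, i.e.\ $\gb_k\in\pam\zgw$, so $\gg\in\conez(\pam\zgw)$ --- and this argument is valid in all types, consistent with the $\conez$ appearing in the statement. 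So the task reduces to showing that a decomposition in $\pam$ of a weight of $T_x\kxw$ uses only weights of $T_x\kxw$. This is precisely what the word-based description of $T_x\kxw$ in \cite{GrKr:20} is designed to supply --- there the contribution of the $i$th coordinate direction $\gg_i$ is governed by the Demazure product $z_i$ --- and I would re-run that analysis, discarding the indecomposability hypothesis used there to pin the weights down exactly and keeping only the (sufficient) conclusion that the weights occurring are controlled by $\{i\mid z_i\geq w\}$.

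I expect the main obstacle to be exactly this last step of (iii): establishing rigorously, from the geometry of $\kxw$ as developed in \cite{GrKr:20}, that the $\pam$-indecomposable constituents of a tangent weight are again tangent weights --- a statement that is not formal for subsets of $\pam$, and without which one only recovers $\ptak\subseteq\pam$. As an alternative route one might instead try to obtain the sharper $\ptak\subseteq\pam\zgw$ from a flat degeneration of $\kxw$ to the Stanley--Reisner scheme of the subword complex of $(\vs,w)$ (compare \cite{KnMi:04}), whose origin tangent space is spanned by the $\gg_i$ with $z_i\geq w$, using semicontinuity of tangent-space dimension; but the cone inclusion is all that is needed here, and is what the methods of \cite{GrKr:20} directly yield.
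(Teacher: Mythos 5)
Parts (i) and (ii) of your proposal are correct and coincide with the paper's argument: (i) follows from $TE_x\kxw \st T_x\kxw$ together with Corollary \ref{c.tanwts}(iii), and (ii) from Corollary \ref{c.exw}(ii) and the identity $x_i = s_{\gg_i}x$.

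Part (iii), however, is where the substance of the proposition lies, and your argument for it has a genuine gap that you yourself flag but do not close. Your reduction requires that a weight $\gg \in \ptak$ admit an integral decomposition into integrally indecomposable elements of $\pam$ \emph{all of which again lie in $\ptak$}. Corollary \ref{c.sinco} only produces a decomposition into indecomposable elements of $\pam$; nothing forces those constituents to be tangent weights, and this is not a formal property of subsets of $\pam$. In fact, the existence of such a decomposition with constituents in $\pam\zgw$ (equivalently, after Corollary \ref{c.inds}, in $\pcuk \st \ptak$) is essentially the content of statement (iii) itself, so the reduction is circular rather than a simplification: it is exactly the point where the machinery of \cite{GrKr:20} must be invoked, and "re-running that analysis discarding the indecomposability hypothesis" is a statement of intent, not a proof. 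Your fallback via a degeneration to the Stanley--Reisner scheme of the subword complex also does not work as stated: upper semicontinuity of tangent-space \emph{dimension} compares numbers, not $T$-weights, and cannot by itself yield the containment $\ptak \st \conez(\pam\zgw)$.

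For comparison, the paper's proof of (iii) avoids any weight-by-weight decomposition. It takes the coordinate rings $B$ and $C$ of the tangent space and tangent cone of $\kxw$ at $x$, and uses the character formula (5.2) of \cite{GrKr:20} (Theorem 6.1 there), which writes $\Char C$ as an alternating sum over $\vt \in \ct_{w,\vs}$ of sums over $\gz \in \conez\{\gg_i : i \notin \vt\}$. By \cite[Theorem 5.8]{GrKr:20}, every index $i$ with $i \notin \vt$ for some $\vt \in \ct_{w,\vs}$ satisfies $z_i \geq w$, so every weight of $C$ lies in $-\conez(\pam\zgw)$; since $C_1$ is canonically identified with $B_1$, which is dual to $T_x\kxw$, the weights of the tangent space lie in $\conez(\pam\zgw)$. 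This global character argument is the step your proposal leaves unproved, so as written the proof of (iii) is incomplete.
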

\begin{proof}
  (i) The first inclusion holds because  $TE_x\kxw \st T_x\kxw$. The
  second follows from Corollary \ref{c.tanwts}(iii).

\noindent  (ii) is due to
  Corollary \ref{c.exw}(ii) and the equality $x_i=s_{\gg_i}x$. 

  \noindent (iii) We first give definitions of three terms which
  appear in equation \eqref{e.charc} below. Recall that
  $\vs=(s_1,\ldots,s_l)$ is a fixed reduced expression for $x$. Define
  $\ct_{w,\vs}$ to be the set of sequences $\vt=(i_1,\ldots,i_m)$,
  $1\leq i_{1}<\cdots < i_m\leq l$, such that $H_{s_{i_1}}\cdots
  H_{s_{i_m}}=H_w$. For such $\vt$, define $e(\vt)=m-\ell(w)$. For
  $\gz\in \conez\{\gg_i:i\notin \vt\}$, define $n_{\gz}$ to be the
  number of ways to express $\gz$ as a nonnegative integer linear
  combination of the $\gg_i$, $i\notin \vt$.

  Let $B$ and $C$ be the coordinate rings of the tangent space and
  tangent cone respectively of $\kxw$ at $x$, and let $B_1$ and $C_1$
  be the degree one components of these rings. By equation (5.2) of
  \cite{GrKr:20} (see also \cite[Theorem 6.1]{GrKr:20}) and the
  simplifications following this equation, the character of $C$ under
  the $T$ action is
\begin{equation}\label{e.charc}
  \Char C=\sum\nolimits_{\vt\in \ct_{w,\vs}}\sum\nolimits_{\gz\in
    \conez\{\gg_i: i\notin \vt\}}(-1)^{e(\vt)}n_{\gz}e^{-\gz}.
\end{equation}
For each $\gg_i$ in this summation, $i\notin \vt$ for some
$\vt\in\ct_{w,\vs}$; by \cite[Theorem 5.8]{GrKr:20}, $z_i\geq
w$. Therefore all weights $\gz$ of $C$ lie in $-\conez (\pam\zgw)$. So
too do all weights of $B_1$, since $B_1$ and $C_1$ are canonically
identified. But $B_1$ is the dual space of $T_x\kxw$. Hence all
weights of $T_x\kxw$ lie in $\conez (\pam\zgw)$.
\end{proof}

\begin{theorem}\label{t.containments} We have: 
  \begin{enumerate}
     \item $\conea \ptak = \conea \pcuk$, or equivalently,
     $(\ptak)\ua=(\pcuk)\ua$. Hence $\ptak \st \conea \pcuk$.
    \item $\ptak\cap \pam\ua=\pcuk\cap \pam\ua$.
    \item $\ptak\cap \pam\ua = \pcuk\cap \pam\ua = (\pcuk)\ua =
      (\ptak)\ua$, if $\gP$ is of classical type.
  \end{enumerate}
\end{theorem}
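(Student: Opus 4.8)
The plan is to prove (i) first and then obtain (ii) and (iii) as formal consequences of (i) together with facts already established for the root set $\pam$. For (i), I would build the chain of inclusions
\[
  \ptak \st \conez(\pam\zgw) \st \conea(\pam\zgw) = \conea(\pam\xgw) = \conea(\pcuk).
\]
The first inclusion is Proposition \ref{p.pcu}(iii); the second holds because $\Z\st\A$, which gives $\conez(\pam\zgw)\st\conea(\pam\zgw)$; the equality $\conea(\pam\zgw)=\conea(\pam\xgw)$ is exactly Corollary \ref{c.ind-opp}; and $\pam\xgw=\pcuk$ by Proposition \ref{p.pcu}(ii). Since $\conea(\pcuk)$ is closed under nonnegative $\A$-linear combinations, this yields $\conea(\ptak)\st\conea(\pcuk)$; the reverse inclusion follows from $\pcuk\st\ptak$ (Proposition \ref{p.pcu}(i)). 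Hence $\conea(\ptak)=\conea(\pcuk)$, and Corollary \ref{c.conez-zind} converts this cone equality into the equivalent statement $(\ptak)\ua=(\pcuk)\ua$. The displayed chain simultaneously records the assertion $\ptak\st\conea\pcuk$.

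For (ii), I would invoke Lemma \ref{l.indrel}(iii): both $\ptak$ and $\pcuk$ are subsets of the root set $\pam$ by Proposition \ref{p.pcu}(i), and $(\ptak)\ua=(\pcuk)\ua$ by part (i), so taking $S=\pam$, $F=\ptak$, and $E=\pcuk$ gives $\ptak\cap\pam\ua=\pcuk\cap\pam\ua$. For (iii), assuming $\gP$ of classical type, it remains only to identify $\pcuk\cap\pam\ua$ with $(\pcuk)\ua$, since the remaining identifications are (ii) and (i). Writing $\pcuk=\pam\xgw$ (Proposition \ref{p.pcu}(ii)) and using that $\pam\xgw\cap\pam\ua=(\pam\ua)\xgw$, this amounts to the identity $(\pam\ua)\xgw=(\pam\xgw)\ua$, which is one of the four equalities of Corollary \ref{c.summary-class}.

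The proof is essentially bookkeeping: all of the substantive work has already been done, namely Corollary \ref{c.ind-opp} (equality of the Demazure and Coxeter cones of $\pam$), Corollary \ref{c.summary-class} (the collapse of the various notions of indecomposability for inversion sets in classical types), and Proposition \ref{p.pcu} (the $K$-theoretic input bounding $\ptak$ and the Carrell--Peterson description $\pcuk=\pam\xgw$). The only points that demand care, and which I regard as the main obstacle such as it is, are keeping the distinction between $\conez$ and $\conea$ straight throughout the chain for (i), and verifying that the hypotheses of Lemma \ref{l.indrel}(iii) are met --- in particular that $\ptak$ and $\pcuk$ genuinely lie inside $\pam$, so that the $\A$-indecomposability appearing there is the one taken relative to $\pam$ rather than relative to $\ptak$ or $\pcuk$ themselves.
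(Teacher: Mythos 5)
Your proposal is correct and follows essentially the same route as the paper: the identical chain $\ptak \st \conez(\pam\zgw) \st \conea(\pam\zgw) = \conea(\pam\xgw) = \conea\pcuk$ via Proposition \ref{p.pcu} and Corollary \ref{c.ind-opp} for (i), Lemma \ref{l.indrel}(iii) for (ii), and Corollary \ref{c.summary-class} together with $\pcuk=\pam\xgw$ for (iii). The only difference is that you spell out steps the paper leaves implicit (e.g.\ the reverse inclusion from $\pcuk\st\ptak$), which is fine.
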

\begin{proof} 
  (i) For $\rs=\pam$,
 \begin{equation*}
   \ptak \st \conez (\rs\zgw) \st \conea (\rs\zgw) = \conea (\rs\xgw) 
   = \conea \pcuk,
 \end{equation*}
 where the inclusions and equalities are due, respectively, to
 Proposition \ref{p.pcu}(iii), $\Z\st\A$, Corollary \ref{c.ind-opp},
 and Proposition \ref{p.pcu}(ii). It follows that $\conea \ptak \st
 \conea \pcuk$, and the other inclusion is clear. The equivalence of
 the second equality of (i) is due to Corollary \ref{c.conez-zind}.

\noindent (ii) follows from (i) and Lemma \ref{l.indrel}(iii).

\noindent (iii) The first and third equalities are (ii) and (i)
respectively, and the second equality follows from Corollary
\ref{c.summary-class} and Proposition \ref{p.pcu}(ii).
\end{proof}

We can deduce from Theorem \ref{t.containments} analogous results for
Schubert varieties.

\begin{corollary}\label{c.contain-sch} We have:
  \begin{enumerate}
  \item $\conea \pta= \conea \pcu$, or equivalently,
    $(\pta)\ua=(\pcu)\ua$. Hence $\pta \st
\conea \pcu$.
  \item $\pta\cap (x \gP_P^-)\ua = \pcu\cap (x \gP_P^-)\ua$.
  \item $\pta\cap x\gD^- = \pcu\cap x\gD^-$, if $\xw\st G/B$.
 \end{enumerate}
\end{corollary}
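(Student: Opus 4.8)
The plan is to deduce each part directly from the corresponding statement of Theorem \ref{t.containments}, exploiting the fact that, near $x$, the Schubert variety $\xw$ differs from the Kazhdan--Lusztig variety $\kxw$ only in the directions coming from $\pupm\cap\gP^-$. Write $\gP'=\pupm\cap\gP^-=x\gP_P^-\cap\gP^-$. By Corollary \ref{c.tanwts}(ii) and (iv) we have disjoint-union decompositions $\pta=\gP'\sqcup\ptak$ and $\pcu=\gP'\sqcup\pcuk$, and by Corollary \ref{c.tanwts}(i) together with $TE_x\xw\st T_x\xw$, both $\pta$ and $\pcu$ are subsets of the root set $\pupm=x\gP_P^-$.

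For part (i) it suffices to show $\pta\st\conea\pcu$ and $\pcu\st\conea\pta$. For the first: if $\ga\in\gP'$ then $\ga\in\pcu\st\conea\pcu$, while if $\ga\in\ptak$ then $\ga\in\ptak\st\conea\ptak=\conea\pcuk\st\conea\pcu$ by Theorem \ref{t.containments}(i); the reverse inclusion is symmetric. Hence $\conea\pta=\conea\pcu$, which in particular gives $\pta\st\conea\pcu$, and the equivalence with $(\pta)\ua=(\pcu)\ua$ is Corollary \ref{c.conez-zind}. For part (ii), apply Lemma \ref{l.indrel}(iii) with ambient root set $S=x\gP_P^-$, $F=\pta$, and $E=\pcu$: part (i) gives $F\ua=E\ua$, hence $F\cap S\ua=E\cap S\ua$, which is precisely $\pta\cap(x\gP_P^-)\ua=\pcu\cap(x\gP_P^-)\ua$. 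For part (iii), the hypothesis $\xw\st G/B$ means $P=B$, so $\gP_P^-=\gP^-$ and $x\gP_P^-=x\gP^-$; by Corollary \ref{c.indsimple}, $(x\gP^-)\ua=x\gD^-$ (for $\A=\Q$ or $\Z$). Substituting $S=x\gP^-$ into part (ii) then yields $\pta\cap x\gD^-=\pcu\cap x\gD^-$.

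This corollary involves no genuine difficulty beyond Theorem \ref{t.containments}; it is bookkeeping. The step that needs the most care is part (i): one must verify that the equality of cones over the Kazhdan--Lusztig variety transfers to the Schubert variety, and this rests on having the decompositions in the precise disjoint-union form $\pta=\gP'\sqcup\ptak$, $\pcu=\gP'\sqcup\pcuk$ of Corollary \ref{c.tanwts} (so that the ``extra'' weights $\gP'$ are literally common to both sides, and the $A$-indecomposable elements of $\pta$ and of $\pcu$ coincide). One should also be mindful in part (iii) that ``$\xw\st G/B$'' is exactly the case $P=B$, which is what makes Corollary \ref{c.indsimple} applicable.
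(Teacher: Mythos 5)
Your proof is correct and follows essentially the same route as the paper: part (i) from Theorem \ref{t.containments}(i) together with the disjoint-union decompositions of Corollary \ref{c.tanwts}(ii),(iv) (and Corollary \ref{c.conez-zind} for the equivalence), part (ii) from Lemma \ref{l.indrel}(iii), and part (iii) from Corollary \ref{c.indsimple} in the case $P=B$. You merely spell out the bookkeeping that the paper leaves implicit, so there is nothing to add.
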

\begin{proof} (i) follows from Theorem \ref{t.containments}(i) and
  Corollary \ref{c.tanwts}(ii), (iv).

\noindent (ii) follows from (i) and  Lemma \ref{l.indrel}(iii).

\noindent (iii) follows from (ii) and Corollary \ref{c.indsimple}.
\end{proof}

If we restrict attention to Schubert varieties in $G/B$ and $x=w_0$,
the longest element of the Weyl group, some of our results
simplify. In this case, since $P=B$, $\gP_P^-=\gP^-$ and
$x\gP_P^-=w_0\gP^-=\gP^+$.  Thus, by Corollaries
\ref{c.contain-sch}(i) and \ref{c.exw}(i),
$\pta \st \conea \pcu = \conea \{\ga\in \gP^+ \mid s_{\ga}w_0 \geq
w\}$.  Moreover, $s_{\ga}w_0\geq w$ is equivalent to
$s_{\ga}\leq w w_0$ (see \cite[Example 5.9.3]{Hum:90}).  Hence we
obtain
\begin{equation*}
  \pta \st  \conea \{\ga\in \gP^+ \mid s_{\ga}\leq  w w_0\}.  
\end{equation*}
The following proposition gives a stronger result for classical types.

\begin{proposition} \label{p.w0} Suppose $\Phi$ is of classical type,
  $P=B$, and $x = w_0$.  Then
  $(\pcu)\ua=\{\ga \in \gD\mid s_{\ga}\leq w w_0\}$, and thus
  \begin{equation}\label{e.wnot}
    \pta \st \conea \{\ga\in \gD\mid s_{\ga}\leq w w_0\}.
  \end{equation}
\end{proposition}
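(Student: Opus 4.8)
The plan is to make the case $x = w_0$, $P = B$ completely explicit, compute $(\pcu)\ua$ using the classical-type results of Section~\ref{s.dec-iso-dec}, and then read off \eqref{e.wnot} from the main containment of Corollary~\ref{c.contain-sch}.

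First I would record that when $x = w_0$ we have $\pam = I(w_0^{-1}) = \gP^+$, since $w_0$ sends every positive root to a negative one. With $P = B$ we moreover have $\pupm = w_0\gP^- = \gP^+$, so $\pupm \cap \gP^- = \emptyset$, and Corollary~\ref{c.tanwts}(iv) collapses to $\pcu = \pcuk$. By Proposition~\ref{p.pcu}(ii), $\pcuk = \pam\xgw$, and since the Coxeter weight is $\ga \mapsto s_\ga w_0$ this says $\pcu = \{\ga \in \gP^+ \mid s_\ga w_0 \geq w\}$; by the equivalence $s_\ga w_0 \geq w \Leftrightarrow s_\ga \leq w w_0$ recalled just before the proposition, $\pcu = \{\ga \in \gP^+ \mid s_\ga \leq w w_0\}$.

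Next I would identify $(\pcu)\ua$. Writing $\rs = \pam = \gP^+$, which is a root set of classical type, Corollary~\ref{c.summary-class} gives $(\pam\xgw)\ua = (\pam\ua)\xgw$, and Proposition~\ref{p.equivs} identifies $(\gP^+)\ua = \gD$. Hence $(\pcu)\ua = \gD \cap \pam\xgw = \{\ga \in \gD \mid s_\ga w_0 \geq w\} = \{\ga \in \gD \mid s_\ga \leq w w_0\}$, which is the first assertion. For the cone statement, Corollary~\ref{c.sinco} applied to the root set $\pcu$ gives $\conea\pcu = \conea((\pcu)\ua)$, while Corollary~\ref{c.contain-sch}(i) gives $\pta \st \conea\pcu$; combining these yields \eqref{e.wnot}.

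I do not expect a genuine obstacle: once $x = w_0$ and $P = B$ are substituted, the proposition is bookkeeping, the real content residing in Corollary~\ref{c.summary-class} (equivalence of the various indecomposabilities in classical inversion sets) and Corollary~\ref{c.contain-sch}(i) (the main theorem). The one spot needing a moment's care is verifying that the summand $\pupm \cap \gP^-$ of Corollary~\ref{c.tanwts}(iv) is empty, so that $\pcu$ is literally $\pam\xgw$ and not merely a disjoint union with it; this is immediate from $\pupm = \gP^+$. The point worth emphasizing is conceptual rather than technical — the indecomposable curve weights are now all simple roots, so $\gD$ is linearly independent and \eqref{e.wnot} confines $\pta$ to a simplicial cone, a strictly stronger statement than the general bound $\pta \st \conea\{\ga \in \gP^+ \mid s_\ga \leq w w_0\}$ noted just above.
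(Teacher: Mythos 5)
Your proposal is correct and follows essentially the same route as the paper's proof: identify $\pam = \gP^+$ and $\pcu = \pcuk = \pam\xgw$ via Corollary \ref{c.tanwts}(iv) and Proposition \ref{p.pcu}(ii), compute $(\pcu)\ua$ from Corollary \ref{c.summary-class} together with Proposition \ref{p.equivs} ($(\gP^+)\ua = \gD$), and conclude \eqref{e.wnot} from Corollary \ref{c.contain-sch}(i) and Corollary \ref{c.sinco}. The only cosmetic difference is that you spell out the emptiness of $\pupm\cap\gP^-$, which the paper leaves implicit in its citation of Corollary \ref{c.tanwts}(iv).
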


\begin{proof}
  Since $x = w_0$, $S = I(x^{-1}) = \Phi^+$.  By Proposition
  \ref{p.equivs}, $S^A = \gD$.  We have
  \begin{equation*}
    (S\xgw)^A = (S^A)\xgw = \{ \ga \in \gD \mid s_{\ga} w_0 \geq w \},
  \end{equation*}
  where the first equality is by Corollary \ref{c.summary-class}, and
  the second is because $S^A = \gD$. By Proposition \ref{p.pcu},
  $S\xgw = \pcuk$.  Since $x = w_0$, by Corollary \ref{c.tanwts}(iv),
  $\pcuk = \pcu$.  We conclude that
  \begin{equation*}
    (\pcu)\ua = \{ \ga \in \gD \mid s_{\ga} w_0 \geq w \}.
  \end{equation*}
  Since $s_{\ga}w_0\geq w \Leftrightarrow s_{\ga}\leq w w_0$, and
  $\pta \st \conea ((\pcu)\ua)$ (see Corollary \ref{c.sinco}), the
  result follows.
\end{proof}

We remark that the condition $s_{\ga}\leq w w_0$ appearing in
Proposition \ref{p.w0} is equivalent to the condition that the simple
reflection $s_{\ga}$ occurs in a reduced expression for $w w_0$.

\section{When tangent weights are curve weights}
\label{s.smoothness}

In this section we look at some consequences of Theorem
\ref{t.containments} and Corollary \ref{c.contain-sch} when
one knows that an element of $\pta$ must be contained in $\pcu$.  For example,
we obtain conditions on $x$ and $\gP$ which ensure that $\pta=\pcu$.
We also give smoothness criteria for $\xw$ which apply to such $x$ and
$\gP$.

\subsection{Characterizations of tangent spaces}\label{ss.charac}

Recall that $\pta\st \pupm$ and $\ptak\st \puup=\pam$.

\begin{theorem}(cf. \cite[Theorem A]{GrKr:20})\label{t.charac-kl-gb}
  Let $\ga\in\pam\ua$.  Then $\ga\in \ptak$ if and only if
  $s_{\ga}x\geq w$.
\end{theorem}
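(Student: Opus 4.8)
The plan is to deduce this as an essentially immediate consequence of results already established, principally Theorem \ref{t.containments}(ii) together with the Carrell--Peterson description of $\pcuk$ recorded in Proposition \ref{p.pcu}(ii). This statement is the Kazhdan--Lusztig-variety analogue of \cite[Theorem A]{GrKr:20}, and the point of the present proof is that, with Corollary \ref{c.ind-opp} in hand, it requires no further work of substance.

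Concretely, I would argue by a short chain of equivalences for $\ga\in\pam\ua$. First, since $\pam\ua\st\pam$ and $\ptak\st\pam$ by Proposition \ref{p.pcu}(i), the condition $\ga\in\ptak$ is the same as $\ga\in\ptak\cap\pam\ua$. Next, Theorem \ref{t.containments}(ii) gives $\ptak\cap\pam\ua=\pcuk\cap\pam\ua$, so $\ga\in\ptak$ is equivalent to $\ga\in\pcuk\cap\pam\ua$, and hence (again using $\ga\in\pam\ua$) to $\ga\in\pcuk$. Finally, Proposition \ref{p.pcu}(ii) identifies $\pcuk$ with $\pam\xgw=\{\gg_i\mid x_i\geq w\}$; since $x_i=s_{\gg_i}x$ (Section \ref{ss.prelim}) and $\ga\in\pam$, the condition $\ga\in\pcuk$ says precisely that $s_{\ga}x\geq w$. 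Chaining these equivalences yields the theorem.

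There is no real obstacle at this stage: the substantive inputs are Theorem \ref{t.techmain}/Corollary \ref{c.ind-opp} (the equality of cones $\conea\{\gg_i\mid z_i\geq w\}=\conea\{\gg_i\mid x_i\geq w\}$) and Theorem \ref{t.known} ($\ptak\st\conez(\pam\zgw)$), which feed into Theorem \ref{t.containments}. It is worth noting where the content lies: the ``if'' direction ($s_{\ga}x\geq w\Rightarrow\ga\in\ptak$) needs no indecomposability, as it follows from $\pcuk\st\ptak$ for \emph{any} $\ga\in\pam$ with $s_{\ga}x\geq w$; the ``only if'' direction is the genuine point, since a general element of $\ptak$ need not satisfy $s_{\ga}x\geq w$. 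If one preferred a more self-contained argument for that direction, one could instead use $\ptak\st\conez(\pam\zgw)\st\conea(\pam\zgw)$ to write the indecomposable $\ga$ as a positive $\A$-combination of elements of $(\pam\zgw)\ua=(\pam\xgw)\ua$ via Corollary \ref{c.ind-opp}, and then indecomposability of $\ga$ forces $\ga$ itself to lie in $\pam\xgw=\pcuk$, i.e.\ $s_{\ga}x\geq w$.
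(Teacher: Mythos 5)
Your argument is correct and matches the paper's own proof, which likewise deduces the theorem from Theorem \ref{t.containments}(ii) together with the Carrell--Peterson description of the curve weights (the paper cites Corollary \ref{c.exw}(ii), which is just Proposition \ref{p.pcu}(ii) before rewriting $s_{\gg_i}x$ as $x_i$). The extra remarks about where the content lies, and the alternative route via $\ptak\st\conez(\pam\zgw)$ and Corollary \ref{c.ind-opp}, simply unwind the proof of Theorem \ref{t.containments} and add nothing that changes the approach.
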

\begin{proof}
  This follows from Theorem \ref{t.containments}(ii) and Corollary
  \ref{c.exw}(ii).
\end{proof}

\begin{corollary}\label{c.schub-crit}
  Let $\ga\in \pupm$.
  \begin{enumerate}
  \item If $s_{\ga}x>x$, then $\ga\in \pta$.
  \item If $s_{\ga}x<x$, then $\ga\in \pam$.  In this case, if
    $\ga\in\pam\ua$, then $\ga\in\pta$ if and only if
    $s_{\ga}x\geq w$.
  \end{enumerate}
\end{corollary}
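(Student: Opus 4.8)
The plan is to derive both parts directly from the structural results of Section~\ref{s.schubert} together with Theorem~\ref{t.charac-kl-gb}, so that no new geometric input is needed; the corollary is essentially a repackaging, for Schubert varieties, of the Kazhdan--Lusztig statement.

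For part~(1), I would start from $\ga\in\pupm=x\gP_P^-$ with $s_\ga x>x$. Lemma~\ref{l.xgp} then places $\ga$ in $x\gP_P^-\cap\gP^-$. By the decomposition $\gP(T_x\xw)=\{\ga\in x\gP_P^-\mid s_\ga x>x\}\sqcup\gP(T_x\kxw)$ of Corollary~\ref{c.tanwts}(ii), $\ga$ lies in the first summand, hence $\ga\in\pta$. That is all that is required for~(1).

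For part~(2), given $\ga\in\pupm$ with $s_\ga x<x$, Lemma~\ref{l.xgp} gives $\ga\in x\gP_P^-\cap\gP^+=\pam$, which is the first assertion. For the second, assume in addition $\ga\in\pam\ua$. The key point is that for a root lying in $\pam$, being a tangent weight of $\xw$ is the same as being a tangent weight of $\kxw$: by Corollary~\ref{c.tanwts}(iii) one has $\gP(T_x\kxw)=\gP(T_x\xw)\cap\pam$, so $\ga\in\pam$ forces the equivalence $\ga\in\pta\iff\ga\in\ptak$. Applying Theorem~\ref{t.charac-kl-gb}, which characterizes membership of an $\A$-indecomposable element of $\pam$ in $\ptak$ by the condition $s_\ga x\geq w$, then yields $\ga\in\pta\iff s_\ga x\geq w$, completing the proof.

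I do not anticipate a genuine obstacle here. The only thing to keep straight is the bookkeeping of ambient spaces: $\pta$ sits inside $\pupm$, while the indecomposability hypothesis of part~(2) is imposed in $\pam=\pupm\cap\gP^+\subseteq\pupm$ (using Lemma~\ref{l.uup}(ii)). These are compatible, and $\ga\in\pam\ua$ is precisely the hypothesis that Theorem~\ref{t.charac-kl-gb} requires, so the chain of equivalences goes through without incident.
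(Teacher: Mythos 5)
Your proposal is correct and follows essentially the same route as the paper: Lemma \ref{l.xgp} plus Corollary \ref{c.tanwts}(ii) for part (i), and for part (ii) the reduction to $\ptak$ followed by Theorem \ref{t.charac-kl-gb}. The only cosmetic difference is that you invoke Corollary \ref{c.tanwts}(iii) for the equivalence $\ga\in\pta\iff\ga\in\ptak$ where the paper cites part (ii) directly, but (iii) is an immediate consequence of (ii), so the arguments coincide.
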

\begin{proof}
(i)   If $s_{\ga}x>x$, then $\ga\in \pupm\cap \gP^-$, and thus $\ga\in
  \pta$ by Corollary \ref{c.tanwts}(ii).

  \noindent (ii) If $s_{\ga}x<x$, then $\ga\in\pam$, and thus, by
  Corollary \ref{c.tanwts}(ii), $\ga\in \pta$ if and only if $\ga\in
  \ptak$. By Theorem \ref{t.charac-kl-gb}, if $\ga\in\pam\ua$, this
  occurs if and only if $s_{\ga}x\geq w$.
\end{proof}

\begin{corollary}\label{c.gb-schub}
  Let $X^w\st G/B$ and let $\ga\in x\gD^-$. Then $\ga\in\pta$
  if and only if $s_{\ga}x\geq w$. 
\end{corollary}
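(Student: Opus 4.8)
The plan is to obtain this as an immediate specialization of the general Schubert-variety comparison, Corollary \ref{c.contain-sch}(iii), combined with the explicit (Carrell--Peterson) description of the curve weights in Corollary \ref{c.exw}(i). Since $\xw\st G/B$ we have $P=B$, hence $x\gP_P^-=x\gP^-$ and in particular $x\gD^-\st x\gP^-$. By the definition of $\pcu$ and Corollary \ref{c.exw}(i),
\begin{equation*}
  \pcu=\gP(TE_x\xw)=\{\ga\in x\gP^-\mid s_{\ga}x\geq w\},
\end{equation*}
so that $\pcu\cap x\gD^-=\{\ga\in x\gD^-\mid s_{\ga}x\geq w\}$. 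On the other hand, Corollary \ref{c.contain-sch}(iii) gives $\pta\cap x\gD^-=\pcu\cap x\gD^-$. Putting these together, for $\ga\in x\gD^-$ one has $\ga\in\pta$ if and only if $\ga\in\pta\cap x\gD^-$, which holds if and only if $s_{\ga}x\geq w$. This completes the argument.

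I would also include, as a sanity check or in place of the appeal to Corollary \ref{c.contain-sch}(iii), a more hands-on version running through Corollary \ref{c.schub-crit}. Given $\ga\in x\gD^-\st\pupm=x\gP^-$, Lemma \ref{l.xgp} splits into two cases. If $s_{\ga}x>x$, then $\ga\in\pta$ by Corollary \ref{c.schub-crit}(i), and also $s_{\ga}x>x\geq w$, so both sides of the claimed equivalence hold. If $s_{\ga}x<x$, then $\ga\in\pam$ by Corollary \ref{c.schub-crit}(ii); since $\ga\in x\gD^-=(x\gP^-)\ua$ by Corollary \ref{c.indsimple} and $\pam\st x\gP^-$, Lemma \ref{l.indrel}(i) yields $\ga\in\pam\cap(x\gP^-)\ua\st\pam\ua$, so the last assertion of Corollary \ref{c.schub-crit}(ii) applies and gives $\ga\in\pta$ if and only if $s_{\ga}x\geq w$. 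Either way the equivalence holds.

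I do not anticipate a genuine obstacle here: all the substance (the cone identity Corollary \ref{c.ind-opp}, the classification of indecomposable roots, and the tangent/curve weight comparisons) has already been established, and what remains is only to set $P=B$ and intersect with $x\gD^-$. The one small point requiring care is that a root in $x\gD^-$ which happens to land in $\pam$ is automatically $\A$-indecomposable in $\pam$, but this follows at once from Corollary \ref{c.indsimple} together with the monotonicity of indecomposability under restriction to a subset, Lemma \ref{l.indrel}(i).
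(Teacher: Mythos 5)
Your first argument is exactly the paper's proof: the corollary is deduced from Corollary \ref{c.contain-sch}(iii) together with the description of $\pcu$ in Corollary \ref{c.exw}(i), specialized to $P=B$. The alternative route you sketch via Corollary \ref{c.schub-crit}, Lemma \ref{l.xgp}, Corollary \ref{c.indsimple}, and Lemma \ref{l.indrel}(i) is also correct, but it is only a redundant unwinding of the same ingredients.
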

\begin{proof}
  This follows from Corollaries \ref{c.contain-sch}(iii) and
  \ref{c.exw}(i).
\end{proof}

Recall from Section \ref{s.allin} that the element $x$ is said to be
{\em cominuscule} if there exists $v\in \ft$ such that $\ga(v)=-1$ for
all $\ga\in I(x^{-1})$, and it is said to be a {\it KL cominuscule
  point of $X^w$} if there exists $v\in\ft$ such that $\ga(v)=-1$ for
all $\ga\in\ptak$.  The maximal parabolic subgroup $P\supseteq B$ (or
sometimes $G/P$) is said to be {\em cominuscule} if the simple root
corresponding to $P$ occurs with coefficient 1 when the highest root
of $G$ is written as a linear combination of the simple roots.

\begin{theorem}\label{t.comin-char}
  Suppose that $\gP$ is simply laced and that any of the following hold:
  \begin{enumerate}
  \item All elements of $\ptak$ are integrally indecomposable in
    $\ptak$.
  \item All elements of $\pam$ are integrally indecomposable in
    $\pam$.
  \item $x$ is fully commutative and $\gP$ is of types $A$ or $D$.
  \item $x$ is a KL cominuscule point of $X^w$.
  \item $x$ is cominuscule.
  \item $P$ is cominuscule.
  \end{enumerate}
  Then $\ptak=\pcuk$ and $\pta = \pcu$. If $\ga\in\pupm$, then
  $\ga\in\pta$ if and only if $s_{\ga}x\geq w$, and
  $\ga\in\ptak$ if and only if $x>s_{\ga}x\geq w$.
\end{theorem}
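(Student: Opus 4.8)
The plan is to show that each of the six hypotheses (i)--(vi) implies the single condition $(\ptak)\uz=\ptak$, and then to deduce all three conclusions from this condition together with the simply laced assumption; this way almost all of the real work is inherited from earlier sections.

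First, the reductions. Hypothesis (ii) is the statement $\pam\uz=\pam$; since $\ptak\st\pam$ by Proposition~\ref{p.pcu}(i), Lemma~\ref{l.indrel}(v) (with $\A=\Z$, $F=\pam$, $E=\ptak$) gives $(\ptak)\uz=\ptak$. Hypothesis (iii) implies $\pam\uz=\pam$ by Corollary~\ref{c.adind}, hence implies (ii). Hypothesis (iv) asserts that the root set $\ptak=\gP(T_x\kxw)\st\gP^+$ is coplanar, so applying the implication (i)$\Rightarrow$(ii) of Theorem~\ref{t.allin} to $S=\ptak$ yields $(\ptak)\uz=\ptak$ at once. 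Hypothesis (v) asserts that $\pam$ is coplanar, so the same implication of Theorem~\ref{t.allin} applied to $S=\pam$ gives $\pam\uz=\pam$, i.e.\ (ii). Finally, if $P$ is cominuscule, with associated simple root $\ga_P$ (the one not in the Levi), then every root lying outside the Levi has $\ga_P$-coefficient $\pm1$; writing $\varpi^\vee\in\ft$ for the fundamental coweight dual to $\ga_P$, we get $\gb(\varpi^\vee)=-1$ for every $\gb\in\gP_P^-$, so the element $x\varpi^\vee\in\ft$ exhibits $\pupm=x\gP_P^-$ as coplanar, whence $\pam\st\pupm$ (Lemma~\ref{l.uup}(ii)) is coplanar; thus (vi)$\Rightarrow$(v). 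In every case $(\ptak)\uz=\ptak$, which is exactly hypothesis (i), so it suffices to treat that case.

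So assume $(\ptak)\uz=\ptak$ and $\gP$ simply laced. By Theorem~\ref{t.containments}(i), applied with $\A=\Z$ (legitimate since $\gP$ is simply laced; this is where the deeper input Corollary~\ref{c.ind-opp} enters), we have $(\ptak)\uz=(\pcuk)\uz$. Combined with the hypothesis this gives $\ptak=(\ptak)\uz=(\pcuk)\uz\st\pcuk$, and since $\pcuk\st\ptak$ by Proposition~\ref{p.pcu}(i), we conclude $\ptak=\pcuk$. For the Schubert-variety statement, Corollary~\ref{c.tanwts}(ii) and (iv) give $\pta=(\pupm\cap\gP^-)\sqcup\ptak$ and $\pcu=(\pupm\cap\gP^-)\sqcup\pcuk$, so $\ptak=\pcuk$ forces $\pta=\pcu$. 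For the final sentence, take $\ga\in\pupm=x\gP_P^-$: Corollary~\ref{c.exw}(i) identifies $\pcu=\gP(TE_x\xw)$ with $\{\ga\in x\gP_P^-\mid s_\ga x\geq w\}$, so $\ga\in\pta=\pcu$ iff $s_\ga x\geq w$; and Corollary~\ref{c.tanwts}(v) identifies $\pcuk=\gP(TE_x\kxw)$ with $\{\ga\in x\gP_P^-\mid x>s_\ga x\geq w\}$, so $\ga\in\ptak=\pcuk$ iff $x>s_\ga x\geq w$.

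I expect no genuinely hard step here: the substantive content is the equality of integer cones (Corollary~\ref{c.ind-opp}, repackaged as Theorem~\ref{t.containments}(i)) and the coplanarity criteria of Section~\ref{s.allin}, both already established. The one thing to be careful about is the bookkeeping of the six reductions---in particular verifying (vi)$\Rightarrow$(v) via the coweight computation, and making sure at each step that one is working with \emph{integral} indecomposability and integral cones, since that is precisely what the simply laced hypothesis buys and what links condition (i) to all the geometric conditions.
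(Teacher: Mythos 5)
Your proof is correct and follows essentially the same route as the paper: reduce each of (ii)--(vi) to condition (i), i.e.\ $(\ptak)\uz=\ptak$, and then combine this with Theorem~\ref{t.containments}(i) (with $\A=\Z$, valid since $\gP$ is simply laced) and Corollaries~\ref{c.tanwts} and \ref{c.exw} to get $\ptak=\pcuk$, $\pta=\pcu$, and the two characterizations. The only divergences are cosmetic: you prove (vi)$\Rightarrow$(v) directly via the coweight $x\varpi^\vee$ where the paper cites \cite[Proposition 6.7]{GrKr:20}, and you route (v) through (ii) rather than (iv) --- both are fine.
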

\begin{proof}
  (iii) $\Rightarrow$ (ii) $\Rightarrow$ (i) by Corollary
  \ref{c.adind} and Lemma \ref{l.indrel}(v) respectively; (vi)
  $\Rightarrow$ (v) $\Rightarrow$ (iv) $\Rightarrow$ (i) by
  \cite[Proposition 6.7]{GrKr:20}, definition, and Theorem
  \ref{t.allin} respectively.

  Thus we may assume that (i) holds, i.e., $(\ptak)\uz=\ptak$. By
  Lemma \ref{l.indrel}(v), $(\pcuk)\uz=\pcuk$.  Hence Theorem
  \ref{t.containments}(i) implies $\ptak=\pcuk$. From Corollary
  \ref{c.tanwts}(ii) and (iv) it follows that $\pta = \pcu$.  Let
  $\ga\in x\gP_P^-$. Then $\ga\in\ptak =\pcuk \Leftrightarrow
  x>s_{\ga}x\geq w$, by Corollary \ref{c.tanwts}(v).  By Corollary
  \ref{c.tanwts}(ii), $\ga\in \pta\Leftrightarrow s_{\ga}x>x \text{ or
  }x>s_{\ga}x\geq w$ $\Leftrightarrow s_{\ga}x\geq w$.
\end{proof}

\subsection{Smoothness criteria}\label{ss.smoothness}

Recall that $\vs=(s_1,\ldots,s_l)$ is a reduced expression for
$x$. Since $w\leq x$, $\vs$ contains a reduced subexpression for $w$.
Let $M=\{i\in [l]: (s_1,\ldots,\wh{s}_i,\ldots,s_l)$ contains a
reduced subexpression for $w\}$.

\begin{lemma}\label{l.lxw}
  $|M|=\ell(x)-\ell(w)$ if and only if $\vs$ contains a unique reduced
  subexpression for $w$.
\end{lemma}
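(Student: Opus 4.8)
The plan is to establish a bijection, or at least a counting argument, between reduced subexpressions of $\vs$ for $w$ and the ``omitted-index'' data recorded by $M$. First I would fix notation: a reduced subexpression for $w$ inside $\vs=(s_1,\ldots,s_l)$ is a subset $J\subseteq[l]$ with $|J|=\ell(w)$ such that $\prod_{j\in J}s_j=w$ (product in increasing order of indices). The set $M$ consists of those $i$ such that $(s_1,\ldots,\wh{s}_i,\ldots,s_l)$ still contains some reduced subexpression for $w$; equivalently, $i\in M$ if and only if there exists a reduced subexpression $J$ for $w$ with $i\notin J$. Thus $M=\bigcup_J ([l]\setminus J)$, the union being over all reduced subexpressions $J$ for $w$.

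For the forward direction, suppose $\vs$ contains a \emph{unique} reduced subexpression $J_0$ for $w$. Then every $i\in M$ satisfies $i\notin J_0$ (since $J_0$ is the only such $J$), so $M\subseteq [l]\setminus J_0$. Conversely, for any $i\notin J_0$, the subword $(s_1,\ldots,\wh{s}_i,\ldots,s_l)$ still contains $J_0$ as a reduced subexpression, so $i\in M$; hence $M=[l]\setminus J_0$, giving $|M|=l-\ell(w)=\ell(x)-\ell(w)$. For the reverse direction, I would argue the contrapositive: if $\vs$ contains at least two distinct reduced subexpressions $J_1\neq J_2$ for $w$, I want to show $|M|>\ell(x)-\ell(w)$, i.e. $M$ is strictly larger than the complement of a single $\ell(w)$-element set. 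Since $J_1\neq J_2$ and $|J_1|=|J_2|=\ell(w)$, there exist indices $a\in J_1\setminus J_2$ and $b\in J_2\setminus J_1$. Then $a\notin J_2$ forces $a\in M$, and $b\notin J_1$ forces $b\in M$, so $M$ contains both $[l]\setminus J_1$ and $[l]\setminus J_2$; since neither complement is contained in the other, $|M|\geq |[l]\setminus J_1| + 1 = l-\ell(w)+1 > \ell(x)-\ell(w)$. (Here I use $\ell(x)=l$ because $\vs$ is reduced.) Combining both directions with the obvious inequality $|M|\leq l-\ell(w)$ — which holds because every reduced subexpression $J$ omits at least the $\ell(w)$ indices it does \emph{not} contain, wait, more carefully: $M$ avoids every index that lies in \emph{all} reduced subexpressions, and at minimum, picking any one reduced subexpression $J$, the $\ell(w)$ indices of $J$ that are common to $J$... — I should instead simply note $M = \bigcup_J([l]\setminus J) \subseteq [l]$ and that if there is a unique $J$ then $|M| = l - \ell(w)$, while if there are $\ge 2$ then $|M| > l-\ell(w)$, and in all cases the indices lying in \emph{every} reduced subexpression are never in $M$, so $|M| \le l - \ell(w)$ always (as at least $\ell(w)$ many... no: the intersection of all reduced subexpressions could be empty). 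Let me restate cleanly: always $|M|\le l - 0$, trivially, so I only need the sharp statement $|M|=l-\ell(w)\iff$ uniqueness, which is exactly what the two directions above prove, since the forward direction gives one value and the reverse (contrapositive) shows non-uniqueness yields a strictly larger value, while $|M| = l-\ell(w)$ in the unique case and $|M|\geq l - \ell(w)$ whenever $w\leq x$ because any single reduced subexpression $J$ witnesses $[l]\setminus J\subseteq M$.

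The main obstacle I anticipate is making the counting rigorous without over- or under-counting: specifically, confirming that $|M| \geq l-\ell(w)$ always (immediate from a single reduced subexpression), that $|M|=l-\ell(w)$ exactly when the reduced subexpression is unique (forward direction above), and that two distinct reduced subexpressions force the strict inequality $|M|\geq l-\ell(w)+1$ via the symmetric-difference indices $a,b$. A subtle point worth double-checking is that $a\in J_1\setminus J_2$ genuinely yields $a\in M$: the subword obtained by deleting position $a$ from $\vs$ still contains $J_2$ (since $a\notin J_2$), and $J_2$ is a reduced subexpression for $w$, so indeed $a\in M$. With that verified, the equivalence follows. I would also remark that this lemma feeds into the smoothness criterion of Theorem \ref{t.smoothcrit}, where $|M|=\ell(x)-\ell(w)$ is compared with $\dim T_xX^w$ via the results of the preceding section.
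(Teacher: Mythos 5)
Your proposal is correct and follows essentially the same argument as the paper: uniqueness of the reduced subexpression $J_0$ forces $M=[l]\setminus J_0$ of size $\ell(x)-\ell(w)$, while two distinct reduced subexpressions force $M$ to properly contain the complement of one of them, giving the strict inequality. The mid-paragraph detour about whether $|M|\leq l-\ell(w)$ holds in general is unnecessary (and you rightly discard it); the two implications you prove already yield the biconditional, exactly as in the paper's proof of Lemma \ref{l.lxw}.
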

\begin{proof}
  Let $1\leq i_1< \cdots <i_t\leq l $ be such that
  $(s_1,\ldots,\wh{s}_{i_1},\ldots,\wh{s}_{i_t},\ldots,s_l)$ is a
  reduced expression for $w$.  If it is the unique reduced
  subexpression of $\vs $ for $w$, then $M=\{i_1,\ldots, i_t\}$, and
  $|M|=l-\ell(w)=\ell(x)-\ell(w)$.  If there exists another reduced
  subexpression of $\vs$ for $w$, then $M\supsetneq
  \{i_1,\ldots,i_t\}$, and $|M|>\ell(x)-\ell(w)$.
\end{proof}

If $Z$ is any scheme, then $z\in Z$ is \textbf{smooth} if
$\dim T_zZ= \dim Z$ (see \cite[Theorem 4.2.1]{BiLa:00}).

\begin{theorem}\label{t.smooth}
  Suppose that $\gP$ is simply laced.  If any of the conditions of
  Theorem \ref{t.comin-char} are satisfied, then the following are
  equivalent:
  \begin{enumerate}
  \item $x$ is a smooth point of $X^w$.
  \item $x$ is a smooth point of $\kxw$.
  \item $|\{\ga \in x\gP_P^-:s_{\ga}x\geq w\}|=\dim \xw$.
  \item $|\{ \ga\in\pam : s_{\ga}x\geq w\}| = \dim \kxw$.
  \end{enumerate}
  If conditions (ii), (iii), (v), or (vi) of Theorem
  \ref{t.comin-char} are satisfied, then the four equivalent
  statements above are equivalent to
  \begin{enumerate}
  \item[(v)] $\vs$ contains a unique reduced subexpression for $w$.
  \end{enumerate}
\end{theorem}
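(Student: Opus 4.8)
The plan is to derive everything from Theorem \ref{t.comin-char} together with the dimension bookkeeping of Section \ref{s.schubert}. Under any of the stated hypotheses, Theorem \ref{t.comin-char} gives $\pta=\pcu$ and $\ptak=\pcuk$. Since $T_xX^w$ (resp.\ $T_x\kxw$) is a $T$-submodule of $T_x(G/P)$ (resp.\ of $T_x(BxP)$), and the weights $x\gP_P^-$ (resp.\ $\pam$) of this ambient space are pairwise distinct roots, one has $\dim T_xX^w=|\pta|$ and $\dim T_x\kxw=|\ptak|$. Using the definition of smoothness ($\dim T_zZ=\dim Z$) and Corollary \ref{c.exw}, the equivalences (i)$\Leftrightarrow$(iii) and (ii)$\Leftrightarrow$(iv) follow at once: for instance $x$ is smooth on $X^w$ iff $|\pta|=\dim X^w$ iff $|\pcu|=\dim X^w$, and $\pcu=\{\ga\in x\gP_P^-\mid s_\ga x\geq w\}$ by Corollary \ref{c.exw}(i).

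For (i)$\Leftrightarrow$(ii), I would combine the splitting $\pta=(\pupm\cap\gP^-)\sqcup\ptak$ (Corollary \ref{c.tanwts}(ii)) with the identity $\dim X^w-\dim\kxw=|\pupm\cap\gP^-|$. The latter follows from the standard formulas $\dim X^w=\dim G/P-\ell(w)$ and $\dim\kxw=\ell(x)-\ell(w)$ together with $|\pupm\cap\gP^-|=\dim G/P-\ell(x)$ (since $|x\gP_P^-|=\dim G/P$ and $|x\gP_P^-\cap\gP^+|=|\pam|=\ell(x)$). Hence $|\pta|=\dim X^w$ iff $|\ptak|=\dim\kxw$, i.e.\ (i)$\Leftrightarrow$(ii), which completes the first part.

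For the second part, the key observation is that each of conditions (ii), (iii), (v), (vi) of Theorem \ref{t.comin-char} forces \emph{all} of $\pam$ to be integrally indecomposable: condition (ii) says this directly; (iii) is equivalent to it by Corollary \ref{c.adind}; and (vi)$\Rightarrow$(v) by \cite[Proposition 6.7]{GrKr:20}, while (v) makes $\pam$ coplanar, so the implication (i)$\Rightarrow$(ii) of Theorem \ref{t.allin} applies. Since $\gP$ is simply laced, Proposition \ref{p.int-iso} then yields that every $\gg_i$ is iso-indecomposable in $\pam$, so by Theorem \ref{t.reduced} every word $(s_1,\ldots,\wh{s}_i,\ldots,s_l)$ is reduced. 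Now $\ptak=\pcuk=\{\gg_i\mid x_i\geq w\}$ by Corollary \ref{c.exw}(ii) and $x_i=s_{\gg_i}x$; and for a reduced word $(s_1,\ldots,\wh{s}_i,\ldots,s_l)$ with product $x_i$, the subword property of the Bruhat order (\cite[Theorem 5.10]{Hum:90}) gives $x_i\geq w$ iff this word contains a reduced subexpression for $w$, i.e.\ iff $i\in M$. As the $\gg_i$ are distinct (Lemma \ref{l.red_in}(i)), $|\ptak|=|M|$, so $x$ is smooth on $\kxw$ iff $|M|=\dim\kxw=\ell(x)-\ell(w)$, which by Lemma \ref{l.lxw} holds iff $\vs$ contains a unique reduced subexpression for $w$. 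Combined with the first part, (v) is equivalent to (i)--(iv).

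The main obstacle is not any single computation but pinning down exactly which hypotheses of Theorem \ref{t.comin-char} suffice for the second part: conditions (i) and (iv) of that theorem control indecomposability only inside $\ptak$, not inside the ambient set $\pam$, so Theorem \ref{t.reduced} cannot be applied to all the $\gg_i$ and the equality $|\ptak|=|M|$ may fail -- which is precisely why the second part is restricted to conditions (ii), (iii), (v), (vi). The remainder is routine dimension bookkeeping.
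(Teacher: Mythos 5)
Your argument is correct and essentially reproduces the paper's proof: (i)$\Leftrightarrow$(iii) and (ii)$\Leftrightarrow$(iv) come from Theorem \ref{t.comin-char} together with Corollary \ref{c.exw}, and for the final equivalence you, exactly as the paper does, reduce conditions (iii), (v), (vi) to condition (ii) of Theorem \ref{t.comin-char}, use simple-lacedness to conclude $\pam\uis=\pam$ so that every deleted word $(s_1,\ldots,\wh{s}_i,\ldots,s_l)$ is reduced, and finish with the subword characterization of Bruhat order and Lemma \ref{l.lxw} (your closing remark about why conditions (i) and (iv) of Theorem \ref{t.comin-char} are excluded from the second part also matches the paper's reasoning). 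The only small divergence is (i)$\Leftrightarrow$(ii): the paper invokes the local product decomposition of $\xw$ as $\kxw\times(U_P^-(x)\cap U^-)$ from \cite[Lemma 5.1(ii)]{GrKr:20}, whereas you deduce it from the weight splitting $\pta=(\pupm\cap\gP^-)\sqcup\ptak$ together with the standard dimension formulas $\dim\xw=\dim G/P-\ell(w)$ and $\dim\kxw=\ell(x)-\ell(w)$; both are valid, the paper's version being marginally more self-contained since those dimension facts are themselves consequences of the product structure.
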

\begin{proof}
  (i) $\Leftrightarrow$ (ii) This is because locally, $\xw$ is the
  product of $\kxw$ with the affine space $U_P^-(x)\cap U^-$ (see
  \cite[Lemma 5.1(ii)]{GrKr:20}).

  \noindent (i) $\Leftrightarrow$ (iii) By Theorem \ref{t.comin-char},
  $\dim T_x\xw = |\pta| = |\pcu| = |\{\ga\in x\gP_P^- : s_{\ga}x\geq
  w\}|$.

  \noindent (ii) $\Leftrightarrow$ (iv) By Theorem \ref{t.comin-char},
  $\dim T_x\kxw = |\ptak| = |\pcuk| = |\{\ga\in \pam : s_{\ga}x\geq
  w\}|$.

  \noindent (iv) $\Leftrightarrow$ (v) if conditions (ii), (iii), (v),
  or (vi) of Theorem \ref{t.comin-char} are satisfied.  For any
  $\ga\in \pam$, $\ga=\gg_i$ for some $i\in [l]$, and $s_{\ga}x=x_i$.
  Thus
  \begin{equation*}
    |\{\ga\in
    \pam : s_{\ga}x\geq w\}| = |\{ i\in [l]:x_i\geq w\}|
  \end{equation*}

  Conditions (iii), (v), and (vi) of Theorem \ref{t.comin-char} all
  imply condition (ii) of the same theorem; hence we may assume that
  $\pam\uz = \pam$. By Theorem \ref{t.allin}, $\pam\uis = \pam$; hence
  $x_i=z_i$ for all $i$. Consequently,
  \begin{equation*}
    \{ i\in [l]:x_i\geq w\} = \{ i\in [l]:z_i\geq w\} 
  \end{equation*}

  By Lemma \ref{l.heckefact}, $z_i\geq w$ if and only if
  $(s_1,\ldots,\wh{s}_i,\ldots, s_l)$ contains a reduced subexpression
  for $w$. Therefore $\{ i\in [l]:z_i\geq w\} = M$, so (iv) is
  equivalent to $|M| = \ell(x) - \ell(w)$. The result now follows from
  Lemma \ref{l.lxw}.
\end{proof}

For $P$ cominuscule, the equivalence of (i) and (v) of Theorem
\ref{t.smooth} holds even when $\gP$ is not simply laced.  This can be
deduced from \cite[Corollary 2.11]{GrKr:15}, which states that the
multiplicity of $x\in X^w$ when $P$ is cominuscule is equal to the
number of reduced subexpressions of $\vs$ for $w$. Of course, $x$ is a
smooth point of $X^w$ precisely when its multiplicity equals 1. Thus
the result is obtained. Further discussion of the criterion (v) for
smoothness appears in \cite{GrKr:23b}.

\section{Examples} \label{s.examples} In this section we consider
examples.  In Section \ref{ss.3D}, we use our main result to determine
the tangent space at $w_0$ to singular 3-dimensional Schubert
varieties for an irreducible root system not of type $G_2$.  Section
\ref{ss.ex_typeD} focuses on type $D$.  We define a family of elements
$w_{ab} \in W$, for $1 \leq a < b < n-1$, and let $\pta$ and $\pcu$ be
defined taking $x = w_0$ and $w = w_{ab}$.  We show that $\pta$
properly contains $\pcu$, and verify by direct calculation that
$\conea \pcu \supseteq \pta$, as guaranteed by our main result Theorem
\ref{t.main}.

Throughout this section, we take $P = B$ so $W^P = W$.  In Sections
\ref{ss.3D} and \ref{ss.ex_typeD}, we fix $x = w_0$.  The calculations
in our examples use the descriptions of the tangent spaces to Schubert
varieties given in \cite{Lak:00}, \cite{BiLa:00}, as well as a result
from \cite{Bri:98}.  In this paper, we consider the Schubert varieties
$X^w = \overline{B^- \cdot wB}$, whereas those references consider the
opposite Schubert varieties of the form $X_w = \overline{B \cdot wB}$.
However, as is well-known, Schubert varieties and opposite Schubert
varieties are isomorphic.  The relation between tangent spaces is
given by the following lemma, whose proof we omit.

\begin{lemma} \label{l.translation} We have $w_0 X_{w_0 w} = X^w$, and
  $w_0 \Phi(T_{w_0 x} X_{w_0 w} ) = T_x X^w$.
\end{lemma}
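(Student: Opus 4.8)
The plan is to realize $X^w$ as the image of $X_{w_0 w}$ under left translation by a representative of $w_0$, and then read off the effect on tangent weights. First I would fix a representative $\dot w_0\in N_G(T)$ of $w_0$; then $\dot w_0^2\in T$, and since $w_0$ sends $\gP^+$ to $\gP^-$ and $\mathrm{Lie}(B)$ is spanned by the positive root spaces, we have $\Ad(\dot w_0)\,\mathrm{Lie}(B)=\mathrm{Lie}(B^-)$, i.e.\ $\dot w_0 B\dot w_0^{-1}=B^-$. The map $\phi\colon G/B\to G/B$, $gB\mapsto\dot w_0\, gB$, is an isomorphism of varieties, in particular a homeomorphism, so it commutes with Zariski closure. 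Combining these facts with $\dot w_0^2\in T\subseteq B^-$ (which lets the stray torus factor be absorbed), one gets
\[
 \dot w_0\, X_{w_0 w}=\overline{\dot w_0\cdot\bigl(B\cdot (w_0 w)B\bigr)}=\overline{(\dot w_0 B\dot w_0^{-1})\cdot \dot w_0^{2}\, wB}=\overline{B^-\cdot wB}=X^w,
\]
which is the first assertion.

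Next I would examine $\phi$ near the $T$-fixed points. A short computation (using $\dot w_0^2\in T$ and $\dot x\in N_G(T)$) gives $\phi\bigl((w_0 x)B\bigr)=xB$, so $\phi$ restricts to an isomorphism $X_{w_0 w}\to X^w$ carrying the $T$-fixed point $(w_0 x)B$ to the $T$-fixed point $xB$; this also confirms that $(w_0 x)B$ lies in $X_{w_0 w}$, consistently with $w\le x\Rightarrow w_0 x\le w_0 w$. Differentiating $\phi$ at $(w_0 x)B$ yields a linear isomorphism $d\phi\colon T_{w_0 x}X_{w_0 w}\to T_x X^w$. The crucial point is that $\phi$ is not $T$-equivariant: for $t\in T$ one has $\phi(t\cdot gB)=\dot w_0 t\, gB=(\dot w_0 t\dot w_0^{-1})\cdot\phi(gB)$, so $\phi$ intertwines the $T$-action on $X_{w_0 w}$ with the $w_0$-twisted $T$-action on $X^w$. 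Differentiating this identity at $(w_0 x)B$ shows that a weight vector $v\in T_{w_0 x}X_{w_0 w}$ of weight $\mu$ is sent by $d\phi$ to a weight vector of weight $w_0\mu$ (using $w_0^{-1}=w_0$). Hence $\gP(T_x X^w)=w_0\,\gP(T_{w_0 x}X_{w_0 w})$, which is the second assertion (here, as elsewhere in the paper, $\gP(\,\cdot\,)$ denotes the weight set of a $T$-module).

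I do not anticipate any real obstacle: the lemma is essentially formal. The only points demanding attention are the bookkeeping in passing from Weyl group elements to representatives in $N_G(T)$ --- one has $\dot w_0^2\in T$ rather than $\dot w_0^2=1$, but this is immaterial since $T\subseteq B\cap B^-$ --- and the observation that left translation by $\dot w_0$ conjugates the $T$-action by $w_0$, which is precisely what produces the factor $w_0$ in the statement. If one prefers, the first assertion can instead be checked on the level of cells, using $X_v=\bigsqcup_{u\le v}BuB/B$ together with $\dot w_0\cdot(BuB/B)=B^-(w_0 u)B/B$ and the order-reversal $u\le w_0 w\Leftrightarrow w_0 u\ge w$.
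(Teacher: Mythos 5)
Your proof is correct, and it is precisely the routine argument the paper has in mind: the paper explicitly omits the proof of this lemma, and the intended justification is exactly left translation by a representative $\dot w_0$, which conjugates $B$ to $B^-$ (giving $w_0 X_{w_0 w}=X^w$) and twists the $T$-action by $w_0$ (giving the statement on weights). You also correctly read the lemma's right-hand side as $\Phi(T_x X^w)$ rather than $T_x X^w$, which is evidently a typo in the paper, and your bookkeeping with $\dot w_0^2\in T$ and the independence of the choice of representatives is handled properly.
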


We will use this lemma without comment and state the results from
\cite{Lak:00}, \cite{BiLa:00} and \cite{Bri:98} in terms of Schubert
varieties of the form $X^w$.

We will use some general facts about the relation of $\pcu$ and
$\pta$.  In general, we have $\pcu \subseteq \pta$.  Also, Carrell and
Peterson (\cite{Car:94}; see also \cite[Cor.~19 (iv)]{Bri:98}) proved
that $| \pcu | \geq \dim X$.  In addition,
$\dim T_x X = | \pta | \geq \dim X$, and $X$ is smooth at $x$ if and
only if this inequality is an equality (see e.g. \cite[Theorem
4.2.1]{BiLa:00}).

Part (1) of the following lemma is well-known, and part (2) is a fact
which will be useful below.

\begin{lemma} \label{l.general} Let $\pcu$ and $\pta$ be the curve and
  tangent weights of $X^w$ at $x$.

$(1)$ If $X^w$ is smooth at $x$, then $\pcu = \pta$.

$(2)$ If $X^w$ is not smooth at $x$ and $| \pcu | = \dim X$, then
$\pcu \neq \pta$.
\end{lemma}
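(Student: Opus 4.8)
The plan is to deduce both parts purely by counting, using only the three facts recalled in the paragraph just above the lemma: the inclusion $\pcu \subseteq \pta$; the Carrell--Peterson inequality $|\pcu| \geq \dim X$; and the equality $\dim T_x X = |\pta|$, with $X^w$ smooth at $x$ precisely when $|\pta| = \dim X$. Since $\pcu$ and $\pta$ are finite subsets of $\gP$ with $\pcu \subseteq \pta$, the equality $\pcu = \pta$ holds if and only if $|\pcu| = |\pta|$, so the whole lemma reduces to comparing cardinalities.

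For part (1), I would argue as follows. Assume $X^w$ is smooth at $x$, so that $|\pta| = \dim T_x X = \dim X$. On the other hand $\dim X \leq |\pcu|$ by Carrell--Peterson, and $|\pcu| \leq |\pta|$ because $\pcu \subseteq \pta$. Chaining these gives $|\pta| = \dim X \leq |\pcu| \leq |\pta|$, hence $|\pcu| = |\pta|$, and therefore $\pcu = \pta$.

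For part (2), assume $X^w$ is not smooth at $x$; then $|\pta| = \dim T_x X > \dim X$. If in addition $|\pcu| = \dim X$, then $|\pcu| = \dim X < |\pta|$, so the inclusion $\pcu \subseteq \pta$ must be proper; in particular $\pcu \neq \pta$.

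I do not expect any genuine obstacle here: both parts are immediate from facts already cited. The only point deserving a word of care is that $\pta$ and $\pcu$ must be treated as honest sets of weights, so that $|\pta|$ really equals $\dim T_x X$ and set-equality follows from equality of cardinalities under the inclusion — but this is exactly how the results of \cite{Car:94} and \cite[Theorem 4.2.1]{BiLa:00} are recorded in the preceding paragraph.
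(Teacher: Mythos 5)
Your proposal is correct and uses exactly the same counting argument as the paper: chaining $\dim X \leq |\pcu| \leq |\pta| = \dim X$ in the smooth case, and comparing $|\pcu| = \dim X < |\pta|$ in the singular case, with set equality deduced from equality of cardinalities under the inclusion $\pcu \subseteq \pta$. No differences worth noting.
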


\begin{proof}
  If $X^w$ is smooth at $x$, then
$$
\dim X^w \leq | \pcu | \leq | \pta | = \dim X^w,
$$
so the middle inequality must be an equality and hence $\pcu = \pta$.
This proves (1).  If $X^w$ is not smooth at $x$, then and
$| \pcu | = \dim X$, then $| \pcu | < | \pta |$, implying (2).
\end{proof}

\subsection{Three-dimensional Schubert varieties} \label{ss.3D} In
this section, $\pta$ and $\pcu$ always refer to the sets of curve and
tangent weights at the point $x = w_0$.  The following lemma is mainly
a rephrasing of results in \cite{Bri:98}, but we state it for
convenience.
 
\begin{lemma} \label{l.3D} A $3$-dimensional Schubert variety $X^w$ is
  singular at $w_0$ if and only if $w = w_0 s_{\ga} s_{\gb} s_{\ga}$
  for nonorthogonal simple roots $\ga$, $\gb$ with
  $\IP{\gb}{\ga^{\vee}} \leq -2$.  For such $w$, $X^w$ is rationally
  smooth at $w_0$.
 \end{lemma}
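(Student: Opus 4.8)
The plan is to reduce the question to a rank-$2$ computation and then invoke the explicit tangent-space descriptions of \cite{Bri:98}, \cite{Lak:00}, \cite{BiLa:00}. Since $\dim X^w = \ell(w_0)-\ell(w) = 3$, the element $v := w_0 w$ has length $3$, and $w = w_0 v$ has the form $w_0 s_\ga s_\gb s_\ga$ precisely when $v = s_\ga s_\gb s_\ga$. By Lemma \ref{l.translation}, $X^w \cong X_v := \overline{B v B}$, with $w_0 B$ corresponding to the $B$-fixed point $e$, so it suffices to show that $X_v$ is singular at $e$ iff $v = s_\ga s_\gb s_\ga$ for nonorthogonal simple roots $\ga,\gb$ with $\IP{\gb}{\ga^{\vee}} \le -2$, and that for such $v$, $X_v$ is rationally smooth at $e$. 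Since $\ell(v) = 3$, the support $J$ of $v$ satisfies $|J| \in \{2,3\}$ (one generator is impossible, as $s^3 = s$).

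First suppose $|J| = 3$, so $v$ is a product of three distinct simple reflections and every reduced word for $v$ has pairwise distinct letters. Such a word is automatically reduced, every $u \le v$ arises from a unique subword of a fixed reduced word for $v$ (two subwords with the same product have the same length and the same support, hence, the letters being distinct, coincide), and each such subword is distinguished in the sense of Deodhar; therefore the Bott--Samelson morphism associated with a reduced word for $v$ is birational and bijective onto $X_v$, hence an isomorphism (its source is smooth and $X_v$ is normal), so $X_v$ is smooth at $e$. The case $|J| = 2$ with $v = s_\ga s_\gb s_\ga = w_{0,J}$ the longest element of a type-$A_2$ parabolic is also smooth, since then $X_v = P_J/B$. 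In each of these cases $v$ is not of the exceptional form: a product of distinct simple reflections is not a reflection unless it is simple, and $w_{0,J}$ of type $A_2$ has $\IP{\gb}{\ga^{\vee}} = -1$; so these are consistent with the asserted equivalence.

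There remains the case $v = s_\ga s_\gb s_\ga$ with $\ga,\gb$ simple, nonorthogonal, and $s_\ga s_\gb$ of order $\ge 4$, so $W_{\{\ga,\gb\}}$ is of type $B_2$, $C_2$, or $G_2$. Then $v$ is the reflection $s_\gg$ in the root $\gg = s_\ga \gb = \gb - \IP{\gb}{\ga^{\vee}}\ga$, and $X_v$ is the Schubert variety of $s_\gg$ in $P_{\{\ga,\gb\}}/B$. One checks directly that $\{\,t : t\le v,\ t \text{ a reflection}\,\} = \{s_\ga, s_\gb, s_\gg\}$, which has $3 = \ell(v) = \dim X_v$ elements; by the criterion of Carrell and Peterson (\cite{Car:94}; cf. \cite{Bri:98}) counting $T$-invariant curves, $X_v$ is rationally smooth at $e$, proving the last sentence of the lemma. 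Finally, reading off $T_e X_v$ from the explicit descriptions of tangent spaces to rank-$2$ Schubert varieties in \cite{Bri:98}, \cite{Lak:00}, \cite{BiLa:00} (translated to the present conventions by Lemma \ref{l.translation}), one finds that of the two length-$3$ reflections of $W_{\{\ga,\gb\}}$, the one whose reduced word $s_\ga s_\gb s_\ga$ satisfies $\IP{\gb}{\ga^{\vee}} \le -2$ has $\dim T_e X_v > 3$, hence is singular at $e$, whereas for the complementary one ($\IP{\gb}{\ga^{\vee}} = -1$) one has $\dim T_e X_v = 3$ and $X_v$ is smooth. Combined with the previous paragraph, this gives the claimed equivalence.

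The main obstacle is this last point: deciding which of the (at most two) length-$3$ reflections in a $B_2$, $C_2$, or $G_2$ parabolic carries a singular Schubert variety. In the present context this is simply a matter of quoting \cite{Bri:98}/\cite{Lak:00} and matching conventions; carried out from scratch it amounts to writing $X_v$ in coordinates on the opposite big cell of $P_{\{\ga,\gb\}}/B$ and observing that $e$ is a singular point — for $B_2$ and $C_2$ this Schubert variety is a divisor whose defining ideal is generated by a single quadratic form (a quadric cone) exactly in the $\IP{\gb}{\ga^{\vee}} \le -2$ case and by a linear form otherwise. A minor, purely bookkeeping, difficulty is keeping conventions straight when passing between $X^w$ at $w_0$ and $X_v$ at $e$, which Lemma \ref{l.translation} settles.
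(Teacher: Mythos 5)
Your overall strategy is close to the paper's: reduce to the rank-two element $v=w_0w$ of length $3$, split according to whether a reduced word for $v$ has three distinct letters or has the form $s_\ga s_\gb s_\ga$, and quote the explicit rank-two computations of \cite{Bri:98}, \cite{Lak:00}, \cite{BiLa:00} for the decisive singular-versus-smooth dichotomy. The genuine difference is in the distinct-letters case: the paper disposes of it by the Arabia--Rossmann equivariant multiplicity formula (the multiplicity of $X_{s_\ga s_\gb s_\gg}$ at $1$ is $1/\ga\gb\gg$) together with Kumar's criterion, whereas you argue that the Bott--Samelson map is an isomorphism; and for rational smoothness the paper simply cites \cite{Bri:98}, whereas you run the Carrell--Peterson curve count yourself. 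Both alternative routes can be made to work, but as written each has a soft spot.

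First, in the Bott--Samelson step, uniqueness of the subword representing each $u\le v$ plus ``every subword is distinguished'' does not by itself give bijectivity of the morphism on points: you still need to know that each fiber is a single point, not merely that it has a single $T$-fixed point (equivalently, you need a fiber-dimension count in Deodhar's decomposition, or the observation that a positive-dimensional closed $T$-stable subvariety of the smooth projective Bott--Samelson variety must contain at least two fixed points, or simply a citation that Schubert varieties indexed by products of distinct simple reflections are smooth). Second, the Carrell--Peterson criterion, in the form stated in \cite{Car:94} and \cite[Cor.~19]{Bri:98}, requires the number of $T$-curves to equal $\dim X$ at \emph{every} fixed point $y$ of the relevant Bruhat interval, not only at the point under consideration; the papers are only quoted here for the inequality $|\pcu|\ge\dim X$ (see Lemma \ref{l.general}). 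So counting the three reflections $\le v$ establishes equality only at the bottom point; to conclude rational smoothness you should either check the (five remaining) fixed points of $[e,v]$ -- which is immediate in the dihedral group -- or invoke the single-point form of the criterion (Deodhar's conjecture for Weyl groups, together with monotonicity of Kazhdan--Lusztig polynomials), or, as the paper does, take the rational smoothness statement for these rank-two Schubert varieties directly from \cite{Bri:98}. Finally, in your last paragraph the parenthetical ``the complementary one ($\IP{\gb}{\ga^{\vee}}=-1$)'' conflates the $A_2$ subcase (already handled) with the short-root reflection in a $B_2$, $C_2$ or $G_2$ pair; the intended statement -- that for a non-simply-laced pair only the choice with $\IP{\gb}{\ga^{\vee}}\le -2$ has $\dim T_eX_v>3$ -- is correct and is exactly what the cited references (and Proposition \ref{p.bc2} of the paper) give, but it should be phrased so that both length-three reflections of the pair are visibly covered.
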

 
 \begin{proof}
   Schubert varieties of dimension $3$ are of the form $X^w$, where
   either:
 \begin{myenumerate}
 \item $w = w_0 s_{\ga} s_{\gb} s_{\ga}$ for nonorthogonal simple
   roots $\ga$, $\gb$.
 \item $w = w_0 s_{\ga} s_{\gb} s_{\gg}$, where $\ga$, $\gb$, $\gg$
   are distinct simple roots.
\end{myenumerate}
For $w$ as in (1), the statement of the lemma is given in
\cite{Bri:98}.  To complete the proof, it suffices to show that if $w$
is as in (2), then $X^w$ is smooth at $w_0$.  This is most easily seen
by translating into the equivalent statement that
$X_{s_{\ga} s_{\gb} s_{\gg}}$ is smooth at $1$.  A formula for
equivariant multiplicities due to Arabia and Rossmann
(\cite[Prop.~3.3.1]{Ara:89} and \cite{Ros:89}; see \cite[Section
4]{Bri:98}) implies that the equivariant multiplicity of
$X_{s_{\ga} s_{\gb} s_{\gg}}$ at $1$ is $1/\ga \gb \gg$.  By
\cite{Kum:96} (see \cite[Cor.~19]{Bri:98})), this implies that $X^w$
is smooth at $w_0$, as desired.
\end{proof} 

\begin{lemma} \label{l.2root} Assume $\Phi$ is irreducible.  Suppose
  $\ga$ and $\gb$ are simple roots with
  $\IP{\gb}{\ga^{\vee}} \leq -2$, and let
  $w = w_0 s_{\ga} s_{\gb} s_{\ga}$.  Then
$$
\pcu = \{ \ga, \gb, 2 \ga + \gb \}.
$$
\end{lemma}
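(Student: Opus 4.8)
The plan is to read off $\pcu$ from the Carrell--Peterson description of curve weights and then reduce to a Bruhat-order computation. Taking $P=B$ and $x=w_0$, we have $x\gP_P^-=w_0\gP^-=\gP^+$, so Corollary~\ref{c.exw}(i) gives $\pcu=\{\gamma\in\gP^+\mid s_\gamma w_0\geq w\}$; and, as recorded in Section~\ref{s.ind-tinv}, the condition $s_\gamma w_0\geq w$ is equivalent to $s_\gamma\leq ww_0$ (see \cite[Example~5.9.3]{Hum:90}). Hence the task is to list the reflections lying weakly below $ww_0$ in Bruhat order.

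The next step is to compute $ww_0$ explicitly. Since $\Phi$ is irreducible and has simple roots $\alpha,\beta$ with $\langle\beta,\alpha^\vee\rangle\leq -2$, it must be of type $B_n$, $C_n$, $F_4$, or $G_2$; excluding $G_2$ (as throughout this subsection), we get $\langle\beta,\alpha^\vee\rangle=-2$, and in each of the remaining types $w_0=-\operatorname{id}$. Consequently $w_0 s_\gamma w_0=s_{w_0\gamma}=s_\gamma$ for every root $\gamma$, so that $ww_0=w_0(s_\alpha s_\beta s_\alpha)w_0=s_\alpha s_\beta s_\alpha$. Because $\langle\beta,\alpha^\vee\rangle\langle\alpha,\beta^\vee\rangle\geq 2$, the product $s_\alpha s_\beta$ has order at least $4$, so $(s_\alpha,s_\beta,s_\alpha)$ is reduced and $\ell(ww_0)=3$; moreover $ww_0=s_\alpha s_\beta s_\alpha=s_{s_\alpha(\beta)}$ with $s_\alpha(\beta)=\beta-\langle\beta,\alpha^\vee\rangle\alpha=2\alpha+\beta$, which is a positive root. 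Thus $ww_0=s_{2\alpha+\beta}$.

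Finally I would apply the subword property of Bruhat order to the reduced word $(s_\alpha,s_\beta,s_\alpha)$: by \cite[Theorem~5.10]{Hum:90} (cf.\ Lemma~\ref{l.heckefact}), an element $v$ satisfies $v\leq ww_0$ exactly when $v$ is the product of a subword, i.e.\ $v\in\{e,\,s_\alpha,\,s_\beta,\,s_\alpha s_\beta,\,s_\beta s_\alpha,\,s_\alpha s_\beta s_\alpha\}$. Of these six elements, the reflections are precisely $s_\alpha$, $s_\beta$, and $s_\alpha s_\beta s_\alpha=s_{2\alpha+\beta}$, since reflections have odd length while $s_\alpha s_\beta$ and $s_\beta s_\alpha$ have length $2$. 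Translating back through the bijection $\gamma\mapsto s_\gamma$ between positive roots and reflections, $\pcu=\{\alpha,\beta,2\alpha+\beta\}$, as claimed.

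No genuine obstacle is anticipated here: the only real observation needed is that $ww_0$ is itself a reflection of length three, after which everything is standard Coxeter theory. The one point demanding care is the bookkeeping for conjugation by $w_0$, in particular the fact that it does not interchange $\alpha$ and $\beta$; this is handled by the remark that $w_0$ acts as $-\operatorname{id}$ in the types under consideration. (If $G_2$ were retained, the same argument would give $\pcu=\{\alpha,\beta,c\alpha+\beta\}$ with $c=-\langle\beta,\alpha^\vee\rangle$, so the exclusion of $G_2$ is exactly what pins down $c=2$.)
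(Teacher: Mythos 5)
Your proof is correct and follows essentially the same route as the paper's: both reduce via the Carrell--Peterson description to the condition $s_\gg w_0 \geq w$, use that $w_0$ acts as $-\operatorname{id}$ (equivalently is central) in the non-simply-laced irreducible types to turn this into $s_\gg \leq s_\ga s_\gb s_\ga$, and then enumerate the reflections below $s_\ga s_\gb s_\ga$ via the subword property and the odd length of reflections, identifying $s_\ga s_\gb s_\ga = s_{s_\ga\gb}$. Your closing remark about the $G_2$ case (where $s_\ga\gb = -\IP{\gb}{\ga^{\vee}}\ga+\gb$ rather than $2\ga+\gb$) is a slightly more careful reading of the hypothesis $\IP{\gb}{\ga^{\vee}} \leq -2$ than the paper's own statement, and is consistent with the surrounding text's exclusion of $G_2$.
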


\begin{proof}
  Since there are two root lengths in $\Phi$, the action of $w_0$ on
  $\Phi$ is by multiplication by $-1$ (cf.~\cite[Sec.~13,
  Ex.~5]{Hum:72}), so $w_0$ is in the center of $W$.

  Taking $x = w_0$, we have
  $\Phi_{\cur} = \{ \gg \in \Phi^+ \mid s_{\gg} w_0 \geq w \}$.
  Equivalently,
  $\Phi_{\cur} = \{ \gg \in \Phi^+ \mid s_{\gg} \leq s_{\ga} s_{\gb}
  s_{\ga} \}$.  (The statements are equivalent because
  $s_{\gg} w_0 \geq w_1$ is equivalent to
  $w_0 s_{\gg} w_0 \leq w_0 w$; since $w_0$ is in the center of $W$,
  this is equivalent to $s_{\gg} \leq s_{\ga} s_{\gb} s_{\ga}$.)
  Since the length of $s_{\gg}$ is odd, the only possibilities for
  $s_{\gg}$ are $s_{\ga}$, $s_{\gb}$, or
  $s_{\ga} s_{\gb} s_{\ga} = s_{s_{\ga} \gb}$.  We conclude that
  $\pcu = \{ \ga, \gb, s_{\ga} \gb \} = \{ \ga, \gb, 2 \ga + \gb \}$.
\end{proof}

In light of Lemma \ref{l.3D}, the following proposition describes the
tangent space at $w_0$ to a singular $3$-dimensional Schubert variety
for an irreducible root system $\Phi$ not of type $G_2$.

\begin{proposition} \label{p.bc2} Assume $\Phi$ is irreducible.
  Suppose $\ga$ and $\gb$ are simple roots with
  $\IP{\gb}{\ga^{\vee}} = -2$.  Let $\pta$ and $\pcu$ correspond to
  $w = w_0 s_{\ga} s_{\gb} s_{\ga}$ and $x = w_0$.  Then
$$
\pta = \{ \ga, \gb, \ga + \gb, 2 \ga + \gb \}.
$$
\end{proposition}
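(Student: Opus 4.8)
The plan is to sandwich $\pta$ between the explicitly known set $\pcu=\{\ga,\gb,2\ga+\gb\}$ (Lemma \ref{l.2root}) and the set of positive roots of $\conea\pcu$, using Theorem \ref{t.main} for the upper bound and a singularity argument to force the extra root $\ga+\gb$ into $\pta$.

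First I would set up the two easy inclusions. Since $\pcu\subseteq\pta$ in general and $\pcu=\{\ga,\gb,2\ga+\gb\}$ by Lemma \ref{l.2root}, we have $\{\ga,\gb,2\ga+\gb\}\subseteq\pta$. For the upper bound, Theorem \ref{t.main} gives $\pta\subseteq\conea\pcu$, with $\A=\Q$ (the root system is not simply laced). Because $2\ga+\gb$ is a nonnegative integer combination of $\ga$ and $\gb$, and $\ga,\gb\in\pcu$, the cone $\conea\pcu$ equals $\{r\ga+s\gb:r,s\in\Q_{\ge0}\}$. Any root lying in this cone has all its simple-root coordinates supported on $\{\ga,\gb\}$ (and nonnegative there), hence lies in the rank-two subsystem $\Phi\cap(\R\ga+\R\gb)$; since $\ga,\gb$ are simple with $\IP{\gb}{\ga^{\vee}}=-2$, this subsystem is of type $B_2$, whose positive roots are exactly $\ga,\gb,\ga+\gb,2\ga+\gb$, and no negative root lies in the cone. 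Hence $\pta\subseteq\{\ga,\gb,\ga+\gb,2\ga+\gb\}$.

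It then remains only to rule out $\pta=\pcu$, i.e.\ to show $\ga+\gb\in\pta$. For this I would invoke Lemma \ref{l.3D}. The element $w=w_0 s_{\ga}s_{\gb}s_{\ga}$ satisfies $\ell(w)=\ell(w_0)-3$ (the word $s_{\ga}s_{\gb}s_{\ga}$ is reduced, as the braid relation between $s_{\ga}$ and $s_{\gb}$ has length $4$, and $\ell(w_0 v)=\ell(w_0)-\ell(v)$ for all $v$), so $\dim X^w=\ell(w_0)-\ell(w)=3$; and $\ga,\gb$ are nonorthogonal with $\IP{\gb}{\ga^{\vee}}=-2$, which meets the hypothesis $\IP{\gb}{\ga^{\vee}}\le-2$ of Lemma \ref{l.3D}. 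Therefore $X^w$ is singular at $w_0$. Since $|\pcu|=3=\dim X^w$, Lemma \ref{l.general}(2) yields $\pcu\neq\pta$. Combined with $\pcu\subseteq\pta\subseteq\{\ga,\gb,\ga+\gb,2\ga+\gb\}$, this forces $\pta=\{\ga,\gb,\ga+\gb,2\ga+\gb\}$.

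The only step requiring genuine care is the identification of $\Phi^+\cap\conea\pcu$ with $\{\ga,\gb,\ga+\gb,2\ga+\gb\}$: here one uses that $\ga$ and $\gb$ are \emph{simple} (so that the roots lying in the plane $\R\ga+\R\gb$ are precisely those supported on $\{\ga,\gb\}$, namely the Levi subsystem) together with the fact that the single Cartan integer $\IP{\gb}{\ga^{\vee}}=-2$ already pins this subsystem down to type $B_2$ and hence lists its positive roots. Everything else is formal once Theorem \ref{t.main}, Lemma \ref{l.2root}, Lemma \ref{l.3D}, and Lemma \ref{l.general} are in hand.
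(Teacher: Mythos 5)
Your proposal is correct and follows essentially the same route as the paper: sandwich $\pta$ between $\pcu=\{\ga,\gb,2\ga+\gb\}$ (Lemma \ref{l.2root}) and the four positive roots of the rank-two $B_2$ subsystem spanned by $\ga,\gb$ inside $\conea\pcu$ (via Theorem \ref{t.main}), then use singularity at $w_0$ (Lemmas \ref{l.3D} and \ref{l.general}) to force the strict inclusion $\pcu\subsetneq\pta$ and hence $\ga+\gb\in\pta$. Your extra checks (reducedness of $s_\ga s_\gb s_\ga$, the support argument identifying the roots in the cone) only make explicit what the paper leaves implicit.
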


\begin{proof}
  We have
  \begin{equation} \label{e.bc2} \pcu \subsetneq \pta \subseteq \conea
    \pcu;
  \end{equation}
  the first inclusion is proper by Lemma \ref{l.general} because
  $X^{w}$ is singular at $w_0$, and the second inclusion is our main
  result.  In this case, $\ga$ and $\gb$ span a root system of type
  $B_2$ (which is isomorphic to the root system of type $C_2$), with
  long root $\gb$.  Thus, there are $4$ roots in the cone spanned
  (over $\Q$ or $\Z$) by $\ga, \gb$, namely,
  $ \ga, \gb, s_{\ga} \gb = 2 \ga + \gb, s_{\gb} \ga = \ga + \gb $,
  the first three of which are in $\pcu$.  From \eqref{e.bc2}, we
  conclude that $\pta$ must consist of all $4$ of these roots.
\end{proof}
 
 Note that for $\Phi$ of type $G_2$ (which occurs exactly when
 $\IP{\gb}{\ga^{\vee}} = -3$), there are more than $4$ positive roots,
 so the arguments above do not suffice to determine $\pta$.
 
 \subsection{Examples in type $D_n$} \label{ss.ex_typeD} In this
 section we assume $\Phi$ is of type $D_n$.  We define a family of
 elements $w_{ab}$ ($a<b<n-1$), and consider the tangent spaces
 $T_{w_0} X^{w_{ab}})$ at $x = w_0$.  We write
 $\Phi_{\oth} = \Phi_{\tan} \setminus \Phi_{\cur}$ for the set of
 ``other" roots in the tangent space of $X^{w_{ab}}$ at $w_0$.  (Of
 course, $\pta, \pcu$, and $\Phi_{\oth}$ all depend on the choice of
 $x = w_0$ and of $w_{ab} \in W$, but we omit this from the notation.)
 We will calculate $\Phi_{\oth}$, and identify enough elements of
 $\pcu$ to show that each root in $\Phi_{\oth}$ is a sum of two roots
 in $\pcu$.  Thus, in this example, we can verify our main result that
 $\pta \subseteq \conea \pcu$ by direct calculation.
 
 We have chosen $x = w_0$ in this section because the description of
 the tangent spaces $T_x (X^w)$ in \cite{Lak:00} and \cite{BiLa:00} is
 much simpler for $x=w_0$ than for arbitrary $x$.
 
 We use the standard realization of the root system of $D_n$ as in
 Section \ref{s.dec-iso-dec}.  Let $w \in W$.  We have
 $\pcu = \{ \gg \in \Phi^+ \mid s_{\gg} w_0 \geq w \}$.  Applying
 \cite[Theorem 6.8]{Lak:00} or \cite[Theorem 5.3.1]{BiLa:00}, we see
 that $\gg \in \Phi_{\oth}$ if and only if $\gg = \gre_i + \gre_j$,
 with $1 \leq i < j < n-1$, and

$(1)$ $s_{\gg} w_0 \not\geq w$, and

$(2)$ $s_{\gre_i - \gre_n} s_{\gre_i + \gre_n} s_{\gre_j + \gre_{n-1}} w_0 \geq  w$.

\noindent (In translating the result from \cite{Lak:00} we have used
the fact that $w_0$ commutes with
$s_{\gre_i - \gre_n} s_{\gre_i + \gre_n} s_{\gre_j + \gre_{n-1}} $, as
can easily be seen by writing the elements of $W$ as signed
permutations (cf.~\cite{BoGr:03}.)

Motivated by condition (2), for $1 \leq a < b < n-1$, we define
$u_{ab} = s_{\gre_a - \gre_n} s_{\gre_a + \gre_n} s_{\gre_b +
  \gre_{n-1}}$, and set $w_{ab} = u_{ab} w_0$.  If $w = w_{ab}$, then
we claim that (1) and (2) are equivalent to the conditions:

$(1')$ $s_{\gg} \not\leq u_{ab}$, and

$(2')$ $u_{ij} < u_{ab}$.

Indeed, condition (1) states that
$s_{\gg} w_0 \not\geq w_{ab} = u_{ab} w_0$, which is equivalent to
$(1')$.  Condition (2) states that
$u_{ij} w_0 \geq w_{ab} = u_{ab} w_0$, which is equivalent to $(2')$.
This verifies the claim.

\begin{proposition}
  Suppose $\Phi$ is of type $D_n$ for $n \geq 4$.  Fix $a, b$
  satisfying $1 \leq a < b < n-1$.  Let $\pta$, $\pcu$, and
  $\Phi_{\oth}$ be defined as above, corresponding to $x = w_0$ and
  $w = w_{ab}$.  Assume below that $i,j$ denote integers satisfying
  $1 \leq i < j < n-1$. Then:

$(a)$ $\Phi_{\oth} = \{ \gre_i + \gre_j \mid  i \geq a , j \geq b \}$.

$(b)$ Suppose $i \geq a$ and $ j \geq b$.  Then the roots
$\gre_i \pm \gre_{n-1}$ and $\gre_j \pm \gre_{n-1}$ are in $\pcu$.

$(c)$ $\pta \subseteq \conea \pcu$.
\end{proposition}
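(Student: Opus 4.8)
The plan is to deduce $(c)$ directly from parts $(a)$ and $(b)$, which carry all the real weight. First I would record that, by the definition $\Phi_{\oth} = \pta \setminus \pcu$, we have the disjoint decomposition $\pta = \pcu \sqcup \Phi_{\oth}$; since $\pcu \subseteq \conea \pcu$ trivially, it remains only to show $\Phi_{\oth} \subseteq \conea \pcu$.

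So I would fix a root $\gg \in \Phi_{\oth}$. By part $(a)$, $\gg = \gre_i + \gre_j$ with $1 \leq i < j < n-1$, $i \geq a$, and $j \geq b$. The decomposition I would use is
\[
\gre_i + \gre_j = (\gre_i - \gre_{n-1}) + (\gre_j + \gre_{n-1}).
\]
Because $i < j < n-1$, both summands $\gre_i - \gre_{n-1}$ and $\gre_j + \gre_{n-1}$ are genuine positive roots of $D_n$; because $i \geq a$ and $j \geq b$, part $(b)$ puts both of them in $\pcu$. Hence $\gg$ is an (integral, two-term) nonnegative combination of elements of $\pcu$, so $\gg \in \conea \pcu$. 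This yields $\Phi_{\oth} \subseteq \conea \pcu$, and therefore $\pta = \pcu \sqcup \Phi_{\oth} \subseteq \conea \pcu$, as claimed.

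I do not expect any genuine obstacle in this last step once $(a)$ and $(b)$ are in hand. The only points requiring care are that the two summands above are actual positive roots — which is exactly why $(a)$ is stated with the sharp bound $j < n-1$ (forcing $i < n-1$ as well) — and that the inequalities $i \geq a$, $j \geq b$ recorded in $(a)$ are precisely the hypotheses needed to invoke $(b)$. All the substantive work lies in the proofs of $(a)$ (applying the tangent-space formula of \cite{Lak:00}/\cite{BiLa:00} and translating conditions $(1),(2)$ into the Bruhat-order conditions $(1'),(2')$) and $(b)$ (verifying the relevant relations $s_{\gg} w_0 \geq w_{ab}$). As a closing remark I would note that this realizes each ``other'' root as a sum of two curve weights, paralleling the $3$-dimensional examples of Section \ref{ss.3D} and giving an explicit illustration of Theorem \ref{t.main}.
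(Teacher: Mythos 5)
There is a genuine gap: the proposition you were asked to prove consists of parts $(a)$, $(b)$, and $(c)$, and your proposal only proves $(c)$ \emph{assuming} $(a)$ and $(b)$. You explicitly defer "all the substantive work" to the proofs of $(a)$ and $(b)$ and then never carry it out — naming the tangent-space formula of Lakshmibai and the conditions $(1'),(2')$ is a description of what would have to be done, not an argument. Concretely, what is missing is the Bruhat-order content: (i) the proof that condition $(2')$, i.e.\ $u_{ij} < u_{ab}$, holds if and only if $i \geq a$ and $j \geq b$ (the paper does this by Proctor's characterization of the Bruhat order on signed permutations for the "only if" direction, and by exhibiting an explicit increasing chain $u_{ij} < u_{ij}s_{\gb_1} < \cdots = u_{ab}$ for the "if" direction); (ii) the verification that $s_{\gre_i+\gre_j} \not\leq u_{ab}$ when $i \geq a$, $j \geq b$, which is what makes these roots lie in $\Phi_{\oth}$ rather than in $\pcu$; and (iii) for part $(b)$, the reduction to the two roots $\gre_i \pm \gre_{n-1}$ (using $s_{\gre_j \pm \gre_{n-1}} < s_{\gre_i \pm \gre_{n-1}}$) and the verification that $s_{\gre_i \pm \gre_{n-1}} < u_{ab}$, again by producing chains of reflections. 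Without some version of these computations the sets $\Phi_{\oth}$ and the membership claims of $(b)$ are simply asserted, and the proposition is not proved.

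Your treatment of $(c)$ itself is fine and coincides with the paper's: the paper likewise writes $\gre_i + \gre_j = (\gre_i - \gre_{n-1}) + (\gre_j + \gre_{n-1}) = (\gre_i + \gre_{n-1}) + (\gre_j - \gre_{n-1})$ and concludes $\Phi_{\oth} \subseteq \conea\pcu$; a single one of these two decompositions suffices, as you use, and your check that the summands are positive roots and satisfy the hypotheses of $(b)$ is correct. But this last step is the easy one; to make the proposal a proof you must supply the signed-permutation/Bruhat-order arguments behind $(a)$ and $(b)$, at least to the level of detail the paper's own sketch provides.
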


\begin{proof} We sketch the proof, but omit most details, which
  involve calculations in the Bruhat order in type $D_n$.  We adopt
  the conventions and notation of \cite{BoGr:03}.  The Weyl group of
  type $D_n$ can be realized as the group of signed permutations of
  $1, \ldots, n$, with an even number of negative signs.  If
  $u \in W$, write $u(i) = u_i$; then $u$ can be represented by the
  sequence $u_1 u_2 \ldots u_n$.  Here
  $u_i \in \{ 1, \bar{1}, 2, \bar{2}, \ldots, n, \bar{n} \}$, where
  $\bar{a}$ denotes $-a$.  Write $\gg = \gre_i + \gre_j$.

  The proof uses two key facts about the Bruhat order.  The first is
  the fact that if $u \in W$ and $\gb$ is a positive root with
  $u \gb > 0$, then $u < u s_{\gb}$.  The second is a characterization
  of the Bruhat order in type $D_n$ in terms of the sequences
  representing elements of $W$ as signed permutations.  This
  characterization is due to Proctor \cite{Pro:82}; the statement may
  also be found in \cite[Prop.~2.10]{BoGr:03}.

  We first consider (a).  We know that $\gg \in \Phi_{\oth}$ if and
  only if conditions $(1')$ and $(2')$ are satisfied.  We claim first
  that $(2')$ is satisfied $\Leftrightarrow$ $a \leq i$ and
  $b \leq j$.  The implication $(\Rightarrow)$ is proved by
  considering the contrapositive.  If either of $i \geq a$ or
  $ j \geq b$ is false, then applying Proctor's condition to the
  signed permutations corresponding $u_{ij}$ and $u_{ab}$, we see that
  $u_{ij} \not\leq u_{ab}$.  The calculation is made easier because
  the parity condition in this characterization is not needed, and one
  can restrict attention to the places where the expressions for
  $u_{ij}$ and $u_{ab}$ differ from the identity permutation
  $12\ldots n$ (cf.~\cite[Lemma 3.4]{BoGr:03}).  To prove
  $(\Leftarrow)$, we suppose $i \geq a$ and $ j \geq b$.  In this
  case, we can exhibit a sequence of positive roots
  $\gb_1, \ldots, \gb_r$ such that
  \begin{equation} \label{e.sequence} u_{ij} < u_{ij} s_{\gb_1} <
    \cdots < u_{ij} s_{\gb_1} s_{\gb_2} \cdots s_{\gb_r} = u_{ab}.
\end{equation}
This calculation is facilitated by considering elements of $W$ as
signed permutations and using the description of multiplication by
reflections in \cite[(2.1)]{BoGr:03}.  We omit further details.  This
proves the claim.

To complete the proof of (a), we need to verify that if
$\gg = \gre_i+\gre_j$, with $a \leq i$ and $b \leq j$, then
$s_{\gg} \not\leq u_{ab}$.  This can be verified by writing down the
expressions for $s_{\gg}$ and $u_{ab}$ as signed permutations, and
again using Proctor's characterization of the Bruhat order.  We omit
further details.  This completes the proof of (a).

We next consider (b).  We know a root $\zeta$ is in $\pcu$ if and only
if $s_{\zeta} w_0 \geq w_{ab} = u_{ab} w_0$, or equivalently,
$s_{\zeta} < u_{ab}$.  Thus, we want to show that if $\zeta$ is one of
the four roots listed in the statement of (b), then
$s_{\zeta} < u_{ab}$.  Since
$s_{\gre_j - \gre_{n-1}} < s_{\gre_i - \gre_{n-1}} $ and
$s_{\gre_j + \gre_{n-1}} < s_{\gre_i + \gre_{n-1}} $, we only need to
prove this for $\zeta$ equal to $\gre_i - \gre_{n-1}$ or
$\gre_i + \gre_{n-1}$.  We prove that $s_{\zeta} < u_{ab}$ in the same
way the statement $u_{ij} \leq u_{ab}$ was proved in part (a), by
exhibiting sequences analogous to \eqref{e.sequence}.  We omit further
details.

Finally, part (c) follows from parts (a) and (b).  Indeed, since
$$
\gre_i + \gre_j = (\gre_i - \gre_{n-1}) + (\gre_j + \gre_{n-1}) =
(\gre_i + \gre_{n-1}) + (\gre_j - \gre_{n-1}),
$$ 
parts (a) and (b) imply that every root in $\Phi_{\oth}$ is in
$\conea \pcu$.
\end{proof}


\bibliographystyle{amsalpha}
\bibliography{removableroots}

\providecommand{\bysame}{\leavevmode\hbox to3em{\hrulefill}\thinspace}
\providecommand{\MR}{\relax\ifhmode\unskip\space\fi MR }
\providecommand{\MRhref}[2]{%
  \href{http://www.ams.org/mathscinet-getitem?mr=#1}{#2}
}
\providecommand{\href}[2]{#2}
\begin{thebibliography}{Lak00b}

\bibitem[Ara89]{Ara:89}
Alberto Arabia, \emph{Cohomologie {$T$}-\'{e}quivariante de la vari\'{e}t\'{e}
  de drapeaux d'un groupe de {K}ac-{M}oody}, Bull. Soc. Math. France
  \textbf{117} (1989), no.~2, 129--165. \MR{1015806}

\bibitem[BG03]{BoGr:03}
Brian~D. Boe and William Graham, \emph{A lookup conjecture for rational
  smoothness}, Amer. J. Math. \textbf{125} (2003), no.~2, 317--356.
  \MR{1963688}

\bibitem[Bil98]{Bil:98}
Sara~C. Billey, \emph{Pattern avoidance and rational smoothness of {S}chubert
  varieties}, Adv. Math. \textbf{139} (1998), no.~1, 141--156. \MR{1652522}

\bibitem[BJS93]{BJS:93}
Sara~C. Billey, William Jockusch, and Richard~P. Stanley, \emph{Some
  combinatorial properties of {S}chubert polynomials}, J. Algebraic Combin.
  \textbf{2} (1993), no.~4, 345--374. \MR{1241505}

\bibitem[BL00]{BiLa:00}
Sara Billey and V.~Lakshmibai, \emph{Singular loci of {S}chubert varieties},
  Progress in Mathematics, vol. 182, Birkh\"auser Boston Inc., Boston, MA,
  2000. \MR{1782635 (2001j:14065)}

\bibitem[Bou02]{Bou:02}
Nicolas Bourbaki, \emph{Lie groups and {L}ie algebras. {C}hapters 4--6},
  Elements of Mathematics (Berlin), Springer-Verlag, Berlin, 2002, Translated
  from the 1968 French original by Andrew Pressley. \MR{1890629}

\bibitem[Bri98]{Bri:98}
Michel Brion, \emph{Equivariant cohomology and equivariant intersection
  theory}, Representation theories and algebraic geometry ({M}ontreal, {PQ},
  1997), NATO Adv. Sci. Inst. Ser. C: Math. Phys. Sci., vol. 514, Kluwer Acad.
  Publ., Dordrecht, 1998, Notes by Alvaro Rittatore, pp.~1--37. \MR{1649623}

\bibitem[Bri99]{Bri:99}
M.~Brion, \emph{Rational smoothness and fixed points of torus actions}, vol.~4,
  1999, Dedicated to the memory of Claude Chevalley, pp.~127--156. \MR{1712861}

\bibitem[BW03]{BiWa:03}
Sara~C. Billey and Gregory~S. Warrington, \emph{Maximal singular loci of
  {S}chubert varieties in {${\rm SL}(n)/B$}}, Trans. Amer. Math. Soc.
  \textbf{355} (2003), no.~10, 3915--3945. \MR{1990570}

\bibitem[Car94]{Car:94}
James~B. Carrell, \emph{The {B}ruhat graph of a {C}oxeter group, a conjecture
  of {D}eodhar, and rational smoothness of {S}chubert varieties}, Algebraic
  groups and their generalizations: classical methods ({U}niversity {P}ark,
  {PA}, 1991), Proc. Sympos. Pure Math., vol.~56, Amer. Math. Soc., Providence,
  RI, 1994, pp.~53--61. \MR{1278700}

\bibitem[CK03]{CaKu:03}
James~B. Carrell and Jochen Kuttler, \emph{Smooth points of {$T$}-stable
  varieties in {$G/B$} and the {P}eterson map}, Invent. Math. \textbf{151}
  (2003), no.~2, 353--379. \MR{1953262}

\bibitem[Fan95]{Fan:95}
C.~Kenneth Fan, \emph{A {H}ecke algebra quotient and properties of commutative
  elements of a {W}eyl group}, ProQuest LLC, Ann Arbor, MI, 1995, Thesis
  (Ph.D.)--Massachusetts Institute of Technology. \MR{2716583}

\bibitem[Fan97]{Fan:97}
\bysame, \emph{Structure of a {H}ecke algebra quotient}, J. Amer. Math. Soc.
  \textbf{10} (1997), no.~1, 139--167. \MR{1396894}

\bibitem[FS97]{FaSt:97}
C.~Kenneth Fan and John~R. Stembridge, \emph{Nilpotent orbits and commutative
  elements}, J. Algebra \textbf{196} (1997), no.~2, 490--498. \MR{1475121}

\bibitem[Ful93]{Ful:93}
William Fulton, \emph{Introduction to toric varieties}, Annals of Mathematics
  Studies, vol. 131, Princeton University Press, Princeton, NJ, 1993, The
  William H. Roever Lectures in Geometry. \MR{1234037}

\bibitem[Gau03]{Gau:03}
St\'{e}phane Gaussent, \emph{Combinatorial tangent space and rational
  smoothness of {S}chubert varieties}, Comm. Algebra \textbf{31} (2003), no.~7,
  3111--3133. \MR{1990263}

\bibitem[GK15]{GrKr:15}
William Graham and Victor Kreiman, \emph{Excited {Y}oung diagrams, equivariant
  {$K$}-theory, and {S}chubert varieties}, Trans. Amer. Math. Soc. \textbf{367}
  (2015), no.~9, 6597--6645. \MR{3356949}

\bibitem[GK22a]{GrKr:19}
\bysame, \emph{{C}ominuscule points and {S}chubert varieties}, Ann. Inst.
  Fourier (Grenoble) (2022), to appear.

\bibitem[GK22b]{GrKr:20}
\bysame, \emph{Equivariant ${K}$-theory and tangent spaces of {S}chubert
  varieties}, Michigan Math. J. (2022), to appear.

\bibitem[GK23]{GrKr:23b}
\bysame, \emph{Computational considerations of cominuscule points in {S}chubert
  varieties}, 2023.

\bibitem[Hum72]{Hum:72}
James~E. Humphreys, \emph{Introduction to {L}ie algebras and representation
  theory}, Springer-Verlag, New York, 1972, Graduate Texts in Mathematics, Vol.
  9. \MR{0323842 (48 \#2197)}

\bibitem[Hum90]{Hum:90}
\bysame, \emph{Reflection groups and {C}oxeter groups}, Cambridge Studies in
  Advanced Mathematics, vol.~29, Cambridge University Press, Cambridge, 1990.
  \MR{1066460 (92h:20002)}

\bibitem[KLR03]{KLR:03}
Christian Kassel, Alain Lascoux, and Christophe Reutenauer, \emph{The singular
  locus of a {S}chubert variety}, J. Algebra \textbf{269} (2003), no.~1,
  74--108. \MR{2015302}

\bibitem[KM04]{KnMi:04}
Allen Knutson and Ezra Miller, \emph{Subword complexes in {C}oxeter groups},
  Adv. Math. \textbf{184} (2004), no.~1, 161--176. \MR{2047852 (2005c:20066)}

\bibitem[Knu09]{Knu:09}
Allen Knutson, \emph{Frobenius splitting, point-counting, and degeneration},
  2009.

\bibitem[Kum96]{Kum:96}
Shrawan Kumar, \emph{The nil {H}ecke ring and singularity of {S}chubert
  varieties}, Invent. Math. \textbf{123} (1996), no.~3, 471--506.
  \MR{97j:14057}

\bibitem[Lak95]{Lak:95}
V.~Lakshmibai, \emph{Tangent spaces to {S}chubert varieties}, Math. Res. Lett.
  \textbf{2} (1995), no.~4, 473--477. \MR{1355708}

\bibitem[Lak00a]{Lak:00}
\bysame, \emph{On tangent spaces to {S}chubert varieties}, J. Algebra
  \textbf{230} (2000), no.~1, 222--244. \MR{1774765}

\bibitem[Lak00b]{Lak2:00}
\bysame, \emph{On tangent spaces to {S}chubert varieties}, J. Algebra
  \textbf{230} (2000), no.~1, 222--244. \MR{1774765}

\bibitem[LLM98]{LLM:98}
Venkatramani Lakshmibai, Peter Littelmann, and Peter Magyar, \emph{Standard
  monomial theory and applications}, Representation theories and algebraic
  geometry ({M}ontreal, {PQ}, 1997), NATO Adv. Sci. Inst. Ser. C Math. Phys.
  Sci., vol. 514, Kluwer Acad. Publ., Dordrecht, 1998, Notes by Rupert W. T.
  Yu, pp.~319--364. \MR{1653037}

\bibitem[LS84]{LaSe:84}
V.~Lakshmibai and C.~S. Seshadri, \emph{Singular locus of a {S}chubert
  variety}, Bull. Amer. Math. Soc. (N.S.) \textbf{11} (1984), no.~2, 363--366.
  \MR{752799}

\bibitem[LS90]{LaSa:90}
V.~Lakshmibai and B.~Sandhya, \emph{Criterion for smoothness of {S}chubert
  varieties in {${\rm Sl}(n)/B$}}, Proc. Indian Acad. Sci. Math. Sci.
  \textbf{100} (1990), no.~1, 45--52. \MR{1051089}

\bibitem[Pap94]{Pap:94}
Paolo Papi, \emph{A characterization of a special ordering in a root system},
  Proc. Amer. Math. Soc. \textbf{120} (1994), no.~3, 661--665. \MR{1169886}

\bibitem[Pro82]{Pro:82}
Robert~A. Proctor, \emph{Classical {B}ruhat orders and lexicographic
  shellability}, J. Algebra \textbf{77} (1982), no.~1, 104--126. \MR{665167}

\bibitem[Ros89]{Ros:89}
W.~Rossmann, \emph{Equivariant multiplicities on complex varieties},
  Ast\'erisque (1989), no.~173-174, 11, 313--330, Orbites unipotentes et
  repr\'esentations, III. \MR{91g:32042}

\bibitem[Ser01]{Ser:01}
Jean-Pierre Serre, \emph{Complex semisimple {L}ie algebras}, Springer
  Monographs in Mathematics, Springer-Verlag, Berlin, 2001, Translated from the
  French by G. A. Jones, Reprint of the 1987 edition. \MR{1808366}

\bibitem[Ste96]{Ste:96}
John~R. Stembridge, \emph{On the fully commutative elements of {C}oxeter
  groups}, J. Algebraic Combin. \textbf{5} (1996), no.~4, 353--385. \MR{1406459
  (97g:20046)}

\bibitem[Ste97]{Ste:97}
\bysame, \emph{Some combinatorial aspects of reduced words in finite {C}oxeter
  groups}, Trans. Amer. Math. Soc. \textbf{349} (1997), no.~4, 1285--1332.
  \MR{1389789}

\bibitem[Ste01]{Ste:01}
\bysame, \emph{Minuscule elements of {W}eyl groups}, J. Algebra \textbf{235}
  (2001), no.~2, 722--743. \MR{1805477}

\end{thebibliography}

\end{document}